\newcommand{\CC}{{\mathbb{C}}}
\newcommand{\FF}{{\mathbb{F}}}
\newcommand{\RR}{{\mathbb{R}}}
\newcommand{\ZZ}{{\mathbb{Z}}}
\newcommand{\fA}{{\mathfrak{A}}}
\newcommand{\fS}{{\mathfrak{S}}}
\newcommand{\bC}{{\mathbf{C}}}
\newcommand{\bG}{{\mathbf{G}}}
\newcommand{\bH}{{\mathbf{H}}}
\newcommand{\bL}{{\mathbf{L}}}
\newcommand{\bT}{{\mathbf{T}}}
\newcommand{\bS}{{\mathbf{S}}}
\newcommand{\bU}{{\mathbf{U}}}
\newcommand{\bP}{{\mathbf{P}}}
\newcommand{\bQ}{{\mathbf{Q}}}
\newcommand{\bX}{{\mathbf{X}}}
\newcommand{\bB}{{\mathbf{B}}}
\newcommand{\cE}{{\mathcal{E}}}
\newcommand{\End}{{\operatorname{End}}}
\newcommand{\Aut}{{\operatorname{Aut}}}
\newcommand{\Inndiag}{{\operatorname{Inndiag}}}
\newcommand{\Irr}{{\operatorname{Irr}}}
\newcommand{\St}{{\operatorname{St}}}
\newcommand{\tr}{{\operatorname{tr}}}
\newcommand{\rk}{{\operatorname{rank}}}
\newcommand{\Ind}{{\operatorname{Ind}}}
\newcommand{\Char}{{\operatorname{char}\,}}
\newcommand{\diag}{{\operatorname{diag}\,}}
\newcommand{\GL}{{\operatorname{GL}}}
\newcommand{\SL}{{\operatorname{SL}}}
\newcommand{\PGL}{{\operatorname{PGL}}}
\newcommand{\PSL}{{\operatorname{L}}}
\newcommand{\PSU}{{\operatorname{U}}}
\newcommand{\GU}{{\operatorname{GU}}}
\newcommand{\SU}{{\operatorname{SU}}}
\newcommand{\Sp}{{\operatorname{Sp}}}
\newcommand{\CSp}{{\operatorname{CSp}}}
\newcommand{\PSp}{{\operatorname{S}}}
\newcommand{\OO}{{\operatorname{O}}}
\newcommand{\CSpin}{{\operatorname{CSpin}}}
\newcommand{\SO}{{\operatorname{SO}}}
\newcommand{\Chevie}{{\sf Chevie}}
\newcommand{\al}{\alpha}
\def\pmod#1{~({\rm mod}~#1)}
\newcommand{\tw}[1]{{}^#1\!}
\let\eps=\epsilon
\newtheorem{thm}{Theorem}[section]
\newtheorem{lem}[thm]{Lemma}
\newtheorem{cor}[thm]{Corollary}
\newtheorem{prop}[thm]{Proposition}
\newtheorem*{conj}{Conjecture A}
\theoremstyle{definition}
\newtheorem{exmp}[thm]{Example}
\theoremstyle{remark}
\newtheorem{rem}[thm]{Remark}
\begin{document}

\title[Products of Conjugacy Classes]
{Products of Conjugacy Classes in Finite and Algebraic Simple Groups}

\date{\today}

\author{Robert Guralnick}
\address{Department of Mathematics, University of
  Southern California, Los Angeles, CA 90089-2532, USA}
\makeatletter\email{guralnic@usc.edu}\makeatother
\author{Gunter Malle}
\address{FB Mathematik, TU Kaiserslautern, Postfach 3049,
  67653 Kaisers\-lautern, Germany}
\makeatletter\email{malle@mathematik.uni-kl.de}\makeatother
\author{Pham Huu Tiep}
\address{Department of Mathematics, University of Arizona, Tucson,
  AZ 85721-0089, USA}
\makeatletter\email{tiep@math.arizona.edu}\makeatother

\thanks{The first and third authors were partially supported by the NSF
  (grants DMS-1001962 and DMS-0901241).}

\keywords{products of conjugacy classes, products of centralizers, algebraic
groups, finite simple groups,  Szep's conjecture, characters, Baer--Suzuki
theorem}

\subjclass[2010]{Primary 20G15, 20G40, 20D06; Secondary 20C15, 20D05}

\begin{abstract}
We prove the Arad--Herzog conjecture for various families of finite simple
groups --- if $A$ and $B$ are nontrivial conjugacy classes,
then $AB$ is not a conjugacy class. We also prove that if $G$ is a finite
simple group of Lie type and $A$ and $B$ are nontrivial conjugacy classes,
either both semisimple or both unipotent, then $AB$ is not a conjugacy class.
We also prove a strong version of the Arad--Herzog conjecture for simple
algebraic groups and in particular show that almost always the product of
two conjugacy classes in a simple algebraic group consists of infinitely
many conjugacy classes.   As a consequence we obtain a complete classification
of pairs of centralizers in a simple algebraic group which have dense product.
A special case of this has been used by Prasad to prove a uniqueness result for
Tits systems in pseudo-reductive groups. 
Our final result is a generalization of the Baer-Suzuki theorem for 
$p$-elements with $p \geq 5$. 
\end{abstract}

\dedicatory{Dedicated to Robert Steinberg on the occasion of his 90th birthday}

\maketitle


\section{Introduction} \label{sec:intro}

In \cite[p.~3]{AH}, Arad and Herzog made the following conjecture:

\begin{conj}[Arad--Herzog]   \label{conj:AH}
 If $S$ is a finite non-abelian simple group and $A$ and $B$ are nontrivial
 conjugacy classes of $S$, then $AB$ is not a conjugacy class.
\end{conj}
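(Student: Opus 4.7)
The plan is to attack the conjecture via the Classification of Finite Simple Groups, reducing to the three families of alternating, sporadic, and Lie type simple groups, and within the Lie type case splitting further according to the Jordan decomposition of the classes $A$ and $B$. The main analytical tool throughout is the structure constant formula
\[
  \bigl|\{(a,b)\in A\times B : ab=c\}\bigr|
  \;=\;\frac{|A||B|}{|S|}\sum_{\chi\in\Irr(S)}\frac{\chi(a)\chi(b)\overline{\chi(c)}}{\chi(1)},
\]
which, under the assumption $AB=C$ for a single class, must be constant on $C$ and vanish off $C$. Producing a single irreducible character $\chi$ that is incompatible with these two constraints is then enough to finish.

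For $\fA_n$ the argument should be combinatorial on cycle types: if $A,B$ have types $\lambda,\mu$ and $AB=C$, one varies the conjugate representative of $B$ and shows directly (e.g.\ by comparing fixed-point counts, or by producing two products with different numbers of $2$-cycles) that $AB$ cannot be contained in any single class; small $n$ are checked by hand. The sporadic groups are immediate from the Atlas, so the substance of the conjecture is $S=G^F$ of Lie type. When both classes are \emph{semisimple}, I would feed the Steinberg character $\St$ into the formula: since $\St$ vanishes off the semisimple variety and satisfies $\St(g)=\pm|C_S(g)|_p$ there, the constancy of the structure constant on $C$ and its vanishing off $C$ forces $C$ to be semisimple and imposes strong $p$-divisibility relations between $|C_S(a)|_p$, $|C_S(b)|_p$, $|C_S(c)|_p$; a second character — an irreducible Deligne--Lusztig character for a torus nondegenerate with respect to $a$ and $b$, or a well-chosen unipotent character — should then contradict these. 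The case where both classes are \emph{unipotent} is dual and should be run with a Gelfand--Graev character, whose support on regular unipotent elements plays the role that $\St$ plays on semisimple ones; the vanishing of $\Gamma$ on non-regular unipotent part is exactly what one needs to separate $AB$ from a target class.

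The step I expect to be the genuine obstacle is the \emph{mixed} case, where $A$ is semisimple and $B$ unipotent. Here neither $\St$ nor a Gelfand--Graev character cleanly isolates $AB$, and the generic character-ratio bounds must compete with the large sizes of both classes. My plan there would be to lift to the ambient algebraic group $\bG$, fix a semisimple representative $s\in A$, and study the translate $s\cdot u^\bG$ geometrically via the structure of $C_\bG(s)^\circ$ and the Bruhat decomposition, exhibiting two $\bG^F$-nonconjugate products in $AB$. A second, pervasive difficulty is the small rank / small field regime, where Deligne--Lusztig-type upper bounds $|\chi(g)|\le\chi(1)^{1-\delta}$ are not yet strong enough to dominate the character sum, and one must fall back to explicit computation in \Chevie\ and case-by-case analysis of specific class pairs (typically a short class such as an involution or transvection paired with a regular class). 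I therefore expect the cleanest general statement from this approach to cover the ``pure'' cases — both semisimple, or both unipotent — with the full conjecture following only after further case-specific work.
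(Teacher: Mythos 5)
This is a \emph{conjecture} in the paper, not a theorem: the paper proves it only for $\fA_n$, $\PSL_d(q)$, a handful of small-rank classical and exceptional groups, and the ``pure'' semisimple and unipotent cases (Theorems~\ref{thm:alt}, \ref{thm:main1}, \ref{thm:main2}, \ref{thm:main3}), and explicitly states that the general case remains open. You are candid about this at the end of your proposal, so there is no discrepancy on what is and is not proved. Your high-level strategy matches the paper's partial results closely: the structure-constant identity you invoke is, when $AB$ is a single class, equivalent to the paper's Lemma~\ref{lem:basic}, and your use of the Steinberg character for two semisimple classes is exactly Proposition~\ref{prop:regss}. One refinement worth noting: rather than reaching for a ``second character'' to defeat the $p$-divisibility relations coming from $\St$, the paper converts $\St(a)\St(b)=\St(c)\St(1)$ into a statement about dimensions of centralizers in the ambient algebraic group (namely that $C_\bG(a)C_\bG(b)$ would be dense) and then kills it with Corollary~\ref{cor:factorize} --- an algebraic-geometric, not character-theoretic, contradiction. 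This is cleaner and more uniform than hunting for a complementary character would be.

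Where your plan genuinely diverges is the unipotent case. You propose Gelfand--Graev characters, but the paper does not use them anywhere. Instead it splits the unipotent statement into two parts: (i) Propositions~\ref{prop:charval}--\ref{prop:unipotent prods} use irreducible Deligne--Lusztig characters $\pm R_{\bT_i,\theta_i}$ for two carefully chosen maximal tori (with Zsigmondy-prime characters in general position) whose $G$-conjugates meet only in the center --- this shows that if $c$ is unipotent then so are $a$ and $b$; and (ii) for two unipotent classes, the paper abandons character theory entirely and argues geometrically via parabolic subgroups and closures of unipotent classes (Lemma~\ref{lem:unipotent pairs}, Theorem~\ref{thm:unipotent prods}, Theorem~\ref{bs1}, relying on Lawther's work). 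A Gelfand--Graev approach is not obviously wrong, but you would need to control its values off the regular unipotent set, which is delicate; the paper's route sidesteps this. You correctly identify the mixed semisimple/unipotent case as the genuine obstacle; the paper does not resolve it either, and your sketch there (studying $s\cdot u^\bG$ via $C_\bG(s)^\circ$ and Bruhat) is only a heuristic, not an argument. So: your proposal is an honest plan consistent with the paper's partial results, with a technical divergence in the unipotent case, and it does not (and does not claim to) close the conjecture.
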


In this paper, we prove this conjecture in various cases. We
also consider the analogous problem for simple algebraic groups. Note
that the results do not depend on the isogeny class of the group
(allowing the possibility of multiplying a class by a central element)
and so we work with whatever form is more convenient.  Moreover, in 
characteristic $2$, we ignore the groups of type $B$ (the result can
be read off from the groups of type $C$).

Here one can prove much more:

\begin{thm}  \label{main:alg}
 Let $\bG$ be a simple algebraic group over an algebraically closed field of
 characteristic $p \ge 0$. Let $A$ and $B$ be non-central conjugacy classes
 of $\bG$. Then $AB$ can never constitute a single conjugacy class. In fact, 
 either $AB$ is the union of infinitely many conjugacy classes, or
 (up to interchanging $A$ and $B$ and up to an isogeny for $\bG$) 
  one of the following holds:
 \begin{enumerate}[\rm(1)]
  \item $\bG=G_2$, $A$ consists of long root elements and $B$ consists of
   elements of order $3$. If $p=3$, $B$ consists of short root elements and
   if $p \ne 3$, $B$ consists of elements with centralizer isomorphic
   to $\SL_3$.
  \item $\bG=F_4$, $A$ consists of long root elements and $B$ consists of
   involutions. If $p=2$, $B$ consists of short root elements and if $p \ne 2$,
   $B$ consists of involutions with centralizer isomorphic to $B_4$.
  \item $\bG=\Sp_{2n}= \Sp(V)$, $n\ge 2$, $\pm A$ consists of long root
   elements and $B$ consists of involutions; when $p=2$ then the involutions
   $b\in B$ moreover satisfy $(bv,v)=0$ for all $v \in V$.
  \item $\bG=\SO_{2n+1}$, $n \ge 2$, $p \ne 2$, $A$ consists of elements
   which are the negative of a reflection and $B$ consists of unipotent
   elements with all Jordan blocks of size at most~$2$.
 \end{enumerate}
\end{thm}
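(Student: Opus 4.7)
The plan is first to reduce to pairs of ``small'' classes---those whose elements have large centralizer---and then to classify such pairs by explicit case analysis, confirming both that $AB$ is never a single class and that the only way for $AB$ to be a finite union of classes is one of the listed exceptions.

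\emph{Setup and dimension formula.} I would begin by analyzing the multiplication map $\mu : A \times B \to \bG$ with image $AB$. Since $AB$ is $\bG$-invariant and constructible, and each conjugacy class is locally closed, $AB$ being a finite union of classes forces $\dim(AB) \leq \dim \bG - 1$. A fiber analysis of $\mu$ gives $\dim(AB) = \dim A + \dim B - \dim(A^{-1} \cap c B^{-1})$ for generic $c \in AB$. The first goal is to establish the general fact that if neither $A$ nor $B$ consists of elements with large centralizer, then $AB$ is dense in $\bG$ and hence contains infinitely many classes. This uses dimension estimates on the above intersections together with Chevalley-type bounds on centralizer dimensions.

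\emph{Reduction to small classes.} Assuming without loss that $A$ is of minimum dimension among the classes giving a finite-union product, I would enumerate the candidates for $A$: in classical types, these are (symplectic/orthogonal) transvections and the related near-reflections; in exceptional types, long or short root unipotent classes, the order-$3$ class with centralizer $\SL_3$ in $G_2$ (when $p \ne 3$), and the involution class with centralizer $B_4$ in $F_4$ (when $p \ne 2$). The list is compiled from the Bala--Carter classification of unipotent classes and the standard sub-root-system description of semisimple centralizers, combined with the dimension bound from the previous step.

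\emph{Case analysis.} For each candidate pair $(A,B)$, the $\bG$-classes appearing in $AB$ correspond to the $C_{\bG}(a)$-orbits on $B$ (equivalently, the $C_{\bG}(b)$-orbits on $A$). When $a$ is a transvection-type element and $b$ is an involution (cases (2)--(4)), the relative position of the $\pm 1$-eigenspaces of $b$ with respect to the geometric data of $a$ yields a finite list of orbits, which I would enumerate directly; one then computes the resulting conjugacy classes in $AB$ and verifies that there are at least two of them (so $AB$ is never a single class) but still only finitely many. In case (1), the embedding $\SL_3 = C_{G_2}(b) \subset G_2$ and the orbits of this subgroup on the long root class are analyzed using the explicit structure of $G_2$ and the characteristic-dependent features of its root system.

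\emph{Main obstacle.} The principal difficulty is the uniform lower bound $\dim(AB) = \dim \bG$ in the non-exceptional cases, which requires bounding $\dim(A^{-1} \cap c B^{-1})$ from above when one of the classes is small but not one of the listed exceptions. In bad characteristic the centralizer structure can degenerate and closures of unipotent classes become non-standard, so here the argument must invoke explicit representatives and the known closure ordering; this makes the analysis in $G_2$ (case (1) with $p=3$) and $F_4$ (case (2) with $p=2$) the most delicate part of the proof.
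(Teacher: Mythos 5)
Your proposal replaces the paper's closure-theoretic argument with a dimension-counting strategy, and the pivot you rely on does not hold. You want to show that if neither $A$ nor $B$ is ``small'' (i.e.\ has large centralizer) then $AB$ is dense, and then carry out a finite case analysis of small classes. But density simply fails for most pairs: take $\bG = E_8$ and $A = B$ the class of long root elements. Each has dimension $58$ while $\dim\bG = 248$, so $\dim(AB) \le 116 < \dim\bG$; yet the theorem asserts $AB$ is an infinite union of classes. The same phenomenon persists for arbitrarily large classes in high rank, so ``large classes $\Rightarrow$ $AB$ dense'' is false, and the intended reduction to a finite list of small $A$ never gets off the ground. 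Moreover, even for a small $A$ the partner $B$ ranges over infinitely many classes (all semisimple classes, in particular), so the ``case analysis'' stage is not a finite check. The $C_{\bG}(a)$-orbit count you propose is what the paper does, but only for the handful of genuinely exceptional pairs, after the reduction has been made.

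What the paper does instead, and what your outline is missing, is the observation that one need not show $AB$ is dense or even big: it suffices to show the closure $\overline{AB}$ meets infinitely many semisimple classes. Since $A,B$ are irreducible, $\overline{AB}$ is irreducible, and Lemma~\ref{lem:irred} gives a sharp dichotomy: $\overline{AB}$ contains either exactly one or infinitely many semisimple classes, detected by whether $\overline{AB}\cap\bT$ is finite. This converts the problem into one about closures, which is then handled by two tools your proposal never invokes: (a) the closure of a nontrivial unipotent class always contains root elements, and usually long root elements (Lemmas~\ref{lem:closures1}, \ref{lem:closures2}); and (b) a reduction lemma (Lemma~\ref{lem:fusion}) that lets one pass to a rank-one reductive subgroup $\langle \bT, \bU_{\pm\alpha}\rangle$ and finish by the $\SL_2$ computation in Lemma~\ref{lem:d=2}. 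The exceptions in the theorem are exactly the configurations where these closure reductions break (e.g.\ a unipotent class whose closure avoids long root elements, or an involution centralizing the subgroup generated by all long root subgroups). Your identification of the bad-characteristic $G_2$ and $F_4$ cases as the most delicate is reasonable, but the missing ingredient is systemic, not a matter of delicacy: without the closure dichotomy and the rank-one reduction, neither the general case nor the exceptional list is reachable by the route you sketch.
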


The methods rely heavily on closure of unipotent classes. In particular, this
gives a short proof for simple algebraic groups of what is referred to as
Szep's conjecture for the finite simple groups (proved in \cite{FA}) ---
a finite simple group is not the product of two subgroups with nontrivial
centers.

\begin{cor}   \label{alg:factorization}
 Let $\bG$ be a simple algebraic group over an algebraically closed field
 of characteristic $p \ge 0$. Let $a, b$ be non-central elements of $\bG$.
 Then $\bG \ne C_{\bG}(a)C_{\bG}(b)$.
\end{cor}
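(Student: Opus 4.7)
The plan is to deduce the corollary directly from Theorem~\ref{main:alg}. I would argue by contradiction: suppose $\bG = C_\bG(a) C_\bG(b)$ for some non-central elements $a, b$, set $A := a^\bG$ and $B := b^\bG$, and aim to show that $AB$ is a single conjugacy class of $\bG$, contradicting the first assertion of Theorem~\ref{main:alg}.

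The key observation is that such a factorization forces $C_\bG(a)$ to act transitively on $B$ by conjugation. Indeed, given any $g \in \bG$, one factors $g = c_1 c_2$ with $c_1 \in C_\bG(a)$ and $c_2 \in C_\bG(b)$, and then
\[
 g b g^{-1} \;=\; c_1 (c_2 b c_2^{-1}) c_1^{-1} \;=\; c_1 b c_1^{-1},
\]
so $B = b^{C_\bG(a)}$.

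From here I would conclude as follows. Since $a$ commutes with every element of $C_\bG(a)$, the identity above gives $aB = a \cdot b^{C_\bG(a)} = (ab)^{C_\bG(a)} \subseteq (ab)^\bG$. For any other $a' = h a h^{-1} \in A$, the $\bG$-invariance of $B$ yields $a' B = h(aB)h^{-1} \subseteq (ab)^\bG$, and so $AB \subseteq (ab)^\bG$; the reverse inclusion $(ab)^\bG \subseteq AB$ is immediate from $h(ab)h^{-1} = (hah^{-1})(hbh^{-1})$. Hence $AB = (ab)^\bG$ is a single conjugacy class, contradicting Theorem~\ref{main:alg}. There is no genuine obstacle here --- all the substantive work has been done in that theorem, and the corollary is just a formal consequence.
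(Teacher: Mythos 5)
Your proof is correct and follows the same route the paper takes: the paper also deduces this corollary from Theorem~\ref{main:alg} by the elementary observation (spelled out in Section~\ref{sec:szep}, just before the proof of Szep's conjecture) that a factorization $\bG=C_\bG(a)C_\bG(b)$ forces $\bG$ to act transitively on $a^\bG\times b^\bG$, whence $a^\bG b^\bG$ would be a single conjugacy class. Your unwinding of the transitivity via the identity $gbg^{-1}=c_1bc_1^{-1}$ is exactly this argument made explicit, so there is nothing to add.
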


Indeed, we see that $C_{\bG}(a)C_{\bG}(b)$ is rarely dense in 
$\bG$ (it only happens in
the exceptional cases in Theorem~\ref{main:alg}) --- see
Corollary~\ref{cor:factorize}. In particular, we give a very short proof of:

\begin{cor}   \label{cor:prasad}
 If $\bG$ is a simple algebraic group and $x$ is a non-central element of $G$,
 then for any $g \in G$,  $C_{\bG}(x)gC_{\bG}(x)$ is not dense in $G$.
 In particular,  $|C_{\bG}(x) \backslash \bG / C_{\bG}(x)|$ is infinite.
\end{cor}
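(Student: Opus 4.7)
The plan is to reduce the non-density of the double coset $C_{\bG}(x) g C_{\bG}(x)$ to the classification of dense products of centralizers given by Corollary~\ref{cor:factorize}, and then to obtain the infinitude of double cosets as a formal consequence.

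Set $C := C_{\bG}(x)$. The identity
\[
C g C \;=\; \bigl(C \cdot g C g^{-1}\bigr) \cdot g \;=\; \bigl(C_{\bG}(x)\cdot C_{\bG}(gxg^{-1})\bigr)\cdot g,
\]
combined with $g C_{\bG}(x) g^{-1} = C_{\bG}(gxg^{-1})$ and the fact that right translation by $g$ is a variety automorphism of $\bG$, shows that $CgC$ is dense in $\bG$ if and only if $C_{\bG}(x)\cdot C_{\bG}(gxg^{-1})$ is dense. Note that $gxg^{-1}$ is non-central because $Z(\bG)$ is normal.

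Now I would invoke Corollary~\ref{cor:factorize}, the density refinement of Corollary~\ref{alg:factorization}: a product $C_{\bG}(a) C_{\bG}(b)$ of centralizers of non-central elements is dense in $\bG$ only if the pair $(a,b)$ realizes one of the exceptional configurations (1)--(4) of Theorem~\ref{main:alg}. In every such exception the two conjugacy classes $A$ and $B$ are distinct (long root elements paired with an honestly different class of involutions, short-root or order-$3$ elements, or small unipotents; negative reflections paired with short unipotents in type $B$). Since $x$ and $gxg^{-1}$ lie in one and the same conjugacy class, the ``diagonal'' pair $(x, gxg^{-1})$ is never exceptional, so $C_{\bG}(x)\cdot C_{\bG}(gxg^{-1})$ is not dense in $\bG$, and hence neither is $CgC$.

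Finally, the ``in particular'' clause is automatic: were $|C\backslash\bG/C|$ finite, $\bG$ would equal $\bigcup_{i=1}^{n}\overline{C g_i C}$, a finite union of closed subsets, and irreducibility of $\bG$ would force some $\overline{C g_i C} = \bG$ --- producing a dense double coset in contradiction with what was just proved. The substantive obstacle lies entirely in Corollary~\ref{cor:factorize}; the present argument is purely formal, consisting of the translation between double cosets and products of conjugate centralizers plus the elementary observation that the diagonal case never appears in the exceptional list.
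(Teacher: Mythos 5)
Your argument is correct, and the core reduction is the same one the paper uses: rewrite the double coset as a product of centralizers of two conjugate non-central elements via $CgC = \bigl(C_{\bG}(x)\,C_{\bG}(x^g)\bigr)g$, observe that this product cannot be dense, and finish with irreducibility of $\bG$ to get infinitely many double cosets. Where you diverge from the paper is the ingredient you invoke for non-density. You appeal to Corollary~\ref{cor:factorize}, which is a consequence of the full classification in Theorem~\ref{main:alg}. The paper instead derives Corollary~\ref{cor:prasad} from the standalone Corollary~\ref{unipotent} (``$C_\bG(a)C_\bG(a^g)$ is not dense''), which is proved \emph{before} and \emph{independently of} Theorem~\ref{main:alg} by a short direct argument: reduce to the case where $a$ is semisimple or unipotent, then pass to a rank-one subgroup via closures of unipotent classes (Lemma~\ref{lem:closures1}) or a noncentralized root subgroup, and use the $\SL_2$ computation (Lemma~\ref{lem:d=2}) together with Lemmas~\ref{lem:dense centralizer} and~\ref{lem:fusion}. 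That route is what the paper means by a ``very short proof''. Your version is valid but considerably heavier, since it routes through the complete classification of dense centralizer products. Also, your case-by-case check that the exceptional configurations of Theorem~\ref{main:alg} are never ``diagonal'' is redundant: Corollary~\ref{cor:factorize}(i) already states precisely that conjugate $a,b$ never give a dense product, so you could have cited that item directly. The formal irreducibility argument for the ``in particular'' clause is correct.
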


This  was proved independently for unipotent
elements by Liebeck and Seitz \cite[Chapter~1]{LSbook}.  The previous result
was used by Prasad \cite[Thm.~B]{prasad} to show that any
Tits system for a pseudo-reductive group satisfying some natural conditions
is a standard Tits system (see \cite{prasad} for more details).

Conjecture A  is open only for the simple groups of Lie type,
where it was known to be true for certain families (cf.~\cite{MoTo}),
but not for any family of arbitrary rank and field size.
Our idea is to show that we can find a small set of irreducible
characters $\Gamma$ of $S$ so that for any pair of nontrivial classes
$A,B\subset S$ there is $\chi\in\Gamma$ which is not constant on $AB$ (and
so obviously $AB$ is not a conjugacy class).

For $\fA_n$ and $\fS_n$, the conjecture was proved by Fisman and Arad
\cite[3.1]{FA}; see also Adan-Bante and Verrill \cite{AV}. In
Section~\ref{sec:sym} we give a very short proof of the slightly stronger
result:

\begin{thm}   \label{thm:alt}
 Let $H :=\fA_n$ and $G :=\fS_n$ with $n \geq 5$. For nontrivial elements
 $a,b \in G$, set $A :=a^H$ and $B :=b^H$. For $g \in G$,
 let $f(g)$ denote the number of fixed points of $g$ in the natural
 permutation representation of $G$. Then $f$ is not constant on $AB$.
\end{thm}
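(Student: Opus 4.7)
Since the permutation character on $n$ points factors as $f=1_G+\chi$ with $\chi=\chi^{(n-1,1)}$ the standard irreducible of $G=\fS_n$ of degree $n-1$, it suffices to show that $\chi$ is not constant on $AB$. For $n\geq 5$ the restriction $\chi|_H$ is still irreducible, a useful background fact but not needed explicitly in what follows.

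The heart of the proof is an averaging identity. Fix $a\in A$ and consider
\[
\bar f:=\frac{1}{|H|}\sum_{h\in H}f(ab^h).
\]
Expand $f(ab^h)=\sum_{i=1}^n\mathbf{1}_{\{ab^h(i)=i\}}$ and note that the inner condition is equivalent to $b(h(i))=h(a^{-1}(i))$. Splitting the sum according to whether $i$ is a fixed point of $a$ and using the $2$-transitivity of $H=\fA_n$ on $\{1,\dots,n\}$ (valid for $n\geq 4$) yields, by a direct count,
\[
\bar f=\frac{f(a)f(b)}{n}+\frac{(n-f(a))(n-f(b))}{n(n-1)}=1+\frac{(f(a)-1)(f(b)-1)}{n-1}.
\]
If $f$ were constant on $AB$, it would equal $\bar f$ on the subset $\{ab^h:h\in H\}\subseteq AB$; since $f$ takes integer values, this forces $(n-1)\mid(f(a)-1)(f(b)-1)$.

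Because $|f(a)-1|,|f(b)-1|\leq n-3<n-1$, the divisibility leaves only a handful of possibilities: either $f(a)=1$ or $f(b)=1$ (so $\bar f=1$), or $(f(a)-1)(f(b)-1)$ is a positive multiple of $n-1$ in $[1,(n-3)^2]$. In each residual configuration we exhibit an explicit $h\in H$ with $f(ab^h)\neq\bar f$ by aligning prescribed cycles of $a$ and $b$; the $(n-2)$-transitivity of $\fA_n$ on $\{1,\dots,n\}$ for $n\geq 5$ gives the needed freedom. For instance, when $f(b)=1$ with fixed point $i_0$, choose $h\in H$ sending a fixed point of $a$ to $i_0$ and sending a non-fixed pair $(j,a^{-1}(j))$ of $a$ to a $b$-edge disjoint from $\{i_0\}$; then $ab^h$ fixes at least two points, contradicting $\bar f=1$. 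The main obstacle in writing out the proof is this final alignment step: verifying, for each residual pair $(f(a),f(b))$, that the construction can be realised inside $H=\fA_n$ rather than only in $G=\fS_n$.
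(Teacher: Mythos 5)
Your averaging identity is correct and, once rearranged to $(f(a)-1)(f(b)-1)=(n-1)(f(c)-1)$ for $c=ab$, is exactly the consequence that the paper extracts from its Lemma~\ref{lem:basic} applied to the standard character $\chi=f-1$. Your derivation is genuinely different in mechanism: you count pairs $(i,h)$ with $ab^h(i)=i$ directly, using transitivity and $2$-transitivity of $\fA_n$, whereas the paper obtains the identity from Schur's lemma applied to class sums (which requires knowing that $\chi|_{\fA_n}$ is irreducible, but is then a one-line consequence). The combinatorial derivation is a pleasant alternative and morally equivalent, since $2$-transitivity is what makes $\chi|_{\fA_n}$ irreducible in the first place.

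The gap is in the part you flag yourself. After the divisibility observation $(n-1)\mid(f(a)-1)(f(b)-1)$, the set of residual pairs $(f(a),f(b))$ is not a ``handful'': the product ranges over all multiples of $n-1$ in $[0,(n-3)^2]$, of which there are roughly $n-4$, and each can be realised by several values of $(f(a),f(b))$. So the number of configurations to check grows with $n$, and the ``alignment'' argument would need to be given uniformly in $n$, not case-by-case. Your sketch handles only $f(b)=1$ and even there implicitly assumes $f(a)\ge 1$ (you ``send a fixed point of $a$ to $i_0$''), so it does not cover $f(a)=f(b)-1=0$. You are also right to worry about realising the alignment by an element of $\fA_n$ rather than $\fS_n$, but this is the lesser obstacle; $(n-2)$-transitivity, or a parity fix using a transposition inside a long cycle, disposes of it. The real issue is that without some additional reduction the case list is open-ended.

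The paper closes this by induction on $n$: if both $a$ and $b$ have a fixed point, one aligns a common fixed point and reduces to $\fS_{n-1}$; otherwise $f(a)=0$ forces $\chi(b)=\chi(c)=0$ (i.e.\ $f(b)=f(c)=1$), and then a short analysis of cycle structures produces a conjugate $b^h$ with $ab^h$ having either $0$ or $\ge 2$ fixed points. Adding such an inductive reduction to your argument would collapse the unbounded case list to the single base configuration $f(a)=0$, $f(b)=1$, which your alignment idea does handle once corrected. As written, though, the proposal is not a complete proof.
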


Similarly, we show:

\begin{thm}   \label{thm:main1}
 Let $S = \PSL_d(q)= \PSL(V)$ be simple. Let $f(g)$ be the number of fixed
 one-spaces of $g\in S$ on $V$. If $A$ and $B$ are nontrivial conjugacy
 classes of $S$, then $f$ is not constant on $AB$ (in particular, $AB$ is
 not a conjugacy class).
\end{thm}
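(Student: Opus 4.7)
The class function $f$ is the permutation character of $S$ on $\Omega := \mathbb{P}(V)$, and since this action is doubly transitive we have $f = 1_S + \chi$ with $\chi\in\Irr(S)$. So showing that $f$ is non-constant on $AB$ is the same as showing the single irreducible character $\chi$ is non-constant there, fitting the general strategy described in the introduction: here the distinguishing set is $\Gamma=\{\chi\}$.

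I would fix $a\in A$ and study $f(ab')$ as $b'$ ranges over $B$. Applying the class-algebra identity $\sum_{a'\in A,\,b'\in B}\chi(a'b') = |A||B|\chi(a)\chi(b)/\chi(1)$ to $\chi = f-1$, together with the observation that $\sum_{b'\in B}\chi(a'b')$ is independent of $a'\in A$ (by conjugation invariance of $B$), yields the first-moment identity
\[
\frac{1}{|B|}\sum_{b'\in B} f(ab') \;=\; 1 + \frac{(f(a)-1)(f(b)-1)}{|\Omega|-1}.
\]
The same formula can be read as a direct double count: $\sum_{x\in S}f(ab^x) = \sum_{\ell\in\Omega}\big|\{x\in S : b^x\ell = a^{-1}\ell\}\big|$, with the inner count determined by $2$-transitivity according to whether $a\ell=\ell$.

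If $f$ were constant on $AB$ with value $k$, the formula would pin down $k = 1 + (f(a)-1)(f(b)-1)/(|\Omega|-1)$. Since $f$ is integer-valued, this already gives the integrality obstruction $(|\Omega|-1)\mid(f(a)-1)(f(b)-1)$, which rules out most pairs of classes outright. For the residual cases, in particular when $f(a)=1$ or $f(b)=1$ (forcing $k=1$), I would exhibit two elements of $AB$ with distinct $f$-values. The non-centrality of $a$ gives a 1-space $\ell_0$ with $a^{-1}\ell_0\ne\ell_0$, and the non-centrality of $b$ gives some 1-space moved by $b$; by $2$-transitivity of $S$ on $\Omega$ there is a conjugate $b_1\in B$ mapping $\ell_0$ to $a^{-1}\ell_0$, so that $ab_1$ fixes $\ell_0$ and $f(ab_1)\ge 1$.

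The main obstacle is promoting the qualitative conclusion $f(ab_1)\ge 1$ to a quantitative statement that forces $f$ to take at least two distinct values on $AB$, rather than merely to fix different 1-spaces. My plan is to complement the first moment with a parallel second-moment computation: the transitive action of $S$ on ordered pairs of distinct 1-spaces yields, by exactly the same double count, a closed-form expression for $\sum_{b'\in B} f(ab')^2$ in terms of refined invariants of $a$ and $b$, and comparing with $|B|k^2$ shows that the variance of $f(ab')$ is strictly positive in all but a small finite list of low-rank or small-$q$ situations, which can then be handled by hand.
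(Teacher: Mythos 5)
Your opening moves coincide with the paper's: you take the same irreducible constituent $\chi = f - 1$ of the doubly transitive permutation character, and your first-moment identity
\[
\frac{1}{|B|}\sum_{b'\in B} f(ab') = 1 + \frac{(f(a)-1)(f(b)-1)}{|\Omega|-1}
\]
is exactly the identity $\chi(a)\chi(b)=\chi(1)\chi(c)$ of the paper's Lemma~\ref{lem:basic}, just averaged. The integrality observation is a genuine (if partial) reduction. But the proposal is not a proof: it explicitly defers the hard step --- showing that $f$ takes two distinct values on $AB$ once $f(a)$ or $f(b)$ is small --- to an unexecuted second-moment computation, and this is precisely where the real work lies.

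There are two concrete problems with the plan as stated. First, showing only that some $b_1\in B$ gives $f(ab_1)\ge 1$ is not a contradiction when the putative constant is $k=1$; you need $f(ab')\ne 1$ for some $b'$, and $2$-transitivity alone does not produce this. Second, the claimed ``closed-form expression'' for $\sum_{b'} f(ab')^2$ is not as clean as suggested: $f^2$ is the permutation character on $\Omega\times\Omega$, which for $d\ge 3$ decomposes into several nontrivial irreducibles (the action of $\PSL_d(q)$ on ordered pairs of distinct lines is not doubly transitive), so the second moment is $\sum_\psi m_\psi\,\psi(a)\psi(b)/\psi(1)$ over several $\psi$, not a simple expression in ``refined invariants of $a$ and $b$.'' Establishing positivity of the variance from such a sum is a substantial undertaking, and the assertion that it works ``in all but a small finite list'' of cases is unsubstantiated. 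The paper avoids this entirely: its Lemma~\ref{lem:psl} shows by direct construction (exhibiting conjugates $u\in A$, $v\in B$ with $uv$ fixing extra lines, via Jordan-block and cyclic-submodule arguments) that constancy of $f$ on $AB$ forces $f(a),f(b),f(c)\ge 2$, and Theorem~\ref{thm:psl} then closes the argument with a short estimate comparing $q$-powers of maximal eigenspace dimensions. That constructive step is the content you would have to supply; the second-moment route, even if salvageable, would likely be longer and less transparent.
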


As noted above 
this is the first family of groups of Lie type including both unbounded rank
and field size for which the conjecture is now established.

For arbitrary groups of Lie type using Lemma \ref{lem:basic}, the fact that
the Steinberg character is nonzero on semisimple elements only and the result
that the product of centralizers of two non-central semisimple elements in a
simple algebraic group is not dense (cf. Corollary \ref{cor:factorize}),
we can show:

\begin{thm}  \label{thm:main2}
 Let $G$ be a finite simple group of Lie type, and let $\St$ denote the
 Steinberg character of $G$. If $a, b \in G\setminus\{1\}$ are semisimple
 elements, then $\St$ is not constant on~$a^Gb^G$.
\end{thm}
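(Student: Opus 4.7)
The plan is to argue by contradiction. Suppose $\St$ is constant on $AB := a^G b^G$ with common value $c$, and combine the three listed ingredients to reach a contradiction.

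First, I apply Lemma~\ref{lem:basic} with $\chi = \St$. The content of that lemma is the standard identity obtained by evaluating the central character $\omega_\chi$ on the product of the class sums $\widehat{a^G}\cdot\widehat{b^G}$: if $\chi$ is constant with value $c$ on $AB$, then $c\,\chi(1) = \chi(a)\chi(b)$. Specializing to $\chi = \St$ gives
\[
  c \cdot \St(1) \;=\; \St(a)\,\St(b).
\]
Because $a$ and $b$ are non-identity semisimple elements, and $\St$ vanishes precisely on the non-semisimple elements of $G$ (while $|\St(s)| = |C_G(s)|_{p'}$ for semisimple $s$), both $\St(a)$ and $\St(b)$ are nonzero. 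Hence $c \neq 0$. The equation $\St(g) = c \neq 0$ for every $g \in AB$, combined with the vanishing of $\St$ off the semisimple locus, therefore forces every element of $AB$ to be semisimple.

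Second, I rule out the possibility that $a^G b^G$ consists entirely of semisimple elements. Pass to the ambient simple algebraic group $\bG$, inside which $a^G$ and $b^G$ are Zariski dense (via Lang--Steinberg) in the algebraic conjugacy classes $a^{\bG}$ and $b^{\bG}$. A dimension analysis of the multiplication morphism $a^{\bG} \times b^{\bG} \to \bG$ and of its generic fibres, together with the fact that the non-semisimple locus in $\bG$ is a nonempty constructible subset of positive codimension, translates the hypothesis that $a^G b^G$ contains only semisimple elements into the statement that $C_{\bG}(a) \cdot C_{\bG}(b)$ is Zariski dense in $\bG$. This directly contradicts Corollary~\ref{cor:factorize}, since $a$ and $b$ are both non-central semisimple elements.

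The main obstacle is this last geometric step: clearly articulating why containment of the variety $a^{\bG} b^{\bG}$ in the semisimple locus forces the density of the centralizer product $C_{\bG}(a) C_{\bG}(b)$. The character-theoretic portion (Lemma~\ref{lem:basic} together with the vanishing of the Steinberg character on non-semisimple elements) is routine; the substantive content lies in this algebro-geometric reduction to Corollary~\ref{cor:factorize}.
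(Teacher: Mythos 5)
Your character-theoretic opening is on the right track but stops short of extracting the information that actually drives the proof, and the geometric step you outline is not the route the paper takes and, as stated, does not hold up.

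The issue with the geometric reduction: you claim that the hypothesis ``every element of $a^G b^G$ is semisimple'' can be translated, via a dimension count on the multiplication morphism and the positive codimension of the non-semisimple locus, into density of $C_\bG(a)C_\bG(b)$. But ``$a^G b^G$ lands in the semisimple locus'' is a statement about a finite set of rational points, and density of $a^G \times b^G$ in $a^\bG \times b^\bG$ does not force $a^\bG b^\bG$ into the (non-closed) semisimple locus; moreover, even in the algebraic group the condition ``$a^\bG b^\bG$ consists of semisimple elements'' places no upper bound on the number of semisimple classes hit, which is what density of $C_\bG(a)C_\bG(b)$ would require (cf.\ Lemma~\ref{lem:dense centralizer} and Corollary~\ref{cor: borel vs. finite}). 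So there is no honest ``fiber dimension'' argument here; nothing in your hypotheses bounds $\dim\bigl(C_\bG(a)\cap C_\bG(b)\bigr)$ from above.

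What is missing is the quantitative content of the Steinberg value formula. For semisimple $g$ one has $|\St(g)| = q^{m(g)}$, where $m(g)$ is the dimension of a maximal unipotent subgroup of $C_\bG(g)$, and $\St(1) = q^N$ with $N$ the number of positive roots. Comparing $q$-powers in $\St(a)\St(b) = \St(c)\St(1)$ (with $c = ab$) gives the exact identity $m(a) + m(b) = m(c) + N$. Since for a semisimple element $s$ the reductive group $C_\bG(s)$ contains a maximal torus and so has dimension $2m(s) + r$ (with $r = \rk\bG$), this numerical identity becomes
\[
\dim C_\bG(a) + \dim C_\bG(b) \;=\; \dim\bG + \dim C_\bG(c).
\]
Combining this with $C_\bG(a)\cap C_\bG(b)\subseteq C_\bG(c)$ (as $c=ab$) and the fiber dimension of the multiplication map $C_\bG(a)\times C_\bG(b)\to\bG$, one gets $\dim C_\bG(a)C_\bG(b) \ge \dim\bG$, hence density, contradicting Corollary~\ref{cor:factorize}. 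In short: the ``routine'' character-theoretic step you wave away is precisely what furnishes the dimension identity the geometric step needs; deducing $c\ne 0$ alone is not enough.
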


This implies immediately:

\begin{cor}  \label{cor:ss}
 Let $G$ be a finite simple group of Lie type and $a,b,c\in G\setminus\{1\}$
 such that $a^Gb^G=c^G$. Then neither $c$ is semisimple, nor are both $a,b$.
\end{cor}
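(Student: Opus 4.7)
The plan is to deduce both halves of the corollary simultaneously from Theorem~\ref{thm:main2}, together with the elementary character-theoretic consequence of the hypothesis $a^G b^G = c^G$: passing to central characters in the group algebra one has, for every $\chi \in \Irr(G)$,
\[
\chi(a)\chi(b) = \chi(1)\chi(c),
\]
which is exactly the ``basic lemma'' (Lemma~\ref{lem:basic}) mentioned in the paragraph preceding Theorem~\ref{thm:main2}.

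First I would dispose of the assertion that $a$ and $b$ cannot both be semisimple. Suppose for contradiction that both are; since both are nontrivial, Theorem~\ref{thm:main2} produces two elements of $a^G b^G$ on which $\St$ takes distinct values. But $a^G b^G = c^G$ is a single conjugacy class, and every class function --- in particular $\St$ --- is constant on a single class. This contradiction handles the second assertion.

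For the assertion that $c$ is not semisimple, I would apply the character identity above to $\chi = \St$, obtaining $\St(a)\St(b) = \St(1)\St(c)$. The two defining properties of the Steinberg character are that $\St$ vanishes on every non-semisimple element, while $\St(g) \ne 0$ on every semisimple $g$, together with $\St(1) = |G|_p \ne 0$. If $c$ were semisimple, the right-hand side would be nonzero, forcing $\St(a)\St(b) \ne 0$; thus both $a$ and $b$ would be semisimple --- the case just excluded.

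There is really no obstacle here beyond invoking Theorem~\ref{thm:main2}. The proof is a two-line bookkeeping argument that relies only on the standard vanishing pattern of $\St$ and the fact that $a^G b^G = c^G$ forces the pointwise identity $\chi(a)\chi(b) = \chi(1)\chi(c)$. All the real work has been done in Theorem~\ref{thm:main2}, and the hard part of that theorem (the non-density of products of centralizers of semisimple elements, via Corollary~\ref{cor:factorize}) is already in hand.
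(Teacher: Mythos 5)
Your proof is correct and is exactly the argument the authors intend when they say the corollary follows ``immediately'' from Theorem~\ref{thm:main2}: the second assertion follows directly since $\St$ must be constant on a single class $c^G = a^G b^G$, and for the first you correctly combine the identity $\St(a)\St(b)=\St(1)\St(c)$ (Lemma~\ref{lem:basic} with $H=G$) with the vanishing pattern of the Steinberg character to reduce to the case just excluded. No gap.
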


Using Deligne--Lusztig theory, we can similarly show:

\begin{thm} \label{thm:main3}
 Let $G$ be a finite simple group of Lie type and $a,b,c\in G\setminus\{1\}$
 such that $a^Gb^G=c^G$. Then $c$ is not unipotent and so neither
 are both $a$ and $b$.
\end{thm}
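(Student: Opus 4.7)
Suppose, for a contradiction, that $a^Gb^G=c^G$ with $c$ a nontrivial unipotent element of $G$. By Lemma~\ref{lem:basic}, for every $\chi\in\Irr(G)$ we must have the character identity
\[\chi(a)\,\chi(b)=\chi(1)\,\chi(c).\]
Specializing to the Steinberg character $\St$ and using that $\St$ vanishes on every nontrivial unipotent element, we get $\St(a)\St(b)=0$; since $\St$ is nonzero on semisimple elements, at least one of $a,b$ is non-semisimple, and we may assume this is~$a$.

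The idea is now to produce an irreducible Deligne--Lusztig character $\chi=\pm R_\bT^\theta$ of $G$ violating the identity above. Assume first that the semisimple part $a_s$ of $a$ is nontrivial. I would choose an $F$-stable maximal torus $\bT\le\bG$ such that $T:=\bT^F$ contains no $G$-conjugate of $a_s$; this is possible because the $\bG$-conjugacy class of $a_s$ is a positive-dimensional closed subvariety of $\bG$, and the $F$-fixed points of a suitable $F$-stable maximal torus avoid it. Choose $\theta\in\Irr(T)$ in general position, so that $\chi:=\pm R_\bT^\theta$ is irreducible of degree $|G|_{p'}/|T|$. The Deligne--Lusztig character formula then gives $\chi(a)=0$ (the indexing set $\{x\in G:x^{-1}a_s x\in T\}$ is empty), while $\chi(c)=\pm Q_\bT^G(c)$ is a Green function value. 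Substituting into the character identity forces $Q_\bT^G(c)=0$ for every such $\bT$. Varying $\bT$ over tori with $a_s\notin T^G$ and invoking the standard non-vanishing results for Green functions on nontrivial unipotent classes, one produces a torus $\bT$ with both $a_s\notin T^G$ and $Q_\bT^G(c)\ne 0$, the desired contradiction. If $a$ is unipotent but $b$ is not, exchange the roles of $a$ and $b$ and rerun the argument.

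The truly delicate case, which I expect to be the main obstacle, is when both $a$ and $b$ are unipotent. Then every irreducible DL character automatically takes Green-function values on $a,b$ and $c$, and the identity reduces to a system of constraints $Q_\bT^G(a)\,Q_\bT^G(b) = \pm |G|_{p'}\,Q_\bT^G(c)/|T|$ on Green functions for all $F$-stable maximal tori $\bT$, which is not obviously contradicted. To handle this I would appeal to Theorem~\ref{main:alg}: the product $a^{\bG}b^{\bG}$ of two nontrivial unipotent $\bG$-classes consists of infinitely many $\bG$-classes in $\bG$, outside a short list of exceptional long/short-root pairs in types $G_2$, $F_4$, and $\Sp_{2n}$ in small characteristic. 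Outside these exceptional configurations a descent argument from the algebraic product to the $F$-rational level, combined with the fact that Green functions separate unipotent classes, rules out $a^Gb^G$ being a single $G$-class; the remaining exceptional configurations would be dispatched by exhibiting an explicit irreducible character of $G$ that distinguishes $c^G$ from $a^Gb^G$.
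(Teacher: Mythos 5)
Your overall strategy — use the character identity $\chi(a)\chi(b)=\chi(1)\chi(c)$ from Lemma~\ref{lem:basic} applied to Deligne--Lusztig characters, exploit the vanishing criterion of Proposition~\ref{prop:charval}, and reduce to the case where both $a$ and $b$ are unipotent — is the same as the paper's, and the opening move with the Steinberg character is a nice shortcut. But two of the steps you leave as black boxes are in fact where the real work lies, and one of them is a genuine gap.

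First, the ``standard non-vanishing results for Green functions'' you invoke do not exist in the form you need. The paper avoids any such appeal by a simple but essential observation: for $c$ a nontrivial $p$-element and $\bT$ any $F$-stable maximal torus, $Q_\bT^\bG(c)=R_{\bT,\theta}^\bG(c)\equiv R_{\bT,\theta}^\bG(1)=\pm|G:T|_{p'}\equiv\pm1\pmod p$, because the two sides are values of a generalized character of the cyclic $p$-group $\langle c\rangle$ and $|G:T|_{p'}$ is a product of cyclotomic polynomial values. Hence $\chi(c)\ne0$ for \emph{every} such $\bT$. This reverses the logic: instead of hunting for a torus avoiding $a_s$ with nonzero Green function on $c$, one concludes that $\chi(a)\ne0$ for every suitable $\bT$, which by Proposition~\ref{prop:charval} forces $a_s$ to lie in $T$ up to conjugacy for \emph{all} such $T$. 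The paper then exhibits two specific tori $T_1,T_2$ (built around Zsigmondy primes, so that general-position $\theta$ trivial on $Z(G)$ exists) whose pairwise conjugate-intersections lie in $Z(G)$; this forces $a_s$, and similarly $b_s$, to be central. Your approach of looking for a single avoiding torus could perhaps be made to work, but it requires a non-vanishing statement for Green functions that you would still have to supply.

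Second, and more seriously, the ``descent argument from the algebraic product to the $F$-rational level'' in the both-unipotent case does not work as stated. Theorem~\ref{main:alg} tells you that $a^\bG b^\bG$ meets infinitely many semisimple $\bG$-classes outside a short list of exceptions, but this does not automatically give you a non-unipotent element of $a^Gb^G$ over the finite field: the non-unipotent locus in $\overline{a^\bG b^\bG}$ could a priori miss the rational points one cares about, and Green functions separating classes does not bridge this. The paper instead proves Theorem~\ref{bs1} directly over the finite field by parabolic reduction (using the classical-group computations of Theorem~\ref{thm:unipotent prods} and Corollary~\ref{cor:unipotent prods}, Lawther's results for exceptional groups, and Lemma~\ref{f4 lemma} for $F_4$ in characteristic $2$), and then verifies by hand that in the surviving exceptional configurations $u^Gw^G$ still comprises at least two unipotent classes. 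That body of work — the content of Section~\ref{sec:classic} and much of Section~\ref{sec:exc} — is exactly what your sketch defers to a one-line appeal, and it is the bulk of the proof.
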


Recall that the Baer--Suzuki theorem states that if $G$ is a finite group,
$p$ a prime and $x\in G$ is such that $\langle x, x^g \rangle$ is a $p$-group
for all $g \in G$, then the normal closure of $x$ in $G$ is a $p$-group.
One step in the proof of Theorem \ref{thm:main3} is an analog of
Theorem~\ref{main:alg} for pairs of $p$-elements of finite groups.
This leads to a generalization of the Baer--Suzuki theorem (for primes at least
$5$) by considering two possibly distinct conjugacy classes of $p$-elements.

\begin{thm}   \label{thm:bsthm}
 Let $G$ be a finite group, $p \ge 5$ prime, and let $C$ and $D$ be normal
 subsets of $G$ with $H:=\langle C \rangle = \langle D \rangle$. Suppose that
 for every pair $(c,d) \in C \times D$, $\langle c, d \rangle$ is a
 $p$-group. Then $H$ is a $p$-group.
\end{thm}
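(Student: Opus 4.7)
First observe that every element of $C \cup D$ is a $p$-element: given any $c \in C$ and any fixed $d \in D$, the subgroup $\langle c, d\rangle$ is a $p$-group, so $c$ has $p$-power order, and similarly for $d$. The plan is a minimal counterexample reduction to the almost-simple case, followed by an application of the earlier structural results of the paper, most importantly Theorem~\ref{main:alg}.

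Suppose $(G,C,D)$ is a counterexample with $|G|$ minimal; after replacing $G$ by $H$ we may assume $G = \langle C\rangle = \langle D\rangle$. For any proper normal subgroup $N$ of $G$, the images of $C,D$ in $G/N$ satisfy the hypothesis and still generate $G/N$, so by minimality $G/N$ is a $p$-group. Taking $N = O_p(G)$ forces $O_p(G) = 1$. Two distinct minimal normal subgroups $N_1 \ne N_2$ would intersect trivially and embed $G$ in $G/N_1 \times G/N_2$, a $p$-group, contradiction. Hence $G$ has a unique minimal normal subgroup $N = S_1 \times \cdots \times S_k$ whose simple factors $S_i$ are nonabelian (since $O_p(G)=1$). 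The conjugation action of $G$ on $\{S_1,\dots,S_k\}$ is transitive and factors through the $p$-group $G/N$, so $k$ is a power of $p$. When $k>1$, projecting elements of $C$ and $D$ to a single factor $S_1$ together with the induced permutation yields a smaller configuration still satisfying the hypothesis, contradicting minimality. Hence $G$ is almost simple with simple socle $S$, and $G/S$ is a $p$-group.

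The heart of the argument is now the following: for any almost simple finite group $G$ with socle $S$ and any two non-trivial $p$-elements $c,d \in G$ with $p \ge 5$, there exists $g \in G$ with $\langle c, d^g\rangle$ not a $p$-group. For $S$ alternating or sporadic, the $p$-part of $\operatorname{Out}(S)$ is trivial for $p \ge 5$, so $c,d \in S$, and a combinatorial fixed-point argument in the spirit of Theorem~\ref{thm:alt} (respectively a character-table verification for the sporadic groups) finishes the job. For $S$ of Lie type, write $S = \bG^F/Z(\bG^F)$ with $\bG$ simple algebraic and $F$ a Frobenius, and let $A, B$ be the $\bG$-classes containing lifts of $c, d$. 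In the defining-characteristic case $c,d$ are unipotent, and in the cross-characteristic case they are semisimple; in either case Theorem~\ref{main:alg} applies. The four exceptional configurations listed there all involve involutions or elements of order $3$, none of which can occur as $p$-elements once $p \ge 5$. Therefore $AB$ is an infinite union of $\bG$-classes, hence has dimension strictly larger than any single conjugacy class; a Lang--Steinberg and dimension argument then exhibits an $F$-stable $\bG$-class inside $AB$ whose $F$-fixed points are not $p$-elements, producing the desired non-$p$-element in $c^G d^G$. This transfer is reinforced by Theorem~\ref{thm:main3} in the unipotent case and by Theorem~\ref{thm:main2} together with Corollary~\ref{cor:ss} in the semisimple case.

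The principal obstacle is precisely this transfer from the qualitative algebraic-group statement to the sharper finite-group statement about $c^G d^G$: it requires careful dimension tracking on the unipotent and semisimple varieties, a rationality argument via Lang--Steinberg, and an ad hoc treatment of a short list of low-rank exceptions (small $\fS_n$ and small Lie-type groups), plus some care when $c$ or $d$ lies outside $S$. The hypothesis $p \ge 5$ enters only at this final stage and is indispensable, as it is exactly what eliminates every exceptional configuration of Theorem~\ref{main:alg}.
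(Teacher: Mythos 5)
Your proposal contains genuine gaps at each of the three reduction steps, and the almost-simple endgame you sketch is not how the paper proceeds.

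First, the claim that $O_p(G)=1$ forces the unique minimal normal subgroup $N$ to be a product of nonabelian simple groups is false: $N$ could be an elementary abelian $r$-group for some prime $r\ne p$, and you never address this case. The paper disposes of it by observing (via step~1 of its proof) that $P=\langle C\cap P\rangle=\langle D\cap P\rangle$ for $P$ a Sylow $p$-subgroup, and then invoking Lemma~\ref{lem:primetop}, which forces $P\lhd G$, a contradiction. That lemma --- which says that if $P$ has a normal $p$-complement and $P=\langle C\rangle=\langle D\rangle$ with each $\langle c^x,d\rangle$ a $p$-group, then $P$ is central --- is the real engine of the proof and plays no role in your argument.

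Second, the passage from $N=S_1\times\cdots\times S_k$ with $k>1$ to the almost simple case is asserted via ``projecting elements of $C$ and $D$ to a single factor $S_1$,'' but this is not well defined: the elements of $C$ and $D$ are $p$-elements whose image in $G/N$ may permute the factors nontrivially, so there is no natural projection. The paper's argument here is substantially different: it sets $Q=P\cap N$, uses the Glauberman--Thompson normal $p$-complement criterion to conclude $N_{L_i}(Q_i)\ne Q_i$, and then applies Lemma~\ref{lem:primetop} to $N_G(Q)/Q$ to force $P$ to normalize each $L_i$, whence $N$ is simple. Without this step the reduction simply doesn't go through.

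Third, the almost-simple case as you describe it --- read off the exceptional configurations from Theorem~\ref{main:alg}, note they only involve elements of order $2$ or $3$, and then do a ``Lang--Steinberg and dimension argument'' to transfer the infiniteness of $AB$ to a non-$p$-element in $c^Gd^G$ --- is not what the paper does, and it is not clear this route can be carried out without substantial new work. The paper proves the needed fact as Theorem~\ref{thm:bsas}, with an entirely finite-group, case-by-case argument: parabolic reduction in defining characteristic (using Theorem~\ref{bs1}), explicit reduction to rank-one and small groups for field automorphisms (using \cite[7.2]{GL} and Lemmas~\ref{lem:rank} and~\ref{lem:fieldofdef}), Gow's theorem for irreducible $p$-elements in $\PSL^\eps_p(q)$, and for exceptional groups a reduction via Lemma~\ref{lem:red} and an explicit table of quasisimple subgroups of maximal rank, checked against the extended Dynkin diagram. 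Your sketch also brushes aside the case where $c$ or $d$ is a field automorphism of order $p$, which occurs (for $p\ge5$) exactly when $p$ divides the degree of the field extension and requires real work in the paper. So while the broad skeleton (minimal counterexample, reduction toward almost simple) is correct, the decisive lemmas --- Lemma~\ref{lem:primetop}, the Glauberman--Thompson application, and the full proof of Theorem~\ref{thm:bsas} --- are all missing or replaced by assertions that do not hold.
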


The Baer--Suzuki theorem (for $p$-elements with $p \ge 5$) is the special
case $C=D$. Example~\ref{ex:wreath} shows that we cannot drop
the assumption that $\langle C \rangle = \langle D \rangle$.
The examples in Section \ref{sec:dense} show that we must also require
that $p \ge 5$.

See Theorems~\ref{thm:bsas} and~\ref{thm:bsgen} for other variants.

\medskip
This paper is organized as follows. In Section~\ref{sec:sym} we write down
a variant of the character-theoretic condition for a product of
conjugacy classes to be a conjugacy class in a finite group, and then use it to
give short proofs of Conjecture~A  for $\fA_n$ and $\PSL_d(q)$.
In Sections~\ref{sec:classic} and~\ref{sec:exc}, we show that the conjecture
holds for low rank classical and exceptional groups, and prove
Theorems ~\ref{thm:main2} and 
\ref{thm:main3}.

In Section \ref{sec:alg}, we consider algebraic groups and prove
Theorem~\ref{main:alg}. We also establish Corollary~\ref{alg:factorization}, and
classify in Corollary~\ref{cor:factorize} the cases when products of
centralizers in a simple algebraic group over an algebraically closed field
are dense. We then discuss in Section \ref{sec:dense} the special cases listed in
Theorem~\ref{main:alg} in detail. These two sections are essentially 
independent of the rest of the paper (only Corollary \ref{cor:factorize} is 
used to prove Theorem~\ref{thm:main2} for the finite groups of Lie type).

In the next section, we use our results on semisimple elements to give
a relatively quick proof of Szep's conjecture and also provide some examples
which show that the simplicity hypothesis in both Conjecture~A 
and Szep's conjecture cannot be weakened much.

In the last section, we prove variants of Theorem~\ref{thm:bsthm}.

\medskip
In order to prove Conjecture~A 
for the remaining open cases, one will have to work much harder.
The short proofs for the alternating groups and linear groups used the fact
that the groups had doubly transitive permutation representations
(however, the proof does not work for all doubly transitive simple groups).
There are a few other special cases where the existence of a special character
makes the proofs relatively straightforward.  The conjecture can be checked
easily for the finite groups of Lie type of small rank using \Chevie.
In a sequel, employing more sophisticated
tools from the representation theory of finite groups of Lie type, we hope to
establish Conjecture A. We have proved the result for several
families of classical groups -- in particular the conjecture holds for
symplectic groups (at this point the proof of this case is roughly $40$ pages
long). The methods here depend upon proving some new results about character
values for these groups.

\begin{rem}
A dual problem to considering products of conjugacy classes would be
to consider tensor products of irreducible representations.   
See \cite{BeK1, BeK2, MMT, MT01, Z} for some partial results. 
\end{rem}

\vskip 1pc
\noindent{\bf Acknowledgements:}
It is a pleasure to thank Ross Lawther for writing his interesting paper
\cite{law} at our request and also for allowing us to include his result,
Lemma \ref{f4 lemma}. We also thank Tim Burness and Gopal Prasad 
for some helpful comments.

\section{$\fS_n$, $\fA_n$, and Projective Linear Groups}  \label{sec:sym}
We start by proving Theorem \ref{thm:alt} which we restate below:

\begin{thm}
 Let $a, b\in\fS_n\setminus\{1\}$ with $n \geq 5$ and set $A :=a^{\fA_n}$ and
 $B :=b^{\fA_n}$. For $g \in \fS_n$, let $f(g)$ be the number of fixed points
 of $g$ in the natural permutation representation. Then $f$ is not constant
 on $AB$.
\end{thm}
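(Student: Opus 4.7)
The plan is to exhibit two elements of $AB$ on which $f$ takes distinct values, proceeding by cases according to whether $1 \in AB$.

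\emph{Case 1: $1 \in AB$.} Then some pair $(a_1, b_1) \in A \times B$ satisfies $a_1 b_1 = 1$, so $f(a_1 b_1) = n$. Since $a \ne 1$ and $\fA_n$ is simple and non-abelian for $n \ge 5$, the centralizer $C_{\fA_n}(a)$ is proper, so $|A| \ge 2$; choosing $a_2 \in A \setminus \{a_1\}$, the product $a_2 b_1 = a_2 a_1^{-1}$ is non-identity, hence $f(a_2 b_1) \le n - 2 < n$. So $f$ is not constant on $AB$.

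\emph{Case 2: $1 \notin AB$.} I would decompose the permutation character as $\pi = f = \mathbf{1} + \chi_\rho$, where $\chi_\rho$ is the irreducible standard character of $\fS_n$ (and $\chi_\rho|_{\fA_n}$ is irreducible for $n \ge 5$). Applying Frobenius' class-multiplication formula $\sum_{(x,y) \in C \times D} \chi(xy) = |C||D|\chi(c)\chi(d)/\chi(1)$ in whichever of $\fA_n$ or $\fS_n$ is appropriate (the $\fA_n$-class of $a$ coincides with its $\fS_n$-class unless the cycle type consists of distinct odd parts; in the split case, one works directly in $\fA_n$ and uses that the two halves contribute equally) gives
\[
\frac{1}{|A||B|}\sum_{(a',b')\in A\times B} f(a'b') \;=\; 1 + \frac{(f(a)-1)(f(b)-1)}{n-1}.
\]
If $f \equiv c$ on $AB$, then $c$ equals this rational number and must be a non-negative integer, forcing $(n-1) \mid (f(a)-1)(f(b)-1)$. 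In particular, whenever this divisibility fails, the conclusion follows immediately.

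\emph{Residual case and main obstacle.} For the remaining case---$1 \notin AB$ together with $(n-1) \mid (f(a)-1)(f(b)-1)$---I would argue by direct construction. Pick a cycle $(x_1\,x_2\cdots x_k)$ of $a$ with $k \ge 2$ (so $a(x_1) = x_2$) and a cycle of $b$ of length $\ge 2$. By $\fA_n$-conjugation, construct $b_1 \in B$ so that $b_1(x_2) = x_1$, yielding $ab_1(x_2) = a(x_1) = x_2$ as an extra fixed point; construct $b_2 \in B$ with its chosen cycle placed entirely outside $\operatorname{supp}(a)$, so that $x_2 \notin \operatorname{Fix}(ab_2)$. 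Arranging the other cycles of $b_1$ and $b_2$ identically yields $f(ab_1) \ne f(ab_2)$. The chief obstacle here is twofold: ensuring the conjugating element lies in $\fA_n$ rather than just $\fS_n$ (handled by composing with a suitable disjoint transposition in the centralizer when parity needs adjustment) and ruling out accidental fixed points elsewhere that might mask the discrepancy. A brief case analysis on small cycle types, using the hypothesis $n \ge 5$ to guarantee enough free points outside $\operatorname{supp}(a) \cup \operatorname{supp}(b)$, closes the argument.
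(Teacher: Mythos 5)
Your Case~1 is fine, and the averaging identity in Case~2 is correct and is essentially the same class-algebra computation that underlies the paper's Lemma~\ref{lem:basic}, applied to $\chi=f-1$ (which stays irreducible on $\fA_n$ for $n\ge5$). But the divisibility test $(n-1)\mid(f(a)-1)(f(b)-1)$ discards only a thin slice of pairs, and the residual case --- which contains, for instance, every pair with $f(a)=1$ or $f(b)=1$ --- is where essentially all the difficulty lies, and your sketch does not close it. Three concrete problems: (i) when $a$ has at most one fixed point (exactly the region the divisibility test leaves open) there is no room to place a length-$\ge2$ cycle of $b_2$ ``entirely outside $\operatorname{supp}(a)$''; (ii) the proposed parity repair --- multiplying by an odd element of a centralizer --- is unavailable when the relevant $\fS_n$-class splits in $\fA_n$, i.e.\ when the cycle type has all parts odd and distinct, since then the $\fS_n$-centralizer already lies in $\fA_n$; and (iii) ``arranging the other cycles of $b_1$ and $b_2$ identically'' is not generally possible once the designated cycle is relocated, so extra incidental fixed points are not actually controlled.

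The paper avoids a free-standing construction by inducting on $n$: if $a$ and $b$ both have a fixed point, conjugate $b$ inside $\fA_n$ so they share one and descend to $\fA_{n-1}$; thus one may assume $f(a)=0$. Lemma~\ref{lem:basic} is then used at full strength, as the pointwise identity $\chi(a)\chi(b)=\chi(c)\chi(1)$ rather than only its averaged consequence: with $\chi(a)=-1$ this forces $\chi(b)=\chi(c)=0$, i.e.\ $f(b)=1$ and $f\equiv1$ on $AB$, after which a short case analysis on the cycle types of $a$ and $b$ produces some $b'\in B$ with $f(ab')\ne1$. If you want to repair your residual case, the clean route is to invoke the pointwise identity first to pin down the putative constant value of $f$ on $AB$, and then replace the explicit cycle surgery by the inductive reduction to $f(a)=0$.
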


The proof uses the following easy lemma.

\begin{lem}  \label{lem:basic}
 Let $G$ be a finite group with $H$ a subgroup of $G$. Let $a,b\in G$ and
 set $c=ab$, $A=a^H$ and $B=b^H$. Let $V$ be an irreducible $\CC G$-module
 that remains irreducible for $H$. If $\chi$ is the character of $V$ and
 $\chi$ is constant on $AB$, then $\chi(a)\chi(b)=\chi(c)\chi(1)$.
\end{lem}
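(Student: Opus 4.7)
The plan is to use the standard class-algebra trick, applied with respect to the subgroup $H$ rather than $G$ itself. Let $\hat A = \sum_{x\in A} x$ and $\hat B = \sum_{y\in B} y$ be the class sums in $\CC H$; these lie in the centre $Z(\CC H)$ because $A$ and $B$ are $H$-conjugacy classes. Since $V$ is assumed to be irreducible as an $H$-module, Schur's lemma forces $\hat A$ and $\hat B$ to act on $V$ by scalars, and the familiar trace computation identifies these scalars as $|A|\chi(a)/\chi(1)$ and $|B|\chi(b)/\chi(1)$ respectively.

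From this, $\hat A\hat B$ acts on $V$ as the scalar $|A||B|\chi(a)\chi(b)/\chi(1)^2$, so the trace of $\hat A\hat B$ on $V$ equals
\[
\tr_V(\hat A\hat B)=|A||B|\frac{\chi(a)\chi(b)}{\chi(1)}.
\]
On the other hand, expanding directly gives $\tr_V(\hat A\hat B)=\sum_{x\in A,\,y\in B}\chi(xy)$. The hypothesis that $\chi$ is constant on $AB$, combined with the observation that $c=ab$ itself lies in $AB$ (take the identity element of $H$), shows that every summand equals $\chi(c)$, so the sum is $|A||B|\chi(c)$.

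Equating the two expressions for $\tr_V(\hat A\hat B)$ and cancelling $|A||B|$ yields $\chi(a)\chi(b)=\chi(c)\chi(1)$, as required. The only step that needs any care is the Schur-scalar computation for $\hat A$ and $\hat B$, but this is completely standard; the whole argument is really just the observation that the usual scalar-action formula for central elements in $Z(\CC H)$ may be applied to the $G$-character $\chi$ as long as its restriction to $H$ remains irreducible. There is no genuine obstacle; the content of the lemma is entirely in recognising that irreducibility upon restriction is exactly what one needs in order to reuse $\chi$ on both sides of the identity.
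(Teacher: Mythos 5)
Your proof is correct and follows essentially the same route as the paper: both rest on the observation that, because $V|_H$ is irreducible, the $H$-class sums $\hat A,\hat B$ lie in $Z(\CC H)$ and act as the scalars $|A|\chi(a)/\chi(1)$ and $|B|\chi(b)/\chi(1)$, and then compare the action (or trace) of $\hat A\hat B$ with the termwise sum $\sum_{x\in A,\,y\in B}\chi(xy)=|A||B|\chi(c)$. The paper phrases this by first decomposing $\hat A\hat B$ into $H$-class sums before applying the scalar formula; you take the trace of $\hat A\hat B$ directly, which is a cosmetic shortcut of the same argument.
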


\begin{proof}
For $X \subseteq G$, let $\theta(X)= \sum_{x \in X} x \in \ZZ G$.
Write $\theta(A)\theta(B) = \sum e_i \theta(C_i)$ where $C_i$ are the
$H$-orbits of elements in $AB$. Let $\rho~:~G \rightarrow \GL(V)$ denote
the representation of $G$ on $V$. Since $H$ acts irreducibly on $V$, it follows
that if $D = d^H $ for some $d \in G$, then $\rho(\theta(D))$ acts as a
scalar on $V$. Computing traces, we see that the scalar is given by
$$ \frac{ |D|\,\chi(d)}{\chi(1)}. $$

Thus,
$$\frac{|A| |B| \chi(a)\chi(b)}{\chi(1)^2}
  = (\sum_i e_i|C_i|)\frac{\chi(c)}{\chi(1)}.
$$
Since $|A||B|= \sum e_i |C_i|$, the result follows.
\end{proof}

\begin{proof}[Proof of Theorem~\ref{thm:alt}]
For $n = 5$, one checks directly. So assume the theorem is false
for some $n > 5$. Let $a \in A$, $b \in B$ and $c \in AB$.
Note that $\chi:=f-1$ is an irreducible character of both $\fS_n$ and $\fA_n$.
If $a$ and $b$ each have a fixed point, then the result follows by induction.

So we may assume that $f(a)=0$, i.e., $\chi(a)= -1$. Since
$\chi(a)\chi(b)=(n-1)\chi(c)$ by Lemma~\ref{lem:basic}, $\chi(c) \ne 0$
implies that $b=1$, a contradiction. So $\chi(b)=0 = \chi(c)$.
In particular, $c$ has a unique fixed point.

Suppose that neither $a$ nor $b$ is an involution.
Then $a$ and $b$ both contain cycles of length at least $r \ge 3$.
We can then replace $b$ by a conjugate so that $ab$ has at least
$r-1 \ge 2$ fixed points, a contradiction.

Suppose that either $b$ has a cycle of length $4$ or at least $2$ nontrivial
cycles. Thus, arguing as above, if $a$ is not a $2$-cycle (in the first case)
or a $2$-cycle or $3$-cycle (in the second case), we can arrange
for $ab$ to have at least $2$ fixed points, a contradiction.

If $a$ is a $2$-cycle, we can reduce to the case that $b$ is an
$m-1$-cycle on $m$ points. Then $ab$ can be an $m$-cycle or can
have fixed points, a contradiction. Similarly if $a$ is a $3$-cycle,
we can reduce to the case $b$ is an $m-1$-cycle on $m$ points.
Again, we can arrange for $ab$ either to have fixed points or not,
a contradiction.
\end{proof}

We next consider $\PSL_d(q)$. We first note a much stronger
result for $d=2$.

\begin{lem}   \label{lem:d=2}
 Let $a, b \in \GL_2(q)$, $q>3$ with $a,b$ both non-central.
 Set $A=a^H$ and $B=b^H$ where $H = \SL_2(q)$.
 \begin{enumerate}[\rm(a)]
  \item There exist $(u_i,v_i) \in A \times B$, $i=1,2$ such that $u_1v_1$
   fixes a lines and $u_2v_2$ does not.
  \item If $a$ and $b$ are semisimple elements with an eigenvalue in $\FF_q$,
   then $|\{\tr(uv) \mid (u,v) \in A \times B\}|=q$.
\end{enumerate}
\end{lem}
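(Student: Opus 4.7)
The plan is to prove (b) by a direct computation of $\tr(av)$ in explicit coordinates, and then to deduce (a) from the same style of argument together with a discriminant count. For (b), after replacing $a$ by a conjugate I would assume $a = \diag(\alpha,\beta)$ with $\alpha,\beta \in \FF_q$, $\alpha \ne \beta$. For $v \in B$ with entries $p = v_{11}$, $x = v_{12}$, $y = v_{21}$, $s = v_{22}$, one has $\tr(av) = \alpha p + \beta s = (\alpha-\beta)\,p + \beta\,\tr(b)$, a non-constant affine function of $p$. The crux is then to show that the map $v \mapsto v_{11}$ from $B$ to $\FF_q$ is surjective. Given $p \in \FF_q$, set $s = \tr(b) - p$ and look for $(x,y)$ with $ps - xy = \det(b)$: if $ps \ne \det(b)$ take $(x,y) = (1,\, ps-\det(b))$; if $ps = \det(b)$ then $p$ is an $\FF_q$-eigenvalue of $b$, so $\{p,s\}$ equals the two eigenvalues of $b$ and one can take $v$ triangular with diagonal $(p,s)$. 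In each case $v$ has $b$'s characteristic polynomial and is non-scalar; since $b$ is semisimple with distinct $\FF_q$-eigenvalues, $C_{\GL_2(q)}(b)$ is a split torus whose determinant surjects onto $\FF_q^\times$, so $b^{\GL_2(q)} = b^{\SL_2(q)} = B$ and indeed $v \in B$. Therefore $\tr(av)$ takes all $q$ values, giving the equality in (b).

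For (a), I would use that $g \in \GL_2(q)$ fixes an $\FF_q$-line iff its characteristic polynomial $x^2 - \tr(g)x + \det(g)$ has a root in $\FF_q$. Since $\det(uv) = \det(a)\det(b) =: D$ is constant on $A \times B$, the question is whether the set $T := \{\tr(uv) : (u,v) \in A \times B\} \subseteq \FF_q$ contains both a $\tau$ for which $x^2 - \tau x + D$ splits over $\FF_q$ and one for which it does not. For $q > 3$ and $D \ne 0$, a standard elementary count (via the discriminant $\tau^2 - 4D$ for odd $q$, and an Artin--Schreier argument for $q = 4$) shows that the set of $\tau$ giving $\FF_q$-splitting and the set not giving $\FF_q$-splitting each have size roughly $q/2$, so as soon as $T$ is not too small, both outcomes occur. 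When $a$ and $b$ both satisfy the hypothesis of (b), $T = \FF_q$ and we are done. In the remaining cases I would do a case analysis on Jordan type: for $a$ non-semisimple (conjugate to $\lambda I + E_{12}$), the same style of computation gives $\tr(av) = \lambda\,\tr(b) + v_{21}$ and one analyzes the $v_{21}$-entries of $B$; for $a$ anisotropic semisimple, I would work in the non-split torus containing $a$ and show that sufficiently many trace values occur as $v$ varies.

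The main obstacle I anticipate is handling the non-semisimple and anisotropic cases uniformly, since the $\SL_2(q)$-class of $b$ may be a proper subset of $b^{\GL_2(q)}$ (entries like $v_{21}$ being constrained to a single square class rather than covering $\FF_q$). One has to either verify that each $\SL_2$-class individually gives enough trace values, or invoke the freedom to choose $u \in A$ (not just $u = a$) to compensate. The hypothesis $q > 3$ enters precisely to ensure there are enough values of $\tau$ available to separate the two types of discriminant behaviour.
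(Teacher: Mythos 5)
Your argument for part (b) is complete and correct: after observing that a non-central semisimple element with one eigenvalue in $\FF_q$ automatically has two distinct eigenvalues in $\FF_q$, the reduction $\tr(av)=(\alpha-\beta)v_{11}+\beta\,\tr(b)$ together with the surjectivity of $v\mapsto v_{11}$ on $B=b^{\GL_2(q)}=b^{\SL_2(q)}$ gives exactly $q$ trace values. The paper offers no argument of its own here (it declares the computation straightforward and cites Macbeath), so there is nothing to compare against; your write-up is a perfectly acceptable filling-in of the details.

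Part (a), however, has a genuine gap that you yourself flag but underestimate. The counting step --- ``the set of $\tau$ giving $\FF_q$-splitting and the set not giving $\FF_q$-splitting each have size roughly $q/2$, so as soon as $T$ is not too small, both outcomes occur'' --- is not a valid deduction in the cases you have left open. In the unipotent case with $q$ odd, the trace set is of the form $T=\{c+\lambda\gamma^2 : \gamma\in\FF_q\}$ for fixed $c,\lambda$, which has exactly $(q+1)/2$ elements; and when $D=\det(a)\det(b)$ is a square, the set of $\tau$ for which $x^2-\tau x+D$ is reducible over $\FF_q$ \emph{also} has exactly $(q+1)/2$ elements. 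So $T$ is precisely the same size as the ``good'' set, and a pigeonhole argument cannot exclude the possibility $T=\{\text{reducible }\tau\}$. One must instead compare the two sets directly: substituting $\tau=c+\lambda\gamma^2$ into the discriminant $\tau^2-4D$ and counting points on the resulting conic in $(\gamma,\eta)$ shows that for $q>3$ both square and non-square discriminants occur. That is a concrete, finite calculation, but it is not subsumed by the size heuristic, and it is exactly the content missing from your sketch of the non-semisimple case. The anisotropic case requires a similar explicit check. Finally, the suggested escape route of ``invoking the freedom to choose $u\in A$ (not just $u=a$) to compensate'' does not exist: by cyclicity of the trace, $\{\tr(uv) : (u,v)\in A\times B\}=\{\tr(a\,b^{k}) : k\in H\}$, so fixing $u=a$ already realizes the full set $T$, and varying $u$ buys you nothing.
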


\begin{proof}
This is a straightforward computation. See also Macbeath \cite{Mac}.
\end{proof}

For the rest of this section, we fix a prime power $q$. Let $S=\PSL_d(q)
\le H \le G = \PGL_d(q)$ with $d \ge 3$. Let $V$ be the natural module
for the lift of $G$ to $\GL_d(q)$. Let $f(g)$ denote the number of fixed
$1$-spaces of an element $g \in G$. Let $\chi = f -1$ and note that
$\chi$ is an irreducible character of $G$ (and $S$).

\begin{lem} \label{lem:psl}
 Let $a, b$ be nontrivial elements of $G$ and set $A=a^H$, $B=b^H$ and
 $c=ab$. If $f$ is constant on $AB$, then $f(a)$, $f(b)$, and $f(c)$
 are each at least $2$.
\end{lem}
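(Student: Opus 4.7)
The approach hinges on Lemma~\ref{lem:basic} applied to the irreducible character $\chi := f - 1$ of $G$, the nontrivial constituent of the $2$-transitive permutation character of $G$ on the set $\mathbb{P}(V)$ of one-spaces. Since $S \le H$ already acts $2$-transitively on $\mathbb{P}(V)$, $\chi$ remains irreducible over $H$, so Lemma~\ref{lem:basic} yields the numerical identity
$$(f(a) - 1)(f(b) - 1) = (f(c) - 1)(N - 1),$$
where $N := (q^d-1)/(q-1) = \chi(1)+1$.

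The plan is then a case analysis on $f(c)$. If $f(c) \ge 2$, the right side is at least $N-1 \ge 3$; the factors on the left are integers in $[-1, N-1]$ and cannot both be negative (that would force $f(a) = f(b) = 0$ with product $1 < N-1$), so both are positive, giving $f(a), f(b) \ge 2$. If $f(c) = 0$, the right side equals $-(N-1)$, and the only admissible factorization $(-1)(N-1)$ forces one of $f(a), f(b)$ to equal $N$; but then the corresponding element stabilizes every $1$-space, is scalar on $V$, and therefore trivial in $G$, contradicting nontriviality.

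The delicate remaining case is $f(c) = 1$, where the identity gives $(f(a)-1)(f(b)-1) = 0$, so (up to swapping) $f(a) = 1$; let $L_0$ be the unique fixed line of $a$. The constancy hypothesis says $f(ab') = 1$ for every $b' \in B = b^H$, and I would exhibit one $b'$ violating this. The strategy is to exploit the $2$-transitivity of $S \le H$ on $\mathbb{P}(V)$ together with the flexibility of $d \ge 3$: by conjugating $b$ appropriately, arrange either that $b'$ fixes $L_0$ --- so $ab'$ fixes $L_0$ and the induced action on $V/L_0$ (a product of the fixed element $a|_{V/L_0}$, which has no $\FF_q$-eigenvalue lest $a$ fix a second line, with varying conjugates of $b|_{V/L_0}$) can be made to acquire an $\FF_q$-eigenvalue, producing a second fixed line for $ab'$; or that $b'$ does not fix $L_0$, so the unique fixed line of $ab'$ must lie elsewhere, and comparing two such choices via $2$-transitivity yields a contradiction.

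The main obstacle is precisely this construction: converting the abstract $2$-transitivity of $H$ on $\mathbb{P}(V)$ into a specific witness $b' \in b^H$ with prescribed behavior on two lines amounts to understanding the product of the conjugacy class of $b|_{V/L_0}$ with the fixed element $a|_{V/L_0}$ inside $\GL_{d-1}(q)$, essentially a lower-rank instance of the same problem. The cleanest route is likely induction on $d$, with the base case $d = 2$ handled by Lemma~\ref{lem:d=2}.
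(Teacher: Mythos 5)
Your reduction via the identity $(f(a)-1)(f(b)-1) = (N-1)(f(c)-1)$ is correct, and the cases $f(c)\ge 2$ and $f(c)=0$ are handled cleanly; this numerical part is in substance the same opening move the paper makes (the paper organizes it around $f(a)\in\{0,1\}$ rather than $f(c)$, but it is the same identity and the same conclusion: after swapping, one is left with $f(a)=f(c)=1$).

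The trouble is that the case $f(c)=1$ is where essentially all of the actual work lies, and you have not closed it. Your sketch for that case contains a concrete error: you claim that, because $a$ fixes a unique line $L_0$, the induced map $a|_{V/L_0}$ has no eigenvalue in $\FF_q$. This is false --- if $a$ acts as a single Jordan block with eigenvalue $\lambda\in\FF_q^\times$, then $a$ fixes a unique line, yet $a|_{V/L_0}$ is again a Jordan block with the same $\FF_q$-eigenvalue $\lambda$. So the planned ``make $ab'$ acquire a second eigenline by varying $b'$'' step does not rest on the dichotomy you assert. The fall-back, ``induction on $d$,'' is also not set up: the natural inductive object is $\GL_{d-1}(q)$, not $\PGL_{d-1}(q)$, and the statement you would need there fails for scalar matrices; moreover $a|_{V/L_0}$ and $b|_{V/L_0}$ depend on the chosen representatives and on the choice of $L_0$, so they are not invariants of the classes $A,B$. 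The paper fills this gap with a direct module-theoretic case analysis of the lifts in $\GL_d(q)$: first showing that if both $a$ and $b$ admit cyclic submodules of dimension $\ge 3$ one can conjugate to make $uv$ fix at least two lines; then forcing $b$ (up to scalars) to be a transvection or pseudoreflection; and finally handling those residual configurations by hand. That is the argument your proposal still needs to supply.
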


\begin{proof}
Lift $a$ and $b$ to elements in $\GL_d(q)= \GL(V)$ (we abuse notation and
still denote them by $a$ and $b$). Note that $|\chi(g)| \geq 1$
if $\chi(g) \neq 0$.

If $f(a)=0$, then $-\chi(b) = \chi(1)\chi(c)$ by Lemma~\ref{lem:basic},
whence $\chi(c) = \chi(b) = 0$ and so each of $b$ and $c$ fixes
a unique line. Similarly, if $f(a)=1$, then $\chi(a)=0$, whence $\chi(c)=0$
and so $a$ and $c$ each fix a unique line. So we may assume that $a$ and
$c$ each fix a unique line (interchanging $a$ and $b$ if necessary).

By scaling we may assume that the unique eigenvalue of $a$ in $\FF_q$ is $1$.
Note that if both $a$ and $b$ have cyclic submodules of dimension at least $3$,
then there are $u \in A$ and $v \in B$ with $uv$ fixing at least two
lines. (Indeed, let $e_1, e_2, e_3$ be part of a basis. Then we can choose $u$
sending $e_i$ to $e_{i+1}$ for $i=1,2$ and $v$ sending
$\langle e_i \rangle_{\FF_q}$ to $\langle e_{i-1} \rangle_{\FF_q}$ for $i=2,3$.)
Then $f(uv) > 1 = f(c)$, a contradiction.

So one of $a$ or $b$ has a quadratic minimal polynomial. Note that $a$
cannot have a quadratic minimal polynomial, since its minimal polynomial
has a linear factor and it fixes a unique line. So $b$ has a quadratic minimal
polynomial. Note that as long as $d > 3$, $a$ will either contain
a $4$-dimensional cyclic submodule or a direct sum of two cyclic
submodules of dimension at least $2$. Thus, if $b$ has a $4$-dimensional
submodule that is a direct sum of two $2$-dimensional cyclic modules, as above
we can arrange that there are conjugates $u, v$ with $f(uv) > 1$.
So $b$ has no submodule that is the direct sum of two cyclic submodules
of dimension $2$. This forces $b$ to be (up to scaling) either a transvection
or a pseudoreflection for $d > 3$. The same is true for $d = 3$.

So assume that this is the case. Suppose that $a$ is not unipotent.
Write $a = a_1 \oplus a_2$ where $a_1$ is a single Jordan block
and $a_2$ fixes no lines. Conjugate $b$ so that we may write
$b = b_1 \oplus b_2$ where $b_2$ is not a scalar and $b_1$ is $1$
(and $a_i$ has the same size as $b_i$). Then
since $b_2$ has a $2$ dimensional cyclic submodule as does $a_2$, we
can arrange that $a_2b_2$ fixes a line. Thus, $f(ab) > 1$, a contradiction.

So we may assume that $a$ is a single Jordan block. If $b$ is a transvection,
then we can conjugate such that $ab$ is a unipotent element with $2$ blocks, a
contradiction.

The remaining case is where $a$ is a single Jordan block and $b$ is a
pseudoreflection. So we may assume that $a$ is upper triangular and
$b$ is diagonal. Then $ab$ will have two distinct eigenvalues in $\FF_q$, whence
$f(ab) > 1$, a contradiction.
\end{proof}

We now prove the main result of this section.

\begin{thm}   \label{thm:psl}
 Let $H=\PSL_d(q) \le G=\PGL_d(q)$ with $d \ge 3$. If $a, b$ are nontrivial
 elements of $G$, then $f$ is not constant on $a^Hb^H$.
\end{thm}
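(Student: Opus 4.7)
The plan is to argue by contradiction. Assume that $f$ takes a single value $k$ on $AB := a^H b^H$. By Lemma~\ref{lem:psl} this forces $k \geq 2$ and $f(a), f(b) \geq 2$. The character $\chi := f-1$ is irreducible for $G$ and, since $H$ acts doubly transitively on $\bP(V)$, also for $H$, so Lemma~\ref{lem:basic} applies and yields the identity
\[
  (f(a)-1)(f(b)-1) \;=\; \chi(1)\,(k-1) \;=\; \frac{q(q^{d-1}-1)}{q-1}\,(k-1).
\]

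Next I would extract a structural consequence. For any $g \in G \setminus \{1\}$ one has $\chi(g) \leq (q^{d-1}-1)/(q-1)$, with the maximum attained essentially only when $g$ acts, modulo a central scalar, as the identity on a hyperplane of $V$. Together with $\chi(1)/\max_{g\neq 1}\chi(g)=q$, the identity above forces at least one of $a, b$ to lie in an extremely restricted conjugacy class; after swapping and multiplying by a central scalar, I may assume that $a$ preserves a hyperplane $W \subset V$ on which it acts by a scalar, i.e.\ $a$ is a transvection, a pseudo-reflection, or a small variant thereof.

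The construction step is then to exhibit two $H$-conjugates $v_1, v_2$ of $b$ for which $f(av_1) \neq f(av_2)$. For $v \in b^H$, fixed lines of $av$ split into those contained in $W$ (on which $a$ is scalar) and those transverse to $W$; the transverse ones contribute boundedly, while those inside $W$ are governed by the restriction of $v$ to $W$ when $v$ stabilises $W$, and by lower-dimensional intersection data otherwise. Since by Lemma~\ref{lem:psl} $b$ itself has at least two fixed lines in $V$, one can choose $v_1 \in b^H$ stabilising $W$ with a fixed line of $b$ inside $W$, and $v_2 \in b^H$ in general position with respect to $W$; a direct line count then produces the desired discrepancy.

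The main obstacle will be the tightest regime, in which both $a$ and $b$ are simultaneously close to scalars (for instance both transvections or both pseudo-reflections), so that the character identity barely admits solutions and the $H$-classes are small and rigid. Here I would restrict to an invariant $2$-dimensional subspace preserved by $a$ and a suitable conjugate of $b$, pass to the induced $\SL_2(q)$-action, and invoke Lemma~\ref{lem:d=2} to force two distinct counts in that subspace. A separate technicality, present throughout, is that all conjugations must be realised inside $H = \PSL_d(q)$ rather than $\PGL_d(q)$; this will be handled by tracking determinants and, when necessary, absorbing a central element of $\SL_d(q)$ to land in $H$.
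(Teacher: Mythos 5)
Your proposal sets up the same character identity $\chi(a)\chi(b)=\chi(1)\chi(c)$ that the paper uses, and correctly invokes Lemma~\ref{lem:psl} to ensure $\chi(a),\chi(b),\chi(c)\geq 1$. However, it then diverges into an argument that has a genuine gap at its pivotal step. You assert that the identity ``forces at least one of $a,b$ to lie in an extremely restricted conjugacy class'' (essentially a transvection or pseudoreflection), but the bound $\chi(g)\leq (q^{d-1}-1)/(q-1)=\chi(1)/q$ alone does not yield this. Combined with $\chi(c)\geq 1$, it only gives $\chi(a),\chi(b)\geq q$ (and, if $\chi(a)\geq\chi(b)$, that $\chi(a)\geq\sqrt{\chi(1)}\approx q^{(d-1)/2}$). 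For $d=3$ this does force $\chi(a)$ to be maximal, but for $d\geq 4$ there is a wide range of values compatible with the identity; nothing in what you wrote rules out both $a$ and $b$ having middling fixed-point counts. You would need a sharper relation between $\chi(c)$ and $\chi(a),\chi(b)$, and that is exactly what is missing.

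The ingredient that makes a short proof possible, and that your proposal does not use, is a comparison between $c$ and $b$: let $m(x)$ be the dimension of the largest eigenspace of a lift of $x$ with eigenvalue in $\FF_q$. After conjugating $b$ so that its largest eigenspace lies inside that of $a$, one gets $m(c)\geq m(b)$, hence $\chi(c)\geq q^{m(b)-1}+\cdots+q$ when $m(b)>1$, while $\chi(b)<q^{m(b)}-1$. Together with $\chi(a)\leq\chi(1)/q$ this immediately gives $\chi(a)\chi(b)<\chi(1)\chi(c)$, a contradiction, with a similarly quick argument when $m(b)=1$. Your later steps (constructing explicit conjugates $v_1,v_2$ with $f(av_1)\neq f(av_2)$, and the reduction to an $\SL_2$ sub-configuration) are both unnecessary under this line of reasoning and, as written, not carried out: no actual line counts are performed, the ``tightest regime'' is acknowledged but not resolved, and the claim that only pseudoreflection-like $a$ can occur is left unsupported. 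As it stands the proposal is an outline of a different, longer strategy with an unproved classification step at its heart, not a correct proof.
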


\begin{proof}
Let $m(a)$ and $m(b)$ denote the dimensions of the largest
eigenspaces (with eigenvalue in $\FF_q$) for $a$ and $b$, respectively.
Assume that $m(a) \ge m(b)$, and set $c := ab$.

If $f$ is constant on $a^Hb^H$, then $\chi(a)\chi(b) = \chi(1)\chi(c)$. We know
that $\chi(a), \chi(b)$ and $\chi(c)$ are all positive by the previous lemma.

Note that $m(c) \ge m(b)$ (since we can conjugate and assume that the largest
eigenspace of $b$ is contained in that of $a$). Note also that
$\chi(a)\le q^{d-2} + \ldots + 1$ (with equality precisely when $a$ is
essentially a pseudoreflection). Thus, $\chi(a) \le \chi(1)/q$.

First assume that $m(b) > 1$.
Then $\chi(b)<q^{m(b)}-1$ and $\chi(c)\ge q^{m(b)-1}+\ldots+q>\chi(b)/q$,
whence $\chi(a)\chi(b) < \chi(1)\chi(c)$, a contradiction. If $m(b)=1$, then
$\chi(b) \le q-1$ and $\chi(c) \ge 1$ (by the previous lemma) and we have
the same contradiction.
\end{proof}

\section{Classical and low rank exceptional type groups}   \label{sec:classic}

We first prove the Arad--Herzog conjecture for some low rank 
classical groups.

\begin{prop}  \label{prop:smallPSU}
 Conjecture A  holds for $\PSU_n(q)$ with $3\le n\le 6$,
 $(n,q)\ne(3,2)$.
\end{prop}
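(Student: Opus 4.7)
The plan is to verify the proposition by a direct character-theoretic computation, guided by Lemma~\ref{lem:basic}. Take $G=\GU_n(q)$ and $H=\SU_n(q)$. If $A=a^H$ and $B=b^H$ are nontrivial $H$-classes and $AB=c^H$ is a single $H$-class, then the lemma forces
$$\chi(a)\chi(b)=\chi(1)\chi(c)$$
for every character $\chi\in\Irr(G)$ whose restriction to $H$ remains irreducible. Equivalently, for every $H$-class $D\ne c^H$, the class-multiplication structure constant $a_{A,B,D}$ must vanish. To rule out $AB=c^H$ it therefore suffices to produce, for each ordered pair $(A,B)$ of non-central classes, two distinct classes $D_1,D_2$ appearing with nonzero coefficient in the expansion of $AB$.

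For $n\in\{3,4,5,6\}$, \Chevie\ provides generic character tables of $\GU_n(q)$ together with the machinery to compute such structure constants as rational functions of $q$, and to identify which characters of $\GU_n(q)$ restrict irreducibly to $\SU_n(q)$. The plan is to loop over all generic class types of $\GU_n(q)$ and verify, for every pair $(A,B)$ of non-central class types, that at least two distinct class types appear with nonzero coefficient in $AB$. To descend from $\SU_n(q)$ to $\PSU_n(q)=S$, one observes that any single-class product $A'B'=C'$ in $S$ lifts to a product of $\SU_n(q)$-classes whose image in $S$ is a single class; a centrally-twisted version of the same character identity, now applied to those $\chi\in\Irr(\SU_n(q))$ that are trivial on $Z(\SU_n(q))$ (equivalently, characters of $S$), again contradicts the nonvanishing of two distinct coefficients.

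The main obstacles are twofold. First, the bookkeeping grows quickly with $n$: for $n=5,6$ the number of generic class and character types is large, and one has to track the action of the central quotient carefully in order to apply Lemma~\ref{lem:basic} correctly. Second, the structure constants are only rational functions of $q$, and the generic argument is inconclusive at the finitely many specialisations of $q$ where a coefficient polynomial vanishes. These exceptional finite groups, together with the excluded $(n,q)=(3,2)$ for which $\PSU_3(2)$ is not simple, are finite in number and each can be dispatched by a direct computation with the ordinary character table (for example via the GAP character table library or the ATLAS).
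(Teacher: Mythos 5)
Your plan of computing class-multiplication structure constants $a_{A,B,D}$ in $\GU_n(q)$ is not feasible with the data that \Chevie{} actually provides for the cases at hand. Structure constants require a summation over the \emph{entire} set $\Irr(\GU_n(q))$, but for $n=5,6$ only the values of the \emph{unipotent} characters are available in \Chevie{} (as the paper's proof explicitly notes); the full generic character tables of $\GU_5(q)$ and $\GU_6(q)$ are not. So the central computation you describe cannot be carried out for the two largest ranks in the statement. The paper instead uses Lemma~\ref{lem:basic} as a one-sided test: it checks the \emph{necessary} condition $\chi(a)\chi(b)=\chi(1)\chi(c)$ only for the unipotent $\chi$ (which do restrict irreducibly to $\SU_n(q)$ and kill the center), and if it fails for any one of them, the product is not a single class.

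This brings out a second gap. You assert that the generic argument is only inconclusive at finitely many specialisations of $q$ where some coefficient polynomial vanishes, which could be patched by finite computations. But the actual exceptional configurations found in the paper are not sporadic in $q$: the unipotent-character identity $\chi(a)\chi(b)=\chi(1)\chi(c)$ holds for \emph{all} unipotent $\chi$ and \emph{all} $q$ when $c$ is regular unipotent, and also in a second family where $a$ is unipotent with a Jordan block of size $n-1$, $b$ is semisimple with centralizer $\GU_{n-1}(q)$, and $c$ is their commuting product. These cases survive the unipotent-character test generically and have to be eliminated by other means. The paper kills the regular-unipotent case by appealing to Theorem~\ref{thm:main3} (a unipotent element is never the single class $a^Sb^S$ unless both $a,b$ are unipotent, and the regular unipotent case is then covered by Theorem~\ref{thm:unipotent prods} etc.), and the second family by observing that all three classes live in a $\GU_{n-1}(q)$ subgroup where the product visibly meets more than one class. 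Your proposal needs to anticipate that the character test alone will not close these cases and to supply an independent argument; a ``finitely many bad $q$'' cleanup will not do it.

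A minor logical point: the equivalence you claim between $AB=c^H$ and the vanishing of $a_{A,B,D}$ for $D\ne c^H$ is only ``iff'' against \emph{all} of $\Irr(H)$, not against the characters of $H$ that happen to extend to $\GU_n(q)$; those give a necessary condition only, which is in fact how the paper uses it. Finally, your descent from $\SU_n(q)$ to $\PSU_n(q)$ can be stated more cleanly as the paper does: the unipotent characters are trivial on $Z(\SU_n(q))$, hence they \emph{are} characters of $\PSU_n(q)$, and the test in Lemma~\ref{lem:basic} can be run directly there.
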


\begin{proof}
The values of the unipotent characters of $\GU_n(q)$, $3\le n\le 6$, are
contained in \Chevie{} \cite{Chevie}. Now unipotent characters restrict
irreducibly to the derived group $\SU_n(q)$, and are trivial on the center,
so can be regarded as characters of the simple group $\PSU_n(q)$. It turns
out that for
$a,b,c\in \GU_n(q)$ non-central the equation $\chi(a)\chi(b)=\chi(1)\chi(c)$
is only satisfied for all unipotent $\chi\in\Irr(\GU_n(q))$ when either
$c$ is regular unipotent, or $a$ is unipotent with one Jordan block of size
$n-1$, $b$ is semisimple with centralizer $\GU_{n-1}(q)$ (in $\PSU_n(q)$)  
and $c = xy$ is a commuting product with $x$ conjugate to $a$ and 
$y$ conjugate to $b$. In particular, in the
latter case all three classes have representatives in $\GU_{n-1}(q)$, and
it is straightforward to see that the product hits more than one class. 
The situation of the former case is ruled out by Theorem~\ref{thm:main3} 
(which does not rely on this result).
\end{proof}

\begin{prop}  \label{prop:smallclassical}
 Conjecture A  holds for $\PSp_4(q)$, $\PSp_6(q)$, $\OO_8^+(q)$
 and $\OO_8^-(q)$.
\end{prop}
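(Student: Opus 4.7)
The plan is to mirror Proposition~\ref{prop:smallPSU}. Using \Chevie{} we access the unipotent character values of $\Sp_4(q)$, $\Sp_6(q)$, $\SO_8^+(q)$ and $\SO_8^-(q)$. Unipotent characters are trivial on the center and restrict irreducibly to the derived subgroup, so they descend to irreducible characters of the simple quotient $S$. Hence by Lemma~\ref{lem:basic}, if nontrivial classes $A = a^S$, $B = b^S$ in $S$ satisfy $AB = c^S$, then
\[
 \chi(a)\chi(b) = \chi(1)\chi(c)
\]
must hold for every unipotent $\chi \in \Irr(S)$.

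First I would invoke Theorems~\ref{thm:main2} and \ref{thm:main3} to reduce to the case where $a$ and $b$ are not both semisimple and not both unipotent. Then, running through the finite list of triples $(a,b,c)$ of non-central class representatives in the reductive group (which in low rank is parameterized $q$-uniformly by Jordan decomposition), one checks the system of equations indexed by the unipotent characters. As in the unitary case, this system should be highly restrictive: the only surviving configurations are expected to be those in which $c$ is regular unipotent (excluded by Theorem~\ref{thm:main3}) or those in which $a$, $b$, $c$ have representatives inside a proper Levi subgroup $L$, typically the centralizer of a common semisimple part. In the latter situation one writes each of $a$, $b$, $c$ as a commuting semisimple--unipotent product, and either verifies Conjecture~A directly inside the smaller classical group $L$, or checks by explicit multiplication in $L$ that the product class $a^L b^L$ splits into more than one $L$-class.

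The main obstacle will be the $D_4$ cases: the unipotent character theory of $\OO_8^\pm(q)$ is the most intricate (triality, four cuspidal unipotent characters in type $D_4$, and many more conjugacy classes to handle), so one has to be careful when translating between $\SO_8^\pm(q)$, the simply connected cover $\Spin_8^\pm(q)$, and the simple quotient, as well as between isogenous forms. A secondary technical issue is that elements which are neither semisimple nor unipotent may not always be separated by unipotent characters alone; there one can appeal to additional Deligne--Lusztig characters, or descend via the Jordan decomposition into the centralizer of the semisimple part and recurse in the smaller classical group, which for our groups has rank at most~$3$ and is handled either by small-rank results already available or by a direct \Chevie{} computation.
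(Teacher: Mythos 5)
Your proposal follows essentially the same route as the paper: use the \Chevie{} tables of unipotent character values for the relevant groups (the paper works with $\CSp_{2n}(q)$, $\CSpin_8^+(q)$, and ${}^2D_4(q)$), observe that unipotent characters descend to the simple quotient, and apply Lemma~\ref{lem:basic} to get the constraint $\chi(a)\chi(b)=\chi(1)\chi(c)$ for all such $\chi$, then dispose of the residual cases. The one place where your prediction diverges from what the computation actually yields is in the description of the residual cases. You anticipate that the surviving configurations (beyond regular unipotent $c$, which is indeed killed by Theorem~\ref{thm:main3}) would live in a proper Levi subgroup and be handled by recursion, as in the unitary proof. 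What the paper's \Chevie{} check actually finds is a single additional configuration: $n=2$, $q$ odd, with one of $a,b$ having centralizer $\SL_2(q^2)$. This is dismissed not by a Levi argument but by the observation that no such element lies in the derived subgroup $\Sp_4(q)$ at all. Since this is a matter of what the explicit computation reports rather than a methodological difference, and since your framework would certainly detect and allow you to examine that case, the approach is in substance the one taken by the paper.
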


\begin{proof}
The values of the unipotent characters of the conformal symplectic
group $\CSp_{2n}(q)$, $n=2,3$, of the conformal spin group $\CSpin_8^+(q)$
and of a group of type $^2D_4(q)$ are available in \cite{Chevie}. As before,
unipotent characters restrict irreducibly to the derived group and are trivial
on the center, so can be regarded as characters of the simple group
$\PSp_{2n}(q)$ respectively $\OO_8^{\pm}(q)$. Again, for given non-central
elements $a,b,c$ the equation $\chi(a)\chi(b)=\chi(1)\chi(c)$ fails for at
least one unipotent character $\chi$, unless either $c$ is regular unipotent,
which by Theorem~\ref{thm:main3} does not give rise to an example, or $n=2$, $q$ is
odd and one of $a$, $b$ is an element with centralizer $\SL_2(q^2)$.
But $\Sp_4(q)$ does not contain such elements.
\end{proof}

Unfortunately, \Chevie{} does not contain the unipotent characters of any group
related to $\OO_7(q)$.

Next we prove the Arad-Herzog conjecture for 
the low rank exceptional type groups.

\begin{prop}   \label{prop:smallexc}
 Conjecture A  holds for the groups
 $$\tw2B_2(2^{2f+1})\ (f\ge1),\ \tw2\,G_2(3^{2f+1})\ (f\ge1),\
    G_2(q)\ (q\ge3),\ \tw3D_4(q),\ \tw2F_4(2^{2f+1})\ (f\ge1).$$
\end{prop}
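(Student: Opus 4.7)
The plan is to follow exactly the strategy of Propositions~\ref{prop:smallPSU} and~\ref{prop:smallclassical}: for each of the listed groups $G$, use Lemma~\ref{lem:basic} combined with the (known) values of sufficiently many irreducible characters that restrict irreducibly to the simple quotient $S$ and vanish on the center. If $a^G b^G = c^G$ for non-central $a,b,c$, then for every such character $\chi$ one must have
\[
\chi(a)\chi(b) \;=\; \chi(1)\,\chi(c),
\]
and we intend to verify by direct inspection of the character tables that this equation fails for at least one $\chi$ unless $c$ is regular unipotent, in which case Theorem~\ref{thm:main3} (which is independent of the present proposition) already rules the case out.

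The character tables needed are all available in closed form: for $\tw2B_2(q)$ the full ordinary character table is Suzuki's; for $\tw2G_2(q)$ it is due to Ward; for $\tw2F_4(q)$ the unipotent characters were determined by Lusztig and their generic values by Malle; and for $G_2(q)$ and $\tw3D_4(q)$ the unipotent characters are stored in \Chevie{} \cite{Chevie}. Since all unipotent characters restrict irreducibly to the derived subgroup and are trivial on the center, they descend to characters of $S$, so Lemma~\ref{lem:basic} applies with $H = S$. For each of the five families I would enumerate the classes of $G$ according to the generic parametrisation (where the class type is stable under $q$), and for every triple of class types $(a^G, b^G, c^G)$ with $a, b, c$ non-central and $c$ not regular unipotent, exhibit one unipotent character $\chi$ for which the equality above fails as a polynomial identity in $q$. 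This is the same kind of mechanical \Chevie{} computation that underlies the two preceding propositions, and can be carried out uniformly in $q$ once one partitions classes into their generic types.

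The main obstacle I anticipate is the rank-one group $\tw2B_2(q)$: it has only three unipotent characters (the trivial and the two cuspidal ones of degree $\frac{1}{\sqrt2}q(q^2{\pm}\sqrt{2}q{+}1)$), and on certain pairs of semisimple classes they may not suffice to separate the polynomial identities. In that case one can fall back on the complete character table (Suzuki), which contains further series of irreducible characters of the semisimple group, each restricting irreducibly to $\tw2B_2(q)$. A similar minor enlargement beyond the unipotent characters may be needed for $\tw2G_2(q)$ and $\tw2F_4(q)$ on pairs of semisimple classes centralised by small tori. A secondary point to watch is that for the twisted groups in defining characteristic, when both $a$ and $b$ are unipotent, the product $c$ is forced to be unipotent by order considerations on the Sylow, and these subcases are again absorbed directly by Theorem~\ref{thm:main3}; likewise, when both $a$ and $b$ are semisimple, Theorem~\ref{thm:main2} applies and gives the conclusion without any further computation.

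After these observations the proof reduces to a finite, fully explicit check: for each of the five families run through the short list of class types and verify the failure of $\chi(a)\chi(b) = \chi(1)\chi(c)$ as a polynomial in $q$ for one $\chi$ in the stored table, and invoke Theorems~\ref{thm:main2} and~\ref{thm:main3} to cover the residual semisimple--semisimple and regular unipotent cases.
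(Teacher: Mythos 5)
Your overall strategy—applying Lemma~\ref{lem:basic} to unipotent (or other) characters whose generic values are stored in \Chevie, and checking that $\chi(a)\chi(b)=\chi(1)\chi(c)$ fails for some $\chi$—is exactly the paper's approach for these five families. But there is a genuine circularity in your fallback step, and it is not a minor point.

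You write that when the unipotent character test fails to separate (in particular when $c$ is regular unipotent), Theorem~\ref{thm:main3} ``(which is independent of the present proposition)'' disposes of the case. This independence claim is false. The proof of Theorem~\ref{thm:main3} runs through Proposition~\ref{prop:unipotent prods}, whose proof says verbatim that for exceptional groups of twisted Lie rank at most~$2$ ``the claim has already been proved in Proposition~\ref{prop:smallexc}.'' Those small-rank exceptional groups are precisely $\tw2B_2$, $\tw2G_2$, $G_2$, $\tw3D_4$, $\tw2F_4$—the groups you are trying to prove the proposition for. So invoking Theorem~\ref{thm:main3} inside the proof of Proposition~\ref{prop:smallexc} is circular. (By contrast, the analogous citations in Propositions~\ref{prop:smallPSU} and~\ref{prop:smallclassical} are legitimate, since Theorem~\ref{thm:main3} for unitary and symplectic/orthogonal groups does not route back through those propositions; that is exactly why the paper flags the independence there but cannot do so here.) Theorem~\ref{thm:main2} / Corollary~\ref{cor:ss} is genuinely independent, so your use of it for the semisimple--semisimple residue is fine, and it does dispose of the $\tw2G_2$ exception (where $c$ has order dividing $q^2-1$, hence is semisimple). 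But your appeal to Theorem~\ref{thm:main3} is not available.

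The residual cases that need a self-contained argument are $G_2(q)$ with $\gcd(q,6)=1$ and $\tw3D_4(q)$ with $q$ odd, where $b,c$ are regular unipotent and the unipotent characters alone cannot detect the failure. The paper closes these by passing to non-unipotent characters: specifically the two (resp. four) irreducible characters in the Lusztig series labelled by an involution in the dual group, whose generic values are in the \Chevie{} tables for $G_2$ and $\tw3D_4$. You gesture at ``falling back on the complete character table'' only for $\tw2B_2$; you need to make that move systematically for the regular unipotent cases in $G_2$ and $\tw3D_4$ rather than leaning on Theorem~\ref{thm:main3}.
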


\begin{proof}
The generic character tables of all of the above groups $G$ are available in
the \Chevie{} system \cite{Chevie}, respectively, the values of all unipotent
characters in the case of $\tw2F_4(2^{2f+1})$. It can be checked easily
that the equation
$\chi(a)\chi(b)=\chi(c)\chi(1)$ is not satisfied simultaneously for all
unipotent characters $\chi$ of $G$, for any choice of $a,b,c\ne1$; except
when
\begin{enumerate}[(1)]
 \item $G=\tw2\,G_2(q^2)$ with $b,c$ of order dividing $q^2-1$;
 \item $G=G_2(q)$, $\gcd(q,6)=1$, with $b,c$ regular unipotent; or
 \item $G=\tw3D_4(q)$, $q$ odd, with $b,c$ regular unipotent.
\end{enumerate}
In the latter three
cases, the required equality fails on some of the two, respectively four,
irreducible characters lying in Lusztig series parametrized by an involution
in the dual group.
\end{proof}

In fact, one does not even need all the characters mentioned in the above
proof: in all cases just four of them will do. We also note that
the cases of $\PSL_3(q)$, $\PSU_3(q)$, $^2G_2(q)$ and $\PSp_4(q)$ are
handled in \cite{MoTo} using available character tables but somewhat more
elaborate arguments.
\bigskip

In the next three results, by {\it a finite classical group} we mean
any non-solvable group of the form $\SL(V)$, $\SU(V)$, $\Sp(V)$, or $\SO(V)$, 
where $V$ is a finite vector space. First we note (see also \cite{law}):

\begin{lem}   \label{lem:unipotent pairs}
 Let $G$ be a finite classical group with natural module $V$ of dimension
 $d$ over the finite field $\FF_q$.  Assume that $G$ has rank at least $2$
 and that $\dim V \ge  6$ if $G$ is an orthogonal group. 
 Let $x\in G$ be a nontrivial unipotent element of $G$.
 \begin{enumerate}[\rm(a)]
  \item Let $P$ be the stabilizer of a singular $1$-space with $Q$ the
   unipotent radical of $P$. If $x^G \cap P \subseteq Q$, then either
    $G=\Sp_4(q)$ with $q$ even and $x$ is a short root element, or
    $G=\SU_4(q)$.
  \item If $d=2m$ and either $G=\Sp_{d}(q)$ with $q$ even or $G=\SU_{d}(q)$,
   and $P$ is the stabilizer of a maximal totally isotropic subspace with $Q$
   the unipotent radical of $P$ and $x^G\cap P \subseteq Q$, then $x$ is a
   long root element.
 \end{enumerate}
\end{lem}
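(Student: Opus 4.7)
The plan is to reformulate the hypothesis $x^G \cap P \subseteq Q$ as a purely geometric condition on $x$, and then to run a case-by-case analysis based on the Jordan form of $x$ together with its form-theoretic class. Since $G$ is transitive on singular $1$-spaces, the hypothesis is equivalent to: for every singular $w$ fixed by $x$, the element $x$ lies in $Q_w$, the unipotent radical of the stabiliser of $\langle w \rangle$. In the form-preserving cases, $Q_w$ consists of $g$ with $(g-1)V \subseteq \langle w \rangle^\perp$ and $(g-1)(\langle w \rangle^\perp) \subseteq \langle w \rangle$. Using the identity $((x-1)V)^\perp = V^x$, valid in any classical group, the first inclusion follows from $w \in V^x$, and so the hypothesis reduces to a single requirement: for every singular $w \in V^x$, $(x-1)(\langle w \rangle^\perp) \subseteq \langle w \rangle$. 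In $\SL(V)$ the analogous condition $(x-1)V \subseteq \langle w \rangle$ for every $\langle w \rangle \subseteq V^x$ is even more restrictive and immediately rules out all nontrivial unipotent elements once $\dim V^x \geq 2$, which is forced by the rank hypothesis.

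The first step is to bound the Jordan block sizes of $x$. Given a Jordan chain $v,(x-1)v,\ldots,(x-1)^{k-1}v$ with $k \ge 3$, I set $w := (x-1)^{k-1}v \in V^x$ and, using the remaining Jordan blocks together with the rank or dimension hypotheses, conjugate so that $w$ is singular. Then $u := (x-1)^{k-3}v$, corrected by an element of $V^x$ to lie in $\langle w \rangle^\perp$, satisfies $(x-1)u \equiv (x-1)^{k-2}v \pmod{\langle w \rangle}$, which is nonzero in $V/\langle w \rangle$ because $(x-1)^{k-2}v$ and $w = (x-1)^{k-1}v$ are linearly independent. This contradicts the hypothesis.

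Next, assume $x$ has Jordan type $(2^a,1^b)$. For each singular $w \in V^x$, one has $V^x \subseteq \langle w \rangle^\perp$, and the restriction of $x-1$ to $\langle w \rangle^\perp$ has kernel $V^x$, so its image has dimension $\dim \langle w \rangle^\perp - \dim V^x$. Requiring this image to lie in the $1$-space $\langle w \rangle$ for every such $w$ is a stringent constraint; one checks class by class that it forces $a$ to be small and the image of $x-1$ to track $\langle w \rangle$ itself. The only cases where this coincidence occurs are precisely those listed in (a): a short root class in $\Sp_4(q)$ with $q$ even, where $(x-1)V = V^x$ is a maximal totally singular plane and the symplectic pairing in dimension $4$ in characteristic $2$ forces the $1$-dimensional image to coincide with $\langle w \rangle$ for every singular $w \in V^x$; and a specific class in $\SU_4(q)$, where the Hermitian structure on a $4$-dimensional space of Witt index $2$ produces the analogous coincidence. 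All other candidates --- long root elements in $\Sp_{2n}(q)$ for $n \ge 2$, transvections in $\SL_d(q)$ and in $\SU_d(q)$ for $d \ge 5$, root elements in $\SO_d(q)$ for $d \ge 6$, and so on --- admit a singular $w \in V^x$ with $(x-1)(\langle w \rangle^\perp) \not\subseteq \langle w \rangle$, exhibited by direct computation in a hyperbolic basis.

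Part (b) is handled by the same translation with $W$ a maximal totally isotropic subspace in place of $\langle w \rangle$, noting that $W = W^\perp$ in the symplectic (characteristic $2$) and unitary cases. The hypothesis reads: for every such $W$ stabilised by $x$, one has $W \subseteq V^x$ and $(x-1)V \subseteq W$. If $\dim(x-1)V \ge 2$, I would construct, via the Witt extension theorem, a maximal totally isotropic $W$ that is stabilised by $x$ but contains only a proper portion of $(x-1)V$ (for example, by pairing a singular vector in $(x-1)V \cap V^x$ with a hyperbolic partner lying outside $V^x$), contradicting the required inclusion. This forces $\dim(x-1)V = 1$; since $(x-1)V \subseteq V^x = ((x-1)V)^\perp$ is automatically singular, $x$ is a long root element. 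The main obstacle throughout the proof is the careful form-theoretic bookkeeping in low dimensions and in characteristic $2$, where unipotent classes of the same Jordan type split into several $G$-classes distinguished by finer invariants; isolating the exceptional class in $\SU_4(q)$ in particular requires an explicit computation in a Hermitian basis to verify that the image of $x-1$ tracks the singular fixed $1$-space.
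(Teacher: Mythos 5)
Your geometric reformulation of the hypothesis is correct and genuinely different from the paper's approach. You translate $x^G \cap P \subseteq Q$ into the condition ``for every singular $w \in V^x$, $(x-1)(\langle w\rangle^\perp) \subseteq \langle w\rangle$,'' using $((x-1)V)^\perp = V^x$. The paper instead starts from the weaker consequence $\dim(x-1)V \le 2$ (from $x$ being conjugate into $Q$) and then reduces to low-dimensional cases by noting that $x$ acts trivially on a large nondegenerate subspace, with part (b) handled by identifying $Q$ with the space of symmetric or Hermitian matrices. Both are legitimate routes, and yours is the more conceptual one; but as written it has a concrete gap.

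The problem is in your first step, bounding Jordan block sizes. You take a chain $v, (x-1)v, \ldots, (x-1)^{k-1}v$ with $k \ge 3$, set $w = (x-1)^{k-1}v$ and $u = (x-1)^{k-3}v$, and propose to ``correct $u$ by an element of $V^x$ to lie in $\langle w\rangle^\perp$.'' That correction requires a vector $u' \in V^x$ with $(u',w)$ equal to $-(u,w)$. But $w \in (x-1)V$ and $V^x = ((x-1)V)^\perp$, so $(u',w) = 0$ for \emph{every} $u' \in V^x$: the correction can move $u$ only within the coset $u + V^x$, all of whose elements pair the same way with $w$. Hence the correction succeeds only if $(u,w)=0$ to begin with. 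Now $(u,w) = ((x-1)^{k-3}v, (x-1)^{k-1}v)$, and the isometry relation forces this to vanish for $k \ge 4$, but for $k = 3$ it becomes $(v, (x-1)^2 v)$, which is nonzero precisely because the block pairing is (up to lower-order corrections) $(e_i,e_j) \ne 0$ iff $i+j=k-1$. So a Jordan block of size exactly $3$ is not excluded by your argument, and indeed classes of type $(3,1^{d-3})$ have $\rk(x-1)=2$ and do arise (e.g.\ in $\SO_d(q)$, $q$ odd, $d \ge 6$, and in $\SU_d(q)$). Such classes must be ruled out by a separate computation --- for instance by exhibiting a singular $w \in V^x$ \emph{not} lying in $(x-1)V$, which exists once $\dim V \ge 6$; for such $w$ one has $\dim (x-1)(\langle w\rangle^\perp) = 2$. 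Your argument as written passes silently from ``no blocks of size $\ge 3$'' to ``type $(2^a,1^b)$,'' which is not justified.

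Beyond that specific gap, the core of the remaining work is only sketched: the reduction to the exceptional classes for type $(2^a,1^b)$ is asserted (``one checks class by class'') rather than carried out, the identification of the exceptional class in $\SU_4(q)$ is deferred to an unspecified ``explicit computation in a Hermitian basis,'' and part (b) is a plan (``I would construct, via the Witt extension theorem...'') rather than a proof. The underlying strategy is sound and in some ways more transparent than the paper's, but in its current form it does not constitute a complete argument.
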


\begin{proof}
Consider (a). By assumption $x$ is conjugate to an element of $Q$. If
$G=\SL_d(q)$, this forces $x$ to be a transvection and the result is clear.
Otherwise, we have $\dim (x-1)V \le 2$. 

If $d \le 4$, this is a straightforward computation (in particular, for $q$
even, all short root elements in $P$ are contained in $Q$).   

If $d > 4$, then $x$ will act trivially on a nondegenerate space.  Thus, 
if $G=\SU_d(q)$, it suffices to show the claim for $d=5,6$ and this is a 
straightforward computation. In all other cases, it follows by the results
for $d \le 4$ noting that if  $x$ is a short root element, then clearly $x$
is conjugate to an element in a Levi subgroup of $P$.

Now consider (b) with $G=\Sp_{d}(q)$.  Again, we may assume that $x \in Q$
and $x$ is not a transvection.
We can identify $Q$ with the set of symmetric matrices of size $m$.
Since $x$ is not a transvection,  $x$ corresponds to a symmetric
matrix of rank at least $2$. If $x$ corresponds to a skew symmetric
matrix, then we see that $V = V_1 \perp V_2$ where $V_1$ is $4$-dimensional
and $x$ is a short root element on $V_1$ and the result follows by induction.
If $x$ does not correspond to a skew symmetric matrix, we may conjugate
$x$ so that it corresponds to a diagonal matrix of rank at least $2$,
whence we see that $V = V_1 \perp V_2$ where $V_1$ is $4$-dimensional
and $x$ has two Jordan blocks on $V_1$ each nondegenerate.
A straightforward computation shows that $x$ stabilizes
and acts nontrivially on a $2$-dimensional totally singular
subspace of $V_1$ and so also on $V$.   

If $G=\SU_{d}(q)$, we can identify $Q$ with Hermitian $m \times m$ matrices and
every element of $Q$ is conjugate to a diagonal element. Since $g \in Q$ is
nontrivial and not a transvection, it corresponds to an element of rank
at least $2$ in $Q$ and a straightforward computation in $\SU_4$ gives the
result. 
\end{proof}

We can use this to prove the following result about pairs of unipotent
elements in classical groups.

\begin{thm}  \label{thm:unipotent prods}
 Let $G$ be a finite classical group with natural module $V$ of dimension
 $d  \ge  2$ over the finite field $\FF_q$. Let $x, y \in G$ be nontrivial
 unipotent elements of $G$. Then one of the following holds:
 \begin{enumerate}[\rm(1)]
  \item $x^Gy^G$ does not consist of unipotent elements; or
  \item $G=\Sp_d(q) = \Sp(V)$, $d\ge 4$, with $q$ even and (up to order) $x$
   is a long root element and $y$ is an involution such that $(yv,v) = 0$
   for all $v \in V$.
 \end{enumerate}
\end{thm}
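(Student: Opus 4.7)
My plan is to assume that $x^Gy^G$ consists entirely of unipotent elements and derive that we must be in case~(2). The starting point is that every nontrivial unipotent element in a finite classical group fixes a nonzero singular vector, hence a singular $1$-space; so every product $x'y' \in x^Gy^G$ lies in some parabolic $P = P_L$ stabilizing a singular $1$-space $L$, with Levi decomposition $P = (\GL_1 \times G') \ltimes Q$ where $G'$ is a classical group of smaller rank acting on $L^\perp/L$. The plan is then to play off Lemma~\ref{lem:unipotent pairs} (which constrains when $x^G \cap P \subseteq Q$) against a flexibility-in-the-Levi argument.

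I proceed by induction on $\dim V$, with base cases furnished by Propositions~\ref{prop:smallclassical} and~\ref{prop:smallPSU}, together with direct small-dimensional computation. Fix $P$ as above. If $x^G \cap P \not\subseteq Q$ and $y^G \cap P \not\subseteq Q$, pick conjugates $x',y' \in P$ with nontrivial images $\bar x',\bar y'$ in $L$. If $\bar x'$ or $\bar y'$ has nontrivial $\GL_1$-component, then $x'y'$ acts on $L$ by a scalar different from~$1$, contradicting unipotence. Otherwise $\bar x', \bar y' \in G'$ are both nontrivial unipotent, and the inductive hypothesis lets us conjugate inside $G'$ so that $\bar x' \bar y'$ is non-unipotent in $G'$ (hence $x'y'$ is non-unipotent in $G$) — unless the Levi instance itself falls into exceptional case~(2), in which case the classification of $x$ and $y$ is pinned down by tracking the shape through the Levi embedding. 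Hence we may assume (after possibly swapping $x$ and $y$) that $y^G \cap P \subseteq Q$ for every parabolic $P$ stabilizing a singular $1$-space.

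Now Lemma~\ref{lem:unipotent pairs}(a) restricts $y$ severely: either $G = \SL_d(q)$ and $y$ is a transvection, or $G = \Sp_4(q)$ with $q$ even and $y$ is a short root element, or $G = \SU_4(q)$. The $\SL_d(q)$ case is disposed of by producing conjugates $x'$ transverse to the hyperplane fixed by $y$ so that $x'y$ has an eigenvalue outside $\{1\}$ (or, more slickly, by invoking Theorem~\ref{thm:main1}). For $\SU_4(q)$ one uses Proposition~\ref{prop:smallPSU}. In the remaining case $G = \Sp_{2n}(q)$ with $q$ even, swapping to analyze $x$ and now using Lemma~\ref{lem:unipotent pairs}(b) with $P'$ the stabilizer of a maximal totally isotropic subspace, one forces $x$ to be a long root element; a further computation with the transvection $x$ acting on $V$ shows that $x'y'$ is unipotent for every choice of conjugates iff $(yv,v) = 0$ for all $v \in V$, exactly the condition in case~(2).

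The main obstacle will be the inductive step in paragraph two: honestly verifying, in the Levi $G'$, that one can destroy unipotence of $\bar x' \bar y'$ without spuriously re-entering the exceptional case. This requires careful tracking of how case~(2) ``lifts'' from a Levi to $G$ (a long root element of $G'$ comes from a long root element of $G$, and similarly for the symplectic-involution condition), so that the induction is monotone. A secondary subtlety is the ``only if'' content of case~(2): one must also check that every product of a long root element with such an involution in $\Sp_{2m}(q)$, $q$ even, is genuinely unipotent, so that case~(2) is a real exception and not vacuous; this is a direct matrix calculation using $(yv,v)=0$ to show that the characteristic polynomial of $x'y'$ is $(t-1)^{2m}$.
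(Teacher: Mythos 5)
Your overall plan — use Lemma~\ref{lem:unipotent pairs} to control when a class has all its $P$-intersections in $Q$, and induct on $\dim V$ via Levi quotients — is the same engine the paper runs, but the way you route the induction creates a real gap precisely in the family $\Sp_{2n}(q)$, $q$ even, $n>2$, that drives the exceptional case.

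The problem is in your paragraph two. For $G=\Sp_{2n}(q)$, $q$ even, $n>2$, Lemma~\ref{lem:unipotent pairs}(a) does let you pick conjugates $x',y'\in P\setminus Q$ for the stabilizer $P$ of a singular line, and the Levi semisimple part is $\Sp_{2n-2}(q)$ — but your inductive hypothesis in $\Sp_{2n-2}(q)$ again carries the exceptional case~(2). You acknowledge this and propose to ``track the shape through the Levi embedding,'' with the parenthetical claim that a long root element of $G'$ necessarily comes from a long root element of $G$. That claim is false as stated: if $x$ is, say, a short root element (so $\dim(x'-1)V=2$ with $(x'-1)V$ nondegenerate), and the singular line $L$ happens to lie inside $(x'-1)V$, then $\bar x'$ has rank~$1$ on $L^\perp/L$, i.e.\ it is a long root element of $\Sp_{2n-2}(q)$, even though $x$ is not one in $G$. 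So your induction can spuriously land in case~(2) in the Levi without $x,y$ being in case~(2) in $G$, and what you actually need — that for every $x$ which is not a long root element one can choose $x'\in x^G\cap P$ whose image is not a long root element of the Levi — is neither Lemma~\ref{lem:unipotent pairs}(a) nor~(b) and is not argued. The paper sidesteps this entirely: it excludes $\Sp_d(q)$ ($q$ even, $d\ge4$) and $\SU_d(q)$ ($d\ge4$ even) from the singular-$1$-space reduction from the outset, and for those two families it uses part~(b) of the lemma with $P$ the stabilizer of a \emph{maximal totally isotropic} subspace. That Levi is $\GL_m(q)$ (resp.\ $\GL_m(q^2)$), which has no exceptional cases, so the induction is monotone by construction. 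Your use of part~(b) comes only at the very end, as a fallback after the $\Sp_4$ conclusion of part~(a), and does not cover $\Sp_{2n}$ for general $n$.

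Two smaller points. First, the trichotomy you extract from Lemma~\ref{lem:unipotent pairs}(a) should not include ``$G=\SL_d(q)$ and $y$ a transvection'': for $\SL_d(q)$ of rank at least $2$ the hypothesis $y^G\cap P\subseteq Q$ never holds (a transvection with centre $\ne L$ but axis containing $L$ lies in $P\setminus Q$), so the lemma is vacuous there; you handle the case anyway, so this is only a logical inaccuracy. Second, invoking Proposition~\ref{prop:smallPSU} for $\SU_4(q)$ is not quite on point: that proposition says $a^Gb^G$ is never a single conjugacy class (Conjecture~A), whereas here you need the stronger (and different) assertion that $x^Gy^G$ is not contained in the set of unipotent elements; the paper does a direct computation in $\SU_4(q)$ instead.
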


\begin{proof}   
First exclude the case that $G=\Sp_d(q)$, $d\ge 4$, with $q$ even or
$G=\SU_{d}(q)$ with $d \ge 4$ even. Let
$P$ be the stabilizer of a singular $1$-space. By~(a) of the previous result
and induction, we are reduced to considering $G=\SL_2(q)$
and $G=\SU_3(q)$.
We can then apply Lemma \ref{lem:d=2} (and just compute to see that this
is still true for $q \le 3$) and similarly for $\SU_3(q)$.

Next consider $G=\Sp_d(q)$ with $q$ even. Suppose that neither $x$ nor $y$ is
a long root element. By applying~(b) of the previous lemma, we are reduced
to the case of $\SL_m(q)$ where $d=2m$. If $x$ and $y$ are both
long root elements, the result is clear (even for $q= 2$) by reducing
to the case of $\SL_2$. So we may assume that
$x$ is a long root element and that $y$ is either not an involution
or $(yv,v) \ne 0$ for some $v \in V$. Indeed, in either case there exists
$v \in V$ with $(yv,v) \ne 0$. By replacing $x$ by a conjugate, we may assume
that $x$ leaves $W:=\langle v, yv\rangle$ invariant and acts nontrivially.
Writing $V = W \oplus W^{\perp}$ and conjugating $x$ on $W$ as necessary, it
is an easy linear algebra computation to see that we can arrange for
$\tr(xy) \ne d$, whence $xy$ is not unipotent.

Finally, consider $G=\SU_{d}(q)$ with $d$ even.  If $d=2$, then the result follows by the
case of $\SL_2(q)$.  By applying~(b) of the previous lemma, we see that
we may assume that $x$ is a long root element.  By applying~(a) of the previous
lemma, we are reduced to the case of $\SU_4(q)$ and a straightforward computation
completes the proof.
\end{proof}

Note that, by choosing $x,y$ in the same Sylow subgroup of $G$, we see that
$x^Gy^G$ always contains unipotent elements. Furthermore,
in (2) above, $x^Gy^G$ will in fact consist of unipotent elements
(since $y$ acts trivially on a maximal totally singular space, we see that
$x$ and $y$ always act trivially on a common totally singular space $U$ of
dimension $d/2 -1$ and $y$ will act trivially on the two-dimensional
space $U^{\perp}/U$, whence $xy$ is unipotent). On the other hand,
it is also straightforward to compute that $xy$ can be an involution or an
element of order $4$, whence $x^Gy^G$ is not a single conjugacy class.
Thus:

\begin{cor}   \label{cor:unipotent prods}
 Let $G$ be a finite classical group with natural module $V$ of dimension
 $d \ge 2$ over the finite field $\FF_q$.   Let $H$ be the derived
 subgroup of $G/Z(G)$.  
 Let $x,y\in  H$ be nontrivial unipotent elements of $H$.
 Then $x^Hy^H$ is not a single conjugacy class of $H$.
\end{cor}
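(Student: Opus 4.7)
The plan is to deduce the corollary directly from Theorem~\ref{thm:unipotent prods} and the remarks immediately preceding it, which together establish the analogous statement at the level of $G$; the only new work is to transfer the non-triviality of $\tilde x^G \tilde y^G$ as a union of classes down to $H = (G/Z(G))'$.

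First I would lift the nontrivial unipotent elements $x, y \in H$ to unipotent elements $\tilde x, \tilde y$ in the preimage $\tilde H$ of $H$ in $G$. Such lifts exist and are unique because $Z(G)$ has order coprime to the defining characteristic $p$, so each coset in $H$ contains exactly one unipotent representative. I then apply Theorem~\ref{thm:unipotent prods} to the pair $(\tilde x, \tilde y)$.

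In Case~(1) of the theorem, $\tilde x^G \tilde y^G$ contains both unipotent elements (obtained by conjugating $\tilde x$ and $\tilde y$ into a common Sylow $p$-subgroup of $G$) and non-unipotent elements. Assume for contradiction that $x^H y^H$ is a single $H$-class. Then any two elements of $\tilde x^G \tilde y^G$ would differ by $G$-conjugation together with multiplication by an element of $Z(G)$; in particular, the semisimple part of every non-unipotent element of $\tilde x^G \tilde y^G$ would have to lie in $Z(G)$. This is incompatible with the explicit constructions used in the proof of Theorem~\ref{thm:unipotent prods}: in the $\SL$-case, Lemma~\ref{lem:d=2}(b) shows that the traces of such products already fill $\FF_q$, whereas elements of the form $zu$ with $z \in Z(G)$ and $u$ unipotent contribute at most $|Z(G)|$ distinct trace values in the natural representation. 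An analogous spectral bookkeeping (tracking traces on $2$-dimensional invariant subspaces) disposes of the unitary and symplectic cases, and a finite check handles the small ranks and fields left over.

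In Case~(2), $G = \Sp_d(q)$ with $q$ even and $d \ge 4$, so $Z(G) = \{1\}$ and $H = G$ except in the exceptional isomorphism $\Sp_4(2) \cong \fS_6$, where $H = \fA_6$. The remark preceding the corollary exhibits within $\tilde x^G \tilde y^G$ both an involution and an element of order~$4$, and these orders cannot fuse under the trivial quotient, so in all but the $\fA_6$-case we are done immediately. The main obstacle is therefore the sporadic case $\Sp_4(2)$, where one must verify that both product-orders are actually realized by $\fA_6$-conjugates (not only by $\fS_6$-conjugates) of $\tilde x$ and $\tilde y$; this reduces to a small explicit computation with the long root elements and the class of singular involutions in $\Sp_4(2)$.
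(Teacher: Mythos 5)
Your proposal follows the same high-level route as the paper: lift $x,y$ to unipotent elements of $G$, apply Theorem~\ref{thm:unipotent prods} together with the Sylow remark in the paragraph preceding the corollary, and transfer the conclusion from $G$-conjugacy to $H$-conjugacy. Your treatment of Case~(2) --- noting that $Z(\Sp_d(q))$ is trivial for $q$ even and isolating the exceptional isomorphism $\Sp_4(2)\cong\fS_6$ --- is a correct and worthwhile refinement of what the paper leaves implicit.

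Your Case~(1) argument, however, has a genuine gap. The claim that ``any two elements of $\tilde x^G\tilde y^G$ would differ by $G$-conjugation together with multiplication by an element of $Z(G)$'' is not what the assumption $x^Hy^H=c^H$ gives you: it only controls products $\tilde x^g\tilde y^h$ with $g,h$ in the preimage $\tilde H=G'Z(G)$ of $H$, so extending to all of $G$ silently uses $G=G'Z(G)$. That equality holds for $\SL(V)$, $\SU(V)$ and non-solvable $\Sp(V)$ other than $\Sp_4(2)$, but fails for $G=\SO(V)$, where $G'=\Omega(V)$ has index~$2$. Your Case~(1) discussion handles $\SL$ via traces, waves at $\SU$ and $\Sp$ as ``analogous spectral bookkeeping,'' and never mentions the orthogonal groups --- which is exactly the family where $\tilde H\ne G$ actually bites. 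To close the gap one should check that the non-unipotent products produced in the proof of Theorem~\ref{thm:unipotent prods} are already realized by $\Omega(V)$-conjugates, e.g.\ by noting that $\Omega(V)$ remains transitive on singular $1$-spaces, so $G=\Omega(V)P$ for the parabolic $P$ used in that proof and the reduction to the Levi can be carried out inside $\Omega(V)$ before bottoming out in the perfect rank-one cases. You should also record, as the paper does, that $x^Hy^H$ contains unipotent elements (so that $c$ is unipotent and the semisimple parts are forced into $Z(G)$) before invoking the trace comparison.

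A smaller point: you cite Lemma~\ref{lem:d=2}(b), but that part concerns \emph{semisimple} $a,b$; for unipotent lifts the relevant statement is part~(a), which directly produces one product fixing a line and one not fixing a line. Since any element of $Z(G)$ times a unipotent element fixes a line, part~(a) already yields an element of $\tilde x^{\SL_2}\tilde y^{\SL_2}$ whose semisimple part lies outside $Z$, without any trace-counting.
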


\section{Semisimple and unipotent classes}   \label{sec:exc}

In this section we prove Theorem~\ref{thm:main2} (assuming a result
on algebraic groups, Corollary \ref{cor:factorize}, which is independent
of this section), and complete the proof of Theorem~\ref{thm:main3}.

First we set up some notation. Throughout this section, let $\bG$ be a
connected reductive algebraic group in characteristic $p>0$ and
$F:\bG\rightarrow \bG$ a Steinberg endomorphism of $\bG$, with (finite) group
of fixed points $G:=\bG^F$. Note that if $\bG$ is simple of adjoint type
then $S=O^{p'}(G)$ is almost always simple.
We may abuse notation and write $G=\bG(q)$ where $q$ is the power of $p$
(always integral unless $G$ is a Suzuki or Ree group; in the latter case we
write $\bG(q^2)$ instead) such that $F$ acts as $q\phi$ on the character group
of an $F$-stable maximal torus of $\bG$, with $\phi$ of finite order.
Note that if $a \in G$ is semisimple, then $a^G=a^S$ (see \cite[2.12]{seitz}).

\subsection{Proof of Theorem \ref{thm:main2}}
Theorem \ref{thm:main2} follows from the following, slightly more general result:

\begin{prop}   \label{prop:regss}
 Let $S \le H \le G$. Let $a,b\in H$ be nontrivial semisimple elements.
 Then the Steinberg character is not constant on $a^Sb^S$. In particular,
 $a^Hb^H \ne c^H$ for any $c \in H$.
\end{prop}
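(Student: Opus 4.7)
The plan is to apply Lemma~\ref{lem:basic} to the subgroup pair $S\le G$ with $\chi=\St$, which is well known to restrict irreducibly to $S$. Assume for contradiction that $\St$ is constant on $a^Sb^S$ and set $c:=ab$; the lemma then produces
$$ \St(a)\St(b)=\St(c)\St(1). $$
Since $a,b$ are non-central semisimple, both $\St(a)$ and $\St(b)$ are nonzero; hence $\St(c)\ne0$, and as $\St$ vanishes off the semisimple classes, $c$ is semisimple as well.

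The next step is to convert this character identity into a dimension identity in the ambient algebraic group $\bG$. Using $|\St(x)|=|C_G(x)|_p$ for semisimple $x$, $\St(1)=|G|_p$, and the uniform formula $|\bH^F|_p=q^{N(\bH)}$ (where $N(\bH)$ is the number of positive roots) applied to the connected centralizers of $a,b,c$, combined with $\dim C_\bG(x)=\rk\bG+2N(C_\bG(x)^\circ)$ for semisimple $x$, the above equation translates into
$$ \dim C_\bG(a)+\dim C_\bG(b)=\dim C_\bG(c)+\dim\bG. $$

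The crucial observation is the elementary inclusion $C_\bG(a)\cap C_\bG(b)\subseteq C_\bG(ab)=C_\bG(c)$, giving $\dim(C_\bG(a)\cap C_\bG(b))\le\dim C_\bG(c)$. Combined with the standard product-dimension formula
$$ \dim(C_\bG(a)\cdot C_\bG(b))=\dim C_\bG(a)+\dim C_\bG(b)-\dim(C_\bG(a)\cap C_\bG(b)), $$
this forces $\dim(C_\bG(a)\cdot C_\bG(b))\ge\dim\bG$, so $C_\bG(a)\cdot C_\bG(b)$ is dense in $\bG$.

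The main obstacle is then to rule out this density; this is precisely where the algebraic-group input enters. By Corollary~\ref{cor:factorize}, density of $C_\bG(a)\cdot C_\bG(b)$ for non-central $a,b$ only occurs in the short list of exceptional configurations inherited from Theorem~\ref{main:alg}, and inspection of that list shows that in every one of them at least one of the two elements is unipotent, hence not semisimple --- contradicting our hypothesis on $a$ and $b$. The concluding assertion is then immediate: an equality $a^Hb^H=c^H$ would imply $a^Sb^S\subseteq c^H\subseteq c^G$, on which the class function $\St$ is obviously constant.
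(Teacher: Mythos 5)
Your proof is essentially the paper's proof: the chain of reasoning --- applying Lemma~\ref{lem:basic} with $\chi=\St$, deducing $c$ is semisimple, translating $\St(a)\St(b)=\St(c)\St(1)$ into the dimension identity $\dim C_\bG(a)+\dim C_\bG(b)=\dim C_\bG(c)+\dim\bG$, bounding $\dim(C_\bG(a)\cap C_\bG(b))\le\dim C_\bG(c)$, concluding density of $C_\bG(a)C_\bG(b)$, and citing Corollary~\ref{cor:factorize} --- is exactly the one in the text. Your invocation of item (ii) of Corollary~\ref{cor:factorize} (neither element unipotent) is a slightly crisper articulation of the contradiction than what the paper writes, but it is the same mathematics.

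One small but genuine gap: you assert that $\St$ ``is well known to restrict irreducibly to $S$,'' but this is false in four cases, namely $G={}^2G_2(3)$, $G_2(2)$, $\Sp_4(2)$ and ${}^2F_4(2)$ (for instance $\St$ of $\Sp_4(2)\cong\fS_6$ has degree $16$, while $\fA_6$ has no irreducible of that degree). Lemma~\ref{lem:basic} cannot be applied in those cases and the conclusion must be verified directly, as the paper does. Since the list is finite and small this is easily patched, but as written your proof skips it.
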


\begin{proof}
Let $\St$ denote the Steinberg character of $G$.
Note that $\St$ restricts irreducibly to $S$ unless
$G={}{^2}G_2(3), G_2(2), \Sp_4(2)$ or $\tw2F_4(2)$. In those
cases, one can verify the result directly (in the last case,
we could use the two ``half-Steinberg'' representations).
Note that if $g \in G$, then
$$\St(g)=\begin{cases}\pm|C_G(g)|_p=\pm q^{m(g)}& \text{if $g$ is semisimple,}\\
  0& \text{else,}\end{cases}$$
where $m(g)$ is the dimension of a maximal unipotent subgroup of $C_\bG(g)$
(see for example \cite[Thm.~6.4.7]{Ca}). In particular,
$\St(1)=q^N$ where $N$ is the number of positive roots of $\bG$.

Suppose that $a$ and $b$ are nontrivial semisimple elements
and $\St$ is constant on $a^Hb^H$. Then $\St(a)\St(b)=\St(c)\St(1)$
for $c := ab$, by Lemma~\ref{lem:basic}. In particular, $\St(c) \ne 0$,
whence $c$ is also semisimple. This in turn implies that
$m(a)+m(b) = m(c)+N$.

Since $C_\bG(a)$ is reductive and contains a maximal torus of $\bG$,
we see that $\dim C_\bG(a) = 2m(a) + r$
where $r$ is the rank of $\bG$ (and similarly for $b$ and $c$). Thus,
$$\begin{aligned}
  \dim C_\bG(a) + \dim C_\bG(b) &=2(m(a) + m(b)) + 2r = 2r +  2m(c) + 2N \\
  &=(r + 2N) + (r+ 2m(c)) = \dim \bG + \dim C_\bG(c).
\end{aligned}$$
Let $f:C_\bG(a) \times C_\bG(b) \rightarrow \bG$ be the multiplication map.
Note that each fiber has dimension equal to
$\dim (C_\bG(a)\cap C_\bG(b))$ which is at most $\dim C_\bG(c)$ as $c = ab$.
It follows that
$$\begin{aligned}
  \dim C_\bG(a)C_\bG(b)&=\dim C_\bG(a)+\dim C_\bG(b)-\dim (C_\bG(a)\cap C_\bG(b))\\
  &\ge \dim C_\bG(a) + \dim C_\bG(b) -  \dim C_\bG(c) = \dim \bG.
\end{aligned}$$
Thus, $C_\bG(a)C_\bG(b)$ is dense in $\bG$. By Corollary~\ref{cor:factorize}
(below) this cannot occur, so $\St$ cannot be constant on $a^Sb^S$.
\end{proof}

\subsection{Proof of Theorem \ref{thm:main3}}
For any $F$-stable maximal torus $\bT$ of $\bG$ and any $\theta\in\Irr(\bT^F)$,
Deligne and Lusztig defined a generalized character $R_{\bT,\theta}^\bG$
of $G = \bG^F$. Its restriction $Q_\bT^\bG:=R_{\bT,\theta}^\bG|_{G_u}$ to the
set $G_u$ of unipotent elements of $G$ is independent of $\theta$, rational
valued, and called the Green function corresponding to $\bT$ (see for instance
\cite[\S\S7.2, 7.6]{Ca}).

The following is an easy consequence of Deligne--Lusztig's character formula
for $R_{\bT,\theta}^\bG$:

\begin{prop}   \label{prop:charval}
 Let $\bG,F$ be as above. Let $x\in G$, with Jordan decomposition
 $x=su$, where $s\in G$ is semisimple, $u$ is unipotent.
 Let $\bT\le\bG$ be an $F$-stable maximal torus with $s\notin\bT^g$ for all
 $g\in G$. Then $R_{\bT,\theta}^\bG(x)= 0$ for any $\theta\in\Irr(\bT^F)$.
\end{prop}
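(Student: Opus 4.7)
The plan is to deduce this immediately from the Deligne--Lusztig character formula, which is precisely the tool that expresses the value of a Deligne--Lusztig character at $x = su$ in terms of Green functions on the connected centralizer of its semisimple part. Concretely, the character formula (see e.g.\ \cite[Thm.~7.2.8]{Ca}) reads
$$R_{\bT,\theta}^\bG(x) \;=\; \frac{1}{|C_\bG(s)^{F,\circ}|} \sum_{\substack{g \in G\\ g^{-1}sg \in \bT}} Q_{\bT^{g^{-1}}}^{C_\bG(s)^{\circ}}(u)\,\theta(g^{-1}sg),$$
and the sum is indexed exactly by those $g \in G$ for which $s$ lies in the $G$-conjugate $g\bT g^{-1}$ of $\bT$ (equivalently, $g^{-1}sg \in \bT$).

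The hypothesis of the proposition asserts that $s \notin \bT^g$ for any $g \in G$, i.e.\ that $s$ is not $G$-conjugate to any element of $\bT^F$. This is exactly the statement that the indexing set in the displayed sum is empty. Consequently the right-hand side collapses to $0$, and we conclude $R_{\bT,\theta}^\bG(x) = 0$ for every $\theta \in \Irr(\bT^F)$, as required.

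There is essentially no obstacle in this argument; the proposition is a direct reading of the character formula. The only point that needs a moment of care is the bookkeeping of the indexing convention (some sources write $\{g : s \in \bT^g\}$, others $\{g : g^{-1}sg \in \bT\}$, with the two related by $g \leftrightarrow g^{-1}$), but both conventions describe the same set of relevant conjugates of $\bT$ containing $s$, and under the hypothesis this set is empty regardless of convention. No finer input — character orthogonality, Lusztig series, or properties of Green functions beyond their definition — is needed.
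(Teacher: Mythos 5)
Your proof is correct and is essentially identical to the paper's: both simply invoke the Deligne--Lusztig character formula of \cite[Thm.~7.2.8]{Ca} and observe that the hypothesis $s\notin\bT^g$ for all $g\in G$ makes the indexing set of the sum empty. The only differences are cosmetic choices of notation (e.g.\ $g^{-1}sg\in\bT$ versus $s^g\in\bT^F$, and writing $C_\bG(s)^{F,\circ}$ where the intended object is $(C^\circ_\bG(s))^F$), which do not affect the argument.
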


\begin{proof}
By \cite[Thm.~7.2.8]{Ca} we have
$$R_{\bT,\theta}^\bG(x)
  =\frac{1}{|\bC^F|}\sum_{g\in G\,:\,s^g\in \bT^F}\theta(s^g)\,Q_{^g\bT}^\bC(u),$$
where $\bC=C_\bG^\circ(s)$. Clearly, this shows that $R_{\bT,\theta}^\bG(x)=0$
unless $s\in {}^g\bT^F$ for some $g\in G$.
\end{proof}

Now assume that $S=G/Z(G)$ is a finite simple group (which usually happens when
$\bG$ is simple of simply connected type).

\begin{prop}   \label{prop:uni}
 In the above setting, let $c\in S$ be unipotent and suppose that there are
 $a,b\in S$ with $a^S b^S=c^S$. Then for any $F$-stable maximal torus
 $\bT\le\bG$ such that $T=\bT^F$ has a character $\theta$ in general position
 with $\theta|_{Z(G)}=1$ we
 have:
 \begin{enumerate}[\rm(a)]
  \item the semisimple parts $a_s,b_s$ of $a,b$ are conjugate to elements
   of $T$, and
  \item $|C_G(a)|\,|C_G(b)|\ge |G:T|_{p'}^2$.
 \end{enumerate}
\end{prop}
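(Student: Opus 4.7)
The plan is to exploit the irreducibility of Deligne--Lusztig characters in general position, together with Proposition~\ref{prop:charval} and the basic class-product identity from Lemma~\ref{lem:basic}.

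Fix $\bT$ and $\theta\in\Irr(T)$ as in the hypothesis. Because $\theta$ is in general position, $\chi:=\pm R_{\bT,\theta}^\bG$ is (for a suitable sign) an irreducible character of $G$. Since $\theta|_{Z(G)}=1$, the central character of $\chi$ is trivial, so $\chi$ factors through $S=G/Z(G)$, yielding an irreducible $\bar\chi\in\Irr(S)$ of degree $\bar\chi(1)=|G:T|_{p'}$. Because $\bar\chi$ is constant on $a^Sb^S=c^S$, Lemma~\ref{lem:basic} (with $H=G=S$) produces
$$\bar\chi(a)\,\bar\chi(b)=\bar\chi(c)\,|G:T|_{p'}.$$
As $c$ is unipotent, the Deligne--Lusztig character formula used in the proof of Proposition~\ref{prop:charval} identifies $\bar\chi(c)=\pm Q_\bT^\bG(\tilde c)$ for a lift $\tilde c\in G$, which is a rational integer.

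Granting for the moment that $\bar\chi(c)\ne0$, parts (a) and (b) follow quickly. For (a): if the semisimple part $a_s$ were not $G$-conjugate to any element of $T$, then Proposition~\ref{prop:charval} would give $\bar\chi(a)=R_{\bT,\theta}^\bG(\tilde a)=0$; the displayed identity then forces $\bar\chi(c)=0$, a contradiction. Thus $a_s$ and (symmetrically) $b_s$ are $G$-conjugate into $T$. For (b): since $\bar\chi(c)$ is a nonzero integer, $|\bar\chi(c)|\ge1$, and combining the identity with the second-orthogonality bound $|\bar\chi(g)|^2\le|C_G(g)|$ applied to lifts gives
$$|C_G(a)|\,|C_G(b)|\ge|\bar\chi(a)|^2\,|\bar\chi(b)|^2=|\bar\chi(c)|^2\,|G:T|_{p'}^2\ge|G:T|_{p'}^2.$$

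The main obstacle is the case $\bar\chi(c)=0$, i.e.\ when the Green function $Q_\bT^\bG(\tilde c)$ vanishes on the lift of $c$. Green functions are known to vanish on some non-regular unipotent classes, so the conclusion is not a formal consequence of the displayed identity alone. I expect to dispose of this case either by showing that the existence of a general-position $\theta$ with $\theta|_{Z(G)}=1$ forces $Q_\bT^\bG(\tilde c)\ne0$ (exploiting the specific structure of such tori $\bT$), or by summing the identity over a suitable family of such characters and reading off the required nonvanishing from an orthogonality relation. This is where I expect the main technical effort to lie.
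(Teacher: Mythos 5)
Your framework is correct and matches the paper's setup exactly: use $\chi=\pm R_{\bT,\theta}^\bG$, note it factors through $S$ because $\theta|_{Z(G)}=1$, apply Lemma~\ref{lem:basic} to get $\chi(a)\chi(b)=\chi(c)\chi(1)$, and then derive (a) from Proposition~\ref{prop:charval} and (b) from $|\chi(g)|^2\le|C_G(g)|$. You also correctly identified that the entire argument hinges on $\chi(c)\ne 0$, and that this is not automatic. But you did not close that gap, and the two routes you sketch (exploiting the structure of $\bT$ to force nonvanishing, or summing over a family of $\theta$'s) are not what the paper does and would likely be harder to carry out; in fact the nonvanishing has nothing to do with the general-position hypothesis on $\theta$.

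The missing idea is an elementary congruence. The Green function $Q_\bT^\bG$ is a $\ZZ$-valued generalized character of $G$, hence restricts to a $\ZZ$-valued generalized character of the cyclic $p$-group $\langle c\rangle$. For any generalized character $\psi$ of a cyclic $p$-group and any generator $c$, one has $\psi(c)\equiv\psi(1)\pmod p$ (reduce mod the maximal ideal above $p$ in $\ZZ[\zeta_{p^m}]$; all nontrivial $p^m$-th roots of unity map to $1$). Thus
$$\chi(c)=\pm Q_\bT^\bG(c)\equiv \pm Q_\bT^\bG(1)=\pm|G:T|_{p'}\pmod p,$$
and $|G:T|_{p'}$ is a product of cyclotomic polynomials evaluated at $q\equiv0\pmod p$, hence $\equiv\pm1\pmod p$. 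Therefore $\chi(c)\not\equiv0\pmod p$, in particular $\chi(c)\ne0$, and both (a) and (b) follow as you argued. Without this (or an equivalent) argument your proof is incomplete.
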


\begin{proof}
As $\theta$ is in general position, $R_{\bT,\theta}^\bG$ is an irreducible
character of $G$ up to sign, say $\chi$, by \cite[Cor.~7.3.5]{Ca}. Since
$\theta|_{Z(G)}=1$ we also have $\chi|_{Z(G)}=1$ by the character formula
\cite[Thm.~7.2.8]{Ca}, so $\chi$ can be considered as an irreducible character
of $S=G/Z(G)$. Moreover, since $Q_\bT^\bG$ is rational valued
we have
$$R_{\bT,\theta}^\bG(c)=Q_\bT^\bG(c)\equiv Q_\bT^\bG(1)=R_{\bT,\theta}^\bG(1)
  =\pm|G:T|_{p'}\equiv\pm1\pmod p.$$
(Here, the first congruence holds for any generalized character of the cyclic
$p$-group $\langle c \rangle$, and the second congruence holds since it is true
for cyclotomic polynomials in $q$.)
In particular, $\chi(c)$ is a nonzero integer.
Thus, by Lemma~\ref{lem:basic}, we also have
$\chi(a)\chi(b)=\chi(1)\chi(c)\ne0$. By Proposition~\ref{prop:charval} this
gives~(a), and moreover
$$|C_G(a)|\cdot|C_G(b)|\ge|\chi(a)|^2\cdot|\chi(b)|^2\ge \chi(1)^2 = |G:T|_{p'}^2.$$
\end{proof}

\begin{prop}  \label{prop:unipotent prods}
 Let $S$ be a simple group of Lie type, and $a,b,c\in S\setminus\{1\}$ such
 that $a^Sb^S=c^S$. Then $c$ is unipotent if and only if both $a$ and $b$ are.
\end{prop}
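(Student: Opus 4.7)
The plan is to prove each direction separately; the $\Leftarrow$ direction will be a routine Sylow argument, and the $\Rightarrow$ direction will rest on Proposition~\ref{prop:uni} together with a torus-avoidance argument.

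For $\Leftarrow$, suppose $a$ and $b$ are both unipotent, hence $p$-elements. I fix a Sylow $p$-subgroup $P \le S$ containing $a$ and choose an $S$-conjugate $b^g \in P$ of $b$. Then $ab^g \in P$ is unipotent; since $ab^g \in a^S b^S = c^S$, the element $c$ is unipotent.

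For $\Rightarrow$, I argue by contradiction: assume $c$ is unipotent but (by symmetry) the semisimple part $a_s$ of $a$ is non-central in $G$. The main tool is Proposition~\ref{prop:uni}(a): for every $F$-stable maximal torus $\bT \le \bG$ whose fixed group $T$ admits a character in general position trivial on $Z(G)$, $a_s$ is $G$-conjugate to an element of $T$. The strategy is to exhibit one such torus $\bT$ that nevertheless contains no $G$-conjugate of $a_s$.  Under the Lang--Steinberg bijection between $G$-classes of $F$-stable maximal tori and $F$-conjugacy classes in $W=W(\bG)$, the torus classes meeting $a_s^G$ correspond to the $W$-classes that meet $W_s := W(C_\bG^\circ(a_s))$. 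Since $a_s$ is non-central, $C_\bG^\circ(a_s)$ is a proper connected reductive subgroup, so $W_s$ is a proper subgroup of $W$; hence some $W$-class (e.g.\ an elliptic class not contained in $W_s$, and for generic $a_s$ the class of a Coxeter element) lies outside $W_s$, giving a torus $\bT$ avoiding $a_s^G$.

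It remains to check that this avoiding torus admits a character in general position trivial on $Z(G)$. For a Coxeter-type torus, $|T|$ is a cyclotomic polynomial evaluation in $q$, so $|T|/|Z(G)\cap T|$ dominates $|W|$ outside an explicit finite list of small pairs $(\bG,q)$, and a standard counting argument produces the required $\theta$. For the remaining small pairs, Conjecture~A is already established by Theorem~\ref{thm:psl} and Propositions~\ref{prop:smallPSU},~\ref{prop:smallclassical},~\ref{prop:smallexc}, so no offending triple $(a,b,c)$ exists in the first place and the assertion is vacuous.

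The principal obstacle is the uniform verification of both torus conditions --- the $W$-class avoidance of $W_s$ \emph{and} the existence of a regular character trivial on $Z(G)$ --- in particular for pseudo-Levi centralizers of full rank in the exceptional types, where the naive Coxeter choice may be insufficient. In such residual configurations, the expected remedy is to invoke two non-conjugate torus classes simultaneously and to exploit the numerical bound of Proposition~\ref{prop:uni}(b), which forces $|C_G(a)|\cdot|C_G(b)|\ge|G:T|_{p'}^2$ for every admissible $T$, as an additional constraint that rules out all remaining centralizer types.
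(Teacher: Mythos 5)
Your $\Leftarrow$ direction is exactly the paper's argument. The $\Rightarrow$ direction, however, departs substantially from the paper's route and leaves a genuine gap.

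The paper does \emph{not} try to exhibit one torus avoiding $a_s^G$. Instead it selects \emph{two} specific $F$-stable maximal tori $T_1,T_2$ (listed in Tables~\ref{tab:exctori} and~\ref{tab:classtori}) chosen so that their orders are (essentially) single cyclotomic factors carrying Zsigmondy primes $\ell_1,\ell_2$. By \cite[Lemmas~5.3 and 5.9]{MNO}, the dual tori contain regular semisimple elements of Zsigmondy-prime order with connected centralizers, which guarantees both tori have characters $\theta$ in general position trivial on $Z(G)$. Proposition~\ref{prop:uni}(a) then forces $\tilde a_s$ to lie in a conjugate of $T_1$ and also in a conjugate of $T_2$, and the decisive — and easily checked — structural fact is that $T_1\cap T_2^g \subseteq Z(G)$ for all $g$. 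That single intersection computation replaces everything you are trying to do with the Lang--Steinberg correspondence.

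Your version has several points that are not actually secured. First, your translation of ``$a_s$ is conjugate into $T_w$'' into ``$[w]$ meets $W_s$'' is stated for ordinary $W$-conjugacy, but the parametrization of maximal tori is by \emph{$F$-conjugacy} classes, and in the twisted cases ($\tw2A_n$, $\tw2D_n$, $\tw2E_6$, $\tw3D_4$) these do not coincide with ordinary classes; moreover the correct criterion involves a coset of $W_s$ rather than $W_s$ itself, because the type of a torus of $C_\bG^\circ(a_s)$ as a torus of $\bG$ is shifted by the type of the torus containing $a_s$. Second, even granting the avoidance criterion, Jordan's theorem gives a \emph{conjugacy} class missed by the proper subgroup $W_s$, not automatically an $F$-conjugacy class; this needs an argument. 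Third — and this is the gap you yourself flag — you never verify uniformly that the avoiding torus admits a regular character trivial on $Z(G)$. For pseudo-Levi centralizers of near-maximal rank in $E_7$, $E_8$ the Coxeter class need not avoid $W_s$, the fallback torus may fail the regular-character test, and your proposed remedy (``invoke two non-conjugate torus classes simultaneously and use Prop.~\ref{prop:uni}(b)'') is not worked out. In the paper these difficulties simply never arise, because the Zsigmondy hypothesis both provides the regular character and gives $T_1\cap T_2^g\subseteq Z(G)$ for free; the small cases where Zsigmondy primes fail are dispatched by the low-rank results already proved (Lemma~\ref{lem:d=2}, Theorem~\ref{thm:psl}, Propositions~\ref{prop:smallPSU},~\ref{prop:smallclassical},~\ref{prop:smallexc}) together with a handful of direct GAP checks. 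So your proposal is a plausible alternative strategy, but as written it is a sketch with unresolved obstructions, whereas the paper's two-tori-with-small-intersection device closes the argument cleanly.
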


\begin{proof}
Let $\bG$ be a simple, simply connected algebraic group over an algebraic
closure of $\FF_p$ and $F:\bG\rightarrow\bG$ a Steinberg endomorphism whose
group of fixed points $G=\bG^F$ satisfies $S=G/Z(G)$. This is possible unless
$S={}\tw2F_4(2)'$, for which the claim is easily checked directly. If $S$ is
of exceptional
type and twisted Lie rank at most~2, the claim has already been proved in
Proposition~\ref{prop:smallexc}. For all other types we have given in
Tables~\ref{tab:exctori} and~\ref{tab:classtori} two maximal tori of $G$
(see \cite[Tables~5.2 and~5.8]{MNO}), with the following properties: in the
exceptional types always, and in the classical types whenever the corresponding
Zsigmondy primes $\ell_i$ exist, the dual tori contain regular elements of
order this Zsigmondy prime (by
\cite[Lemmas~5.3 and 5.9]{MNO}) and with
connected centralizer in the dual group. Note that $\ell_i$ is
coprime to $|Z(G)|$, so both tori have characters $\theta_i$ in
general position with the center in their kernel. \par
Now let $\tilde a,\tilde b$ be preimages of $a$, $b$ respectively
in $G$. Assume that $\tilde c\in G$ is unipotent, with image $c$ in $S$.
Then Proposition~\ref{prop:uni} applies to say that $\tilde a_s$ is conjugate
to elements of both $T_1,T_2$. But in all cases the intersection of $T_1$ with
any conjugate of $T_2$ lies in $Z(G)$, so $a$ is unipotent, and similarly
for $b$. \par
We now consider the classical groups for which not both Zsigmondy primes
exist. The groups $\PSL_2(q)$, $\PSL_3(q)$, $\PSL_6(2)$, $\PSL_7(2)$ are
handled in Lemma~\ref{lem:d=2} and Theorem~\ref{thm:psl}. For the unitary
groups $\PSU_3(q)$ and the symplectic groups $\PSp_4(q)$ as well as for the
groups $\PSU_6(2)$, $\PSp_6(2)$, $\OO_8^+(2)$, $\OO_8^-(2)$,
the claim follows by Propositions~\ref{prop:smallPSU}
and~\ref{prop:smallclassical} while for the groups $\PSU_7(2)$, $\PSp_8(2)$
it can be checked directly using the character tables in GAP. \par
Conversely, if $a,b$ are unipotent then without loss they lie in a common Sylow
$p$-subgroup of $S$, and hence so does $c$, whence it is unipotent.
\end{proof}

\begin{table}[htbp]
 \caption{Two tori and Zsigmondy primes in exceptional groups}
  \label{tab:exctori}
\[\begin{array}{|r||l|l|l|l|}
\hline
     G& |T_1|& |T_2| & \ell_1 & \ell_2 \cr
\hline
     F_4(q)&       \Phi_8&          \Phi_{12} & l(8) & l(12) \cr
     E_6(q)&       \Phi_9&          \Phi_1\Phi_2\Phi_8 & l(9) & l(8) \cr
 \tw2E_6(q)&       \Phi_{18}&       \Phi_1\Phi_2\Phi_8 & l(18) & l(8) \cr
     E_7(q)&       \Phi_2\Phi_{18}& \Phi_1\Phi_7 & l(18) & l(7) \cr
     E_8(q)&       \Phi_{30}&       \Phi_{24} & l(30) & l(24) \cr
\hline
\end{array}\]
\end{table}

\begin{table}[htbp]
 \caption{Two tori and Zsigmondy primes in classical groups}
  \label{tab:classtori}
\[\begin{array}{|rl||l|l|l|l|}
\hline
     G& & |T_1|& |T_2|& \ell_1& \ell_2\cr
\hline
 A_n& & (q^{n+1}-1)/(q-1)& q^n-1& l(n+1)& l(n)\cr
 \tw2A_n& (n\ge2$ even$)& (q^{n+1}+1)/(q+1)& q^n-1& l(2n+2)& l(n)\cr
 \tw2A_n& (n\ge3$ odd$)& (q^{n+1}-1)/(q+1)& q^n+1& l(n+1)& l(2n)\cr
 B_n,C_n& (n\ge2$ even$)& q^n+1& (q^{n-1}+1)(q+1)& l(2n)& l(2n-2)\cr
 B_n,C_n& (n\ge3$ odd$)& q^n+1& q^n-1& l(2n)& l(n)\cr
 D_n& (n\ge4$ even$)& (q^{n-1}-1)(q-1)& (q^{n-1}+1)(q+1)& l(n-1)& l(2n-2)\cr
 D_n& (n\ge5$ odd$)& q^n-1& (q^{n-1}+1)(q+1)& l(n)& l(2n-2)\cr
 \tw2D_n& (n\ge4)& q^n+1& (q^{n-1}+1)(q-1)& l(2n)& l(2n-2)\cr
\hline
\end{array}\]
\end{table}

Together with Corollary~\ref{cor:unipotent prods} this establishes
Theorem~\ref{thm:main3} for classical groups. To complete the proof
of Theorem~\ref{thm:main3} for exceptional groups, we need the following
result of Lawther:

\begin{lem}[Lawther]   \label{f4 lemma}
 Let $G=F_4(q)$ with $q$ even. Let $P=QL$ be a maximal end node parabolic with
 unipotent radical $Q$ and Levi subgroup $L\cong C_3(q)T_1$.
 If $u \in G$ is a nontrivial unipotent element such that
 $u^G \cap P \subset sQ \cup Q$ where $s$ is a long root element in $L$,
 then $u$ is a long root element.
\end{lem}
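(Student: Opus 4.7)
The plan is to prove the contrapositive by case analysis over the unipotent classes of $F_4$ in characteristic~$2$ (as classified by Shinoda). Fixing simple roots $\alpha_1,\alpha_2,\alpha_3,\alpha_4$ of $F_4$ with $\alpha_1,\alpha_2$ long and $\alpha_3,\alpha_4$ short, I would take $P$ to be the parabolic obtained by deleting the long end node $\alpha_1$. Then $L$ has root system $\langle\alpha_2,\alpha_3,\alpha_4\rangle$ of type $C_3$ (with $\alpha_2$ long, $\alpha_3,\alpha_4$ short inside $L$), and $s = x_{\alpha_2}(1)$ is a long root element of $L$. Since $u^G \cap P$ is stable under $P$-conjugation, its image in $L = P/Q$ is $L$-stable, so the hypothesis translates into saying that this image is contained in $\{1\} \cup s^L$.

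I would first dispose of the short root class of $F_4$: its representative $x_{\alpha_3}(1)$ lies in $L$ and projects to a short root element of $C_3$, which is not in $\{1\} \cup s^L$, so the short root class is ruled out. For each remaining nontrivial unipotent class of $F_4$ in characteristic~$2$ (the Bala--Carter classes $A_1 + \tilde A_1$, $A_2$, $\tilde A_2$, $A_2 + \tilde A_1$, $B_2$, $\tilde A_2 + A_1$, $C_3(a_1)$, $F_4(a_3)$, $B_3$, $C_3$, $F_4(a_2)$, $F_4(a_1)$, $F_4$, together with the extra classes that appear in characteristic~$2$), I would exhibit a representative in $P$ whose image in $L$ falls outside $\{1\} \cup s^L$. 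Many such classes already admit a representative in $L$ --- a product of root subgroup elements drawn from $\{x_{\alpha_2},x_{\alpha_3},x_{\alpha_4}\}$ --- giving a unipotent element of $C_3$ not $L$-conjugate to $s$. For the classes that do not meet $L$, I would pick a representative of the form $x_{\alpha_1}(1)\,v$ for a suitable product $v$, conjugate into $P$ via a well-chosen element of $G$, and compute the resulting $L$-part via Chevalley commutator relations. Finally I would verify that the long root class itself does satisfy the condition: any long root element of $F_4$ lying in $P$ is either contained in a root subgroup with root belonging to $Q$, or is $L$-conjugate into the long root subgroup corresponding to $\alpha_2$.

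The main obstacle is the finite but delicate case analysis for the higher classes. Characteristic~$2$ is particularly subtle for $F_4$ because of the exceptional isogeny interchanging long and short roots, and because some unipotent classes split or merge compared to the characteristic~$0$ picture. For each class, checking that the image in $L$ genuinely leaves $\{1\} \cup s^L$ demands careful use of Chevalley commutator relations in characteristic~$2$, and sometimes a nonobvious choice of conjugator from $G \setminus P$. The explicit execution of this case-by-case verification is the content of Lawther's paper \cite{law}.
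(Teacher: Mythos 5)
Your overall plan matches the paper's proof: both proceed by case analysis over Shinoda's list of the $35$ unipotent classes of $F_4(q)$ in characteristic $2$, conjugating each non-long-root, non-identity representative into $P$ and checking that its image in $L = P/Q$ is neither trivial nor a long root element of $C_3(q)$. A couple of differences in execution are worth noting. First, the paper's argument is self-contained and short; it is \emph{not} deferred to Lawther's separate paper \cite{law} (he supplied this particular lemma and its proof directly, as the acknowledgements indicate), so "the explicit execution is the content of \cite{law}" is not accurate. Second, and more substantively, the paper avoids laborious Chevalley-commutator computations by a cheap but effective shortcut: it introduces property~(*) --- being a product of positive root elements at least one of whose roots lies in $\{0010,0001,0011\}$ --- and observes that any such element projects mod~$Q$ to something that is neither~$1$ nor a long root element. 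It then suffices to exhibit, for each Shinoda representative $x_i$ ($i\neq 0,2$), a \emph{Weyl group} conjugator $g_i$ with $g_i^{-1}x_ig_i\in P$ having property~(*); the conjugators used are $w_4w_3w_2w_1w_3w_2$ for $i=1,3,4$, $w_1w_2$ for $5\le i\le 16$, $w_2$ for $19\le i\le 21$, and the identity otherwise. This is cleaner than your plan of picking representatives of the form $x_{\alpha_1}(1)v$ and hunting for conjugators in $G\setminus P$. Also, your final step of verifying that the long root class itself satisfies the hypothesis is unnecessary --- the lemma is a one-way implication and only requires ruling out the other classes.
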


\begin{proof}
The proof is a case by case analysis.
Write roots in $F_4$ as linear combinations of simple roots, so that for
example the highest root is denoted $2342$. Write $w_i$ for the Weyl group
reflection corresponding to the $i$th simple root.

Let us say that if $x$ is a product of positive root elements,
at least one of whose roots is in $\{ 0010, 0001, 0011 \}$, then $x$ has
property (*). Observe that if $x$ has property (*), then $x$ mod $Q$ is
neither the identity nor a long root element of $L$. Now note that Shinoda
\cite[p.~130]{shinoda} has listed unipotent class representatives
$x_0,x_1,\dots,x_{34}$ of $G$.
Recall that $x_0 = 1$ and $x_2$ is a long root element. Thus it suffices to
observe that for $i = 1,3,4,\dots,34$ there is a $g_i \in G$ so
that $g_i^{-1}x_ig_i \in P$ has property (*).

If $i=1, 3, 4$, take $g_i =w_4w_3w_2w_1w_3w_2$. If $5 \le i \le 16$, take
$g_i = w_1w_2$. If $19 \le i \le 21$, take $g_i=w_2$. In the remaining cases,
$x_iQ$ is neither trivial nor a long root element. The result follows.
\end{proof}

Now we can use our methods together with another result 
of Lawther \cite{law} to obtain the following:

\begin{thm}   \label{bs1}
 Let $G$ be a finite simple group of Lie type in characteristic $p$.
 Let $u, w$ be nontrivial unipotent elements of $G$. 
 Then $u^Gw^G$ is not a single conjugacy class.  If $uw^g$ is unipotent
 for all $g \in G$, then $p \le 3$ and (up to order) one the following holds:
 \begin{enumerate}[\rm(1)]
  \item $G=\Sp_{2n}(q)$, $p=2$, $u$ is a long root element and $w$ is an
   involution (which satisfies $(wv,v)=0$ for all $v\in V$, the natural module);
  \item $G=F_4(q)$, $p=2$, $u$ is a long root element and $w$ is a short
   root element; or
  \item $G=G_2(q)$, $p=3$, $u$ is a long root element and $w$ is a short
   root element.
 \end{enumerate}
\end{thm}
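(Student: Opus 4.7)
The first assertion—that $u^Gw^G$ is never a single conjugacy class—is immediate from Theorem~\ref{thm:main3}: if $u^Gw^G=c^G$ with both $u,w$ unipotent, the conclusion of that theorem (forbidding both of $a,b$ from being unipotent when $a^Gb^G=c^G$) is contradicted. For the classification statement, observe that the hypothesis ``$uw^g$ is unipotent for every $g\in G$'' is equivalent to $u^Gw^G$ being contained in the set of unipotent elements of $G$, since every element of $u^Gw^G$ is $G$-conjugate to some $uw^g$.

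When $G$ arises from a classical group, the classification follows directly from Theorem~\ref{thm:unipotent prods}: the only configuration in which $u^Gw^G$ can consist entirely of unipotent elements is $G=\Sp_{2n}(q)=\Sp(V)$ with $q$ even, one of $u,w$ a long root element and the other an involution $y$ satisfying $(yv,v)=0$ for all $v\in V$, which is case~(1).

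For exceptional $G$, I would first dispatch the groups of twisted Lie rank at most~$2$---namely $\tw2B_2(q^2)$, $\tw2G_2(q^2)$, $G_2(q)$, $\tw3D_4(q)$ and $\tw2F_4(q^2)$---by direct inspection of the generic character tables available in \Chevie, in the same spirit as the proof of Proposition~\ref{prop:smallexc}. Here the only surviving example is $G_2(q)$ with $p=3$ and the pair consisting of a long root element and a short root element, which yields case~(3). For the remaining exceptional groups $F_4$, $E_6$, $\tw2E_6$, $E_7$, $E_8$, the strategy is to reduce through a suitably chosen maximal parabolic $P=QL$: after conjugating $w$ into $P$, the projection $\pi\colon P\twoheadrightarrow L$ sends $uw$ to $\pi(u)\pi(w)$, which must remain unipotent in $L$. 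Varying the conjugate of $w$ within $P$ shows that the pair $(\pi(u),\pi(w))$ satisfies the analogous ``always-unipotent product'' property inside $L$; induction on the Lie rank---using the classical case when $L$ is of classical type, or the small exceptional cases already handled---then pins $u$ and $w$ to a very short list of classes.

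The principal obstacle is $F_4(q)$ with $q$ even, where case~(2) genuinely occurs, and the essential new input is Lawther's Lemma~\ref{f4 lemma}. Taking $P=QL$ to be the end-node parabolic with $L\cong C_3(q)T_1$ and $s\in L$ a long root element, the lemma states that a nontrivial unipotent $u\in G$ with $u^G\cap P\subseteq sQ\cup Q$ is itself a long root element. Contrapositively, any unipotent $u$ that is not a long root element has a conjugate in $P$ whose image in $L$ is a nontrivial unipotent element other than a long root element; combined with a suitable conjugate of $w$ chosen via the classical analysis inside $C_3(q)$, this produces a product whose image in $L$ is non-unipotent, hence a product in $G$ that is non-unipotent, contradicting the hypothesis. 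Thus $u$ must be a long root element, and a symmetric argument using the companion computations of Lawther in~\cite{law} forces $w$ to be a short root element, yielding case~(2). Entirely analogous but easier parabolic reductions handle $G_2(q)$ in characteristic~$3$ and rule out additional pairs in $E_6$, $\tw2E_6$, $E_7$, $E_8$; the restriction $p\le 3$ emerges because the inductive step always lands in one of the classical or small exceptional configurations, each of which imposes $p\in\{2,3\}$.
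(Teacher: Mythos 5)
Your derivation of the first assertion is circular. You invoke Theorem~\ref{thm:main3} to conclude that $u^Gw^G$ is never a single conjugacy class for unipotent $u,w$, but in the paper Theorem~\ref{thm:main3} is \emph{derived from} Theorem~\ref{bs1}: the paper states explicitly, just after the proof of Theorem~\ref{bs1}, that ``Theorem~\ref{thm:main3} immediately follows from Proposition~\ref{prop:unipotent prods} and Theorem~\ref{bs1}.'' Proposition~\ref{prop:unipotent prods} alone only shows that $c$ unipotent forces $a,b$ unipotent and vice versa; the extra content that rules out $a^Gb^G$ being a single unipotent class is exactly the first sentence of Theorem~\ref{bs1}. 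So you cannot appeal to Theorem~\ref{thm:main3} here. The paper instead argues directly at the end of the proof: if $u^Gw^G=c^G$, then since $u,w$ lie in a common Sylow $p$-subgroup one may take $c=uw$ unipotent, forcing $uw^g$ to always be unipotent; this puts $(G,u,w)$ on the three-item list, and then one checks by hand that in each listed configuration the product $uw^g$ realizes at least two distinct classes (orders $2$ and $4$ in $\Sp_{2n}$ with $p=2$, the $\Sp_4(q)$ computation inside $F_4$, and regular unipotent versus $3$-central in $G_2$ with $p=3$). That direct verification is missing from your proposal and is needed.

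For the classification itself your outline matches the paper's strategy: Theorem~\ref{thm:unipotent prods} for the classical types, \Chevie{} computations for the very small exceptional types, and parabolic reduction for $F_4$, $E_6$, $\tw2E_6$, $E_7$, $E_8$ with Lawther's Lemma~\ref{f4 lemma} as the key input for $F_4(q)$, $q$ even. A couple of glossed-over points worth tightening: the parabolic reduction needs \emph{both} $u$ and $w$ conjugated into $P\setminus Q$ (not just $w$), and the fact that every nontrivial unipotent class meets some end-node parabolic outside its radical is nontrivial and is supplied by \cite{GuSa} for $E_n$ and by Lawther \cite{law} for $\tw2E_6$ and $F_4$; these references should be cited. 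Also, after forcing $u$ to be a long root element in $F_4$ with $p=2$, the paper identifies $w$ by applying the graph automorphism and rerunning the argument, which is cleaner than the ``symmetric argument using companion computations'' you gesture at, though both are workable.
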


\begin{proof}
If $G$ is classical, this follows by Theorem \ref{thm:unipotent prods}.
If $G={^2}G_2(q^2)$, $\tw2B_2(q^2)$, $\tw2F_4(q^2)'$ or $\tw3D_4(q)$, the result
follows by a computation using \Chevie. If $G=E_n(q)$, then by \cite[\S2]{GuSa},
we can assume that $u,w$ are in an end node parabolic subgroup
and not in its radical. The result now follows by induction (since none of
the exceptions occur in the inductive step).
If $G=\tw2E_6(q)$, then by Lawther \cite{law}, the same argument applies.

Suppose that $G=G_2(q)$.  If $q=2$, one computes directly.  
Let $P_i$, $i = 1,2$ denote the two maximal parabolic subgroups containing
a fixed Borel subgroup.  Let $Q_i$ be the unipotent radical of $P_i$. 
If $p \ne 3$ and $q > 2$, it follows by \cite{law} that any unipotent element
is conjugate to an element of  $P_1\setminus{Q_1}$
and so the result follows by the result for $A_1$.   If  $p=3$, then also by
\cite{law} unless $u$ is  a long root element and $w$ is a short root element
(or vice versa), 
$u, w$ are conjugate to elements in $P_i \setminus{Q_i}$ for $i=1$ or $2$
and the result follows by the case of $A_1$. 
Alternatively, one can compute using \Chevie.  

It remains to consider $G=F_4(q)$. Let $P$ be a maximal parabolic
subgroup with Levi subgroup of type $B_3(q)$. By \cite{law},
we may assume that $u, w$ are in $P$ and not in the radical $Q$ of $P$.
Arguing as above, we may reduce to the case of $B_3(q)$, whence the
result for $q$ odd.
If $q$ is even, the same argument shows that the result holds unless (up to
order), $u^G \cap P \subset Q \cup xQ$ where $x$ is a long root element and
$w^G \cap P \subset Q \cup yQ$ where $y$ is a short root element.
Now Lemma~\ref{f4 lemma} forces $u$ to be a long root element. Now replace
$u$ and $w$ by their images $u'$ and $w'$ under the graph automorphism.
So $u'$ is a short root element. As above, this forces $w'$ to be a long
root element, whence $w$ is a short root element.

We now show that $u^Gw^G$ is not a single conjugacy class. If so, then $uw^g$
is conjugate to $uw$ for all $g$.  Of course, $uw^g$ may be unipotent.
So the result is clear aside from the three special cases above.  In (1), it
is straightforward to observe that $uw^g$ may have order either $2$ or $4$.
Consider (2).  Since we can choose $u, w^g \in H \le F_4$ with
$H \cong \Sp_4(q)$,  the result holds.  Finally, in (3), it is straightforward
to see that $uw^g$ can be a regular unipotent element (and so of order $9$).
On the other hand, $u$ and $w$ are both conjugate to central elements in a
Sylow $3$-subgroup, whence $uw^g$ can also be a $3$-central element
(of order $3$). This completes the proof.
\end{proof}

In fact, we will see (in Examples~\ref{ex: G2}, \ref{ex:f4},
and~\ref{ex: sp uni}) that in all the exceptional cases in the previous 
result, $\langle u, w^g \rangle$ is unipotent for all $g$ (even in the
corresponding algebraic group).

Lawther~\cite{law} proves much more than we require for the proof of
Theorem~\ref{bs1}.  He determines all pairs of conjugacy classes $C$ of
unipotent elements and maximal parabolic subgroups $P$ of a finite simple
group of Lie type such that $C \cap P$ is contained in the unipotent radical
of $P$. 
 
Now Theorem~\ref{thm:main3} immediately follows from Proposition 
\ref{prop:unipotent prods} and Theorem \ref{bs1}.

\subsection{Some permutation characters} 
We now prove some results on certain permutation characters for
$G_2(q)$ and $F_4(q)$ that we will need for our results on algebraic groups.

\begin{lem}  \label{lem:g2not3}
 Let $G=G_2(q)$ with $(q,3)=1$. Let $a$ be a long root element and $b$ an
 element of order $3$ with centralizer $\SL_3(q)$ or $\SU_3(q)$ (depending
 upon whether $q \equiv 1 \pmod3$ or not).
 Let $C=C_G(a)$ and $D=C_G(b)$. Then the scalar product $[1_C^G, 1_D^G]$ equals
 $2$. Moreover, if $q \equiv 1 \pmod3$, then $\langle a, b \rangle$ is
 contained in a Borel subgroup of $G$.
\end{lem}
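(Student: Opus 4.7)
The plan is to reinterpret $[1_C^G,1_D^G]$ as a double-coset count, show it equals $2$ via a $D$-orbit analysis on the long root class, and construct an explicit Borel for the moreover.

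By Mackey, $[1_C^G,1_D^G]=|C\backslash G/D|$, which via the bijection $G/C\leftrightarrow a^G$ equals the number of $D$-orbits on the $G$-conjugacy class $a^G$ of long root elements; so the first assertion reduces to showing $D$ has exactly two orbits on $a^G$. Up to $\bG$-conjugacy, $b=\alpha^\vee(\zeta)$ where $\alpha$ is the short simple root of $G_2$ and $\zeta\in\bar\FF_q$ is a primitive cube root of unity; a direct root-datum calculation shows $C_\bG(b)$ is generated by a maximal torus together with the root subgroups $U_\gamma$ for $\gamma\in\{\pm\beta,\pm(3\alpha+\beta),\pm(3\alpha+2\beta)\}$, i.e.\ the long $A_2$-subsystem subgroup. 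Taking $\FF_q$-points gives $D\cong\SL_3(q)$ or $\SU_3(q)$ according as $q\equiv 1$ or $-1\pmod 3$. Crucially, every root subgroup of $D$ is a long root subgroup of $G$, so $a^G\cap D$ equals the transvection class of $D$; this yields one $D$-orbit on $a^G$, of size $(q^3-\epsilon)(q+\epsilon)$ with $\epsilon=\pm 1$.

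For the second orbit: since $|C_G(a)|=q^6(q^2-1)$ for a long root element, $|a^G|=q^6-1$ and hence $|a^G\setminus D|=q(q^2-1)(q^3-\epsilon)$. Combined with $|D|=q^3(q-\epsilon)(q^2-1)(q^2+\epsilon q+1)$ and the factorization $q^3-\epsilon=(q-\epsilon)(q^2+\epsilon q+1)$, this gives $|D|/|a^G\setminus D|=q^2$. It therefore suffices to exhibit one long root element $a'\notin D$ with $|C_D(a')|=q^2$. I would verify this by an explicit calculation in the $7$-dimensional module $V$ of $G_2$ (in odd characteristic; the octonion model in characteristic $2$), using that $D$ preserves a specific isotypic decomposition of $V$ (roughly $V=V_3\oplus V_3^*\oplus V_0$ for $D=\SL_3(q)$, with a $6+1$ analogue for $\SU_3(q)$). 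Choose a long root element $a'$ whose image line $(a'-1)V$ does not lie in any $D$-invariant summand of $V$; then $C_D(a')$ is the stabilizer in $D$ of this line together with its compatibility conditions, which one computes directly to be a specific $2$-dimensional unipotent subgroup of $D$. This forces a single $D$-orbit outside $D$, completing the orbit count.

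For the moreover: when $q\equiv 1\pmod 3$, $b$ is split semisimple and so lies in a split maximal torus $T\le G$. Any Borel $B=TU\supseteq T$ then contains $b$, and taking $a$ to be any nontrivial element of the long root subgroup $U_\beta\le U$ gives $\langle a,b\rangle\subseteq B$, as required. The main obstacle in the proof is the explicit centralizer computation $|C_D(a')|=q^2$ for a long root element outside $D$: this requires a careful model of $G_2$ and of the embedding $D\subset G_2$, and the verification that the relevant stabilizer is exactly the expected $2$-dimensional unipotent subgroup of $D$; once this is in hand, the orbit count is immediate from the arithmetic and the moreover is automatic from the standard Borel setup.
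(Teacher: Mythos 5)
Your overall strategy is close to the paper's: both recast $[1_C^G,1_D^G]$ as a two-element count of $\bG$-orbits (equivalently $D$-orbits on $a^G$), identify the ``commuting'' orbit explicitly, and then close the arithmetic. Your identification of $D$ as the long-root $A_2$ subsystem subgroup, and of $a^G\cap D$ as the transvection class of $D$ of size $(q^3-\eps)(q+\eps)$, is correct and matches the commuting-pair count $q^3(q^3+1)(q+1)(q^3-1)=|a^G\cap D|\cdot|b^G|$ that appears in the paper.

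There are, however, two genuine gaps. First, the crux of the orbit count --- that some long root element $a'\notin D$ has $|C_D(a')|\le q^2$ --- is explicitly deferred (``I would verify this by an explicit calculation''), and this is exactly where the work lies. The paper sidesteps the $7$-dimensional module computation entirely: using a \Chevie{} computation to pick a representative pair $(c,d)$ with $cd$ conjugate to $bu$, $u$ regular unipotent in $D$, one gets $C_G(c)\cap C_G(d)\le C_D(u)$ of order $3q^2$, and then rules out any order-$3$ element of the intersection (since only $b^{\pm 1}\in D$ are $G$-conjugate to $b$, and they do not commute with $c$), giving the bound $\le q^2$ without a model of the embedding $D\subset G_2$. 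If you want to keep your route, you do have to complete the stabilizer computation; describing the plan is not a proof.

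Second, and more seriously, your proof of the ``moreover'' is incorrect. The assertion is that for \emph{every} pair $(c,d)\in a^G\times b^G$ the group $\langle c,d\rangle$ lies in some Borel subgroup (this is how the lemma is used in Example~\ref{ex:G2ss}); you may not choose $a$ in a long root subgroup of a Borel containing $b$, as that only handles a single orbit. The correct argument, once the two-orbit count is in hand, is to exhibit representatives of \emph{both} $\bG$-orbits inside a common Borel $B$: take $a_1\in C_B(b)$ a long root element (commuting pair), and use that the long-root subgroup $J\le B$ is normal in $B$ while $C_B(b)$ is not, to produce a long root element $a_2\in B$ with $a_2b\ne ba_2$ (non-commuting pair). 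Since every pair in $a^G\times b^G$ is $\bG$-conjugate to $(a_1,b)$ or $(a_2,b)$, every pair lies in some Borel. Your version proves the claim only for one orbit.
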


\begin{proof}
We give the proof for $q \equiv 1 \pmod3$. Essentially the identical proof
works in the other case. Moreover, for our application to algebraic groups,
this case is sufficient.

Note that $[1_C^G, 1_D^G] =   |C \backslash G /D|$  
or equivalently the number of orbits of $G$ on $\Gamma: =a^G \times b^G$
(acting by simultaneous conjugation).

We will produce two distinct $G$-orbits on $\Gamma$ and show that the number
of elements in the union of these orbits is $|\Gamma|$, whence the result.

The first orbit consists of the commuting pairs in $\Gamma$.
We can conjugate and assume that the second element is $b$ and so $a$ must be
a long root elements in $D$. We thus see that this is a single orbit of
size $q^3(q^3+1)(q+1)(q^3-1)$.

Using \Chevie, we see that we may choose $(c,d) \in \Gamma$
such that $cd$ is conjugate to $bu$
 with $u$ a regular unipotent element in $D$.
Thus, $C_G(c) \cap C_G(d)$ is isomorphic to a subgroup of $C_D(u)$ which
has order $3q^2$. We claim that $C_G(c) \cap C_G(d)$ contains no elements
of order $3$. This is because the only elements of order $3$ in $D$ which
are conjugate to $b$ in $G$ are $b$ and $b^{-1}$. Since $c$ does not commute
with $d$, it follows that no element of order $3$ is in $C_G(c) \cap C_G(d)$.
Thus, $|C_G(c) \cap C_G(d)| \le q^2$ (in fact, we have equality but this
will come out).

Thus, the size of the $G$-orbit containing $(c,d)$
is $[G:(C_G(c) \cap C_G(d))] \ge q^4(q^2-1)(q^6-1)$.
It follows that the size of the union of these two
orbits is at least $|\Gamma|$ (and so exactly).

Since we are assuming that $q \equiv 1 \pmod3$, $b$ is contained in some
Borel subgroup $B$ of $G$ containing the Borel subgroup of $C_G(b)$. Let $T$
be a maximal torus of $C_B(b)$ (and so also of $G$). Let $a_1$ be a long
root element of $C_B(b)$. Let $J$ be the subgroup of $B$ generated by $T$
and all long root elements of $B$. Since $J$ is normal in $B$ and $C_B(b)$
is not normal in $B$, we can choose a long root element $a_2$ of $B$ not
in $C_B(g)$. Thus, $(a_1,b)$ and $(a_2,b)$ are in different $G$-orbits on
$a^G \times b^G$. It follows that each pair in $\Gamma$ is contained in
some Borel subgroup of $G$.
\end{proof}

\begin{lem}   \label{lem:f43a}
 Let $G=F_4(q)$ with $q$ odd. Let $a$ be a long root element of $G$ and $b$
 an involution in $G$ with centralizer $H :=C_G(b)$ of type $B_4(q)$.
 Let $P$ be the normalizer of the long root subgroup of $G$ containing~$a$,
 so that $P' = C_G(a)$. Then $[1^G_H,1_{P'}^G] = 2$.
 Moreover, if $(c,d) \in a^G \times b^G$, then $\langle c, d \rangle$ is
 contained in a Borel subgroup of $G$.
\end{lem}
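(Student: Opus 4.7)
The plan is to parallel the argument of Lemma~\ref{lem:g2not3}. By Frobenius reciprocity and the Mackey formula,
\[
[1^G_H,\,1^G_{P'}] \;=\; |H\backslash G/P'|,
\]
which equals the number of $G$-orbits on $\Gamma := a^G \times b^G$ under simultaneous conjugation, since $H = C_G(b)$ and $P' = C_G(a)$. Fixing $d = b$ in an orbit representative reduces the count to the number of $H$-conjugacy classes inside $a^G$, so the main task becomes to show that $a^G$ is a union of exactly two $H$-classes.

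The structural input is that $H \cong \Spin_9(q)$ embeds in $F_4$ via a $B_4$-subsystem whose $24$ long roots coincide with the $24$ long roots of $F_4$. Consequently every long root subgroup of $G$ lies in $H$, whence $a^G \subset H$ and every pair $(c,d) \in \Gamma$ consists of commuting elements. In the algebraic group $\bH$, a direct matrix computation on a long-root vector of $F_4$ (e.g., the element $E_{1,-2}-E_{2,-1} \in \mathfrak{so}_9 \subset \mathfrak{f}_4$ realizing $X_{e_1+e_2}$) shows that any element of $a^G$ lies in the $\bH$-unipotent class of Jordan type $[2^2,1^5]$; among $\bH$-nilpotent classes of dimension $\le 16 = \dim O_{F_4}(a)$ this is the unique one consisting of $F_4$-long-root elements, as the classes $[3,1^6]$ and $[2^4,1]$ consist of $F_4$-short-root and $A_1+A_1$-type elements respectively. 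Hence $a^G \cap \bH$ is a single $\bH$-class.

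In the finite group $H$ this algebraic class splits: the component group $A(u) \cong \ZZ_2$ of the centralizer (arising from the $\OO_5$-factor of the reductive centralizer) is Frobenius-stable, and by the standard Lang--Steinberg rationality dictionary for unipotent classes the single $\bH$-class becomes exactly two $H$-conjugacy classes on $\FF_q$-points. This gives $[1^G_H,1^G_{P'}] = 2$. For the Borel assertion: any $(c,d) \in \Gamma$ satisfies $[c,d] = 1$ with $c$ unipotent in $H$ and $d$ central in $H$. Conjugating $c$ inside $H$, we may assume $c$ lies in a Borel $B_H$ of $H$; then $d \in Z(H) \subset B_H$ automatically. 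Since $H$ has full rank in $G$, $B_H$ extends to a Borel $B$ of $G$ by choosing positive $F_4$-roots extending those of $B_4$. Thus every pair in $\Gamma$ is $G$-conjugate into a common Borel subgroup of $G$.

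The main obstacle is the rationality analysis: computing $A(u)$ for the $[2^2,1^5]$-class in the spin group $\Spin_9$ (not merely $\SO_9$) and verifying that Frobenius acts trivially so that Lang--Steinberg yields exactly two rational classes, not one or four. This is a standard but delicate calculation in the theory of unipotent classes in classical groups.
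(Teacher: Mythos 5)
The proposal breaks at a critical early step. You assert that, because the $B_4$-subsystem contains all $24$ long roots of $F_4$, ``every long root subgroup of $G$ lies in $H$, whence $a^G\subset H$ and every pair $(c,d)\in\Gamma$ consists of commuting elements.'' This is false. What is true is that, relative to a \emph{fixed} maximal torus $T\le H$, all $24$ long root subgroups of $G$ with respect to $T$ lie in $H$. But $a^G$ is a full $G$-conjugacy class and $H=C_G(b)$ is core-free in $G$ (its intersection over all $G$-conjugates is $C_G(\langle b^G\rangle)=Z(G)=1$), so $a^G\subset H$ is impossible. In fact the paper's own Example~\ref{ex:f4odd} explicitly exhibits non-commuting pairs $(a,b^g)$: if $a,b^g$ commute then $(ab^g)^2$ is a long root element, and if not then $(ab^g)^2$ is a short root element --- both occur. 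Since the commutativity claim is what underlies your identification of the orbit count with the number of $H$-classes of elements of Jordan type $[2^2,1^5]$ inside $H$, the entire computation of $[1^G_H,1^G_{P'}]$ collapses: the count of $G$-orbits on $\Gamma$ is the number of $H$-\emph{orbits} on $a^G$, and these orbits need not, and do not all, lie in $H$. The Borel assertion also relies on $[c,d]=1$ and so fails for the same reason.

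The paper's proof is entirely different in method: it decomposes $1^G_{P'}=\sum_{\lambda\in\Irr(P/P')}\lambda^G$ and uses Lusztig's theory to show each $\lambda^G$ for $\lambda\ne 1$ lies in Lusztig series $\cE(G,s)$ with $C_{\bG^*}(s)\supseteq B_3T_1$, then compares with Lawther's explicit decomposition of $1^G_H$ (whose non-unipotent constituents sit in Lusztig series parametrized by semisimple classes with centralizer type $C_3A_1$ or $C_3$) to conclude these share no constituents; the remaining computation $[1^G_H,1^G_P]=2$ is read off from the corresponding Weyl-group permutation characters $1_{W(C_3)}^{W(F_4)}$. The Borel statement is then deduced from the fact that there are exactly two orbits together with an explicit construction of a commuting pair and a non-commuting pair, both inside a Borel subgroup of $G$ containing a Borel of $H$. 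You may want to look at the role Lawther's formula for $1^G_H$ plays and at Example~\ref{ex:f4odd} to see why the commutativity assumption cannot hold.
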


\begin{proof}
Certainly, $1_{P'}^G = \sum_{\lambda \in \Irr(P/P')}\lambda^G$, with
$P/P'\cong C_{q-1}$.

Let $\bP\le\bG$ be an $F$-stable parabolic subgroup with $\bP^F=P$, and $\bL$
an $F$-stable Levi subgroup of $\bP$. Any nontrivial linear character
$\lambda$ of $P/P'$ can be viewed as a linear character of $L=\bL^F$, and
then $\lambda^G$ is the Harish-Chandra induction $R_\bL^\bG(\lambda)$ of
$\lambda$. Thus $\lambda$ belongs to the Lusztig series $\cE(L,s)$, where
$s$ is a nontrivial central (semisimple) element of $L^*\le G^*={\bG^*}^{F^*}$,
the dual of $L$, where $\bG^*$ denotes the dual group (which is isomorphic
to $G$).
Now $\bL$ has type $C_3T_1$, with $T_1$ a $1$-dimensional torus. So the
underlying algebraic group $\bL^*$ with $L^* = {\bL^*}^{F^*}$ has type $B_3T_1$.
By \cite[Prop.~3.6.8]{Ca} we have $Z(\bL^*)^{F^*} = Z(L^*)$, whence
$C_{\bG^*}(s)$ contains the reductive subgroup $\bL^*$ of type $B_3T_1$.
Note that Lusztig induction $R_\bL^\bG$ sends any
irreducible character in $\cE(L,s)$ to a linear combination
of irreducible characters in $\cE(G,s)$, cf. for instance
\cite[Lemma 8.2]{Lub}. 
So all the irreducible
constituents $\varphi$ of $\lambda^G$ belongs to $\cE(G,s)$.

On the other hand, since $q$ is odd, by \cite[p.~110]{law3} we have
$$1^G_H = \chi_{\phi_{1,0}} + \chi_{\phi''_{8,3}} + \chi_{\phi_{4,1}}+
   \chi_{\phi''_{2,4}}
  + \chi_{\kappa_1}^{1,St} + \sum^{(q-3)/2}_{j=1}\chi^1_{\kappa_7,j}
  + \sum^{(q-1)/2}_{j=1}\chi^1_{\kappa_{8,j}},$$
where the first four constituents are unipotent characters
(and $\chi_{\psi}$ is the unipotent character labeled by the Weyl group
character $\psi$ listed in \cite[\S13.9]{Ca}). Furthermore,
the fifth constituent belongs to $\cE(G,\kappa_1)$, where
$\kappa_1 = (t_1)^{G^*}$ is the conjugacy class of an involution $t_1 \in G^*$
with $C_{\bG^*}(t_1)$ of type $C_3A_1$. Each of the summands in the next
two summations belongs to $\cE(G,\kappa_{7,j})$ or $\cE(G,\kappa_{8,j})$,
where $\kappa_{a,j} = (t_{a,j})^{G^*}$ is the conjugacy class of a
semisimple element $t_{a,j} \in G^*$, with the semisimple part of
$C_{\bG^*}(t_{a,j})$ being of type $C_3$ for $a = 7,8$. Since
$C_{\bG^*}(s)$ contains a reductive subgroup of type $B_3T_1$, $s$ cannot be
conjugate to any of the elements $1$, $t_1$, or $t_{a,j}$, $a = 7,8$.
It follows that $[1^G_H,\lambda^G] = 0$ for $\lambda\ne1$.

Thus $[1^G_H,1_{P'}^G] = [1^G_H,1_{P}^G]$, and it remains to consider the
case $\lambda = 1_{P}$. It is well known that the decomposition of $1^G_P$ into
irreducible constituents is given by the corresponding decomposition for the
permutation character of the Weyl group $W(F_4)$ acting on the cosets of the
parabolic subgroup $W(C_3)$, the Weyl group of $L$.
The irreducible constituents in the latter decomposition are
$\chi_{\phi_{1,0}}$, $\chi_{\phi_{2,4}'}$, $\chi_{\phi_{9,2}}$,
$\chi_{\phi_{4,1}}$, and $\chi_{\phi_{8,3}'}$.
Thus, the scalar product of the two permutation characters is $2$ as claimed.

It follows that $G$ has two orbits on $a^G \times b^G$. Let $B$ be a Borel
subgroup of $G$ containing a Borel subgroup of $C_G(b) \cong B_4(q)$.
Arguing as in the previous case, we can choose long root elements
$a_1, a_2 \in B$ with $a_1b=ba_1$ and $a_2b \ne ba_2$.
Certainly, $(a_1,b)$ and $(a_2,b)$ belong to different $G$-orbits on
$a^G \times b^G$. It follows that each pair in $a^G \times b^G$ is 
contained in some Borel subgroup of $G$.
\end{proof}

\section{Algebraic Groups}   \label{sec:alg}

We first recall some facts about conjugacy classes in algebraic groups.
Throughout the section 
we fix an algebraically closed field $k$ of characteristic $p \ge 0$.

By a fundamental result of Lusztig there are only finitely many conjugacy
classes of unipotent elements in a connected reductive group. 
This is easily seen to imply that if $A$ and $B$ are conjugacy classes of
a simple algebraic group, then $AB$ is an infinite union of conjugacy
classes if and only if the closure of $AB$ contains infinitely many
semisimple conjugacy classes. We will not use this result in what follows.

We will use the following elementary result.  Note that if $a$ is an 
element of a connected reductive algebraic group $\bG$ and $a=su=us$ where $s$
is semisimple and $u$ is unipotent, then $s \in \overline{a^{\bG}}$. 

\begin{lem}   \label{lem:irred}
 Let $\bG$ be a connected reductive algebraic group over $k$, $\bT$ a
 maximal torus of $\bG$, and let $A$ and $B$ be non-central conjugacy classes
 of $\bG$. Then the following statements hold.
 \begin{enumerate}[\rm (a)]
  \item  $\overline{AB}$ either contains a unique semisimple conjugacy class
   of $\bG$ or contains infinitely many semisimple classes.
  \item  $\overline{AB}$ contains a unique semisimple conjugacy class if
   and only if $\overline{AB} \cap \bT$ is finite.
 \end{enumerate}
\end{lem}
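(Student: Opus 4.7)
The plan is to combine two facts: $\overline{AB}$ is an irreducible subvariety of $\bG$, and the adjoint quotient morphism $\pi : \bG \to \bT/W$ (with $W$ the Weyl group of $\bG$ relative to $\bT$) is a surjection whose points parametrize the semisimple conjugacy classes of $\bG$, with $\pi(g)$ depending only on the conjugacy class of the semisimple part $g_s$.

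First I would observe that $A$ and $B$, being orbits of the connected group $\bG$ acting by conjugation on itself, are irreducible; hence so is $A \times B$, and the image $AB$ of the multiplication map $A \times B \to \bG$ is irreducible, as is its closure $\overline{AB}$. Next, using the standard fact that $g_s \in \overline{g^{\bG}}$ for every $g \in \bG$, I would check that for any closed $\bG$-invariant subset $X \subseteq \bG$ the semisimple classes contained in $X$ are in bijection with the points of $\pi(X)$: if $g \in X$ and $\pi(g) = [s]$ then $g_s$ is $\bG$-conjugate to $s$ and $g_s \in \overline{g^{\bG}} \subseteq X$, so the whole class of $s$ lies in $X$; conversely every semisimple class in $X$ is visibly hit by $\pi$.

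For (a), $\pi(\overline{AB})$ is the image of an irreducible variety under a morphism, hence irreducible; if it is finite it must consist of a single point, giving a unique semisimple class in $\overline{AB}$, and otherwise there are infinitely many. For (b), $\overline{AB} \cap \bT$ is the disjoint union, over the semisimple classes $[s]$ meeting $\overline{AB}$, of the intersections $s^{\bG} \cap \bT$; each of these is a nonempty union of finitely many $W$-orbits on $\bT$ (since two elements of $\bT$ are $\bG$-conjugate if and only if they are $W$-conjugate, via the conjugacy of maximal tori in $C_{\bG}(s)^\circ$). So $\overline{AB} \cap \bT$ is finite if and only if only finitely many semisimple classes appear in $\overline{AB}$, which by (a) is equivalent to exactly one.

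The delicate point is the identification of the semisimple classes in $\overline{AB}$ with $\pi(\overline{AB})$ via the closure principle $g_s \in \overline{g^{\bG}}$; once this is in place, both (a) and (b) follow formally from irreducibility of $\overline{AB}$ together with finiteness of $W$.
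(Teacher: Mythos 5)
Your proof is correct and at heart uses the same key ideas as the paper: irreducibility of $\overline{AB}$ (via irreducibility of $A$, $B$), the fact that $g_s \in \overline{g^{\bG}}$, and the fact that semisimple classes intersect $\bT$ in a finite (single) Weyl group orbit. The only cosmetic difference is the packaging: the paper works directly with the closed subsets $X_i = \{g : \chi(g) = \chi(s_i) \ \forall\chi\}$ (the fibers of the adjoint quotient, one per semisimple class) and invokes irreducibility to force $\overline{AB}$ into a single $X_i$, whereas you pass through the adjoint quotient morphism $\pi:\bG \to \bT/W$ and observe that $\pi(\overline{AB})$, being an irreducible finite set, is a point. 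These are the same argument stated at different levels of abstraction; your version foregrounds the Chevalley restriction theorem, the paper's version uses only that characters of rational $\bG$-modules determine and separate semisimple classes, which is the same underlying input.
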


\begin{proof}
Suppose that $\overline{AB}$ contains finitely many
semisimple classes $C_1,\ldots, C_m$.   Let $X_i$ be the set of
elements in $\bG$ whose semisimple parts are in $C_i$.   Note that
$X_i$ is closed (since if $s \in X_i$ is a semisimple element, then
$X_i$ consists of all elements $g \in \bG$ with $\chi (g) = \chi(s)$ for
all the characters of rational finite-dimensional $\bG$-modules). 
Since $A$ and $B$ are irreducible varieties, so is $\overline{AB}$, whence
$\overline{AB} \subset \cup_i X_i$ implies that  $\overline{AB} \subset X_i$
for some $i$.    This proves (a).  

Now (b) follows by (a) and the facts that every semisimple class of $\bG$
intersects $\bT$ nontrivially and this intersection is finite 
(since it is an orbit of the Weyl group on $\bT$, see \cite[Prop. 3.7.1]{Ca}). 
\end{proof}

We need some results about closures of unipotent classes. These can be deduced
from the results in \cite{Spa}. We give elementary proofs for what we need
(but quote \cite{Spa} for $G_2$ and also for $F_4$ in characteristic $2$).
We also do not consider the groups of type $B$ in characteristic $2$.
The results in this case can be read off from the results for the groups
of type $C$. The first such result we need has a very short proof,
see \cite[Cor.~3.3]{GM}. 

\begin{lem}   \label{lem:closures1}
 Let $\bG$ be a simple algebraic group over an algebraically closed field $k$
 of characteristic $p \ge 0$ and $g \in \bG$ a nontrivial unipotent element.
 Then the closure of $g^{\bG}$ contains root elements.  
 \end{lem}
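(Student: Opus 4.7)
The plan is to conjugate $g$ into the unipotent radical of a Borel subgroup, and then use a cocharacter of the maximal torus to squeeze $g$ onto a single root subgroup in the closure of its conjugacy class.

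Since $\overline{g^{\bG}}$ is $\bG$-stable, we may assume $g \in \bU$, the unipotent radical of a Borel subgroup $\bB = \bT\bU$ with $\bT$ a fixed maximal torus. Fix any total order on the set $\Phi^+$ of positive roots of $\bG$ relative to $(\bT,\bB)$. Then $g$ admits a unique ordered product expression
$$g = \prod_{\alpha \in \Phi^+} u_\alpha(c_\alpha) \qquad (c_\alpha \in k),$$
where $u_\alpha : k \to \bU$ is the usual parametrization of the $\alpha$-root subgroup. Let $P := \{\alpha \in \Phi^+ : c_\alpha \neq 0\}$; this set is nonempty since $g \neq 1$. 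Let $X^*(\bT)$ and $X_*(\bT)$ denote the character and cocharacter lattices of $\bT$, with natural integer-valued pairing $\langle -, -\rangle$.

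The crucial step is to choose an extreme point $\alpha_0$ of the convex hull $\mathrm{conv}(P)$ in $X^*(\bT) \otimes \RR$, together with an integral cocharacter $\lambda \in X_*(\bT)$ satisfying $\langle \alpha_0, \lambda\rangle = 0$ and $\langle \alpha, \lambda\rangle > 0$ for all $\alpha \in P \setminus \{\alpha_0\}$. Such a pair exists because any vertex of the rational polytope $\mathrm{conv}(P)$ has a supporting hyperplane defined over $\QQ$ meeting $\mathrm{conv}(P)$ only at $\alpha_0$; clearing denominators of the defining linear functional produces the required integral $\lambda$. Conjugation by $\lambda(t)$ is an automorphism of $\bU$ scaling each root subgroup according to $\lambda(t) u_\alpha(c) \lambda(t)^{-1} = u_\alpha(t^{\langle\alpha,\lambda\rangle} c)$, so the ordered factorization is preserved and
$$\lambda(t)\, g\, \lambda(t)^{-1} = \prod_{\alpha \in \Phi^+} u_\alpha(t^{\langle\alpha,\lambda\rangle} c_\alpha).$$
As $t \to 0$, every factor with $\alpha \in P \setminus \{\alpha_0\}$ tends to the identity while the $\alpha_0$-factor is unchanged, so
$$\lim_{t \to 0} \lambda(t)\, g\, \lambda(t)^{-1} = u_{\alpha_0}(c_{\alpha_0}),$$
which is a nontrivial root element lying in $\overline{g^{\bG}}$.

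The principal obstacle is the convex-geometric selection of the pair $(\alpha_0, \lambda)$: one must produce an integral cocharacter, not merely a real one. A cleaner variant would be to induct on $|P|$, applying any cocharacter that strictly separates one root of $P$ from the others in order to produce a nontrivial limit with strictly smaller support; the base case $|P| = 1$ is the desired conclusion. Either way, the content is elementary, but care is needed to stay inside the cocharacter lattice.
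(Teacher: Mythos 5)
Your overall strategy---conjugating $g$ into $\bU$ and degenerating by a cocharacter of the maximal torus---is the standard one for this sort of statement and is in line with the cited reference. But there is a genuine gap in the convex-geometric step. You assert that for any vertex $\alpha_0$ of $\mathrm{conv}(P)$ there is an integral cocharacter $\lambda$ with $\langle\alpha_0,\lambda\rangle=0$ and $\langle\alpha,\lambda\rangle>0$ for $\alpha\in P\setminus\{\alpha_0\}$, obtained from a supporting hyperplane at $\alpha_0$. A supporting hyperplane of a polytope at a vertex is an \emph{affine} hyperplane $\{x:f(x)=c\}$ with $c$ in general nonzero, whereas a cocharacter gives a \emph{linear} functional, which must vanish at the origin; you cannot absorb the constant $c$. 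A concrete failure: in type $A_2$ take $g=u_\alpha(1)\,u_\beta(1)\,u_{\alpha+\beta}(1)$ with $\alpha,\beta$ simple, so $P=\{\alpha,\beta,\alpha+\beta\}$. All three points are vertices of the triangle $\mathrm{conv}(P)$, yet no linear functional satisfies $\lambda(\alpha+\beta)=0$, $\lambda(\alpha)>0$, $\lambda(\beta)>0$, since $\lambda(\alpha+\beta)=\lambda(\alpha)+\lambda(\beta)$.

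The correct object is the \emph{cone} $\RR_{\ge0}P$, not the polytope $\mathrm{conv}(P)$. Since $P\subset\Phi^+$, pairing against any regular dominant cocharacter shows that $0\notin\mathrm{conv}(\Phi^+)$, so $\RR_{\ge0}P$ is a pointed rational polyhedral cone. A pointed nonzero polyhedral cone has an extreme ray, necessarily of the form $\RR_{\ge0}\alpha_0$ for some $\alpha_0\in P$, and by reducedness of $\Phi$ no other element of $P$ lies on this ray. An extreme ray of a rational polyhedral cone is exposed by a rational linear functional vanishing on the ray and strictly positive on the rest of the cone; clearing denominators produces exactly the integral $\lambda$ you need. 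With $\alpha_0$ chosen in this way (and note that not every vertex of $\mathrm{conv}(P)$ lies on an extreme ray of the cone, as the $A_2$ example shows), the rest of your limit computation goes through. Your proposed ``cleaner'' inductive variant suffers from the same confusion: a cocharacter that ``strictly separates one root of $P$ from the others'' and still gives a nontrivial limit is again a statement about the cone, not the polytope, and needs the same pointedness argument to exist.
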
 

We next note the following fact:

\begin{lem}   \label{lem:dense centralizer}
 Let $\bG$ be a semisimple algebraic group with $a, b \in \bG$.
 If $C_{\bG}(a)C_{\bG}(b)$ is dense in $\bG$, then $a^{\bG}b^{\bG}$ 
 is contained in the closure of $(ab)^{\bG}$.
 In particular, the semisimple parts of elements of $a^{\bG}b^{\bG}$ 
 form a single semisimple conjugacy class of $\bG$.
\end{lem}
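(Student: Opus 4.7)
The plan is to reduce the first assertion to a single closed-set check via a carefully chosen morphism. Set $X := \overline{(ab)^\bG}$ and consider
$$\phi : \bG \times \bG \longrightarrow \bG, \qquad \phi(g, h) := a^g b^h = g^{-1} a g\, h^{-1} b h.$$
Since $\phi$ is a morphism of varieties and $X$ is closed, $\phi^{-1}(X)$ is a closed subset of $\bG \times \bG$, so it suffices to exhibit a dense subset of $\bG \times \bG$ contained in $\phi^{-1}(X)$.

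The key observation is the invariance $\phi(xg, yh) = \phi(g, h)$ for $x \in C_\bG(a)$ and $y \in C_\bG(b)$, which is immediate from $x^{-1} a x = a$ and $y^{-1} b y = b$. Specializing $h = g$ and noting that $\phi(g, g) = (ab)^g \in (ab)^\bG \subseteq X$, one obtains $\phi(xg, yg) \in X$ for every $g \in \bG$, $x \in C_\bG(a)$, $y \in C_\bG(b)$. Equivalently,
$$S := \{(u, v) \in \bG \times \bG : uv^{-1} \in C_\bG(a) C_\bG(b)\} \subseteq \phi^{-1}(X),$$
since $(u,v) = (xg, yg)$ forces $uv^{-1} = xy^{-1} \in C_\bG(a) C_\bG(b)$, and conversely $uv^{-1} = xy^{-1}$ allows the choice $g := y^{-1}v$. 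The variety automorphism $(u, v) \mapsto (uv^{-1}, v)$ of $\bG \times \bG$ carries $S$ onto $C_\bG(a) C_\bG(b) \times \bG$, which is dense in $\bG \times \bG$ by the hypothesis that $C_\bG(a) C_\bG(b)$ is dense in $\bG$. Hence $\phi^{-1}(X) = \bG \times \bG$, and $a^\bG b^\bG = \phi(\bG \times \bG) \subseteq X$, which is the first assertion.

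For the final clause, let $s$ denote the semisimple part of $ab$. For any rational finite-dimensional representation $V$ of $\bG$ with character $\chi_V$, the function $\chi_V$ is a regular, $\bG$-invariant function on $\bG$, hence constant on $(ab)^\bG$ with value $\chi_V(s)$, and therefore constant on $X = \overline{(ab)^\bG}$ by continuity. Since such characters separate semisimple conjugacy classes of $\bG$, every element of $X$ — and in particular every element of $a^\bG b^\bG$ — has semisimple part conjugate to $s$. The whole argument is essentially formal: the only substantive ingredient is the observation that the $C_\bG(a) \times C_\bG(b)$-invariance of $\phi$ converts the density hypothesis directly into $\phi^{-1}(X) = \bG \times \bG$, so there is no real obstacle beyond choosing the right morphism.
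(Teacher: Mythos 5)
Your proof is correct and follows essentially the same route as the paper's: both define the morphism $(g,h)\mapsto a^g b^h$, exploit its invariance under left multiplication by $C_\bG(a)\times C_\bG(b)$ to show that the dense set $\{(u,v): uv^{-1}\in C_\bG(a)C_\bG(b)\}$ is mapped into $(ab)^\bG$, and then pass to closures and use that characters of rational representations separate semisimple classes. Your argument is marginally more explicit in deducing density of that set via the automorphism $(u,v)\mapsto(uv^{-1},v)$, which the paper leaves implicit, but the substance is identical.
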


\begin{proof}
Let $\Gamma=\{(g,h) \in \bG \times \bG \mid gh^{-1} \in C_{\bG}(a)C_{\bG}(b)\}$.
Note that by assumption $\Gamma$ contains a dense open subset of
$\bG \times \bG$. Suppose that $(g,h)\in\Gamma$. Then
$$
(a^g, b^h) = (a^{gh^{-1}}, b)^h = (a^{xy}, b)^h = (a,b)^{yh},
$$
where $gh^{-1}=xy$ with $x \in C_\bG(a)$ and $y \in C_\bG(b)$.
Consider $f:\bG \times \bG \rightarrow \bG$ given by
$f(g,h)=a^gb^h$. If $c = ab$, then $f(\Gamma)\subseteq c^\bG$, whence
$f(\bG \times \bG)$ is contained in the closure of $c^\bG$, 
and the first part of the lemma follows.

Let $s$ be the semisimple part of $c$. Let $\bG_s$ be the set of elements  in 
$\bG$ whose semisimple part is conjugate to $s$.  As previously noted,
$\bG_s$ is a closed subvariety of $\bG$. Thus,  
$a^{\bG}b^{\bG} \subseteq \overline{c^\bG} \subseteq \bG_s$.
\end{proof}

We record the following trivial observation. Let $H$ and $K$ be subgroups of
a group $G$ and set $\Gamma: = G/H \times G/K$.  Then $G$ acts naturally on
$\Gamma$ and the orbits
of $G$ on $\Gamma$ are in bijection with the orbits of $H$ on $G/K$ and so
in bijection with $H\backslash G/ K$. In particular, this implies:


\begin{lem}
 Let $G$ be a group with $a, b \in G$. The number of conjugacy classes
 in $a^Gb^G$ is at most $|C_G(a)\backslash G/C_G(b)|$.
\end{lem}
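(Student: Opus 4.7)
The plan is to exploit the diagonal $G$-action on $a^G \times b^G$ together with the multiplication map to $a^Gb^G$. Specifically, I would consider the map
\[
\mu : a^G \times b^G \longrightarrow a^Gb^G, \qquad (x,y)\longmapsto xy.
\]
By definition $\mu$ is surjective, and if $G$ acts on the domain by simultaneous conjugation and on the codomain by conjugation, then $\mu$ is $G$-equivariant, since $\mu(x^g,y^g) = x^gy^g = (xy)^g$. Consequently the preimage of a conjugacy class in $a^Gb^G$ is a $G$-invariant subset of $a^G \times b^G$, so the number of conjugacy classes contained in $a^Gb^G$ is at most the number of $G$-orbits on $a^G\times b^G$.

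To finish, I would invoke the trivial observation recorded just before the lemma: identifying $a^G$ with $G/C_G(a)$ and $b^G$ with $G/C_G(b)$, the $G$-orbits on $G/C_G(a)\times G/C_G(b)$ are in bijection with the double cosets $C_G(a)\backslash G/C_G(b)$. Hence the number of conjugacy classes in $a^Gb^G$ is bounded above by $|C_G(a)\backslash G/C_G(b)|$, as required. There is no substantive obstacle here; the only thing to verify is the $G$-equivariance of $\mu$ and the correct identification of orbits with double cosets, both of which are routine.
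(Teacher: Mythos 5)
Your argument is correct and is exactly the one the paper intends: the lemma is stated as an immediate consequence of the preceding observation that the $G$-orbits on $G/C_G(a)\times G/C_G(b)$ correspond to the double cosets $C_G(a)\backslash G/C_G(b)$, combined with the equivariance and surjectivity of the multiplication map. Nothing is missing.
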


We record the following easy result.

\begin{lem}   \label{lem:fusion}
 Let $\bG$ be a connected reductive algebraic group with $\bH$ a connected
 reductive subgroup.  If $a, b \in \bH$ and the semisimple parts of
 $a^{\bH}b^{\bH}$ are not a single $\bH$-class, then the  semisimple parts
 of $a^{\bG}b^{\bG}$ are a union of an infinite number of $\bG$-conjugacy
 classes.
\end{lem}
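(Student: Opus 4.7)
The plan is to reduce to Lemma~\ref{lem:irred}(a) applied inside $\bH$, and then promote the resulting finiteness statement from $\bH$-classes to $\bG$-classes. First, if either $a$ or $b$ were central in $\bH$, then $a^{\bH}b^{\bH}$ would be a single $\bH$-conjugacy class, and in particular its semisimple parts would form a single $\bH$-class, contradicting the hypothesis. So $a^{\bH}$ and $b^{\bH}$ are non-central conjugacy classes in the connected reductive group $\bH$, and Lemma~\ref{lem:irred}(a), read for $\bH$ in place of $\bG$, shows that the set of semisimple parts of elements of $a^{\bH}b^{\bH}$ either forms a single $\bH$-conjugacy class or forms infinitely many $\bH$-conjugacy classes. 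By assumption we are in the second alternative.

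The main step is a short geometric observation: for any semisimple element $s\in\bH$, the intersection $s^{\bG}\cap\bH$ is a union of only finitely many $\bH$-conjugacy classes. Indeed, because $s$ is semisimple in $\bG$ its orbit $s^{\bG}$ is closed in $\bG$, so $s^{\bG}\cap\bH$ is closed in $\bH$. This intersection decomposes as a disjoint union of $\bH$-conjugacy classes of elements semisimple in $\bH$; each such class is itself closed in $\bH$ (again by semisimplicity in the reductive group $\bH$) and irreducible (as the image of the connected group $\bH$ under conjugation). Each $\bH$-class is therefore an irreducible component of the Noetherian variety $s^{\bG}\cap\bH$, and so there can be only finitely many of them.

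Combining the two ingredients completes the argument: the infinitely many $\bH$-classes appearing as semisimple parts of elements of $a^{\bH}b^{\bH}$ can lie in only finitely many $\bH$-classes over each $\bG$-class, so they must come from infinitely many distinct $\bG$-classes. Since $a^{\bH}b^{\bH}\subseteq a^{\bG}b^{\bG}$, the semisimple parts of $a^{\bG}b^{\bG}$ also form infinitely many $\bG$-conjugacy classes, as required. The only genuinely nontrivial ingredient is the finiteness assertion about $s^{\bG}\cap\bH$; once this is isolated, the rest is formal from Lemma~\ref{lem:irred}(a).
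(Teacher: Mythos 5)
Your overall strategy---reduce via Lemma~\ref{lem:irred}(a) to infinitely many semisimple $\bH$-classes, and then show each semisimple $\bG$-class can contribute only finitely many $\bH$-classes---is a reasonable alternative to the paper's argument. The fact you need, that $s^{\bG}\cap\bH$ is a finite union of $\bH$-classes for $s$ semisimple, is indeed true. However, your proof of that fact has a genuine gap.

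You argue that each $\bH$-class inside $s^{\bG}\cap\bH$ is closed and irreducible, and conclude that it is ``therefore'' an irreducible component of $s^{\bG}\cap\bH$, whence finiteness. But a closed irreducible subset of a Noetherian variety need not be an irreducible component; for that you would need \emph{maximality} among irreducible closed subsets, which you have not established. In fact the purely topological statement implicit in your argument---that a Noetherian variety cannot be a disjoint union of infinitely many closed irreducible subsets---is false: $\mathbb{A}^2$ is the disjoint union of the closed irreducible horizontal lines $\{y=c\}$, none of which is an irreducible component, and this remains a counterexample even when the pieces are orbits of a connected group (they are $\mathbb{G}_a$-orbits under translation in $x$). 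So being closed, irreducible, and an orbit of a connected group is not enough; some input specific to semisimple conjugacy classes is required.

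The missing input is exactly the torus intersection that the paper uses. Pick a maximal torus $\bS$ of $\bH$ and embed it in a maximal torus $\bT$ of $\bG$. Every semisimple $\bH$-class meets $\bS$, and $s^{\bG}\cap\bT$ is a single (finite) orbit of the Weyl group $W(\bG,\bT)$, so $s^{\bG}\cap\bS$ is finite; this bounds the number of $\bH$-classes in $s^{\bG}\cap\bH$ by $|W(\bG,\bT)|$. Once you insert this, your argument closes the gap---but at that point it has essentially folded into the paper's proof, which applies Lemma~\ref{lem:irred}(b) to $\bH$ with the torus $\bS$, observes $\overline{a^{\bH}b^{\bH}}\cap\bS$ is infinite, notes $\bS\subseteq\bT$, and applies Lemma~\ref{lem:irred}(b) again to $\bG$ with $\bT$. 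The paper's route avoids the finiteness lemma you isolated and is shorter for that reason.
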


\begin{proof}
Let $\bS$ be a maximal torus of $\bH$ and $\bT$ a maximal torus of $\bG$
containing $\bS$. By Lemma \ref{lem:irred},
$\overline{a^{\bH}b^{\bH}} \cap \bS$ is infinite.  In particular, 
$\overline{a^{\bG}b^{\bG}} \cap \bT$ is infinite and the result follows
by another application of Lemma \ref{lem:irred}.  
\end{proof}

We next point out the following short proof about products of centralizers.
For unipotent elements, this was proved independently by Liebeck and
Seitz \cite[Chapter 1]{LSbook}. We will obtain stronger results below.

\begin{cor}   \label{unipotent}
 Let $\bG$ be a semisimple algebraic group. If $a \in \bG$ is not central and
 $g \in \bG$, then $C_\bG(a)C_\bG(a^g)$ is not dense in $\bG$.
\end{cor}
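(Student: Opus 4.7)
The plan is to assume for contradiction that $C_{\bG}(a)C_{\bG}(a^g)$ is dense and apply Lemma~\ref{lem:dense centralizer}. Setting $b=a^g$, $c=ab$, and $A=a^{\bG}=b^{\bG}$, the lemma gives $A\cdot A\subseteq\overline{c^{\bG}}$ and forces the semisimple parts of all elements of $A\cdot A$ to lie in a single $\bG$-conjugacy class. Since any character of a rational representation is constant on $\overline{c^{\bG}}$ and agrees with its value on the semisimple part of any element, the semisimple elements of $\overline{c^{\bG}}$ form exactly the class of the semisimple part $s_c$ of $c$. Writing $a=su$ in Jordan form, contracting $u$ by a one-parameter subgroup of $C_{\bG}(s)^{\circ}$ shows $s\in\overline{a^{\bG}}$; since semisimple classes are closed, $s^{\bG}\subseteq\overline{A}$, and continuity of multiplication gives $s^{\bG}\cdot s^{\bG}\subseteq\overline{c^{\bG}}$. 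The proof then splits into two cases.

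Case 1: $s$ is non-central. I fix a maximal torus $\bT\ni s$. For each Weyl group element $w\in W$, $s\cdot w(s)\in\bT$ is semisimple and lies in $\overline{c^{\bG}}$, hence is Weyl-conjugate to $s_c$; setting $w=1$ gives $Ws_c=Ws^{2}$, so $s\cdot w(s)\in Ws^{2}$ for every $w$. I then choose a simple root $\alpha$ with $\alpha(s)\ne 1$ (possible since $s$ is non-central), and compute $s\cdot s_{\alpha}(s)=s^{2}\cdot\alpha^{\vee}(\alpha(s))^{-1}$; a root-system calculation --- most cleanly via values of the adjoint character --- shows this lies outside $Ws^{2}$, contradicting the previous line.

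Case 2: $s$ is central, so $u\ne 1$. Then $A\cdot A=s^{2}\cdot u^{\bG}\cdot u^{\bG}$, and since $a^{2}=s^{2}u^{2}$ has semisimple part $s^{2}$, the single-class condition forces $u^{\bG}\cdot u^{\bG}$ to consist of unipotent elements. By Lemma~\ref{lem:closures1}, $\overline{u^{\bG}}$ contains the conjugacy class of a root element $r$ for some root $\alpha$, so $r^{\bG}\cdot r^{\bG}\subseteq\overline{u^{\bG}\cdot u^{\bG}}$ is contained in the closed set of unipotents. However, a lift of the reflection $s_{\alpha}$ conjugates $r\in\bU_{\alpha}$ into $\bU_{-\alpha}$, giving $r^{-}\in r^{\bG}$; inside the $\SL_{2}$ generated by $\bU_{\alpha}$ and $\bU_{-\alpha}$ an elementary computation gives $\tr(rr^{-})\ne 2$, so $rr^{-}$ is not unipotent, producing the contradiction.

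The main obstacle is the Weyl-orbit step in Case 1: verifying that for non-central semisimple $s$ some $s\cdot w(s)$ escapes $Ws^{2}$. In classical types this follows at once from a multiset comparison of eigenvalues in the natural representation, and a uniform treatment via the adjoint character handles the exceptional and non-simply-laced cases, though one must take some care in small characteristic.
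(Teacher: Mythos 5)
Your Case~2 (unipotent $s$, i.e.\ $u\neq 1$ with $s$ central) runs essentially parallel to the paper: reduce to a root element via Lemma~\ref{lem:closures1}, drop to the rank-one subgroup $\langle\bU_\alpha,\bU_{-\alpha}\rangle$, and observe by a trace computation that the product of two opposite root elements is not unipotent. That part is fine.

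Case~1 is where you diverge from the paper, and there is a genuine gap. You propose to detect failure of the single-semisimple-class condition purely inside a maximal torus, by showing that some $s\cdot w(s)$ with $w\in W$ escapes the Weyl orbit $Ws^2$. You explicitly flag this as ``the main obstacle'' and offer only a sketch (``a root-system calculation, most cleanly via the adjoint character''), but the claim is not proved and in fact fails. Take $\bG=\PGL_2$ and $s$ the image of $\diag(-1,1)$, a non-central involution. Then $s^2=1$, so $Ws^2=\{1\}$; and with $\alpha$ the unique simple root one has $\alpha(s)=-1$ while $\alpha^\vee(-1)=\diag(-1,1)\diag(1,-1)=1$ in $\PGL_2$, so your formula gives $s\cdot s_\alpha(s)=s^2\cdot\alpha^\vee(\alpha(s))^{-1}=1\in Ws^2$. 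Thus every $s\cdot w(s)$, $w\in W$, lands back in $Ws^2$, and the contradiction you want never materializes. The Corollary is of course true here (a dimension count already shows $C_\bG(s)C_\bG(s^g)$ cannot be dense in $\PGL_2$), but it is detected by products $s\cdot s^g$ with $g\notin N_\bG(\bT)$, which your torus-only argument never sees. The paper sidesteps exactly this by going down to the rank-one subgroup $\bH=\langle\bT,\bU_{\pm\alpha}\rangle$ and invoking the explicit $\GL_2$/$\SL_2$ computation of Lemma~\ref{lem:d=2}, together with Lemma~\ref{lem:fusion} to pass back up to $\bG$; this produces genuinely many distinct semisimple classes in $s^\bH s^\bH$, including ones not reached by $N_\bG(\bT)$-conjugation. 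To repair your proof you would need either to restrict attention to non-involutory $s$ and treat involutions (and, in small rank or characteristic, a few other configurations) separately, or --- more cleanly --- to replace the Weyl-orbit step by the paper's rank-one reduction.
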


\begin{proof}
Clearly, we can reduce to the case that $\bG$ is simple.
Write $a=su$ where $su=us$, $s$ is semisimple and $u$ is unipotent.
If $u \ne 1$, then $u$  is not central and since $C_\bG(u) \ge C_\bG(a)$,
we may assume $a=u$.  If $u=1$, then $a$ is semisimple.
In particular, we may assume that  $a$ is either semisimple or unipotent.

As we have noted in Lemma~\ref{lem:dense centralizer}, if $C_\bG(a)C_\bG(a^g)$
is dense in $\bG$, then the
semisimple parts of elements of $a^{\bG}a^{\bG}$ form a single conjugacy class.
If $a$ is semisimple, then we may assume that $a$ lies in a maximal torus $\bT$
and does not commute with some root subgroup $\bU_{\alpha}$. However,
by Lemma \ref{lem:d=2} applied to any large enough field $\FF_q$ that contains
an eigenvalue of $a$, $a^{\bH}a^{\bH}$ contains more than one semisimple class
in $\bH := \langle \bU_{\pm\alpha},\bT\rangle$, whence the result follows by
Lemma \ref{lem:fusion}. 

If $a \ne 1$ is unipotent, then by Lemma~\ref{lem:closures1}, there is a
positive root $\alpha$ and a nontrivial element $b\in\bU_{\alpha}$ in the
closure of $a^\bG$. Set $\bH := \langle \bU_{\alpha}, \bU_{-\alpha} \rangle$,
a rank~1 group. By a direct computation in $\SL_2$, we see that
$b^{\bH}b^{\bH}$ contains both non-central semisimple and unipotent elements.
The result now follows by Lemma \ref{lem:dense centralizer}.  
\end{proof}

Note that Corollary \ref{cor:prasad} now follows since
$C_{\bG}(a)g^{-1} C_{\bG}(a) = C_{\bG}(a)C_{\bG}(a^g)g^{-1}$.  

\begin{lem}   \label{lem:sp2 closures}
 Let $\bG=\Sp_{2n}(k) = \Sp(V)$ where $k$ is an algebraically closed field of
 characteristic $2$. Let $g \in \bG$ be a nontrivial unipotent element
 that is not a transvection. Let $h \in \bG$ be a unipotent element such that
 $V = V_1 \perp V_2 \perp V_3$ with $\dim V_1 = \dim V_2=2$ such that
 $h$ induces a transvection on $V_1$ and $V_2$ and is trivial on $V_3$.
 \begin{enumerate}[\rm(a)]
  \item Suppose that $(gv,v) =0$ for all $v \in V$.
   Then $g^2=1$, and the closure of $g^\bG$ contains short root elements
   but not long root elements.
  \item The closure of $h^\bG$ contains both short and long root elements.
  \item Suppose that $(gv , v) \ne 0$ for some $v \in V$.
   Then the closure of $g^\bG$ contains $h$ and so also both
   long and short root elements.
 \end{enumerate}
\end{lem}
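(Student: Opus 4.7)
The proof splits naturally into the three sub-claims.

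For (a), I would first show $g^2=1$. Writing $g=1+N$ with $N$ nilpotent, the hypothesis $(gv,v)=0$ is equivalent to $(Nv,v)=0$ for all $v$ (since an alternating form vanishes on the diagonal). Polarising gives $(Nv,w)+(Nw,v)=0$, and in characteristic $2$ the alternating form is symmetric, so $(Nw,v)=(v,Nw)$. Combined with the $\Sp$-identity $(Nv,w)+(v,Nw)+(Nv,Nw)=0$ this forces $(Nv,Nw)=0$ for all $v,w$; hence $(N^2v,w)=(Nv,Nw)=0$ for all $w$, and nondegeneracy yields $N^2=0$. Next, the set $\{x\in\bG:(xv,v)=0\ \forall v\in V\}$ is Zariski closed and contains $\overline{g^\bG}$, but excludes every transvection $t_{v_0}:w\mapsto w+(w,v_0)v_0$ (for which $(t_{v_0}w,w)=(w,v_0)^2\ne0$), so no long root elements appear in the closure. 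Finally, since $g$ is an involution of Jordan type $[2^k,1^{2n-2k}]$ with $k\ge2$ (the case $k=1$ being the excluded transvection), the orbit-closure order on the $a_k$-type involution classes in $\Sp_{2n}$ in characteristic~$2$, which I would invoke from \cite{Spa}, degenerates $g$ to a short root element (the $a_2$ class).

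For (b), I would exhibit two explicit one-parameter degenerations. For long root elements, take $h_s:=t_1\oplus t_2^{(s)}\oplus 1_{V_3}$ where $t_2^{(s)}:e_2\mapsto e_2,\ f_2\mapsto f_2+se_2$. For $s\in k^\times$, $h_s$ is conjugate to $h$ via a diagonal cocharacter on $V_2$ (using that squaring is surjective on $k$), and $\lim_{s\to 0}h_s=t_1\oplus 1\oplus 1$ is a transvection. For short root elements, I reduce to $\Sp(V_1\oplus V_2)\cong\Sp_4(k)$: the orbit of $h|_{V_1\oplus V_2}$ (type $b_2$) and the short root class (type $a_2$) both have Jordan type $[2,2]$, but the $b_2$ class is open in its Jordan stratum and dominates $a_2$ in the closure order, which is either done by a direct one-parameter computation on a symplectic basis or cited from \cite{Spa}.

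For (c), I would realise $h$ as a limit inside a $4$-dimensional nondegenerate symplectic summand. Since $g$ is not a transvection, $N=g-1$ has rank at least $2$; pick $v_0$ with $(Nv_0,v_0)\ne0$, so in particular $Nv_0\ne0$. Using the nondegeneracy of the form I would extract a $g$-invariant nondegenerate symplectic subspace $W\subseteq V$ of dimension $4$ on which $g|_W$ is a nontrivial unipotent element still satisfying $(g|_Ww,w)\ne0$ for some $w\in W$; in the case $g^2=1$, i.e.\ $g$ of type $b_k$ with $k\ge 2$, one takes $W$ to contain two ``$b$-type'' rank-one summands of $N$, and in the case $g^2\ne0$, one takes $W$ to contain a large Jordan block together with its dual. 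Inside $\Sp(W)$, part~(b) (or \cite{Spa}) places $h|_W$ in $\overline{(g|_W)^{\Sp(W)}}$, and the orthogonal decomposition $V=W\perp W^\perp$ transfers the degeneration back to $\bG$, giving $h\in\overline{g^\bG}$; both long and short root elements then follow by applying parts~(a) and~(b) to $h$.

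The main obstacle will be part~(c), specifically the uniform construction of the invariant subspace $W$ so that $(g|_Ww,w)\not\equiv 0$ is preserved: in characteristic~$2$ the orbit-closure order on unipotent classes in $\Sp_{2n}$ is strictly finer than Jordan-type dominance (distinguishing the $a$- and $b$-type strata), so I expect to lean on Spaltenstein's classification in \cite{Spa} for the finer closure statements beyond what can be read off from Jordan forms.
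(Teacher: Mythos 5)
Your arguments for parts (a) and (b) are correct and take a somewhat different (more computational) route than the paper. For (a), the paper gets $g^2=1$ structurally: from $(Nv,w)=(Nw,v)$ it deduces $NV\subseteq(\ker N)^\perp$, so $g$ fixes a maximal totally singular subspace pointwise, hence lies in the (elementary abelian) unipotent radical $\bQ$ of a Siegel parabolic. Your derivation via the symplectic identity $(Nv,w)+(v,Nw)+(Nv,Nw)=0$ together with polarization (giving $N$ self-adjoint, $(N^2v,w)=(Nv,Nw)=0$, so $N^2=0$) is equally valid. For the closure assertions in (a)--(b) the paper is lighter-weight: in (a) it just observes that since long root elements are excluded from the closed set $\{x:(xv,v)=0\}$ while Lemma~\ref{lem:closures1} guarantees \emph{some} root elements in the closure, short root elements must appear; in (b) it conjugates $h$ into the Siegel $\bQ$ in $\Sp_4$, notes $h^\bG\cap\bQ$ is dense in $\bQ$, and that $\bQ$ already contains both kinds of root elements. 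Your explicit one-parameter degeneration and the appeal to Spaltenstein's closure order both work, but the paper avoids any citation.

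Part (c) is where your proposal has a genuine gap. You propose to find a $g$-invariant nondegenerate $4$-dimensional subspace $W\subseteq V$ on which $g|_W$ is unipotent, nontrivial, and still satisfies $(g|_Ww,w)\ne0$. This is impossible in general: if $g$ is a regular unipotent element of $\Sp_{2n}(k)$ with $n\ge3$, then $g$ has a single Jordan block of size $2n$, and its only $g$-invariant subspaces are the $\ker N^j$ with $(\ker N^j)^\perp=\ker N^{2n-j}$; so the only nondegenerate invariant subspaces are $0$ and $V$. Yet such a $g$ does fall under case (c), because $g^2\ne1$ and hence (by the contrapositive of~(a)) $(gv,v)\ne0$ for some~$v$. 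Your hedge of taking ``a large Jordan block together with its dual'' has no content when there is a single block. The paper sidesteps this entirely: it fixes a $g$-fixed vector $w$, works inside the stabilizer $\bP$ of the line $kw$, decomposes $V=kw\oplus Y\oplus ku$ with $X=kw+ku$ nondegenerate of dimension~$2$ and $Y=X^\perp$ (this decomposition is \emph{not} $g$-invariant), writes $g$ as a block-upper-triangular matrix $\left(\begin{smallmatrix}1&s&c\\0&r&s^\top\\0&0&1\end{smallmatrix}\right)$ with $c\ne0$, and then uses the central torus of the Levi to degenerate $s$ to~$0$, decoupling $X$ from $Y$; afterwards it degenerates $r$ inside $\Sp(Y)$ and reduces to $\Sp_4$ directly. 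That torus-limit step is the essential idea your outline is missing.
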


\begin{proof}
In~(a) write $g = I + N$ where $N$ is nilpotent.
Note that $(Nv,v)= (gv -v, v)= 0$ for all $v \in V$.
Note also that $0=(N(v+w),v+w)= (Nv,w)+(Nw,v)$ and so
$(Nv,w)=(Nw,v)$ for all $v, w \in V$. It follows
that $NV \subseteq (\ker N)^{\perp}$, whence we see
that $N$ (and $g$) act trivially on a maximal totally singular
subspace $W$ of $V$. Let $\bP$ be the stabilizer of $W$ and $\bQ$ its unipotent
radical. We may view $\bQ$ as the space of symmetric $n \times n$ matrices.
Let $\bQ_0$ be the subspace of skew symmetric matrices.
Thus, $g \in \bQ$, whence $g^2=1$. Moreover the condition that $(gv,v)=0$
is exactly equivalent to $g \in \bQ_0$.

Since $(gv, v)=0$ for all $v \in V$ is a closed condition,
any element in the closure of $g^\bG$ also satisfies this,
whence long root elements are not in the closure of $g^\bG$
(and so necessarily short root elements are --- this is also
obvious from the proof above). This proves (a).

To prove (b) it suffices to work in $\Sp_4$. Note that we can conjugate
$h$ and assume that it is in the unipotent radical $\bQ$ of the stabilizer of
a maximal totally singular space. Note that $h^\bG \cap \bQ$ is dense in $\bQ$ and since
$\bQ$ contains both long and short root elements, the result follows.

Now assume that $(gv,v) \ne 0$ for some $v \in V$.
Recall that $g$ is not a transvection.

Choose $0 \ne w \in V$ with $gw=w$. Let $\bP$ be the subgroup of $\bG$ 
stabilizing the line containing $w$ and let $\bQ$ be its unipotent radical.
Note that $(gu, u) \ne 0$ for some $u$ with $(u,w) \ne 0$
(if $(gu,u)=0$ for all $u$ outside $w^{\perp}$, then
$(gu,u) = 0$ for all $u$ by density). Let $X = ku + kw$
which is a nondegenerate $2$-dimensional space and set $Y=X^{\perp}$.

Let $\bL$ be the Levi subgroup of $\bP$ that stabilizes $ku$ and $kw$ (and so
also $X$ and $Y$). Let $\bT$ be the $1$-dimensional central torus of $\bL$.

With respect to the decomposition
$V=kw \oplus Y \oplus ku$ , $g$ acts as
$$
\begin{pmatrix} 1 & s &c \\
                             0 & r &s^{\top} \\
                              0&0 &1 \\
\end{pmatrix}
$$
where $r \in \Sp(Y)$ is a unipotent element and $c \ne 0$.  First suppose
that $r$ is nontrivial. 
Thus, we see that the closure of $g^T$ contains an element of the same form
but with $s=0$. Since the closure of $r$ in $\Sp(Y)$ contains a root element,
we see that we may assume that $V = X \perp Y$, $g$ induces a transvection
on $X$ and $\dim Y =2$ or $4$ and $g$ induces either a transvection
on $Y$ or a short root element. If $g$ induces a transvection
on $Y$, then $g$ is conjugate to $h$ and there is nothing more
to prove. So assume that $\dim Y =4$ and $g$ acts as
a short root element on $Y$. This implies that the fixed space of $g$ is
a $3$-dimensional totally singular subspace $Z$. The hypotheses imply that
the closure of $g^{\bG}$ contains the unipotent radical of the stabilizer of
$Z$, whence it contains $h$.   Finally suppose that $r$ is trivial.   Since
$g$ is not a transvection, $s$ is nontrivial.  Since $\Sp$
is transitive on nonzero vectors, we can then assume that $s =(1,0,\ldots,0)$
and so reduce to the case of $\Sp_4$.  In that case, $g$ is already conjugate
to $h$.  This completes the proof. 
\end{proof}

\begin{lem}   \label{lem:closures2} 
 Let $\bG$ be a simple algebraic group over an algebraically closed field
 $k$ of characteristic $p \ge 0$. Let $g$ be a nontrivial unipotent element of
 $\bG$. The closure of $g^{\bG}$ contains long root elements unless one of the
 following occurs:
 \begin{enumerate}[\rm(a)]
   \item $(\bG, p) = (G_2,3)$ or $(F_4,2)$ and $g$ is a short root element; or
   \item $\bG=\Sp_{2n}=\Sp(V)$, $p=2$, $n\ge2$ and $(gv,v)=0$ for all $v\in V$.
 \end{enumerate}
 Moreover, if $(\bG,p) = (G_2,3)$ or $(F_4,2)$ and $g$ is not a root element,
 then the closure of $g^{\bG}$ contains both short and long root elements.
\end{lem}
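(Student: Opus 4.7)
The plan is to combine Lemma \ref{lem:closures1}, which already says $\overline{g^{\bG}}$ contains root elements of some kind, with an analysis of how short root elements degenerate to long root elements. If $\bG$ is simply laced (types $A_n,D_n,E_6,E_7,E_8$) there is a single class of root elements, all long by convention, so there is nothing more to prove; this disposes of the first claim in those types. Hence I may assume $\bG$ has two root lengths, i.e.\ type $B_n$, $C_n$, $F_4$, or $G_2$.

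The key reduction is to prove: outside the exceptional pairs $(\bG,p)$ listed in (a) and (b), the closure of the short root class already contains a long root element. Once this is shown, Lemma \ref{lem:closures1} finishes the first assertion. For $\bG=\Sp_{2n}$ this is precisely the content of Lemma \ref{lem:sp2 closures}: in odd characteristic a short root element has Jordan type $[2,2,1^{2n-4}]$ which degenerates to the transvection type $[2,1^{2n-2}]$, while in characteristic $2$ a direct computation in $\Sp_4$ shows the short root elements are exactly those satisfying $(gv,v)=0$ for all $v$, giving case (b). For $\bG=\SO_{2n+1}$ with $p$ odd I would reduce to a rank--$2$ subsystem subgroup of type $B_2=C_2$ (isogenous to $\Sp_4$) and apply the odd-characteristic $\Sp_4$ degeneration just described. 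For $\bG=F_4$ with $p\neq 2$ and $\bG=G_2$ with $p\neq 3$ the same strategy works: take an appropriate $B_2/C_2$ or long--$A_2$ subsystem subgroup containing the given short root element and use the rank--$2$ degeneration.

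It remains to handle the exceptional pairs $(\bG,p)=(G_2,3)$ and $(F_4,2)$, where the very special isogeny interchanges long and short roots and makes the two classes incomparable in the closure order. For the first assertion there is nothing further to do because $g$ being a short root element is explicitly allowed as exception (a). For the ``moreover'' statement I would fix a nontrivial unipotent $g$ that is neither a long nor a short root element, and exhibit both root classes in $\overline{g^{\bG}}$. In $G_2$ with $p=3$ the only remaining unipotent classes are the subregular class $G_2(a_1)$, the regular class $G_2$, and (in characteristic $3$) the extra class $(\tilde A_1)_3$; a similarly short list is available for $F_4$ with $p=2$. In each case one can either appeal to Spaltenstein's closure diagrams \cite{Spa}, as the authors have indicated they will do, or verify by an explicit calculation in a Borel subgroup that both the long and the short root subgroups are reached in the closure.

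The main obstacle, and the step I would expect to be delicate, is the bad-characteristic analysis for $(G_2,3)$ and $(F_4,2)$: without the simply-connected dominance argument available in good characteristic, one really does need Spaltenstein's detailed classification of unipotent classes and their closure relations to conclude cleanly. All other cases reduce to rank--$2$ computations that can be done by hand.
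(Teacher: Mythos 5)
Your proposal follows the same skeleton as the paper's: invoke Lemma~\ref{lem:closures1} to see that the closure of $g^{\bG}$ meets some root class, reduce to the doubly-laced types, show that outside the listed exceptions the short root class already degenerates to the long root class, and invoke Spaltenstein for the delicate very-special-isogeny cases. Two points need correcting, one cosmetic and one substantive.

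The cosmetic one: you attribute the odd-characteristic degeneration $[2^2,1^{2n-4}]\rightsquigarrow[2,1^{2n-2}]$ in $\Sp_{2n}$ to Lemma~\ref{lem:sp2 closures}, but that lemma is stated only for $p=2$. The odd-characteristic degeneration is carried out directly in the proof (writing $V=V_1\perp V_2$ with $g$ a transvection on each $V_i$ and using that $g^{\bG}\cap\bU$ is dense in a maximal unipotent $\bU$ of $\Sp(V_1)\times\Sp(V_2)$); your partition-dominance phrasing is a correct alternative formulation of the same fact.

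The substantive gap is in your treatment of $G_2$ with $p\neq 3$. You propose to reduce to a ``$B_2/C_2$ or long-$A_2$ subsystem subgroup containing the given short root element,'' but neither exists inside $G_2$: the $G_2$ root system has no $B_2$ subsystem (the angle between a long and a short root in $G_2$ is $30^\circ$, $90^\circ$, or $150^\circ$, never $45^\circ$), and the long-$A_2$ subsystem contains no short roots at all. The only rank-$2$ subsystems are $A_2$ (long), $\tilde A_2$ (short), $A_1\times\tilde A_1$, and $G_2$ itself, and none of them exhibits a long root element in the closure of a short root class. This is precisely why the paper cites Spaltenstein's closure diagrams \cite[II.10.4]{Spa} for \emph{all} characteristics of $G_2$, not just for $p=3$; you would need to do the same (or supply a different argument, e.g.\ via the action on the $7$-dimensional module in characteristic $\neq 2$). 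For $F_4$ with $p\neq 2$ the reduction you describe does work, since the $F_4$ root system contains a genuine $B_2$ subsystem.
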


\begin{proof}
By Lemma \ref{lem:closures1}, the result follows unless $\bG$ has two root
lengths. 

If $\bG=G_2$, see \cite [II.10.4]{Spa}. Similarly if $\bG=F_4$ with
$p=2$, see \cite [p. 250]{Spa}.

Now assume that $p \ne 2$ and $\bG=B_n$, $C_n$ or $F_4$.
It suffices to show that for $g$ a short root element, the
closure of $g^{\bG}$ contains long root elements. By passing to a rank $2$
subgroup containing both long and short root subgroups, it suffices to consider
$\bG=\Sp_4 = \Sp(V)$. In this case, we can write $V = V_1 \perp V_2$
where $g$ acts as a transvection on each $V_i$ and so clearly the closure of
$g^{\bG}$ contains long root elements (for $\bU$ a maximal unipotent subgroup
of $\Sp(V_1)\times \Sp(V_2)$, $g^{\bG} \cap \bU$ is dense in $\bU$ and
$\bU$ contains long root elements).

Finally, when $p=2$ and $\bG=\Sp_{2n}=\Sp(V)$ we may apply
Lemma \ref{lem:sp2 closures}. 
\end{proof}

\begin{lem}   \label{lem:so closures}
 Let $\bG=\SO_{2n+1}(k) = \SO(V)$, $n \ge 2$, with $k$ an algebraically closed
 field of characteristic $p \ne 2$. Let $g \in \bG$ be unipotent. Then the
 closure of $g^\bG$ contains a short root element if and only
 if $g$ has a Jordan block of size at least $3$.
\end{lem}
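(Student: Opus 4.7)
The plan is as follows. First I will identify the Jordan type of a short root element of $\bG=\SO_{2n+1}$ in characteristic $p\ne 2$: such an element acts as a regular unipotent on a nondegenerate $3$-dimensional subspace $W\subset V$ and trivially on $W^\perp$, so its Jordan partition on $V$ is $[3,1^{2n-2}]$; equivalently, if $s$ is a short root element then $(s-1)^2\ne 0$ while $(s-1)^3=0$.

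The ``only if'' direction will then follow by a closed-condition argument. If every Jordan block of $g$ has size at most $2$, then $(g-1)^2=0$. Since this is a Zariski-closed condition on $\bG$, it is inherited by every element in the closure $\overline{g^\bG}$. But a short root element violates $(x-1)^2=0$, so no short root element lies in $\overline{g^\bG}$.

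For the ``if'' direction I will invoke the standard parametrization and closure theorem for unipotent classes in $\bG$: the unipotent classes are in bijection with partitions of $2n+1$ in which every even part has even multiplicity (via the Jordan type), and since $p\ne 2$ is good for $B_n$, the closure order on unipotent classes agrees with the restriction of the dominance order on partitions (see \cite{Spa}). The short root class corresponds to $[3,1^{2n-2}]$, which satisfies the parity constraint since all its parts are odd. Now suppose the Jordan partition $\lambda$ of $g$ satisfies $\lambda_1\ge 3$. Then for every $i\ge 1$ we have $\lambda_1+\cdots+\lambda_i\ge 3+(i-1)$, which is precisely the $i$-th partial sum of $[3,1^{2n-2}]$, so $\lambda$ dominates $[3,1^{2n-2}]$ and hence the short root class lies in $\overline{g^\bG}$.

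The main input is the closure-equals-dominance statement for $B_n$ in good characteristic, which I will simply quote from \cite{Spa} (already cited in this section for closures in $G_2$ and $F_4$). A self-contained argument would produce explicit one-parameter degenerations from $\lambda$ to $[3,1^{2n-2}]$: this is easy when $\lambda=[k,1^{2n+1-k}]$ with $k$ odd $\ge 3$, by orthogonally decomposing $V=U\perp U^\perp$ with $g$ regular unipotent on the nondegenerate $k$-dimensional $U$ and applying Lemma~\ref{lem:closures1} inside $\SO(U)$, but handling arbitrary $\lambda$ (in particular partitions with repeated even parts $\ge 4$) requires iterated degenerations and is the main obstacle in the direct approach.
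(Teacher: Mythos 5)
Your proof is correct, and your ``only if'' direction (the closed condition $(g-1)^2=0$ passes to the closure, while a short root element has type $[3,1^{2n-2}]$) is exactly the paper's argument. For the ``if'' direction you take a genuinely different route. The paper avoids quoting the closure-order theorem: it uses the standard orthogonal decomposition of $V$ into $g$-invariant pieces on which $g$ is either a single Jordan block of odd size or a pair of equal even blocks, isolates one piece $V_1$ containing a block of size $d\ge 3$, degenerates $g$ to the identity on $V_1^{\perp}$ by taking closures, and then exploits the fact that $g|_{V_1}$ is regular unipotent in $\SO(V_1)$ (odd case) or in a $\GL_d$ Levi subgroup of $\SO(V_1)$ (paired even case), so that its closure contains every unipotent class of that subgroup, in particular one with a block of size $3$. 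This shows that the ``iterated degenerations'' you flag as the main obstacle to a direct argument are not really needed: one never degenerates all of $\lambda$ step by step, only the complement of a single block (or block pair), and regularity of the remaining piece does the rest. Your route instead imports the Gerstenhaber--Hesselink--Spaltenstein theorem that in characteristic $\ne 2$ the closure order on unipotent classes of $B_n$ is the dominance order on admissible partitions, after which the check that $\lambda$ dominates $[3,1^{2n-2}]$ whenever $\lambda_1\ge 3$ is immediate (note only that for $i$ exceeding the number of parts of $\lambda$ the relevant comparison is of $2n+1$ with $\min(i+2,2n+1)$, not with $i+2$). This buys uniformity and brevity at the cost of a heavier citation; the paper deliberately gives elementary proofs of its closure lemmas, quoting \cite{Spa} only for $G_2$ and for $F_4$ in characteristic $2$. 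Either version is a complete proof.
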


\begin{proof}
Clearly, the condition is necessary since having all Jordan blocks of size
at most $2$ is a closed condition and a short root element has a Jordan block
of size $3$.
Conversely, suppose that $g$ has a Jordan block of size $d \ge 3$.
It is well known that $V$ can be written as an orthogonal direct sum of
$g$-invariant subspaces on each of which either $g$ has a single Jordan block
of odd size or it has two Jordan blocks of (the same) even size of $g$.

Thus, we can write $V = V_1 \perp V_2$ where either $\dim V_1 = d \ge 3$
is odd and $g$ acting on $V_1$ is a regular unipotent element of $\SO(V_1)$
or $\dim V_1 = 2d \ge 6$ and $g$ acts on $V_1$ with two Jordan blocks of size
$d$.  By taking closures, we may assume that $g$ is trivial on $V_2$.
In the first case, the closure of $g^\bG$ contains all unipotent elements of
$\SO(V_1)$ (in particular a short root element). In the second case, we see
that $g$ is contained in some $\GL_d$ Levi subgroup of $\SO(V_1)$ and so $g$
is a regular unipotent element of $\GL_d$. Thus, its closure contains all
unipotent elements of $\GL_d$, whence in particular an element with two
Jordan blocks of size $3$. Now argue as in the first case.
\end{proof}

We next need a result about subgroups generated by root subgroups of a given
length.

\begin{lem}   \label{lem:ss}
 Let $\bG$ be a simply connected  algebraic group over an algebraically
 closed field $k$ of characteristic $p \ge 0$. Let $\bT$ be a maximal torus
 of $\bG$ and let $\Phi$ denote the set of roots of $\bG$ with respect to $\bT$.
 Assume that $\Phi$ contains roots of two distinct lengths. Let $\Phi_\ell$
 denote the long roots in $\Phi$ and $\Phi_s=\Phi\setminus\Phi_\ell$ the short
 roots. Let $X_{\ell}=\langle\bU_{\alpha}\mid \alpha \in\Phi_\ell\rangle$,
 and $\bX_s=\langle\bU_{\alpha}\mid\alpha \in\Phi_s\rangle$.
 The following hold:
 \begin{enumerate}[\rm(a)]
  \item $C_\bG(\bX_s)=Z(\bG)$.
  \item If $\bG=G_2$, then $C_\bG(\bX_{\ell})$ has order $3$ if $p \ne 3$ and is
   trivial otherwise.
  \item If $p=2$ and $\bG \ne G_2$,  then $C_\bG(\bX_{\ell})=Z(\bG)$.
  \item If $p \ne 2$ and $\bG \ne G_2$, then $C_\bG(\bX_{\ell})$ is an elementary
   abelian $2$-group and intersects a unique non-central conjugacy class of
   involutions unless $\bG=\Sp_{2n}$ in which case it intersects every
   conjugacy class of involutions (in $\Sp_{2n})$.
 \end{enumerate}
\end{lem}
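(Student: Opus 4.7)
The plan is to reduce each centralizer to a lattice computation inside the maximal torus $\bT$. First observe that in each of the four relevant root system types ($B_n$, $C_n$, $F_4$, $G_2$), the short coroots span the cocharacter lattice $X_*(\bT)\otimes_{\ZZ}\mathbb{Q}$, so the subtorus of $\bT$ they generate has the full dimension of $\bT$ and a generic element of it is regular in $\bG$. Since $\bT$ is self-centralizing in the connected reductive $\bG$, this forces $C_\bG(\bX_s)\subseteq \bT$; and an element $t\in\bT$ commutes with all of $\bX_s$ iff $\alpha(t)=1$ for every short root $\alpha$, giving
\[ C_\bG(\bX_s) \;=\; \bigcap_{\alpha\in\Phi_s}\ker(\alpha) \;=\; \operatorname{Hom}\bigl(X^*(\bT)/L_s,\,k^*\bigr), \]
where $L_s\subseteq X^*(\bT)$ is the sublattice generated by the short roots. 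For (a), a direct check in each of the four types shows $L_s = Q$, the full root lattice of $\bG$: e.g., in $C_n$ the identity $2e_i=(e_i+e_j)+(e_i-e_j)$ puts the long roots into $L_s$, and the remaining types are analogous. Hence the right-hand side equals $\operatorname{Hom}(X^*(\bT)/Q,k^*)=Z(\bG)$, proving (a).

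The same framework applied to $\bX_\ell$ yields $C_\bG(\bX_\ell)=\operatorname{Hom}(X^*(\bT)/L_\ell,k^*)$, with $L_\ell$ the sublattice generated by the long roots. For (b), in $G_2$ the long roots generate $\ZZ\alpha_2+3\ZZ\alpha_1$ (with $\alpha_1$ short, $\alpha_2$ long), so the quotient is $\ZZ/3$ and dualizing produces $\mu_3(k)$: order $3$ if $p\ne 3$, trivial if $p=3$. For (c) and (d), a type-by-type computation shows that $X^*(\bT)/L_\ell$ is always a finite $2$-group. In characteristic $2$ this gives $\mu_{2^a}(k)=1$, so $C_\bG(\bX_\ell)$ is trivial, matching $Z(\bG)$ in types $C_n$ and $F_4$ (the case $B_n$ is excluded by hypothesis), which is (c). In characteristic not $2$, the Pontryagin dual is itself a finite abelian $2$-group, and it remains to identify its non-central involutions and their fusion under $\bG$.

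The main obstacle lies in the case analysis for (d), where one must distinguish the conjugacy classes by Weyl-group orbits. In $\Sp_{2n}$, $\bX_\ell\cong\SL_2^n$ has center $(\ZZ/2)^n$ sitting diagonally across the $n$ symplectic pairs, producing involutions of every even $(-1)$-eigenspace dimension and thereby meeting every $\Sp_{2n}$-class of involutions. In $F_4$, the triality automorphism of $\bX_\ell\cong\Spin_8$ (realized inside $F_4$ via the overgroup $\Spin_9$) fuses the three non-trivial elements of $Z(\bX_\ell)\cong(\ZZ/2)^2$ into a single $F_4$-class of involutions. In type $B_n$, care must be taken with the precise isogeny type of $\bX_\ell$ inside $\bG$ so that the non-central element of $C_\bG(\bX_\ell)$ is correctly identified with the negative of a reflection on the natural representation of $\SO_{2n+1}$.
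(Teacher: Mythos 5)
Your approach reduces everything to a lattice computation in $\bT$, which is a genuinely different and more systematic route than the paper's: the paper simply identifies $\bX_\ell$ as a subsystem subgroup of known type ($D_4$ inside $F_4$, $\SL_2^n$ inside $\Sp_{2n}$, $D_n$ inside type $B_n$) and reads off the centralizer from the center of that subgroup, whereas you reduce $C_\bG(\bX_\bullet)$ to $\operatorname{Hom}(X^*(\bT)/L_\bullet,k^*)$ and compute the quotient lattice by hand. (For the reduction $C_\bG(\bX_\ell)\subseteq\bT$ you also need that the \emph{long} coroots, not just the short ones, span $X_*(\bT)\otimes_{\ZZ}\mathbb{Q}$; this does hold in all four types, but you should say so.) Parts (a), (b) and (c) are correct and essentially complete as you have written them.

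Part (d) is a genuine gap, and you flag it yourself as ``the main obstacle''. Worse, carrying your own lattice computation through for the simply connected form $\bG=\Spin_{2n+1}$ of type $B_n$ produces a surprise: $L_\ell$ is the $D_n$ root lattice, of index $2$ in $\ZZ^n$, and $X^*(\bT)=P_{B_n}=\ZZ^n+\ZZ\,\omega_n$ with $\omega_n=\tfrac12(e_1+\cdots+e_n)$. When $n$ is even $P_{B_n}/L_\ell\cong(\ZZ/2)^2$ and all is well, but when $n$ is odd $2\omega_n=\sum e_i$ has odd coordinate sum, so $\omega_n$ has order $4$ in the quotient and $P_{B_n}/L_\ell\cong\ZZ/4$. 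Hence for $n$ odd and $p\ne2$ one gets $C_\bG(\bX_\ell)\cong\mu_4(k)\cong\ZZ/4$, cyclic rather than elementary abelian, whose only involution is the central element of $\Spin_{2n+1}$; this directly contradicts the literal wording of (d). (Reassuringly, this matches $Z(\Spin_{2n})\cong\ZZ/4$ for $n$ odd.) Your caveat about ``the precise isogeny type of $\bX_\ell$'' gestures at the right phenomenon, but the ambiguity to resolve is the isogeny type of $\bG$ itself: throughout Section~\ref{sec:alg} type $B_n$ is realized as $\SO_{2n+1}$, the adjoint form, where $X^*(\bT)=\ZZ^n$, $X^*(\bT)/L_\ell\cong\ZZ/2$, and $C_\bG(\bX_\ell)\cong\mu_2$ is elementary abelian with its non-trivial element equal to the negative of a reflection. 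To close (d) you should either switch to $\SO_{2n+1}$ for type $B$, or argue that only the image of $C_\bG(\bX_\ell)$ modulo $Z(\bG)$ is relevant to the application in Theorem~\ref{thm:algebraic}. With that fixed, the $F_4$ triality argument and the $\Sp_{2n}$ eigenspace-dimension count in your sketch are correct.
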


\begin{proof}
This is a straightforward observation.  In fact if $p \ne 2$, then  $\bX_s = \bG$ unless $\bG=G_2$
with $p=3$.  In all those cases, the centralizer is just the center.
So we only need to consider $\bX_{\ell}$. If $\bG = F_4$, then 
$\bX_{\ell}\cong D_4$
while if $\bG=\Sp_{2n}$,  $\bX_{\ell}\cong \SL_2 \times \ldots \times \SL_2$.
Finally if $\bG = B_n$ with $p \ne 2$, then $\bX_{\ell} \cong D_n$. The result
follows.
\end{proof}

We can now prove Theorem \ref{main:alg} which we restate.
As we have already remarked, the result is essentially independent of the
isogeny type
of the simple algebraic group. We will work with the most convenient form
for each group (in particular, we work with $\Sp_{2n}$ and $\SO_{2n+1}$).

\begin{thm}   \label{thm:algebraic}
 Let $\bG$ be a simple algebraic group over an algebraically closed field $k$
 of characteristic $p \ge 0$. Let $a, b$ be non-central elements of $\bG$.
 Then one of the following holds (up to interchanging $a$ and $b$ and up to an 
 isogeny for $\bG$):
 \begin{enumerate}
  \item[\rm(1)] There are infinitely many semisimple conjugacy
   classes which occur as the semisimple part of elements of $a^{\bG}b^{\bG}$.
  \item[\rm(2)] $\bG= \Sp_{2n}(k) = \Sp(V)$, $n \ge 2$, $\pm b$ is a long root
   element, and either
   \begin{enumerate}
   \item[\rm(a)] $p \ne 2$ and $a$ is an involution; or
   \item[\rm(b)] $p=2$ and $a$ is an involution with $(av,v)=0$
    for all $v$ in $V$.
  \end{enumerate}
  \item[\rm(3)]  $\bG=\SO_{2n+1}(k)=\SO(V)$, $n\ge2$, $p\ne 2$ and
   $-a$ is a reflection and $b$ is a unipotent element with all Jordan blocks
   of size at most $2$.
  \item[\rm(4)]  $\bG=G_2$, $p \ne 3$, $a$ is of order $3$ with centralizer
   $\SL_3$ and $b$ is a long root element.
  \item[\rm(5)]  $\bG=F_4$, $p \ne 2$, $a$ is an involution with centralizer
   of type $B_4$ and $b$ is a long root element.
  \item[\rm(6)]  $(\bG,p)=(F_4,2)$ or $(G_2,3)$, $a$ is a long root element
   and $b$ is a short root element.
 \end{enumerate}
\end{thm}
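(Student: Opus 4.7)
The plan is to prove, via Lemma~\ref{lem:irred}, that whenever $(a,b)$ is not one of the listed exceptional configurations, the intersection $\overline{a^{\bG}b^{\bG}}\cap\bT$ with a maximal torus is infinite. The basic tool is to find a rank-one subgroup of $\bG$ containing conjugates of $a$ and $b$, apply Lemma~\ref{lem:d=2} there, and push the conclusion up to $\bG$ via Lemma~\ref{lem:fusion}. I would first reduce to the case where each of $a, b$ is either semisimple or unipotent: writing $a = a_s a_u$ for the Jordan decomposition, both $a_s$ and $a_u$ lie in $\overline{a^{\bG}}$, so $\overline{a_s^{\bG} b^{\bG}} \cup \overline{a_u^{\bG} b^{\bG}} \subseteq \overline{a^{\bG} b^{\bG}}$; since the result is invariant up to isogeny, I may replace $a$ by whichever of $a_s, a_u$ is non-central, and similarly for $b$.

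For the semisimple-semisimple case, I would conjugate so that $a, b$ lie in a common maximal torus $\bT$ and choose a root $\alpha$ with $\alpha(a) \ne 1$ (which exists because $a \notin Z(\bG)$). Inside the rank-one Levi subgroup $\bL_\alpha := \langle \bT, \bU_\alpha, \bU_{-\alpha}\rangle$, Lemma~\ref{lem:d=2} implies that the semisimple parts of $a^{\bL_\alpha} b^{\bL_\alpha}$ do not form a single $\bL_\alpha$-class, and then Lemma~\ref{lem:fusion} yields infinitely many $\bG$-conjugacy classes of semisimple parts in $a^{\bG} b^{\bG}$.

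For the mixed case (say $a$ semisimple, $b$ unipotent), Lemmas~\ref{lem:closures1}, \ref{lem:closures2}, \ref{lem:so closures}, and~\ref{lem:sp2 closures} allow me to replace $b$ by a root element $b' \in \bU_\gamma$ in its closure. If some root $\alpha$ of the same length as $\gamma$ satisfies $\alpha(a) \ne 1$, the rank-one argument of the previous paragraph finishes the job. Otherwise $a$ centralizes $\bX_\ell$ (respectively $\bX_s$), and Lemma~\ref{lem:ss} pins $a$ down as either central (excluded) or one of the specific elements enumerated there; matching these with the admissible unipotent classes of $b$, and using Lemma~\ref{lem:so closures} to force $b$ in the $\SO_{2n+1}$ case to have all Jordan blocks of size at most $2$ (otherwise a further reduction to a short root element succeeds), recovers precisely cases (2a), (3), (4), and (5). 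The unipotent-unipotent case is handled in the same spirit: Lemma~\ref{lem:closures2} reduces both $a$ and $b$ to long root elements except in the explicit exceptions (which give cases (6) and (2b)), and two long root elements conjugated into opposite root subgroups $\bU_{\pm\alpha}$ produce products $u_\alpha(1) u_{-\alpha}(t)$ with $t \in k^{\times}$ varying, yielding infinitely many semisimple classes in the ambient $\SL_2$ and hence in $\bG$.

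I expect the most delicate step to be the semisimple-unipotent analysis. There one must track which root length of $\overline{b^{\bG}}$ is accessible, and then verify that whenever the rank-one reduction breaks down, the rigid centralizer descriptions of Lemma~\ref{lem:ss} line up exactly with the element types in cases (2a), (3), (4), and (5); in particular, the Jordan-block constraint in case~(3) must be deduced as the precise condition under which no further short-root reduction is available via Lemma~\ref{lem:so closures}.
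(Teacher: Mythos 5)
Your reductions for the semisimple--semisimple, unipotent--unipotent, and semisimple--unipotent cases follow the paper's strategy (Lemma~\ref{lem:d=2} in a rank-one subgroup, pushed up via Lemma~\ref{lem:fusion}; closures of unipotent classes via Lemmas~\ref{lem:closures1}, \ref{lem:closures2}, \ref{lem:sp2 closures}, \ref{lem:so closures}; the centralizer data of Lemma~\ref{lem:ss}). But your very first reduction contains a genuine gap, and as written it silently drops a whole case that the paper must handle separately.

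You assert that \emph{both} $a_s$ and $a_u$ lie in $\overline{a^{\bG}}$. The semisimple part $a_s$ does (this is stated just before Lemma~\ref{lem:irred}), but the unipotent part $a_u$ does not in general: if $a\in\GL_n$ is a single Jordan block with eigenvalue $\lambda\ne 1$, every element of $\overline{a^{\bG}}$ has characteristic polynomial $(t-\lambda)^n$, so no nontrivial unipotent element lies in the closure. What \emph{is} true is: if $a_s$ is non-central, then $a_s\in\overline{a^{\bG}}$; if $a_s$ is central, then $a$ equals a unipotent element modulo the center. That repair does allow you to pass from $a$ to a semisimple or unipotent element $a'$ with $(a')^{\bG}\subseteq\overline{a^{\bG}}$ (up to isogeny).

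The deeper problem is that this passage is a one-way implication. If $(a')^{\bG}b^{\bG}$ has infinitely many semisimple classes, so does $a^{\bG}b^{\bG}$ --- fine. But if $(a',b)$ lands in one of the exceptional configurations (2)--(6), you have concluded nothing about $a^{\bG}b^{\bG}$: the exceptional outcome is not ``inherited downward'' by taking closures. The theorem claims that when $a=su$ with both $s$ non-central and $u\ne1$, alternative (1) always holds (none of the exceptional $a$-types is a genuinely mixed element), and this requires a separate argument. The paper devotes its Case~4 exactly to this: it shows that if $s^{\bG}b^{\bG}$ has constant semisimple part, forcing $s$ and $b$ into one of the rigid configurations, then the extra unipotent factor $u$ in $C_{\bG}(s)$ gives enough room inside a rank-two subgroup to produce infinitely many semisimple parts after all. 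Your proposal omits this step entirely, and the reduction you describe cannot produce it. You need to either prove directly that a mixed $a$ (both Jordan components nontrivial and $s$ non-central) always falls in alternative (1), or at least flag the residual case and run the paper's Case~4 argument.
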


\begin{proof}
Let $A$ be the closure of $a^\bG$ and $B$ the closure of $b^\bG$. Note
that if the closure of $a^{\bG}b^{\bG}$ contains only finitely many semisimple
classes,
the same is true for $AB$ (take closures). Thus, the same is true for $A'B'$
where $A' \subset A$ and $B'\subset B$ are conjugacy classes.

Also recall (Lemma \ref{lem:fusion})  that if $a, b \in \bH$ a connected 
reductive subgroup of $\bG$ and
there are infinitely many semisimple classes occurring as the semisimple
part of elements of $a^{\bH}b^{\bH}$, then the same is true in $\bG$.

\medskip
A) We give a very quick proof in the case that $\bG$ has only one root length
where we show that it is always the case that $a^{\bG}b^{\bG}$ contains 
infinitely many classes with distinct semisimple parts.

If the semisimple part $s$ of $a$ is noncentral, then $s$ is in the closure of
$a^{\bG}$ and so we may assume that $a$ is semisimple.  If not, then modifying
$a$ by a central element, we may assume that $a$ is unipotent.   
Similarly, we may assume that $b$ is either semisimple or unipotent. 

 If $a$ and $b$
are both semisimple, choose a maximal torus $\bT$ containing conjugates
$a', b'$ of $a$ and $b$. By conjugating by Weyl group elements, we may
assume that $a', b'$ do not commute with $\bU_{\alpha}$ for some root
$\alpha$. Thus, $\langle \bT, \bU_{\pm\alpha}\rangle $ is reductive
with semisimple part $A_1$. Moreover, $a', b'$ are not central, whence
the result follows from the result for $A_1$ (see Lemma \ref{lem:d=2}).
Similarly if $a$ and $b$ are both unipotent, then by replacing $a$ and
$b$ by elements in the closures of the classes, we may assume that
$a$ and $b$ are both long root elements, whence as above we reduce
to the case of $A_1$. If $a$ is unipotent and $b$ is semisimple, then
as above, we may assume that $a \in \bU_{\alpha}$ and $b \in \bT$
does not centralize $a$, whence again the result follows by the case
of $A_1$.

\medskip
B) So for the rest of the proof we assume that $\bG$ has two root lengths.
In particular $\rk(\bG) > 1$.
The proof is similar to that above but more complicated (and there
are always exceptions).

\vskip 0.5pc\noindent
Case 1. $a, b$ are both semisimple.

Let $\bT$ be a maximal torus containing both $a$ and $b$.
We apply Lemma \ref{lem:ss}.  In particular, we can choose a (short) root
subgroup
$\bU_{\alpha}$ and conjugates of $a, b$ by elements of the Weyl group  that
 do not centralize $\bU_{\alpha}$.  Now the result follows by considering
the subgroup $\langle \bT, \bU_{\pm\alpha}\rangle$.

\vskip 0.5pc\noindent
Case 2. $a$ and $b$ are both unipotent and are not among the excluded cases.

If the closures of $A$ and $B$ both contain long root elements, then
the result follows from the case of $A_1$. If $p \ne 2$, this is
always the case by Lemma \ref{lem:closures2} unless $(\bG,p)=(G_2,3)$.
If $\bG=G_2$ with $p=3$ or $\bG=F_4$ with $p=2$,
aside from the excluded cases, the closures of $A$ and $B$ will either
contain both long root elements or short root elements and again
the result follows.

It remains only to consider $\bG=\Sp_{2n}$ with $p=2$. It follows by
Lemma~\ref{lem:closures2} that unless $a$ or $b$ is a long root element, the
closures of $A$ and $B$ will contain short root elements and the result follows
as above. So we may assume that $b$ is a long root element and that the
closure of $A$ does not contain long root elements. Again by
Lemma~\ref{lem:closures2} this implies
that $a$ is an involution with $(av,v)=0$ for $v \in V$.

\vskip 0.5pc\noindent
Case 3. $a$ is semisimple and $b$ is unipotent.

Let $a \in \bT$ be a maximal torus.
If $\Char k = 2$ with $\bG \ne G_2$, we can choose a root subgroup
$\bU_{\alpha}$ with $a$ not centralizing $\bU_{\alpha}$ and reduce
to $\langle \bT, \bU_{\pm\alpha}\rangle$.
If $\bG=G_2$ with $p=3$, the same argument suffices.

Indeed, if the closure of $b^\bG$ contains a short root element, then
it suffices to assume that $b$ is contained in a short root subgroup
$\bU_{\alpha}$ and as above, we can conjugate $a$ by an element
of the Weyl group and assume that $a$ does not centralize $\bU_{\alpha}$.
Now argue as before.

The same argument suffices if $a$ is not an involution conjugate to an
element of the centralizer of the subgroup of $\bG$ generated by the long
root subgroups (with respect to $\bT$).   So we have reduced to the case
that $a$ is such an involution and the closure of $b^\bG$ contains long root
elements and not short root elements.  By Lemma \ref{lem:closures2}, these
are precisely the exceptions allowed in the theorem.

\vskip 0.5pc\noindent
Case 4. The general case.

We may assume (by interchanging $a$ and $b$ if necessary and using the
previous cases) that $a=su=us$ where $s$ is a noncentral semisimple element
and $u \ne 1$ is unipotent. 

If the semisimple part of $b$ is not central, we can take closures and so
assume that $b$ is semismple.  If the semisimple part of $b$ is central,
we can replace $b$ by a central element times $b$ and assume that $b$ is
unipotent. 

If $b$ is unipotent, then we can take closures and assume that $b$ is a root
element. By working in the closure of $a^G$
(which contains $s$), we see that by previous cases, it must be that 
$s^{\bG}b^{\bG}$ must have constant semisimple part. This implies
that either $p \ne 2$, $\bG \ne G_2$ and $s$ is an involution with $b$
a long root element or $\bG=G_2$, $p \ne 3$,  $s$ is an element of order $3$
and $b$ is a long root element.  

Let $\bT$ be a maximal torus. We may assume that $b \in \bU_{\alpha}$, a root
subgroup with respect to $\bT$. By taking closures in $D:=C_\bG(s)$, we may
also assume that $u$ is in a root subgroup $\bU_\beta$ with respect to $\bT$.
Thus, by considering $\langle \bT, \bU_{\pm \alpha}, \bU_{\pm \beta}\rangle$,
it suffices to assume that $\bG$ has rank $2$.   

Now suppose that $p \ne 2$ and  $\bG=\Sp_4$. As noted above,  $s$ must be
an involution. Note that $D$ contains both long and short root elements and
moreover the centralizer of $s$ is an $A_1A_1$, whence we see that there
are conjugates of $b$ and $a$ in $D$ with $a^Db^D$ having infinitely many
different semisimple parts.   

The remaining case is $p \ne 3$ and $\bG= G_2$. It follows that 
$s$ is an element of order $3$ with centralizer $D$ isomorphic to $A_2$.
So $u$ is a long root element. As we noted, $b$ is also a long root element
and so conjugate to an element of $D$. The result follows since it holds for
$A_2$. 
\end{proof}

We will discuss the examples listed above in the next section. In particular,
we will see that in all cases $a^{\bG}b^{\bG}$ is a finite union of classes but
always more than one. Indeed, we will see that $a^{\bG} \times b^{\bG}$ is the
union of a very small number of $\bG$-orbits (but always at least $2$). In
particular, this implies the following result which includes Szep's conjecture
for algebraic groups. See \cite{FA} for the finite case and \cite{Br1, Br2} for
related results on factorizations.

\begin{cor}   \label{cor:algAH}
 Let $\bG$ be a simple algebraic group. If $a, b$ are non-central elements
 of $\bG$, then $a^{\bG}b^{\bG}$ is not a single conjugacy class and
 $\bG \ne C_\bG(a)C_\bG(b)$.
\end{cor}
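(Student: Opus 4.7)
The plan is to derive both assertions from Theorem~\ref{thm:algebraic} by splitting into the generic case and the exceptional families. The bridge between the two assertions is the elementary observation that $\bG=C_\bG(a)C_\bG(b)$ is equivalent to transitivity of the diagonal $\bG$-action on $a^\bG\times b^\bG$ (indeed $(a^g,b^h)$ lies in the orbit of $(a,b)$ iff $g^{-1}h\in C_\bG(a)C_\bG(b)$), so a factorization would in particular force $a^\bG b^\bG$ to be a single conjugacy class.

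First suppose that $(a,b)$ falls under conclusion~(1) of Theorem~\ref{thm:algebraic}, so that the semisimple parts of elements of $a^\bG b^\bG$ sweep out infinitely many $\bG$-conjugacy classes. Then $a^\bG b^\bG$ is already an infinite union of conjugacy classes, giving the first assertion. For the second, if $\bG$ were equal to $C_\bG(a)C_\bG(b)$ then this product would in particular be dense in $\bG$, and Lemma~\ref{lem:dense centralizer} would force the semisimple parts of $a^\bG b^\bG$ to lie in a single class, a contradiction. Hence $\bG\ne C_\bG(a)C_\bG(b)$ in this case.

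It remains to treat the exceptional pairs listed in~(2)--(6) of Theorem~\ref{thm:algebraic}. For these I plan a case-by-case analysis (carried out in detail in Section~\ref{sec:dense}): in each case one exhibits two pairs $(a_1,b_1),(a_2,b_2)\in a^\bG\times b^\bG$ lying in distinct $\bG$-orbits and verifies that the products $a_1b_1$ and $a_2b_2$ are not $\bG$-conjugate, which simultaneously yields both conclusions by the observation above. The ingredients are the explicit structure of the centralizers involved---end-node parabolics for long root elements, classical subgroups or groups of type $B_4$ for the relevant involutions, and $A_2$-type subgroups for the element of order~$3$ in $G_2$---together with Lemmas~\ref{lem:g2not3} and~\ref{lem:f43a} for the $G_2$ and $F_4$ cases, which already show that the analogous double coset count equals $2$ in the finite-group analogues (and hence produces two $\bG$-orbits).

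The main obstacle is the case analysis itself: in each of the five exceptional families one must identify explicit pairs of conjugates whose products are non-conjugate, via fairly delicate root-subgroup computations together with a careful use (in small characteristic) of the closure relations on unipotent classes established in Lemmas~\ref{lem:closures1}--\ref{lem:closures2}. Granting these verifications, which form the content of Section~\ref{sec:dense}, both conclusions of the corollary follow.
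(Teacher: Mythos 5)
Your proposal is correct and follows essentially the same route as the paper: conclusion (1) of Theorem~\ref{thm:algebraic} immediately gives infinitely many classes in the generic case, the exceptional families are dispatched by the explicit computations of Section~\ref{sec:dense} (each yielding at least two classes in $a^\bG b^\bG$), and the factorization statement is reduced to the conjugacy-class statement via the standard observation that $\bG=C_\bG(a)C_\bG(b)$ is equivalent to transitivity of $\bG$ on $a^\bG\times b^\bG$ (which the paper records both in the lemma on double cosets in Section~\ref{sec:alg} and again at the start of Section~\ref{sec:szep}). The only cosmetic difference is that you additionally invoke Lemma~\ref{lem:dense centralizer} in case (1), which is redundant once the bridge observation is in place, but this does no harm.
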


Another immediate consequence is:

\begin{cor}   \label{cor:factorize}
 Suppose that $\bG$ is a simple algebraic group over an algebraically closed
 field $k$ with $\Char k =p \ge 0$, and that $a, b$ are non-central
 elements of $\bG$.
 If $C_\bG(a)C_\bG(b)$ is dense, then $\bG, a, b$ are as described in
 Theorem \ref{thm:algebraic}. In particular, $C_\bG(a)C_\bG(b)$ is not dense
 if any of the following hold (modulo the center):
 \begin{enumerate}[\rm(i)]
  \item $a$ and $b$ are conjugate;
  \item neither $a$ nor $b$ is unipotent; or
  \item $a$ is semisimple and has order greater than $3$.
 \end{enumerate}
\end{cor}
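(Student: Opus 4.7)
The plan is to deduce this corollary directly from Theorem~\ref{thm:algebraic} combined with Lemma~\ref{lem:dense centralizer}. Suppose first that $C_\bG(a)C_\bG(b)$ is dense in $\bG$. Then Lemma~\ref{lem:dense centralizer} forces the semisimple parts of the elements of $a^\bG b^\bG$ to form a single $\bG$-conjugacy class, so only one (in particular, only finitely many) semisimple class appears. By Theorem~\ref{thm:algebraic} this rules out its generic case (1) and places $(\bG,a,b)$ in one of the explicit exceptional families (2)--(6). This gives the main assertion.

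For the three particular consequences, I would proceed by a quick inspection of this short list. For (i), the most direct route is to invoke Corollary~\ref{unipotent}, which already shows that $C_\bG(a)C_\bG(a^g)$ is never dense for non-central $a$; alternatively one notes that in every exceptional case the two elements $a$ and $b$ lie in distinct $\bG$-classes (an involution against a long root element in (2), (5); an element of order three against a long root element in (4); a long root element against a short root element in (6); a non-unipotent $-$reflection against a genuinely unipotent element in (3)), so conjugate pairs never occur.

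For (ii), I would run through (2)--(6) and verify that in each case at least one of $a, b$ is unipotent: in (2), (4) and (5) the element $(\pm) b$ is a long root element; in (3) $b$ is unipotent by construction; in (6) both elements are root elements. Consequently, if neither $a$ nor $b$ is unipotent no exceptional case can apply, so density fails. For (iii), the same scan shows that whenever $a$ is semisimple in the exceptional list it is either an involution (cases (2a), (3), (5)) or an element of order three (case (4)); in (2b) and (6) the element $a$ is unipotent, so not semisimple at all (note that in characteristic two an involution is unipotent, which is already reflected in the statement of Theorem~\ref{thm:algebraic}). Hence a semisimple $a$ of order greater than $3$ never appears in the list, and density is impossible.

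I do not anticipate any serious obstacle: once Theorem~\ref{thm:algebraic} and Lemma~\ref{lem:dense centralizer} are in hand, the entire argument reduces to identifying the main claim with their combination and then performing a short case-check against an explicit and very short list of exceptions. The only subtlety to keep track of is the convention that involutions in characteristic two are unipotent, which is needed so that case (2b) is counted correctly under (ii) and (iii).
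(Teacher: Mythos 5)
Your proof is correct and follows exactly the paper's intended (though largely unstated) argument: invoke Lemma~\ref{lem:dense centralizer} to conclude that density of $C_\bG(a)C_\bG(b)$ forces the semisimple parts of $a^\bG b^\bG$ into a single class, ruling out case~(1) of Theorem~\ref{thm:algebraic}, and then read off (i)--(iii) by inspecting the short exceptional list (together with Corollary~\ref{unipotent} for (i) if desired). The paper presents this as an ``immediate consequence'' with only the one-line remark that a semisimple non-involution $a$ forces $\bG=G_2$ and $|a|=3$; you have simply made that same inspection explicit.
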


Indeed, if $a$ is semisimple and is not an involution then
$\bG = G_2$ and $a$ has order $3$.

We point out one further corollary which also comes from analyzing the
exceptions in the theorem above.

\begin{cor}  \label{cor: borel vs. finite}
 Let $\bG$ be a semisimple algebraic group. Let $a, b \in \bG$.
 The following are equivalent.
 \begin{enumerate}[\rm(i)]
  \item $a^{\bG}b^{\bG}$ is a finite union of conjugacy classes.
  \item The closure of $a^{\bG}b^{\bG}$ contains only one semisimple conjugacy
   class.
  \item $\langle a, b^g \rangle$ is contained in some Borel subgroup of $\bG$
   for every $g \in \bG$.
  \item $|C_\bG(a)\backslash \bG /C_\bG(b)|$ is finite.
    \item $C_\bG(a)C_\bG(b^g)$ is dense in $\bG$ for some $g \in \bG$.
  \item $\bG$ has finitely many orbits on $a^\bG \times b^\bG$.
 \end{enumerate}
\end{cor}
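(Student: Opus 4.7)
The plan is to establish the six equivalences by proving a cycle of implications, with Theorem~\ref{thm:algebraic} as the key classification tool.

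First, some essentially free implications. The equivalence $(iv) \Leftrightarrow (vi)$ is the standard bijection between $C_\bG(a)$-$C_\bG(b)$ double cosets in $\bG$ and $\bG$-orbits on $\bG/C_\bG(a) \times \bG/C_\bG(b)$ under the diagonal action, using the $\bG$-equivariant identification with $a^\bG \times b^\bG$. The implication $(vi) \Rightarrow (i)$ is immediate from the $\bG$-equivariant multiplication map $a^\bG \times b^\bG \to a^\bG b^\bG$, which carries orbits to conjugacy classes (this was essentially recorded in the unnumbered lemma preceding Lemma~\ref{lem:fusion}). The implication $(v) \Rightarrow (ii)$ is the content of Lemma~\ref{lem:dense centralizer} applied to $(a, b^g)$.

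Next, $(i) \Leftrightarrow (ii)$: if $(i)$ holds, then $\overline{a^\bG b^\bG}$ is a finite union of class closures, each contributing a single semisimple class (the class of the semisimple part); only finitely many semisimple classes appear in the closure, so by Lemma~\ref{lem:irred}(a) there is exactly one. Conversely, if $(ii)$ holds, every element of $a^\bG b^\bG$ has semisimple part in a single class $s^\bG$, and each resulting conjugacy class in $a^\bG b^\bG$ is determined by a unipotent class in $C_\bG(s)$, of which there are only finitely many by Lusztig's theorem.

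To close the circle, invoke Theorem~\ref{thm:algebraic}: condition $(ii)$ is equivalent to $(\bG, a, b)$ being, up to isogeny and interchange, one of the exceptional configurations (2)--(6). In each such exceptional case, the detailed geometric analysis carried out in Section~\ref{sec:dense} --- notably Lemmas~\ref{lem:g2not3} and~\ref{lem:f43a} for the $G_2$ and $F_4$ configurations --- shows that $a^\bG \times b^\bG$ decomposes into only finitely many (at most four) $\bG$-orbits and that every pair $(a', b') \in a^\bG \times b^\bG$ lies inside a common Borel subgroup. This simultaneously verifies $(iii)$, $(iv)$, $(v)$, $(vi)$, completing the equivalence chain once combined with the already-proved implications. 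For $(iii) \Rightarrow (ii)$, one uses a dimension count on the incidence variety $Z = \{(x,y,\bB) : x, y \in \bB\}$, whose image in $\bG \times \bG$ has dimension at most $\dim \bG + \dim \bB < 2\dim \bG$; containment of the irreducible variety $a^\bG \times b^\bG$ in this image forces $\dim C_\bG(a) + \dim C_\bG(b)$ to be large, which combined with Theorem~\ref{thm:algebraic} rules out all non-exceptional $(a,b)$.

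The main obstacle is the implication $(iii) \Rightarrow (ii)$: the condition that pairs lie in common Borels does not, on its face, control the semisimple parts of their products (indeed every element of $\bG$ lies in some Borel), so one genuinely must appeal to the classification in Theorem~\ref{thm:algebraic} via the dimension count above rather than argue intrinsically from Borel containment. All other implications are bookkeeping modulo the case-by-case verifications of Section~\ref{sec:dense}.
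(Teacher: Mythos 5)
Your overall circuit of implications is sound, and most of the individual steps are correct and match what the paper has in mind: $(iv)\Leftrightarrow(vi)$ by the double-coset correspondence, $(vi)\Rightarrow(i)$ by the unnumbered lemma, $(v)\Rightarrow(ii)$ by Lemma~\ref{lem:dense centralizer}, $(i)\Leftrightarrow(ii)$ by Lemma~\ref{lem:irred} together with Lusztig's finiteness theorem, and $(\text{ii})\Rightarrow(\text{exceptional case of Theorem~\ref{thm:algebraic}})\Rightarrow(iii),(iv),(v),(vi)$ by the case analysis in Section~\ref{sec:dense}. The one genuine gap is $(iii)\Rightarrow(ii)$.

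The dimension count on the incidence variety $Z=\{(x,y,\bB):x,y\in\bB\}$ only yields the inequality $\dim C_\bG(a)+\dim C_\bG(b)\ge\dim\bG-\dim\bB=N$ (the number of positive roots), which is a necessary consequence of $(iii)$ but nowhere near sufficient to force the pair $(a,b)$ into the short list of exceptions in Theorem~\ref{thm:algebraic}. For instance, two transvections $a,b$ in $\SL_n$ with $n\ge3$ have $\dim C_\bG(a)+\dim C_\bG(b)=2(n-1)^2\ge N$, yet they are not among the exceptional configurations, $(iii)$ fails for them, and the product $ab^g$ realizes infinitely many semisimple classes. Theorem~\ref{thm:algebraic} is a classification by type of element, not by centralizer dimension, so the phrase ``combined with Theorem~\ref{thm:algebraic} rules out all non-exceptional $(a,b)$'' has no content here. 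Your closing remark that $(iii)$ ``does not, on its face, control the semisimple parts'' is in fact backwards: $(iii)$ controls the semisimple parts \emph{directly}, and the correct argument exploits this. If $\langle a,b^g\rangle\le\bB=\bT\ltimes\bU$, then the semisimple part of $ab^g$ is conjugate to $\bar a\,\bar b$ where $\bar a,\bar b\in\bT$ are the images of $a,b^g$ modulo $\bU$; since $\bar a$ and $\bar b$ are each determined by $a$ and $b$ respectively up to the action of the Weyl group $W$, the semisimple class of $ab^g$ takes at most $|W|$ values as $g$ varies. Thus $\overline{a^\bG b^\bG}$ meets only finitely many semisimple classes, and Lemma~\ref{lem:irred}(a) forces exactly one, which is $(ii)$. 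This replaces your dimension count and closes the cycle without invoking Theorem~\ref{thm:algebraic} for this direction at all.
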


In fact, we will see in the next section 
that in all the cases where $a^{\bG}b^{\bG}$ is a finite
union of conjugacy classes, it is a union of at most $4$ classes.

\section{Examples with dense centralizer products}   \label{sec:dense}

We now consider the examples for the exceptions in Theorem~\ref{main:alg}
and show that $a^{\bG}b^{\bG}$ is a finite union of conjugacy classes in all
cases. However, it always consists of at least two classes and so
$\bG \ne C_\bG(a)C_\bG(b)$.
We also show that there is a dense (and so open) element 
in $C_\bG(a)\backslash \bG /C_\bG(b)$, 
whence $C_\bG(a)C_\bG(b^g)$ is dense for some $g \in \bG$.  Indeed we will
see that $|C_\bG(a)\backslash \bG /C_\bG(b)| \le 4$ in all cases. 

Throughout the section, fix $k$ to be an algebraically closed field
of characteristic $p \ge 0$.

\begin{exmp}   \label{ex: G2}
Let $\bG=G_2$ with $p=3$. Let $a$ be a long root element and $b$ a short
root element. Choose conjugates so that $ab$ is a regular unipotent
element. Then $\dim a^\bG=\dim b^\bG=6$ and $\dim (ab)^\bG = 12$.
Since $\dim a^\bG + \dim b^\bG = 12$, we see that $\dim a^{\bG}b^{\bG} \le 12$
and so $(ab)^\bG$ is the dense orbit in $a^{\bG}b^{\bG}$.
In particular, $\overline{a^{\bG}b^{\bG}}$ is the set of unipotent elements in 
$\bG$. Moreover, for such a pair $(a,b)$ we see that
$\dim C_\bG(a) \cap C_\bG(b) \ge 2$ because $\dim C_\bG(a)=\dim C_\bG(b)=8$. 
However, since $C_\bG(a)\cap C_\bG(b)\le C_\bG(ab)$ and $\dim C_\bG(ab)=2$,
we have equality, whence $C_\bG(a)C_\bG(b)$ is dense in $\bG$. Note that there
are at least two classes in $a^{\bG}b^{\bG}$. As noted, $a^{\bG}b^{\bG}$
contains the regular unipotent
elements. On the other hand, we can find conjugates which commute and so the
product will have order~$3$ and so is not a regular unipotent element
(and so $\bG \ne C_\bG(a)C_\bG(b)$). Since $(a,b)^\bG$ is dense in 
$a^\bG \times b^\bG$, it follows that any pair in $a^\bG \times b^\bG$ is
contained in some Borel subgroup.

We next show that in fact $|C_\bG(a) \backslash \bG /C_\bG(b)|=2$. 
This can be seen as follows.  Fix a maximal torus $\bT$ and a Borel
subgroup $\bB$ containing $\bT$. Note that we may take $C_\bG(a) = \bP_1'$
and $C_\bG(b)=\bP_2'$ where $\bP_1$ and $\bP_2$ are the maximal parabolics
containing $\bB$. Let $\bT_i = \bT \cap \bP_i'$. Since the $\bP_1, \bP_2$
double cosets are in bijection
with the corresponding double cosets in the Weyl group, we see that there are
$2$ such double cosets. Note that if $w$ is in the Weyl group, then
$\bP_1' w \bP_2' = \bP_1' \bT_1(\bT_2)^ww\bP_2'$. Note that
$\bT=\bT_1(\bT_2)^w$ for any $w$ in the Weyl group. It follows that
$\bP_1'w\bP_2'=\bP_1'\bT w\bT\bP_2'=\bP_1w\bP_2$ and so  
$|\bP_1'\backslash \bG / \bP_2'|=2$.
\end{exmp}

\begin{exmp}   \label{ex:G2ss}
Let $\bG=G_2$ with $p \ne 3$. Let $a$ be a long root element
and $b$ an element of order $3$ with centralizer $\SL_3$.
First take $k$ to be the algebraic closure of a finite field.
By Lemma~\ref{lem:g2not3}, we see that $\bG$ has two orbits on 
$a^\bG \times b^\bG$
and for any $(c,d) \in a^\bG \times b^\bG$, $\langle c, d \rangle$ is contained
in some Borel subgroup.   As noted in the proof of Lemma \ref{lem:g2not3}, 
$a^{\bG}b^{\bG}$ consists of two conjugacy classes (the classes have
representatives  $bx$ and $by$
where $x$ is a long root element $C_{\bG}(b)$ and $y$ is regular unipotent
element of $C_{\bG}(b)$). 

By taking ultraproducts, we see that the same is true for some algebraically
closed field of characteristic $0$. By a well known argument
(cf. \cite[1.1]{GLMS}), it follows that the same is true for any algebraically
closed field of characteristic not~$3$.
\end{exmp}

\begin{exmp}   \label{ex:f4}
Let $\bG=F_4$ with $p=2$. Let $a$ be a long root element and $b$ be a short
root element. We will show that $\langle a, b \rangle$ is always unipotent.
Let $\bT$ be a maximal torus of $\bG$ with $\bT \le \bB$, a Borel subgroup
of $\bG$.
Let $\bP_1$ and $\bP_4$ be the two end node maximal parabolic subgroups
containing $\bB$. Note that there are only finitely many
$\bP_1, \bP_4$ double cosets  in $\bG$ each of the form $\bP_1w\bP_4$ where $w$
is in the Weyl group. We may assume that $\bP_1' = C_\bG(a)$ and $\bP_4'=C_\bG(b)$.
Arguing precisely as for $G_2$ with $p=3$, we see that $\bP_1'w\bP_4'=\bP_1w\bP_4$.
Thus there are only finitely many $C_\bG(a), C_\bG(b)$ double cosets in $\bG$. 
In fact, by computing in the Weyl group, we see that there are precisely
$2$ double cosets.
In particular, $\bG$ has only two orbits on $a^\bG \times b^\bG$, whence
the semisimple part of any element in $a^{\bG}b^{\bG}$ is the same up to conjugacy
and so is contained in the set of unipotent elements. The dense double
coset corresponds to $w$ being the element which acts as inversion on $\bT$.
One computes that the group generated by $a$ and $b^w$ is unipotent, whence
this is true for all pairs in $a^\bG \times b^\bG$ (by density).  

If $a$ and $b^g$ commute, then $ab^g$ has order $2$ while if $a$ and $b^g$
do not commute, we see that $ab^g$ has order $4$ (already in $C_2$).  Thus 
$a^{\bG}b^{\bG}$ consists of two conjugacy classes.  
\end{exmp}

\begin{exmp}   \label{ex:f4odd}
Let $\bG=F_4$ with $p \ne 2$. Let $a$ be a long root element and let $b$
be an involution with centralizer of type $B_4$. If $k$ is
the algebraic closure of a finite field of odd characteristic,
it follows by Lemma \ref{lem:f43a} that 
 $|C_\bG(a) \backslash \bG /C_\bG(b)|=2$
and every pair $(c,d) \in a^\bG \times b^\bG$ has the property
that $\langle c, d \rangle$ is contained in a Borel subgroup.
Arguing as for $G_2$ with $p \ne 3$, the same is true for
$k$ any algebraically closed field of characteristic not $2$.
Thus, $\bG$ has two orbits on $a^\bG \times b^\bG$. Clearly, one orbit
is the set of commuting pairs. If $a$ and $b^g$ commute,
then $(ab^g)^2$ is a long root element.  It is straightforward to compute
that if $a$ and $b^g$ do not commute, then $(ab^g)^2$ is a short root element
and so  there are exactly two conjugacy classes in $a^{\bG}b^{\bG}$.
\end{exmp}

\begin{exmp}   \label{ex:sp odd)}
Let $\bG=\Sp_{2n}=\Sp(V)$, $n \ge 2$ with $p \ne 2$.
Let $a$ be a transvection and let $b$ be an involution (i.e., all eigenvalues
are $\pm 1$). We claim that $\langle a, b \rangle$ is contained in a Borel
subgroup, whence $a^{\bG}b^{\bG}$ contains only elements with the semisimple 
part conjugate to $b$.   
Let $W$ be the intersection of the fixed spaces of $a$ and $b$.
If $W$ contains a nondegenerate subspace, we pass to the orthogonal complement
and use induction. If $W$ is totally singular, then $\dim W = n-1$ or $n$.
Let $\bP$ be the stabilizer of $W$ with unipotent radical $\bQ$. If $\dim W = n$,
$a$ is in $\bQ$, whence the result.
If $\dim W = n-1$, then $b$ is central in $\bP/\bQ$, whence the result follows
in this case as well.  

Since $C_\bG(b) = \Sp_{2m}\times\Sp_{2n-2m}$, we see that $C_\bG(b)$ has three
orbits on $V \setminus\{0\}$ whence $|C_\bG(a) \backslash \bG /C_\bG(b)|=3$. 
It is straightforward to see (already in $\Sp_4$) that $a^{\bG}b^{\bG}$ contains
elements whose square are long root elements or short root elements, whence
$a^{\bG}b^{\bG}$ contains at least $2$ conjugacy classes. 
\end{exmp}

\begin{exmp}   \label{ex: sp uni}
Let $\bG=\Sp_{2n}=\Sp(V)$, $n \ge 2$ with $p=2$. Let $a$ be a transvection
and $b$ an involution with $(bv,v)=0$ for all $v \in V$.
We claim that $\langle a, b \rangle$ is unipotent.
Consider the intersection $W$ of the fixed space of $b$ and the fixed space
of $a$. This has dimension at least $n-1 \ge 1$. If this space contains a
nondegenerate space $D$, we can replace $V$ by $D^{\perp}$ and use induction
(note that if $n=1$, $b=1$). So we may assume that $W$ is totally singular.
Let $\bP$ be the stabilizer of $W$ and $\bQ$ the unipotent radical of $\bP$.
If $\dim W = n$, then $a, b$ are both in $\bQ$
and so commute. If $\dim W={n-1}$, then $\langle a, b \rangle \le \Sp_2(k)\bQ$
whence $b \in \bQ$. Thus $a^{\bG}b^{\bG}$ is contained in the set of unipotent
elements (and any pair in $a^\bG \times b^\bG$ is contained in a common Borel
subgroup).  As we have seen $a$ and $b$ may commute and so $ab$ is an
involution but it is straightforward to see that the order of $ab$ may be~$4$.

We can write $V = V_1 \perp V_2 \perp \ldots \perp V_m \perp W$ where
$\dim V_i=4$,
and $b$ acts as a short root element on $V_i$ and
$b$ is trivial on $W$. If $n=2$, we argue as for $G_2$ to see that
$C_\bG(a)C_\bG(b)$ can be dense. Indeed, it follows that in general $C_{\bG}(b)$
has only finitely many orbits on $V$, whence there are only finitely many
$C_\bG(a), C_\bG(b)$ double cosets in $\bG$.   Indeed, it is a fairly easy
exercise in linear algebra to show that $C_\bG(b)$ has at most $4$ orbits
on nonzero vectors in $V$, whence  $|C_\bG(a) \backslash \bG /C_\bG(b)| \le 4$.
\end{exmp}

\begin{exmp}   \label{ex:so}
Let $\bG= \SO_{2n+1}=\SO(V)$, $n \ge 2$ with $p \ne 2$. Let 
$a\in\bG$ be such that $-a$ is a reflection,
and let $b$ be a unipotent element with all Jordan blocks of size at
most $2$. We claim that $\langle a, b \rangle$ is contained in a Borel
subgroup of $\bG$, whence $a^{\bG}b^{\bG}$ consists of unipotent elements.
If $n=2$, then the result follows by the result for $\Sp_4$.
So assume that $n > 2$.
Let $W$ be the intersection of the $-1$ eigenspace of $a$ and
$[b,V]$. Note that $W \ne 0$ (since $\dim [b,V] \ge 2$) and is totally
singular. By induction, $ab$ has semisimple part the negative of a reflection
on $W^{\perp}/W$, whence also in $\bG$.

Note that $C_\bG(a)$ is the stabilizer of a nonsingular $1$-space. Note also
that the number of Jordan blocks of $b$ is even, whence by reducing to the
$4$-dimensional case we see that $C_\bG(b)$ has only finitely many orbits
on $1$-dimensional spaces. Thus there are only finitely many
$C_\bG(a) \backslash \bG /C_\bG(b)$ double cosets. Indeed, it is a
straightforward exercise to see that $|C_\bG(a)\backslash\bG/C_\bG(b)|\le 4$.
By reducing to the
case of $\SO_5 \cong C_2$, we see that the unipotent parts of elements in
$a^{\bG}b^{\bG}$ are in at least $2$ different conjugacy classes, whence
$a^{\bG}b^{\bG}$ is not a single conjugacy class. 
\end{exmp}

\section{A short proof of Szep's conjecture}   \label{sec:szep}

We use our previous results to give a short proof of the conjecture of Szep's
(a finite simple group cannot be the product of two centralizers);
see \cite{FA} for the original proof. In particular, using
Corollaries~\ref{cor:ss} and~\ref{cor:factorize} (for semisimple elements),
we can shorten the proof considerably.

Observe the following connection to the Arad--Herzog conjecture.
Let $G$ be a group with $a, b \in g$.
As we have noted the number of orbits of $G$  on $a^G \times b^G$
is precisely $C_G(a)\backslash G /C_G(b)$. In particular,
if  $G=C_G(a)C_G(b)$, then
$G$ acts transitively on $a^G \times b^G$  and $a^Gb^G$ is a
single conjugacy class of $G$.  Indeed if $w(x,y)$ is any element of the
free group on two generators, then $w(a',b')$ is conjugate to $w(a,b)$
for all $(a',b') \in a^G \times b^G$. In particular, if the Arad--Herzog
conjecture holds for $G$, then no such factorization can exist.

\begin{thm}[Szep's conjecture; Fisman--Arad \cite{FA}]   \label{thm:szep}
 Let $G$ be a finite non-abelian simple group. If $a,b$ are non-trivial
 elements of $G$, then $G \ne C_G(a)C_G(b)$.
\end{thm}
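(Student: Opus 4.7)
The plan is to exploit the observation preceding the theorem: if $G = C_G(a)C_G(b)$, then $G$ acts transitively on $a^G \times b^G$ by simultaneous conjugation, so in particular $a^G b^G = (ab)^G$ is a single conjugacy class. Using CFSG, I would split the argument into the standard three cases. For $G = \fA_n$ with $n \geq 5$, Theorem~\ref{thm:alt} immediately shows that $a^Gb^G$ cannot be a single class, contradicting the hypothesis. For sporadic $G$, the Arad--Herzog conjecture is known (and can be verified directly from the known character tables), so we are again done. This reduces the problem to the case where $G$ is simple of Lie type in some characteristic $p$, which is where the machinery of the paper is designed to help.

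For $G$ simple of Lie type, assume $G = C_G(a)C_G(b)$ with $a,b$ nontrivial. I would first peel off the Jordan parts: writing $a = s_a u_a$ with $s_a$ semisimple and $u_a$ unipotent commuting, we have $C_G(a) \le C_G(s_a)$, so whenever (the image in $G$ of) $s_a$ is nontrivial we may replace $a$ by $s_a$ while retaining the factorization. Doing the same for $b$ brings us to the situation where each of $a,b$ is either semisimple or (the image of) a unipotent element. If both are semisimple, then $a^Gb^G = (ab)^G$ being a single class is ruled out by Corollary~\ref{cor:ss}; if both are unipotent, it is ruled out by Theorem~\ref{thm:main3}. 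Either subcase yields a contradiction immediately.

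The mixed subcase (say $a$ semisimple, $b$ unipotent) is the main obstacle, and this is where Corollary~\ref{cor:factorize} enters. Lift $a,b$ to an ambient simple algebraic group $\bG$ with $G$ a quotient of $\bG^F$. A dimension count modelled on the proof of Proposition~\ref{prop:regss} converts the set-theoretic equality $|G| = |C_G(a)||C_G(b)|/|C_G(a)\cap C_G(b)|$ into
\[
 \dim C_\bG(a) + \dim C_\bG(b) - \dim\bigl(C_\bG(a)\cap C_\bG(b)\bigr) \ge \dim \bG,
\]
so that $C_\bG(a)C_\bG(b)$ is Zariski dense in $\bG$. Corollary~\ref{cor:factorize} then forces $(\bG,a,b)$ into the short list of exceptions in Theorem~\ref{main:alg}, all of which live in type $G_2$, $F_4$, $\Sp_{2n}$ or $\SO_{2n+1}$ with one of $a,b$ a root element and the other of very small order. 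For each of these specific configurations I would appeal to the explicit orbit analyses carried out in Section~\ref{sec:dense}, which exhibit both commuting and non-commuting representative pairs $(c,d)\in a^\bG\times b^\bG$; since in each case these representatives can be chosen in the finite group $G$, they witness at least two distinct $G$-orbits on $a^G\times b^G$, contradicting the transitivity of $G$ forced by the factorization $G = C_G(a)C_G(b)$.
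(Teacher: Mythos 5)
Your proposal is correct in outline but takes a genuinely different route from the paper for the Lie-type case, and it has one step whose justification is off-target even though the conclusion it reaches is right.

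The paper never performs a Jordan-decomposition case split. Instead, for $G$ of Lie type it exhibits two primes $r_1,r_2$ (typically Zsigmondy primes) with cyclic Sylow $r_i$-subgroups and regular semisimple elements $x_i$ of order $r_i$ such that no nontrivial element centralizes conjugates of both. Since $|G|$ divides $|C_G(a)|\,|C_G(b)|$, each $r_i$ divides one of the two centralizer orders, and the configuration forces (up to swapping) $a$ to centralize a conjugate of $x_1$ and $b$ a conjugate of $x_2$; regularity then makes $a$ and $b$ both semisimple, and Proposition~\ref{prop:regss} finishes. This neatly sidesteps the unipotent and mixed cases entirely, at the cost of a case-by-case choice of tori and primes (and separate treatment of small-rank groups and $\OO_{4n}^+(q)$). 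Your route instead reduces via Jordan parts to \emph{semisimple-or-unipotent}, dispatches the pure cases with Corollary~\ref{cor:ss} and Theorem~\ref{thm:main3}, and handles the mixed case by passing to the algebraic group. This is a legitimate and arguably more uniform scheme, but it invokes strictly more of the paper's machinery: the paper's own proof of Theorem~\ref{thm:szep} does not use Theorem~\ref{thm:main3}, the full Theorem~\ref{main:alg}, or any of the explicit orbit analyses in Section~\ref{sec:dense}.

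On your mixed-case step: the phrase ``a dimension count modelled on the proof of Proposition~\ref{prop:regss}'' does not quite apply. That proposition's dimension relation comes from the \emph{Steinberg character} identity $m(a)+m(b)=m(c)+N$ together with $\dim C_\bG(g)=2m(g)+r$, both of which are specific to \emph{semisimple} $g$; neither holds when $b$ is unipotent, and the Steinberg character vanishes off the semisimple locus so gives no information there. A single numerical equality $|G|\,|C_G(a)\cap C_G(b)|=|C_G(a)|\,|C_G(b)|$ at one fixed $q$ does not by itself yield a dimension inequality. The clean way to get what you want is more elementary: $S=C_S(a)C_S(b)\subseteq C_\bG(a)C_\bG(b)$, and $S$ is Zariski dense in the (connected, adjoint) group $\bG$, being of finite index in the dense subgroup $\bG^F$; therefore $C_\bG(a)C_\bG(b)$ is dense. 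With that replacement, Corollary~\ref{cor:factorize} applies and you land in the exceptional cases of Theorem~\ref{main:alg}. Finally, your appeal to Section~\ref{sec:dense} to exhibit two $G$-orbits on $a^G\times b^G$ does work, but it needs to be said carefully: Lemmas~\ref{lem:g2not3} and~\ref{lem:f43a} already carry out the two-orbit count \emph{in the finite group} for $G_2$ and $F_4$, while for $\Sp_{2n}$ and $\SO_{2n+1}$ one reduces to $\Sp_4$ resp.\ $\SO_5$ and checks directly that both a commuting and a non-commuting pair can be chosen over $\FF_q$, producing products in distinct $G$-classes (distinct already because they lie in distinct $\bG$-classes). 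So the approach goes through, but the density justification should be repaired as above and the transfer of the orbit count to the finite group should be made explicit.
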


\begin{proof}
For alternating groups, the Arad--Herzog conjecture, proved in
Theorem~\ref{thm:alt}, gives the result. For the twenty six sporadic groups,
it is straightforward to check the Arad--Herzog conjecture from the character
tables.
\par
So now assume that $G$ is simple of Lie type. The basic idea is as follows.
We find two primes $r_1,r_2$ for which the Sylow $r_i$-subgroups of $G$ are
cyclic and there exist regular semisimple elements $x_1,x_2\in G$ of order
$r_i$ such that no nontrivial element of $G$ centralizes conjugates of both
of them. \par
Then assume that $G =C_G(a)C_G(b)$ for $a,b\in G$. If $r_i$ divides
$|C_G(a)|$, then some conjugate of $x_i$ centralizes $a$, and similarly for
$|C_G(b)|$. Thus by our assumption, $a$ centralizes a conjugate of $x_1$,
say, and $b$ centralizes a conjugate of $x_2$. Since the $x_i$ are regular,
this implies that $a,b$ are both semisimple. But then by Proposition 
\ref{prop:regss},
$a^Gb^G$ consists of more than one class of $G$. As pointed out above this
implies that $C_G(a)C_G(b)\ne G$, a contradiction. \par
For $G$ of exceptional type and rank at least~4, we take for $r_1,r_2$
Zsigmondy primes as
listed in Table~\ref{tab:exctori}. For the small rank cases the claim
follows from Proposition~\ref{prop:smallexc}.
\par
For $G$ of classical type, the claim for $\PSL_n(q)$ follows by
Theorem~\ref{thm:psl}, and for $\PSU_n(q)$ with $3\le n\le6$ by
Proposition~\ref{prop:smallPSU}. For the types $\tw2A_n$, $B_n$, $C_n$,
$\tw2D_n$ and $D_{2n+1}$, we take the two tori $T_1,T_2$ given in
\cite[Table 2.1]{MSW}, which contain Zsigmondy prime elements and are not
contained in a common centralizer (by the arguments given in \cite[\S2]{MSW}).
\par
This leaves only the case of $\OO_{4n}^+(q)$.
If $n=2$, we apply Proposition \ref{prop:smallclassical}. So assume that $n>2$.
Here we take $r_1$ to be a Zsigmondy prime divisor of $q^{4n-2}-1$,
$r_2$ to be a Zsigmondy prime divisor of $q^{2n-1}-1$.
Let $x_i \in G$ be of order $r_i$.
Note that the Sylow $r_i$-subgroups of $G$ are cyclic, and $x_1$ and
$x_2$ are regular semisimple. Abusing the notation, we will
let $x_i$ denote the inverse image of $x_i$ of order $r_i$ in
$S := \SO^+_{4n}(q)$. Then $C_S(x_1) \cong C_{q^{2n-1}+1} \times C_{q+1}$ and
$C_S(x_2) \cong C_{q^{2n-1}-1} \times C_{q-1}$. Suppose $s \in S$ centralizes
conjugates of both $x_1$ and $x_2$. Then $|s|$ divides
$\gcd(q^{2n+1}+1,q^{2n-1}-1)\le2$. In particular, $s = 1$ if $2|q$.
Assume $q$ is odd and $s \neq 1$. Since $s$ centralizes a conjugate of $x_1$,
we see that $s$ acts as $\pm 1$ on $U_1$ and as $\pm 1$ on $U_1^{\perp}$, where
$U_1$ is a nondegenerate subspace (of the natural $\FF_qS$-module
$V = \FF^{4n}_q$) of type $-$ of codimension $2$.
Similarly, since $s$ centralizes a conjugate of
$x_2$, $s$ acts as $\pm 1$ on $U_2$ and as $\pm 1$ on $U_2^{\perp}$, where
$U_2$ is a nondegenerate subspace of $V$ of type $+$ of codimension $2$.
This can happen only when $s = -1_V$. We have shown
that no nontrivial element of $G$ can centralize conjugates of both $x_1$ and
$x_2$, and so we can finish as above.
\end{proof}

We next give some examples to show that if the ambient group is not simple,
there are many counterexamples to both Szep's conjecture and the 
Arad--Herzog conjecture.
Of course, a trivial example is to take $G$
a direct product and choose elements in different factors. There is a more
interesting example for almost simple groups.

\begin{exmp}   \label{ex:as}
Let $G := \GL_{2n}(q)= \GL(V)$, $n \ge 1$, $q > 2$, $(n,q) \neq (1,3)$, 
and let $\tau$ be a graph automorphism of $G$ with centralizer 
$C_G(\tau) \cong \Sp_{2n}(q)$. Also, 
let $x = \diag(a,1, \ldots ,1)$ for some $1 \neq a \in \FF^{\times}_q$, so
that $C_G(x)$ is the stabilizer of a pair $(L, H)$,
where $L$ is a line and $H$ is a hyperplane not containing $L$ in $V$. 

First we show that $G=C_G(\tau)C_G(x)$; equivalently, 
$C_G(\tau)$ is transitive on such pairs $(L,H)$.
Since $\Sp_{2n}(q)$ is transitive on nonzero vectors, we just have to show
that the stabilizer of $L$ in $C_G(\tau)$ is transitive on the hyperplanes
complementary to $L$.
Let $H_i$, $i=1,2$, be fixed hyperplanes complementary to $L$.
Let $0 \ne v \in L$. Choose $v_i \in L_i:=H_i^{\perp}$ with $(v_i,v)=1$.
Set $M_i = \langle L, L_i \rangle$. Note that $V = M_i \perp H_i'$ where
$H_i'=L_i^{\perp}\cap H_i$ is a hyperplane in $L^{\perp}$ not containing $L$.
By Witt's theorem for alternating forms, there is an isometry $g \in G$ such
that $gM_1=M_2$ and $gH_1=H_2$. So we may assume that $M_1=M_2$. Applying
another isometry, we may assume that $L_1=L_2$ whence $H_1=H_2$ as required.

It follows that $\tau^A x^A = (\tau x)^A$
with $A := \langle G, \tau \rangle$. The same also holds in the 
almost simple group $A/Z(G) \leq \Aut(\PSL_n(q))$.
\end{exmp}

Of course this also works for the algebraic group (or indeed over any field of
size greater than $2$).

Here is another example.

\begin{exmp}   \label{ex:wreath}
Let $L$ be a nontrivial finite group and $H$ a cyclic group of order $n > 1$.
Set $G = L \wr H$. Let $1 \ne a$ be an element of $L^n$ with only one
nontrivial coordinate. Let $b$ be a generator for $H$. Note that
$C_G(a) \ge L^{n-1}$ while $C_G(b) = D \times H$ where $D$ is a diagonal
subgroup of $L^n$. Thus $G=C_G(a)C_G(b)$ and $a^Gb^G=(ab)^G$.
\end{exmp}

In particular, we can take $n=2$, $L$ simple non-abelian and choose
$a$ and $b$ to be involutions or $n=p$ a prime and $L$ simple of order
divisible by $p$ and choose $a$ and $b$ to have order $p$. Note
that since $G$ is transitive on $a^G \times b^G$, we see that
$\langle a^x, b^y \rangle$ is always a $p$-group.

We give one more example to show that $a^Gb^G=(ab)^G$ does not
necessarily imply that $G=C_G(a)C_G(b)$.

\begin{exmp}   \label{ex:normal}
Let $G$ be a group with a normal subgroup $N$. Suppose that
$a, b \in G$ are such that all elements in $abN$ are conjugate.
Assume that $G/N$ is abelian.
Then clearly, $a^Gb^G=(ab)^G = abN$ (the condition that $G/N$
is abelian can be relaxed). Such examples include $\fA_4$ and
non-abelian groups of order $qp$ where $p < q$ are odd primes
with $a, b$ classes of $p$-elements with $b$ not conjugate to $a^{-1}$.
\end{exmp}

\section{Variations on Baer--Suzuki}  \label{sec: baersuz}
Recall that the Baer--Suzuki theorem asserts that if $G$ is a finite (or
linear) group, $x \in G$, then $\langle x^G \rangle$ is nilpotent if and only
if $\langle x, x^g \rangle$ is nilpotent for all $g\in G$. One might ask
what happens if we assume that $\langle x, y^g \rangle$ is nilpotent
(or solvable) for all $g \in G$ for $x, y$ not necessarily conjugate elements.
The examples in Section~\ref{sec:szep} show that this analog of the
Baer--Suzuki theorem fails for nonconjugate elements (and indeed even the
solvable version of Baer--Suzuki fails --- see \cite{guest}). As we have seen
for $p=2,3$, we even have counterexamples for simple algebraic groups (and
so also for finite simple groups).

However, it turns out that one can extend the Baer--Suzuki theorem with
appropriate hypotheses at least for $p$-elements with $p \ge 5$ (see
Theorem~\ref{thm:bsgen} below). 

\subsection{Some variations on Baer--Suzuki for simple groups}
First we note that in Theorem \ref{bs1} there are no exceptions if $p>3$. 
Moreover, the same proof
(basically reducing to the case of rank~$1$ groups) gives the following:

\begin{cor}   \label{bs5}
 Let $G$ be a finite simple group of Lie type in characteristic $p\ge 5$.
 Let $u, w$ be nontrivial unipotent elements of $G$. There exists $g \in G$
 such that $uw^g$ is not unipotent and $\langle u,w^g\rangle$ is not solvable.
\end{cor}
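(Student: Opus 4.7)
The plan is to rerun the proof of Theorem~\ref{bs1}, strengthening its conclusion in the rank-one base case. Since $p\ge5$, all three exceptional configurations in Theorem~\ref{bs1}(1)--(3) occur in characteristic $2$ or $3$ and are therefore vacuous, so the non-unipotence assertion --- the existence of $g\in G$ with $uw^g$ not unipotent --- follows at once from Theorem~\ref{bs1}. The additional content of the corollary is to arrange that the \emph{same} $g$ also makes $\langle u,w^g\rangle$ non-solvable.

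I would mimic the inductive reduction of Theorem~\ref{bs1}. For classical $G$, Lemma~\ref{lem:unipotent pairs} and Theorem~\ref{thm:unipotent prods} allow one, after conjugating $u$ and $w$ into a common parabolic and passing to its Levi, to reduce to a rank-one subgroup $H\le G$ isomorphic up to isogeny to $\SL_2(q_0)$ or $\SU_3(q_0)$ with $q_0$ a power of $p$. For exceptional $G$ the end-node parabolic reduction together with Lemma~\ref{f4 lemma} effects the same reduction; the characteristic-$2$ and $3$ exceptions in these lemmas are irrelevant for $p\ge5$. Hence it suffices to prove both claims simultaneously inside $H$. Inside $H\cong\SL_2(q_0)$ I would choose $g$ so that $u=\left(\begin{smallmatrix}1&a\\0&1\end{smallmatrix}\right)$ and $w^g=\left(\begin{smallmatrix}1&0\\b&1\end{smallmatrix}\right)$ with $ab\ne-4$. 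Then $uw^g$ has trace $2+ab\ne\pm2$, so it is not unipotent. Moreover $u$ and $w^g$ lie in opposite Borel subgroups of $H$, so $\langle u,w^g\rangle$ is not itself contained in any Borel of $H$; Dickson's classification of subgroups of $\PSL_2(q_0)$ then forces $\langle u,w^g\rangle$ to contain a subfield subgroup $\SL_2(\FF_{p^k})\supseteq\SL_2(\FF_p)$ or to be isomorphic to $\fA_5\cong\PSL_2(5)$, both of which are non-solvable for $p\ge5$. The $\SU_3(q_0)$ case is handled analogously, by restricting to an embedded $\SU_2(q_0)\cong\SL_2(q_0)$ inside $\SU_3(q_0)$.

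The main obstacle is the coordination requirement: a single element $g$ must simultaneously make $uw^g$ non-unipotent and $\langle u,w^g\rangle$ non-solvable, rather than these properties being achieved by two different conjugates. The explicit rank-one pair displayed above settles this, and conjugation-invariance of unipotency and of solvability lifts the chosen $g\in H$ back to a genuine element of $G$ witnessing both conclusions for the original $u,w$.
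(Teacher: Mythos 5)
Your strategy---rerun the inductive reduction of Theorem~\ref{bs1}, noting that the exceptional configurations all occur in characteristic $2$ or $3$, and settle both conclusions simultaneously in the rank-one base case---is precisely what the paper intends (the paper's ``proof'' is the single sentence that the same reduction works). You correctly observe that the coordination problem is resolved by the reduction itself: if $\bar{u}\bar{w}^{\bar g}$ is non-unipotent in the Levi $L=P/Q$ and $\langle\bar u,\bar w^{\bar g}\rangle$ is non-solvable, any lift $g\in P$ of $\bar g$ witnesses both properties back in $G$. Your explicit $\SL_2$ computation with $ab\neq-4$ is correct, and Dickson's classification gives non-solvability as you say.

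There are two small inaccuracies worth flagging. First, your claim that the characteristic-$2$/$3$ exceptions in Lemma~\ref{lem:unipotent pairs} are all that can go wrong for $p\ge5$ is not quite right: the $\SU_4(q)$ exception in Lemma~\ref{lem:unipotent pairs}(a) holds in every characteristic, so the classical reduction for even-dimensional unitary groups does not go straight to rank one but bottoms out at $\SU_4(q)$, which the paper handles by direct computation. Second, the ``analogous'' $\SU_3(q_0)$ argument by restricting to a nondegenerate $\SU_2(q_0)$ has a genuine gap: a regular unipotent element of $\SU_3(q_0)$ fixes only a one-dimensional singular subspace, so it does not stabilize any nonsingular vector and cannot be conjugated into a natural $\SU_2(q_0)$. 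The fix is easy and actually cleaner than your $\SL_2$ computation: in a rank-one group, every Borel subgroup contains a unique maximal unipotent subgroup, so if $uw^g$ is not unipotent then $u$ and $w^g$ cannot lie in a common Borel; for $p\ge5$, inspection of the subgroup structure of $\PSL_2(q_0)$ (Dickson) and $\PSU_3(q_0)$ (Mitchell) shows that the only solvable subgroups containing nontrivial $p$-elements lie in a Borel, so $\langle u,w^g\rangle$ is automatically non-solvable once $uw^g$ is non-unipotent. This makes the rank-one step of both base cases uniform, and also dispenses with any explicit matrix calculation.
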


Guest \cite{guest} proved that if $G$ is a finite group with $F(G)=1$
and $x \in G$ has prime order $p \ge 5$, then
$\langle x, x^g \rangle$ is not solvable for some $g \in G$.   
See also \cite{GGK}.   

Next we record the following results for alternating and sporadic groups.

\begin{lem}   \label{lem:alt}
 Let $G=\fA_n$, $n\ge5$. Let $p$ be a prime with $p\ge3$. If $u,w\in G$
 are nontrivial $p$-elements, then there exists $g \in G$ such that $uw^g$
 is not a $p$-element and $\langle u, w^g \rangle$ is nonsolvable.
\end{lem}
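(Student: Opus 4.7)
The idea is to exhibit an explicit $g \in \fA_n$ by isolating one cycle of $u$ and one of $w^g$ whose supports share exactly one point, with the remaining cycles of $u$ and $w^g$ kept out of the way. Write $u = cu'$ and $w = dw'$ as commuting products of disjoint cycles, where $c$ and $d$ are single cycles of $u$ and $w$ of $p$-power lengths $p^a$ and $p^b$ respectively, chosen to be $p$-cycles whenever such appear in the decomposition. If $n \ge |\operatorname{supp}(u)| + |\operatorname{supp}(w)| - 1$, I would choose $g \in \fA_n$ so that $\operatorname{supp}(d^g) \cap \operatorname{supp}(c) = \{x\}$ for a single point $x$, and so that $X := \operatorname{supp}(c) \cup \operatorname{supp}(d^g)$, of size $p^a + p^b - 1$, is disjoint from both $\operatorname{supp}(u')$ and $\operatorname{supp}((w')^g)$.

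Under these conditions $X$ is invariant under $u$ and under $w^g$, and their restrictions to $X$ are $c$ and $d^g$ respectively. A direct computation---essentially the calculation $(1,2,\dots,p^a)(1,p^a+1,\dots,p^a+p^b-1) = (1,p^a+1,\dots,p^a+p^b-1,2,\dots,p^a)$---shows that $cd^g$ is a single $(p^a+p^b-1)$-cycle on $X$. Since $p^a + p^b - 1 \equiv -1 \pmod p$, this length is coprime to $p$, so $uw^g$ has a cycle whose length is not a power of $p$ and is therefore not a $p$-element. Moreover, whenever $a = 1$ or $b = 1$ (guaranteed whenever $u$ or $w$ has a $p$-cycle in its decomposition), the transitive image of $\langle u, w^g\rangle$ on $X$ is primitive and contains a cycle of prime length $p \le |X|-2$, so Jordan's theorem forces it to contain $\fA_{|X|}$, and $\langle u, w^g\rangle$ is nonsolvable.

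What remains is to arrange the support-fitting hypothesis and to handle cases where neither $u$ nor $w$ has a $p$-cycle. If both $u$ and $w$ have a fixed point in $\{1,\dots,n\}$, I would conjugate $w$ so that a fixed point of the conjugate coincides with one of $u$, reducing to $\fA_{n-1}$ and applying induction; the base case $n = 5$ is straightforward since only $p = 3$ and $p = 5$ arise there, and two $p$-cycles sharing one point generate $\fA_{2p-1}$. The main obstacle is the full-support case, where $u$ (say) moves every point so that $|\operatorname{supp}(u)|=n$ and the support inequality fails; here I would choose $g$ so that $\operatorname{supp}(d^g)$ meets two distinct cycles of $u$, so that $uw^g$ still acquires a long cycle of length coprime to $p$ by a similar computation. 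When furthermore neither $u$ nor $w$ has a $p$-cycle in its cycle decomposition (for instance $u$ a single $p^k$-cycle with $k\ge 2$), Jordan's theorem does not directly apply to $c$ and $d^g$, and nonsolvability of $\langle u, w^g\rangle$ is instead obtained by running the main construction on the suitable $p^{k-1}$-th powers of $u$ and $w^g$ inside $\langle u, w^g\rangle$, which are nontrivial $p$-elements of order exactly $p$ whose cycle decompositions do contain $p$-cycles.
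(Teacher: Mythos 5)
Your strategy --- choose $g$ to make one cycle $c$ of $u$ and one cycle $d^g$ of $w^g$ overlap in exactly one point, so that $uw^g$ acquires a cycle of length $p^a+p^b-1\equiv -1\pmod p$, and then invoke Jordan's theorem on the invariant set $X$ --- is considerably more explicit than the paper's proof, which simply reduces by induction to the base cases $n\in\{5,6\}$ for $p=3$ and $n=p$ for $p\ge5$ and verifies those. However, your treatment of the \emph{full-support} regime has a genuine gap. Whenever $|\operatorname{supp}(u)|+|\operatorname{supp}(w)|-1>n$, the main construction has no room, and the fallback of ``letting $\operatorname{supp}(d^g)$ meet two distinct cycles of $u$'' is simply unavailable when $u$ is a \emph{single} $p^k$-cycle of full support. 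This is not a peripheral situation: it already contains your purported base case $n=p=5$, where your claim that two $p$-cycles overlapping in one point generate $\fA_{2p-1}$ refers to $\fA_9$, which is not a subgroup of $\fA_5$; and it contains $u=(1\,2\,\cdots\,9)$ in $\fA_9$ with $p=3$, where $u$ has only one cycle. Some fresh computation (for instance choosing $w^g$ to reverse a short segment of the $n$-cycle $u$, so that $uw^g$ has a cycle of length coprime to $p$ and $\langle u,w^g\rangle$ is primitive) is needed, and you do not supply it.

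Two smaller points. The primitivity of the image of $\langle u,w^g\rangle$ on $X$ is asserted, not proved; without primitivity Jordan's theorem does not apply, since imprimitive transitive groups containing a $p$-cycle can be solvable (e.g.\ $\fS_3\wr\fS_2$ of degree $6$). The version of Jordan's theorem you invoke, for a $p$-cycle of prime length $p\le|X|-2$, is also not the clean classical one (which requires $p\le|X|-3$); the boundary case $p=|X|-2$, which arises exactly for $p=3$, $b=1$, $|X|=5$, needs a separate check. By contrast, your ``powers'' device for the case $a,b\ge2$ is in fact sound once $g$ has been produced by the main construction: restricting to the $(2p-1)$-point invariant set around the shared point exhibits two $p$-cycles overlapping in one point inside $\langle u^{p^{a-1}},(w^g)^{p^{b-1}}\rangle$. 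But that device is only available after the support-fitting construction succeeds, which is precisely where the gap above sits.
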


\begin{proof}
First take $p=3$. By induction, it suffices to consider the case $n=5$ or~$6$
where the result is clear. So assume that $p \ge 5$. Clearly, it suffices
to consider the case $p=n$, where again the result is clear.
\end{proof}

\begin{lem}   \label{lem:sporadic}
 Let $G$ be a sporadic simple group. Let $p$ be a prime. Let $u, w \in G$ be
 nontrivial $p$-elements. Then
 \begin{enumerate}[\rm(a)]
  \item there exists $g \in G$ such that $uw^g$ is not a $p$-element; and
  \item if $p \ge 5$, there exists $g \in G$ such that
   $\langle u, w^g \rangle $ is not solvable.
 \end{enumerate}
\end{lem}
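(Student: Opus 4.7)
The plan is to verify both (a) and (b) by a finite case-by-case inspection of the 26 sporadic simple groups, using their character tables (stored, for example, in GAP's character table library) together with the known lists of maximal subgroups from the ATLAS.

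For part (a), the principal tool is the standard class-multiplication structure constant. If $u,w,z\in G$ lie in conjugacy classes $C_1,C_2,C_3$ respectively, then the number of pairs $(x,y)\in C_1\times C_2$ with $xy=z$ equals
\[
\frac{|C_1||C_2|}{|G|}\sum_{\chi\in\Irr(G)}\frac{\chi(u)\chi(w)\overline{\chi(z)}}{\chi(1)}.
\]
For each sporadic $G$, each prime $p$ dividing $|G|$, and each ordered pair $(C_1,C_2)$ of classes of nontrivial $p$-elements, it suffices to exhibit a single class $C_3$ of non-$p$-elements for which this structure constant is strictly positive; the existence of $g\in G$ with $uw^g\in C_3$ then follows. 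Because each sporadic group has only a modest number of classes of $p$-elements for each relevant prime $p$, this is a short, purely mechanical check on the stored character tables.

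For part (b), the additional requirement is that $\langle u,w^g\rangle$ be nonsolvable when $p\ge5$. My plan is, for each pair $(C_1,C_2)$ of classes of nontrivial $p$-elements, to select $z=uw^g\in u^Gw^G$ whose order is divisible by two distinct primes other than $p$ and then compare against the known list of maximal subgroups of $G$ in the ATLAS: for $p\ge5$ the maximal solvable subgroups of a sporadic $G$ are sufficiently restricted that no such subgroup contains elements of orders $|u|$ and $|z|$ simultaneously, forcing $\langle u,w^g\rangle = \langle u,z\rangle$ to be nonsolvable. Equivalently, one may establish $(u,w)$-generation, i.e.\ $\langle u,w^g\rangle=G$ for some $g$, using well-documented random-generation methods in the sporadics; either approach yields nonsolvability.

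The main obstacle is simply the size of the verification for the largest sporadic groups ($B$ and $M$), where the number of class pairs and the complexity of character values make the enumeration tedious; however, since both the structure constants and the lists of maximal (solvable) subgroups are tabulated and accessible in GAP, no new theoretical input is required, only a careful bookkeeping across the 26 cases.
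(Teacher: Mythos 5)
Your proposal is essentially the paper's own approach: the authors simply state that the lemma follows from ``straightforward computations using {\sf GAP},'' which is exactly the character-table and structure-constant verification you describe. Your fleshed-out strategy (positive class-multiplication coefficients landing in a non-$p$-class for (a), and for (b) exhibiting $z=uw^g$ not contained with $u$ in any maximal solvable subgroup, or alternatively invoking known $(u,w)$-generation results) is a correct and sensible way to carry out that computation.
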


\begin{proof}
These are straightforward computations using {\sf GAP}.
\end{proof}

\subsection{A variation on Baer--Suzuki for almost simple groups}
Our next goal is to prove the following:

\begin{thm}   \label{thm:bsas}
 Let $p \geq 5$ be a prime and let $S$ be a finite non-abelian simple group.
 Let $S \lhd G \leq \Aut(S)$, and $c,d \in G$ any two elements of order $p$.
 Then:
 \begin{enumerate}[\rm(1)]
  \item There is some $g \in G$ such that $\langle c,d^g \rangle$ is
   not solvable.
  \item There is some $g \in G$ such that $cd^g$ is not a $p$-element.
 \end{enumerate}
\end{thm}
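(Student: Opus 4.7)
The plan is to reduce the statement to the case $c, d \in S$, which is already handled by the simple-group results above: Corollary \ref{bs5} (for $S$ of Lie type in characteristic $p \ge 5$), Lemma \ref{lem:alt} (for $S$ alternating), and Lemma \ref{lem:sporadic} (for $S$ sporadic), together with the analogous statement for $S$ of Lie type in characteristic coprime to $p$, which I would separately establish using the Steinberg-character analysis behind Theorem \ref{thm:main2} combined with the torus structure of centralizers of semisimple $p$-elements. First, if $c, d \in S$, then both (1) and (2) follow at once by taking $g \in S \le G$.

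The main new case is when at least one of $c, d$ lies outside $S$; say $c \notin S$, so that $\bar c \in G/S \le \operatorname{Out}(S)$ has order $p \ge 5$. Here the hypothesis $p \ge 5$ is decisive: inspection of $\operatorname{Out}(S)$ over all non-abelian finite simple groups shows that this forces $S$ to be of Lie type, and $\bar c$ must be either a field automorphism of order $p$ (so $S = S(q_0)$ with $p$ dividing $\log_r q_0$ for $r$ the defining characteristic), or a diagonal automorphism of order $p$ (which occurs only in types $A_n$ and $\tw2A_n$, requiring $p \mid \gcd(n+1, q \mp 1)$), or a product of these. In each of these configurations the fixed-point subgroup $C_S(c)$ contains a non-solvable simple subgroup $H$ of Lie type --- namely a Steinberg subfield subgroup in the field case, and a proper Levi-type subgroup in the diagonal case. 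I would then locate a $p$-element $y \in H$ (or, when $d \notin S$, a suitable representative in the coset $dS$) and invoke the base case inside $H$ to produce $y' \in y^H$ with $\langle y, y'\rangle$ non-solvable and $yy'$ not a $p$-element; lifting, the conjugate $d^g$ of $d$ prescribed by $y'$ satisfies the two conclusions, since $\langle c, d^g\rangle$ contains $\langle y, d^g\rangle$, which projects onto the non-solvable subgroup $\langle y, y'\rangle$ of $H$.

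The main obstacle is the subcase where both $c$ and $d$ lie outside $S$ with outer images satisfying $\bar c\,\bar d = 1$ in $G/S$, so that $cd^g \in S$ for every $g \in G$ (conjugation by $G$ cannot alter the coset of $cd$ modulo $S$ once $\bar c \bar d = 1$, since $G/S$ is typically abelian in these residual cases). Here the freedom of conjugation collapses onto a single coset $cS = d^{-1}S$, and one must show both that some element of this coset is not a $p$-element and that a suitable $d^g$ produces a non-solvable $\langle c, d^g\rangle$. For the field-automorphism case, the Lang--Steinberg theorem parameterises $cS$ by the $c$-twisted conjugacy classes of $S$, affording enough flexibility to carry through the argument by reducing once again to the simple-group statement inside the fixed subgroup $S(q_0^{1/p})$; the diagonal case is handled separately by a direct analysis in the simply connected cover, again reducing to a statement about semisimple $p$-elements already controlled by the results of Section~\ref{sec:exc}.
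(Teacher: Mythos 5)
Your high-level skeleton---reduce to $c,d\in S$, then peel off the case where one of $c,d$ induces a nontrivial outer automorphism---matches the paper's, but the crucial case you defer is in fact the bulk of the paper's proof, and the tool you propose for it does not work.

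The problem is the case of $S$ of Lie type in characteristic $r\neq p$ with $c,d\in S$ both semisimple $p$-elements. You say you would ``separately establish'' this ``using the Steinberg-character analysis behind Theorem~\ref{thm:main2}.'' But that analysis only shows that $\St$ is not constant on $a^Sb^S$, i.e., that $a^Sb^S$ is not a single conjugacy class. It says nothing about whether $a^Sb^S$ contains an element that is \emph{not} a $p$-element, let alone whether some $\langle a, b^g\rangle$ is non-solvable. Two distinct classes of $p$-elements give distinct $\St$-values without any violation. So conclusion (2), and a fortiori (1), do not follow; some entirely different mechanism is needed. The paper instead locates, for each relevant prime $p$ and each family, a reductive subgroup $D$ (a subfield subgroup, a subgroup of maximal rank, or a subspace stabilizer for classical types) such that (i) every semisimple $p$-element of $S$ has a conjugate in $D$, (ii) $p\nmid|Z(D)|$, and (iii) a transitivity/fusion condition (Lemma~\ref{lem:red}(c)) holds among the quasisimple factors of $D$, so that both $c$ and $d$ can be pushed into a common quasisimple factor and induction applies. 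For exceptional types this requires the table of subgroups built on tori of full $\Phi_e$-rank and explicit Weyl-group computations to verify condition (iii); for classical types it requires the subspace-dimension argument plus the special treatment of irreducible $p$-elements in $\PSL_p^\eps(q)$ via Gow's theorem. None of this is recovered by your Steinberg-character gesture.

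Two smaller issues. First, your opening sentence treats Corollary~\ref{bs5} as covering ``$S$ of Lie type in characteristic $p\ge 5$'' for arbitrary $p$-elements of $G$, but that corollary concerns unipotent elements of $S$; the reduction for field automorphisms in characteristic $p$ (and the rank-one base case with subfield subgroups) still has to be carried out, as the paper does. Second, your ``main obstacle'' paragraph --- the subcase $\bar c\,\bar d = 1$ --- is not where the real difficulty lies: once $c$ is a field automorphism of order $p$ and $d$ is semisimple, the paper uses the fieldofdef lemma (that a semisimple element of order $p\ge5$ in $\bH^{F^p}$ is $\bH^{F^p}$-conjugate into $\bH^F$) to bring both into the normalizer of a subfield subgroup; the only genuinely exceptional residue is $\PSL_n^\eps(q)$ with $p\mid n$, which the paper treats through $N_G(T)/C_G(T)\cong\fS_p$. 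Your Lang--Steinberg sketch for the twisted-coset analysis is plausible in spirit but is not carried out, and in any case the hard part remains the inner semisimple case.
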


If $S$ is an alternating group or a sporadic group, we apply
Lemmas~\ref{lem:alt} and~\ref{lem:sporadic}. So assume that $S$
is of Lie type in characteristic $r$. In what follows, we will call any element
of $G$ inducing a nontrivial field automorphism of $S$ modulo $\Inndiag(S)$,
the subgroup of inner-diagonal automorphisms of $S$,
\emph{a field automorphism}. Also, $\Phi_m(t)$ denotes the
$m^{\mathrm {th}}$-cyclotomic polynomial in the variable $t$.

\subsubsection{The case $r=p$}

If $c,d$ are both inner elements, then the result follows by
Corollary~\ref{bs5}. So assume that $c$ induces a field automorphism
of $S$. Suppose that $S$ has rank at least $2$.
Let $P$ be a maximal end node parabolic subgroup of $S$
with $d$ not in the radical $Q$ of $P$. Note that $N_G(P)$ contains
a Sylow $p$-subgroup of $G$ and so we may assume that $c, d \in N_G(P)$.
Since $p\ge 5$, it follows that $P/Q$ has a unique simple section $S_0$ and
that $c,d$ each act nontrivially on $S_0$, whence the result follows by
induction.

If $S$ has rank $1$, then either $S \cong \PSL_2(q)$ or $\PSU_3(q)$.
Write $q=q_0^p$. Since $p \ge 5$, it follows \cite[7.2]{GL} that there
is a unique conjugacy class of subgroups of field automorphisms
of order $p$ and that every unipotent element
is conjugate to an element of the group defined over $\FF_{q_0}$. Thus,
we see conjugates of $c, d$ in $H:=\PSL_2(q_0) \times \langle c \rangle$ or
$\PSU_3(q_0) \times \langle c \rangle$.
Note that $c$ is conjugate in $G$ to a non-central element in $H$ (again by
\cite[7.2]{GL}) and so the result follows by induction.

\medskip
For the rest of the section, we assume that $r \ne p$.

\subsubsection{Field automorphisms}
Here we handle the case when $c$ is a field automorphism of order $p$ of $S$.
So we can view $S= S(q)$ as a group over the field of $q$ elements with
$q = q_0^p$. One can find a simple algebraic group $\bG$ of adjoint type
over $\overline{\FF}_r$ and a Steinberg endomorphism $F:\bG \to \bG$
such that $X = X(q) := \bG^{F^p}$ is the group of inner-diagonal automorphisms
of $S$. By \cite[7.2]{GL}, any two subgroups of $G$ of order $p$
of field automorphisms of $S$ are conjugate via an element of $X(q)$.
In particular, this implies
that any field automorphism normalizes a parabolic subgroup of any given type.
Thus, precisely as in the case $r=p$, if $d$ is also a field automorphism,
we can reduce to the case that $S$ has rank $1$ and complete the proof.

Thus, we may assume that $d$ is semisimple. Moreover, since $d^S =d^X$, it
suffices to work with $X$-classes and as noted there is a unique conjugacy
class of subgroups of order $p$ consisting of field automorphisms.
We digress to mention two results about $p$-elements.

\begin{lem}   \label{lem:rank}
 Let $\bH$ be a connected reductive algebraic group over $\overline{\FF}_r$,
 with a Steinberg endomorphism $F:\bH \to \bH$, and let $p\ne r$
 be a prime not dividing the order of the Weyl group $W$ of $\bH$ nor the
 order of the automorphism of $W$ induced by $F$. Then the Sylow
 $p$-subgroups of $\bH^F$ and $\bH^{F^p}$ are abelian of the same rank.
\end{lem}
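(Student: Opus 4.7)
The plan is to reduce the equality of Sylow $p$-ranks to an eigenvalue computation on the cocharacter lattice of a maximal torus, exploiting that raising to the $p$-th power is a bijection in any finite group of order coprime to $p$. First I would note that $p\nmid|W|$ combined with Lang--Steinberg and the classical toral property of $p$-subgroups (Borel--Serre, Steinberg) forces every $p$-subgroup of $\bH^F$ (respectively $\bH^{F^p}$) to lie in some $F$-stable (resp.\ $F^p$-stable) maximal torus of $\bH$. Hence Sylow $p$-subgroups of both groups are abelian, and the Sylow $p$-rank of $\bH^F$ equals $\max_{w\in W}\,\mathrm{rank}_p(\bT_w^F)$, where $\bT_w$ ranges over the $F$-stable maximal tori obtained by the standard twisting from a fixed reference torus $\bT_0$; likewise for $\bH^{F^p}$.

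Write the action of $F$ on $X_*:=X_*(\bT_0)$ as $q\phi$, with $\phi$ of finite order equal to $|\phi|$. For $w\in W$, under the identification $X_*(\bT_w)\cong X_*$ the action of $F$ becomes $qw\phi$, and thus the action of $F^p$ becomes $(qw\phi)^p=q^pW_p\phi^p$, where $W_p:=w\phi(w)\phi^2(w)\cdots\phi^{p-1}(w)$. From $\bT_w^F\cong X_*/(qw\phi-1)X_*$ and $\bT_w^{F^p}\cong X_*/((qw\phi)^p-1)X_*$, one has
\[
  \mathrm{rank}_p(\bT_w^F)=\dim_{\overline{\FF}_p}\ker(qw\phi-1\bmod p),\qquad
  \mathrm{rank}_p(\bT_w^{F^p})=\dim_{\overline{\FF}_p}\ker((qw\phi)^p-1\bmod p).
\]
Since $w\phi$ has finite order dividing $|W|\cdot|\phi|$, coprime to $p$, the matrix $qw\phi\bmod p$ is semisimple with eigenvalues $\lambda\in\overline{\FF}_p^{\times}$, whose $p$-th powers are the eigenvalues of $(qw\phi)^p\bmod p$. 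In characteristic $p$ the relation $\lambda^p=1\iff\lambda=1$ is immediate, so the two kernels have equal dimension: $\mathrm{rank}_p(\bT_w^F)=\mathrm{rank}_p(\bT_w^{F^p})$ for every $w\in W$.

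To pass from this per-torus equality to the equality of Sylow ranks, I would parametrise $F$- (resp.\ $F^p$-)stable maximal tori up to conjugacy by $\phi$- (resp.\ $\phi^p$-)twisted conjugacy classes in $W$, noting that an $F$-stable torus labelled by $w$ is, as an $F^p$-stable torus, labelled by $W_p$. The hypothesis $p\nmid|W|\cdot|\phi|$ makes $\Gamma:=W\rtimes\langle\phi\rangle$ a finite group of order coprime to $p$, so the $p$-th power map $\Gamma\to\Gamma$ is a bijection; it restricts to a bijection $W\phi\to W\phi^p$, $w\phi\mapsto W_p\phi^p$, which forces $w\mapsto W_p$ to be a bijection of $W$. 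Consequently every $F^p$-stable maximal torus is $\bH^{F^p}$-conjugate to an $F$-stable one, giving $\max_w\mathrm{rank}_p(\bT_w^F)=\max_{w'}\mathrm{rank}_p(\bT_{w'}^{F^p})$. The main obstacle I expect is precisely this surjectivity-of-tori step — ensuring that no $F^p$-stable torus has strictly larger $p$-rank than all $F$-stable ones — and the $p$-th power trick in $\Gamma$ is exactly what the coprimality hypothesis $p\nmid|\phi|$ is designed to deliver.
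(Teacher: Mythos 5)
Your proof is correct and takes a genuinely different route from the paper's. The paper cites Malle--Testerman, Thm.~25.14, to get that the Sylow $p$-subgroups of $\bH^{F^i}$ are homocyclic abelian of rank $s_i$ equal to the $\Phi_{e_i}$-valuation of the order polynomial of $(\bH,F^i)$, for the unique cyclotomic $\Phi_{e_i}$ with $p\mid\Phi_{e_i}(q^i)$; the lemma then follows at once from the facts that $p\mid\Phi_e(q)$ iff $p\mid\Phi_{ep}(q)$, and that $\Phi_e^a$ dividing the order polynomial of $(\bH,F)$ forces $\Phi_{ep}^a$ dividing that of $(\bH,F^p)$. You replace this $\Phi_e$-Sylow bookkeeping by a torus-by-torus computation: semisimplicity of $qw\phi\bmod p$ gives $\mathrm{rank}_p(\bT_w^F)=\mathrm{rank}_p(\bT_w^{F^p})$ for each $w$, and the $p$-th power bijection on $\Gamma=W\rtimes\langle\phi\rangle$ --- the point you rightly flag as the crux --- guarantees that every $F^p$-stable maximal torus is already accounted for among the $\bT_w$, $w\in W$. (Minor point: the $p$-rank is the dimension of the cokernel, not kernel, of $qw\phi-1$ on $X_*/pX_*$, but these coincide by rank--nullity.) Both proofs rest on the same toral abelian Sylow input (your Borel--Serre/Steinberg citation is essentially the same theorem from Malle--Testerman), and both tacitly require the order of $\phi$ \emph{as an automorphism of the cocharacter lattice}, not merely of $W$, to be prime to $p$: you need it for the semisimplicity of $qw\phi\bmod p$ and for $|\Gamma|$ being prime to $p$, the paper needs it for the uniqueness of $\Phi_{e_i}$. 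These two orders agree when $\bH$ is semisimple, which covers every application in the paper; for a reductive $\bH$ with a central torus carrying an order-$p$ automorphism the conclusion can fail. In short, the paper's proof is shorter because the cyclotomic bookkeeping is outsourced to generic Sylow theory, while yours is more self-contained and makes the use of each part of the hypothesis explicit.
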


\begin{proof}
Under our assumptions, by \cite[Thm.~25.14]{MT} the Sylow $p$-subgroups of
$\bH^{F^i}$ are homocyclic abelian, of rank $s_i$ say. Moreover, there is at
most one cyclotomic polynomial $\Phi_{e_i}$ dividing the order polynomial of
$(\bH,F^i)$ such that $p|\Phi_{e_i}(q^i)$, where $q$ denotes the absolute
value of the eigenvalues of $F$ on the character group of an $F$-stable
maximal torus of $\bH$, and $s_i$ equals the $\Phi_{e_i}$-valuation of
the order polynomial. Now $p|\Phi_e(q)$ if and only if $p|\Phi_{ep}(q)$, and
if $\Phi_e$ divides the order polynomial of $(\bH,F)$ then $\Phi_{ep}$
divides the one of $(\bH,F^p)$, to the same power. Thus, $e_p=pe_1$ and
$s_p=s_1$, and the claim follows.
\end{proof}

Note that in our situation the previous result says that if $p \geq 5$ does
not divide
the order of the Weyl group $W$, then every element in $S$ of order $p$ is
conjugate to an element centralized by $F$.  We can extend this even
to some primes dividing $|W|$.

\begin{lem}   \label{lem:fieldofdef}
 Let $\bH$ be a simple simply connected linear algebraic group over
 $\overline{\FF}_r$, with a Steinberg endomorphism $F:\bH \to \bH$ and
 $5 \le p\ne r$ a prime.  
 If $x \in \bH^{F^p}$ has order $p$, then $x$ is conjugate in
 $\bH^{F^p}$ to an element of $\bH^F$.
\end{lem}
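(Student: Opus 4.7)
The plan is to reduce the problem to showing that the $\bH$-conjugacy class $C_x$ of $x$ is $F$-stable, and then to transfer this to an $\bH^{F^p}$-conjugacy statement via Lang--Steinberg. Since $p\ne r$, the element $x$ is semisimple; since $\bH$ is simply connected, Steinberg's theorem implies that $C_\bH(x)$ is connected. This connectivity will be used twice. First, once we know $C_x$ is $F$-stable, we may write $F(x)=yxy^{-1}$ for some $y\in\bH$, and then Lang--Steinberg applied to the connected group $\bH$ with endomorphism $F$ lets us solve $g^{-1}F(g)=y^{-1}$; a direct computation shows that $x':=gxg^{-1}$ satisfies $F(x')=x'$, so $x'\in\bH^F$. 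Second, $x$ and $x'$ are elements of $\bH^{F^p}$ which are $\bH$-conjugate, and a further application of Lang--Steinberg to the connected, $F^p$-stable group $C_\bH(x)$ with endomorphism $F^p$ promotes this to $\bH^{F^p}$-conjugacy.

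The heart of the proof is showing that $C_x$ is $F$-stable. Fix an $F$-stable maximal torus $\bT_0$ of $\bH$ and let $W=N_\bH(\bT_0)/\bT_0$; semisimple $\bH$-conjugacy classes of elements of order dividing $p$ are in bijection with $W$-orbits on the $p$-torsion $\bT_0[p]$. The Steinberg endomorphism $F$ acts on $\bT_0[p]$ through its field-theoretic component (of order equal to the multiplicative order $d$ of $q$ in $(\ZZ/p)^\times$, so $d\mid p-1$) composed with the graph-automorphism component (of order $m\in\{1,2,3\}$), and correspondingly on $W$ through the graph automorphism. Hence $F$ acts on the finite set $W\backslash\bT_0[p]$ through a cyclic group whose order divides $dm$, and this order is coprime to $p$ since $p\ge 5$. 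It follows that $\langle F\rangle=\langle F^p\rangle$ as subgroups of $\operatorname{Aut}(W\backslash\bT_0[p])$; since the class $C_x$ is $F^p$-stable (because $F^p(x)=x$), it is also $F$-stable, i.e., $F(x)\sim_\bH x$.

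The main technical obstacle is verifying this last claim on the order of $F$'s action on $\bT_0[p]$ uniformly across the isogeny types and Steinberg endomorphisms, especially for the Suzuki and Ree groups where $F$ is a ``square root'' of a pure Frobenius. In those cases $F^2$ is the standard $q$-power Frobenius and one checks directly that $F$ still acts on $\bT_0[p]$ through a group of order coprime to $p$ whenever $p\ge 5$, at which point the Lang--Steinberg machinery in the first paragraph finishes the argument.
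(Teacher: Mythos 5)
Your proof is correct and follows essentially the same strategy as the paper: reduce to showing that the $\bH$-conjugacy class of $x$ is $F$-stable (using connectedness of centralizers of semisimple elements in the simply connected group, together with Lang--Steinberg to transfer to $\bH^{F^p}$-conjugacy), and then establish $F$-stability by a coprimality-to-$p$ argument. The paper implements the last step slightly more concretely, replacing $F$ by $F^{2e}$ (with $e\le 3$ the order of the graph component of $F^2$) and computing $F_0^{p-1}(y)=y^{q^{p-1}}=y$ by Fermat's little theorem for a conjugate $y$ of $x$ in an $F_0$-stable maximal torus, rather than via the order of $F$ acting on $W\backslash\bT_0[p]$, but the two computations are equivalent.
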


\begin{proof}
Since $\bH$ is simply connected, centralizers of semisimple elements
in $\bH$ are connected. So it suffices to show that the $\bH$-conjugacy class 
$C$ of $x$ is $F$-stable (see \cite[Thm.~26.7]{MT}).  
Since $F^p(x)=x$,  $C$ is $F^p$ invariant.  Thus, it suffices to
show that $F^m$ fixes $C$ for some $m$ prime to $p$.
Write $F^2=F_0\tau = \tau F_0$ where $F_0$ is
a standard Frobenius map with respect to an $\FF_q$-structure and $\tau$ is
a graph automorphism of $\bH$ of order $e \le 3$.  So we may replace
$F$ by $F^{2e}$ and assume that $F=F_0$.  
Let $y$ be a conjugate of $x$ in a maximal torus $\bT$ that is
$F$-invariant so that $F(t)=t^q$ for all $t \in \bT$.  
Thus, $F$ fixes $\langle y \rangle$ and so $F^{p-1}$ fixes $y$, whence
$C$ is $F$-stable. 
 \end{proof}
 
Note that the proof goes through verbatim if we only assume that $\bH$ is
reductive and that the derived group is simply connected with fewer than $p$
simple factors.

Returning to the proof of Theorem~\ref{thm:bsas} we see in particular,
if $p$ does not divide the order of the center of the simply
connected algebraic group $\bH$ in the same isogeny class with $\bG$, this
shows that $d$ is conjugate to an element of $X(q_0)$ (and since the
centralizer of $d$ covers $X(q)/S(q)$, this conjugation is via an element
of $S(q)$). Next we claim that some conjugate of
$c$ normalizes but does not centralize some conjugate of $X(q_0)$.
Since any two subgroups of field automorphisms of order  $p$ are conjugate via
an element of
$X(q)$, it follows that $c^{X(q)} \cap X(q_0)$ consists of more than one
conjugacy class. Therefore $c$ has more than one fixed point on $X(q)/X(q_0)$,
whence the result. Thus, choosing some subgroup $Y$ of $X(q)$ with
$Y \cong S(q_0)$, we may assume that each of $c$ and $d$ normalizes but does
not centralize $S(q_0)$, whence the result follows by induction.

So we will only need to consider field automorphisms in the case that
$S=\PSU_n(q)$ or $\PSL_n(q)$ with $p$ dividing $n$, and
these cases will be handled in the next subsection.

\subsubsection{Classical groups}
A) We first handle the case where $S=\PSL^\eps_p(q)$ with $p$ dividing $q-\eps 1$
and $c$ is an irreducible $p$-element. In particular, $c$ is semisimple
regular. First suppose that $d$ is semisimple.
By a minor variation of Gow's result \cite{Gow}, we see that $cd^g$
can be any regular semisimple element of $G$ in the coset $cdS$.
In particular, $cd^g$ need not be a $p$-element. By choosing
$cd^g$ to have order as large as possible in the torus acting irreducibly
on a hyperplane, we see that $\langle c, d^g \rangle$ need not be solvable
(for example, using the main result of \cite{GPPS}).

Suppose now that $d$ is a field automorphism, and 
let $T$ be a maximally split torus of $S$. Then $N_G(T)$ contains a Sylow
$p$-subgroup of $G$. Note that $N_G(T)/C_G(T) \cong \fS_p$ and both $c$
and~$d$ are conjugate to elements in $N_G(T) \setminus{C_G(T)}$ (this is
obvious for $c$, and for $d$ we can apply \cite[7.2]{GL}). Hence the result
follows by applying Lemma \ref{lem:alt} to $N_G(T)/C_G(T)$.

\medskip
B) Now let $S$ be any (simple) classical group with natural module $V$ of
dimension~$e$ defined over $\FF_{q_1}$. 
By our earlier results, it suffices to assume that $c$ is
semisimple and $d$ is either a field automorphism or a semisimple element.
Moreover, since $c^S$ is invariant under all diagonal automorphisms, by the
remark above, we can work with any conjugacy class of field automorphisms of
order $p$.

Let $m$ be the dimension of an irreducible module for an element of order $p$.
Then the case where $p=m$ and $m$ is the order of $q_1$ modulo $p$ has 
already been treated in A). 
Note that every semisimple element of order $p$ stabilizes
an $m$-dimensional subspace $W$ that is either nondegenerate or totally
singular (furthermore, the type is independent of the element).

Suppose that $W$ is totally singular. Then we may
assume that $c, d$ both normalize the stabilizer of $W$.
If $m=1$, then $c, d$ both normalize the stabilizer of a singular $1$-space
and the result follows by induction. So assume that $m > 1$. As $W$ is
totally singular, then by construction, we see that $c, d$ induce nontrivial
automorphisms on $\GL(W)$ (and since $p \ge 5$, $\SL(W)$ is quasisimple),
whence the result follows by induction.

Suppose that $W$ is nondegenerate. The same argument applies unless the
stabilizer of $W$ is not essentially simple. This only happens if $m =2$
and $S$ is an orthogonal group (and so we may assume that $e \ge 7$).
In this case, we see that $c, d$ will each stabilize a nondegenerate space
of the same type of either dimension $4$ or $6$ and we argue as above.

\subsubsection{Exceptional groups}
By the results above, we may assume that $c, d$ are both semi\-simple 
elements in $S$ of order $p$ (with $p \ge 5$). 
We may also assume that $P$ is not
cyclic (since that case is handled by \cite{GR} and \cite{guest}).
In particular, the result follows for
$S=\tw2B_2(q^2)$ or ${^2}G_2(q^2)$ since there $P$ is always cyclic.

If $S= \tw2F_4(2)'$, the result follows by a straightforward computation
(the only prime to consider is $p=5$).
Suppose that $S=\tw2F_4(q^2)$, $q^2>2$. It follows by \cite{MaF} that $P$
will either be contained in a subgroup $\tw2B_2(q^2) \wr 2$ or
$\Sp_4(q^2)$. In either case, we see that conjugates of $c, d$ will normalize
but not centralize a simple subgroup and the result follows by induction.

Suppose $S = G_2(q)$. Since $p \ge 5$ and $P$ is non-cyclic, we 
see that $p|(q^2-1)$ and $q \geq 4$. Now we can embed
$P$ in a subgroup $R \cong \SL_3(q)$ or $\SU_3(q)$ of $S$ and apply the 
previous results to $R$.  

Next suppose that $S=\tw3D_4(q)$. If $p|(q^2-1)$, then we can argue
as in the case of $G_2(q)$. The remaining cases are when $p$ divides 
$\Phi_3(q)$ or $\Phi_6(q)$.
One cannot find a good overgroup in these cases, but using \Chevie, we see that
$c^Gd^G$ hits any regular element in a torus of order
dividing $\Phi_{12}(q)$. In particular, $cd^g$ need not be a $p$-element.
By considering the maximal subgroups \cite{Kl}, it also
follows that $S=\langle c, d^g \rangle$ for some $g$.

\medskip
The standing hypothesis for the rest of this subsection is the following:

$\bullet$ $S$ is a simple exceptional Lie-type group, of type $F_4$, $E_6$,
$\tw2 E_6$, $E_7$, or $E_8$, over $\FF_q$;

$\bullet$ $c$ and $d$ are semisimple $p$-elements in $S$ and
the Sylow $p$-subgroups of $S$ are not cyclic.

Slightly changing the notation, we will view $S = S(q)$ as $(\bG^F)'$, where
$\bG$ is a simple algebraic group of adjoint type over $\overline{\FF}_q$
with a Steinberg endomorphism $F:\bG\to\bG$, and $W$ is the Weyl group of $\bG$.

The basic idea to prove Theorem \ref{thm:bsas} for $S$ is the following:

\begin{lem}   \label{lem:red}
 Assume Theorem~\ref{thm:bsas} holds for all non-abelian simple groups of
 order less than $|S|$. To prove Theorem \ref{thm:bsas} for semisimple 
 elements $c,d \in S$, it suffices to find a subgroup $D < S$ with
 the following properties:
 \begin{enumerate}[\rm (a)]
  \item $D = D_1 \circ \ldots \circ D_t$ is a central product of
   $t \leq 3$ quasisimple subgroups $D_i$ with $p$ coprime to $|Z(D)|$;
  \item each $S$-conjugacy class of elements of order $p$ intersects $D$; and
  \item either $N_S(D)$ acts transitively on $\{D_1, \ldots ,D_t\}$,
   or $t = 2$ and an $S$-conjugate of $D_1$ is contained in $D_2$.
 \end{enumerate}
\end{lem}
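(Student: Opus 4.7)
The plan is to exploit conditions~(a)--(c) to reduce the assertions of Theorem~\ref{thm:bsas} for $(c,d)$ to the same assertions applied to the simple factors $\bar D_i := D_i/Z(D_i)$, each of which is strictly smaller than $S$ since $D < S$. Because the existence of $g \in S$ with $cd^g$ not a $p$-element (respectively with $\langle c, d^g\rangle$ non-solvable) depends only on the unordered pair of $S$-classes $(c^S, d^S)$, condition~(b) lets us assume from the outset that $c,d \in D$. Condition~(a) then ensures that $D/Z(D) = \bar D_1 \times \cdots \times \bar D_t$ is a direct product of non-abelian simple groups, and that $c,d$ project to $p$-elements $\bar c = \prod \bar c_i$ and $\bar d = \prod \bar d_i$, each nontrivial component having order exactly $p$. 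Put $I_c := \{i : \bar c_i \ne 1\}$ and $I_d := \{i : \bar d_i \ne 1\}$; both sets are nonempty.

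The principal case is $I_c \cap I_d \ne \emptyset$. Pick $i$ in the intersection and apply the inductive hypothesis to $(\bar c_i, \bar d_i)$ inside $\bar D_i$ to obtain $\bar g_i \in \bar D_i$ with $\bar c_i \bar d_i^{\bar g_i}$ not a $p$-element and $\langle \bar c_i, \bar d_i^{\bar g_i}\rangle$ non-solvable. Lift $\bar g_i$ to $g_i \in D_i$; since $g_i$ centralizes every $D_j$ with $j \ne i$, the projection of $cd^{g_i}$ to $\bar D_i$ equals $\bar c_i \bar d_i^{\bar g_i}$. Consequently $cd^{g_i}$ is not a $p$-element of $S$, and $\langle c, d^{g_i}\rangle$ surjects onto the non-solvable $\langle \bar c_i, \bar d_i^{\bar g_i}\rangle$, giving both desired conclusions simultaneously.

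What remains is to force $I_c \cap I_d \ne \emptyset$ by an appropriate $S$-conjugation of $d$; this is precisely where condition~(c) is used. If $N_S(D)$ acts transitively on $\{D_1,\ldots,D_t\}$, pick $i \in I_c$ and $j \in I_d$ together with $h \in N_S(D)$ such that $D_j^h = D_i$; then $d^h \in D$ has nontrivial projection to $\bar D_i$, which reduces us to the principal case after replacing $d$ by $d^h$. If instead $t=2$ and $D_1^h \subseteq D_2$ for some $h \in S$, disjoint supports force (after possibly swapping the roles of $c$ and $d$) that $c \in D_1$ and $d \in D_2$; replacing $c$ with its $S$-conjugate $c^h \in D_2$ brings both elements into the single quasisimple factor $D_2$, to which the inductive hypothesis applies directly. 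The main obstacle is conceptual rather than technical: one must check that projecting modulo $Z(D)$ preserves $p$-order on the nontrivial components and that $p$-freeness and non-solvability ascend from $\bar D_i$ back to $S$, but condition~(a) is exactly what makes both of these passages routine, so the reduction goes through cleanly.
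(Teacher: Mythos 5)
Your proposal is correct and, once the notation is unwound, takes essentially the same route as the paper's own argument. Your ``principal case'' $I_c\cap I_d\ne\emptyset$ is exactly the paper's step of observing that some $j$ exists with neither $c$ nor $d$ centralizing $D_j$, whereupon one projects to $N_D(D_j)/C_D(D_j)\cong D_j/Z(D_j)$ and invokes the inductive hypothesis; your explicit description of the projections $\bar c_i,\bar d_i$ and the lifting of $\bar g_i$ spells out what the paper leaves terse. For the reduction to that case, the paper organizes the argument by $t=1,2,3$, using the intermediate subgroup $E:=D_2\circ D_3$ for $t=3$ and then re-running the $t=2$ step inside $N_D(E)/C_D(E)$; you instead invoke transitivity of $N_S(D)$ (or, for $t=2$, the containment $D_1^h\subseteq D_2$) directly to force $I_c$ and $I_{d^h}$ to overlap. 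This is a slightly cleaner packaging of the same idea, and your observation that the problem depends only on the unordered pair of $S$-classes correctly justifies the free conjugation and the swap of $c$ and $d$.

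One small imprecision: you write that a single $\bar g_i$ simultaneously makes $\bar c_i\bar d_i^{\bar g_i}$ fail to be a $p$-element and makes $\langle\bar c_i,\bar d_i^{\bar g_i}\rangle$ non-solvable. Theorem~\ref{thm:bsas} only guarantees such $g$'s separately (a priori different) for its two conclusions. This is harmless here --- the two assertions of the theorem to be established for $(c,d)$ are independent, so you may simply apply the inductive hypothesis twice, once per conclusion, producing possibly different lifts $g_i$ --- but as written the claim slightly overstates what induction gives you.
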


\begin{proof}
1)  By (b), we may assume that $c, d \in D$.
Suppose that there is some $j$ such that neither $c$ nor $d$
centralizes $D_j$. Then we can embed $c$ and $d$ in the almost simple
group $N_D(D_j)/C_D(D_j)$ with socle $D_j/Z(D_j)$. Since Theorem
\ref{thm:bsas} holds for $D_j/Z(D_j)$, we are done.

2)  Since $p\not{|}\,|Z(D)|$,
we are done if $t = 1$. Suppose $t = 2$. In view of 1) we may assume that
$c \in C_D(D_1) = Z(D)D_2$ (in particular, $c$ does not centralize $D_2$
and $c \in D_2$ since $p\not{|}\,|Z(D)|$).
Now if $d$ does not centralize $D_2$, we are also done. So we may assume
that $d \in C_D(D_2) = Z(D)D_1$, whence $d \in D_1$. By the assumptions,
there is some $s \in S$ such that $d^s \in D_2$. Now we can apply
Theorem~\ref{thm:bsas} to the images of $c$ and $d^s$ in $D_2/Z(D_2)$.

Finally suppose that $t = 3$. As above, we may assume that
$c \in E := D_2 \circ D_3$. If $d$ does not centralize $E$, then we can
embed both $c$ and $d$ in $N_D(E)/C_D(E)$ and repeat the $t = 2$ argument.
On the other hand, if $d \in C_D(E) = Z(D)D_1$, then $d \in D_1$ and
some $S$-conjugate $d^s$ lies in $D_2 < E$, and so $d^s$ does not centralize
$E$. Hence we are again done.
\end{proof}

The rest of this subsection is to produce a subgroup $D$ satisfying the
conditions set in Lemma \ref{lem:red}. In the following table we list
such a subgroup $D$. In all cases but the lines with $D = F_4(q)$,
$D$ is taken from \cite[Table 5.1]{LSS}, so that $N_{S}(D)$ is a
subgroup of maximal rank. In all cases, we choose $e$ smallest possible
such that $p|\Phi_e(q)$, and list the largest power $\Phi_e^l$ that divides
the order polynomial of $(\bG,F)$. According to \cite[Thm.~25.11]{MT},
$\bG^F$ has a unique conjugacy class of tori $T$ of order 
$\Phi_e^l(q)$. Moreover, by \cite[Lemma~4.5]{Za}, every $p$-element of 
$\bG^F$ of order at most the $p$-part of $\Phi_e(q)$ is conjugate to 
an element in $T$. In all cases, we choose $D$ so that it contains a 
$\bG^F$-conjugate of $T$ and $p$ is coprime to $|Z(D)|$.
Also, all the Lie-type groups appearing in the third column are {\it simple}
non-abelian (here we are slightly abusing the notation, using
$E_6(q)$ and $\tw2 E_6(q)$ to denote their non-abelian composition factors).

\vskip1pc
\begin{center}
\begin{tabular}{|l|c|l|}\hline
$\bG^F$ & $\Phi_e^{l}$ & $D$ \\ \hline
$F_4(q)$ & $\Phi_1^4$, $\Phi_2^4$, or $\Phi_4^2$ & $Z_{(2,q-1)} \cdot
           \OO_9(q)$ \\
         & $\Phi_3^2$ or $\Phi_6^2$ & $\tw3 D_4(q)$ \\ \hline
$E_6(q)$ & $\Phi_1^6$ & $Z_{(2,q-1)} \cdot (\PSL_2(q) \times \PSL_6(q))$ \\
         & $\Phi_2^4$, $\Phi_4^2$, or $\Phi_6^2$ & $F_4(q)$ \\
         & $\Phi_3^3$ & $Z_{(3,q-1)} \cdot (\PSL_3(q) \times \PSL_3(q)
           \times \PSL_3(q))$ \\ \hline
$\tw2 E_6(q)$ & $\Phi_1^4$, $\Phi_3^2$, or $\Phi_4^2$ & $F_4(q)$ \\
              & $\Phi_2^6$ & $Z_{(2,q-1)} \cdot (\PSL_2(q) \times \PSU_6(q))$ \\
              & $\Phi_6^3$ & $Z_{(3,q+1)} \cdot (\PSU_3(q) \times \PSU_3(q)
                \times \PSU_3(q))$ \\ \hline
$E_7(q)$ & $\Phi_1^7$ or $\Phi_4^2$ & $Z_{(4,q-1)/(2,q-1)} \cdot \PSL_{8}(q)$ \\
         & $\Phi_2^7$ & $Z_{(4,q+1)/(2,q-1)} \cdot \PSU_{8}(q)$ \\
         & $\Phi_3^3$ & $Z_{(3,q-1)} \cdot E_{6}(q)$ \\
         & $\Phi_6^3$ & $Z_{(3,q+1)} \cdot \tw2 E_{6}(q)$ \\ \hline
$E_8(q)$ & $\Phi_1^8$, $\Phi_2^8$, $\Phi_4^4$, or $\Phi_8^2$ &
             $Z_{(2,q-1)} \cdot \OO^+_{16}(q)$ \\
         & $\Phi_3^4$ & $Z_{(3,q-1)} \cdot (\PSL_3(q) \times E_6(q))$ \\
         & $\Phi_5^2$ & $Z_{(5,q-1)} \cdot (\PSL_{5}(q) \times \PSL_5(q))$ \\
         & $\Phi_6^4$ & $Z_{(3,q+1)} \cdot (\PSU_3(q) \times \tw2 E_6(q))$ \\
         & $\Phi_{10}^2$ & $Z_{(5,q+1)} \cdot (\PSU_{5}(q) \times \PSU_5(q))$ \\
         & $\Phi_{12}^2$ & $\tw3 D_{4}(q^2)$ \\ \hline
\end{tabular}
\end{center}

\vskip1pc

To check the condition (3) of Lemma~\ref{lem:red}, we need to
work with the extended Dynkin diagram of $\bG$. Fix an orthonormal basis
$(e_1,\ldots,e_8)$ of the Euclidean space $\RR^8$ and let
$$\al_1 = (e_1+e_8-\sum^7_{i=2}e_i)/2,~~
  \al_2=e_2+e_1,~~\al_i=e_{i-1}-e_{i-2} ~~(3 \leq i \leq 8),~~\al'_8=-e_8-e_7,$$
so that $\al_1, \ldots ,\al_j$ are the simple roots of the root system of type
$E_j$, $6 \leq j \leq 8$, and $(\al_1, \ldots,\al_8,\al'_8)$ forms the extended
Dynkin diagram $E_8^{(1)}$ of type $E_8$. Also, let $\al'_6$ be chosen such
that $(\al_1, \ldots,\al_6,\al'_6)$ forms the extended Dynkin diagram
$E_6^{(1)}$ of type $E_6$.

Certainly, the condition (3) in Lemma~\ref{lem:red} needs to be verified only
when $D$ is not quasisimple. These cases are considered below, where we will
construct certain explicit automorphisms of the Dynkin diagram.

$\bullet$ $\bG^F = E_6(q)$. Let $\omega$ denote a graph automorphism of order
$3$ of $E_6^{(1)}$. Observe that it is induced by an element of $W$, whence by
some element $s \in S$. If $D$ is of type $A_1+A_5$, then $\omega$ sends
$\al'_6$ to $\al_1$ or $\al_5$, and so it sends the $A_1$-subgroup $D_1$ to
a subgroup of the $A_5$-subgroup $D_2$. If $D$ is of type $3A_2$, then $\omega$
permutes the three $A_2$-subgroups $D_i$ of $D$ cyclically.

$\bullet$ $\bG^F = \tw2 E_6(q)$. Let $\tau$ denote the unique graph
automorphism of order $2$ of the Dynkin diagram $E_6$ (which also acts on
$E_6^{(1)}$), so that $\bG^F$ is constructed using
$\tau$. If $D$ is of type $A_1 + \tw2 A_5$, then certainly the $A_1$-subgroup
$D_1$ (corresponding to
$\al'_6$) is $S$-conjugate to the $A_1$-subgroup labeled by
$\al_4$ of the $\tw2 A_5$-subgroup $D_2$. Assume now that
$D$ is of type $3(\tw2 A_2)$. Observe that $\tau$ is central in a Sylow
$2$-subgroup of the full automorphism group $Z_2 \times W$ of the root system of
type $E_6$. Hence it commutes with a $W$-conjugate of $\gamma$, the
automorphism that interchanges $\al_1$ with $\al_3$, $\al_5$
with $\al_6$, and $\al_2$ with $\al'_6$. So without loss we may assume
$D$ is constructed using this particular graph automorphism $\gamma$.
In this case, the order $3$ automorphism $\omega$ commutes with $\gamma$ and
permutes the three $\tw2 A_2$-subgroups $D_i$ of $D$ cyclically.

$\bullet$ $\bG^F = E_8(q)$. If $D$ is of type $A_2 + E_6$, then certainly the
$A_2$-subgroup $D_1$ (corresponding to $\al_8$ and $\al'_8$) is $S$-conjugate
to the $A_2$-subgroup labeled by $\al_1$ and $\al_3$ of the $E_6$-subgroup
$D_2$. Assume now that $D$ is of type $\tw2 A_2 + \tw2 E_6$. One can check
that $D$ can be constructed using the element
$$\beta~:~\al_1 \leftrightarrow \al_6,~\al_2 \mapsto \al_2,~
          \al_3 \leftrightarrow \al_5,~\al_4 \mapsto \al_4,~
          \al_7 \mapsto e_6+e_7,~\al_8 \leftrightarrow \al'_8$$
in $W$; in particular, $\beta$ fixes $\al'_6$. Applying the previous case to
$D_2 \cong \tw2 E_6(q)$, we see that $D_2$ contains a $\tw2 A_2$-subgroup
(labeled by $\al_5$ and $\al_6$). The latter is $S$-conjugate to $D_1$, the
$\tw2 A_2$-subgroup labeled by
$\al_8$ and $\al'_8$, via conjugation by the element
$$\delta~:~e_1 \mapsto e_1,~e_2 \mapsto e_2,~
           e_3 \leftrightarrow e_8,~e_4 \leftrightarrow -e_7,
           ~e_5 \leftrightarrow -e_6$$
in $W$, and so we are done.

Next, observe that the element
$$\begin{array}{lll}
  \varphi &:&\al_1 \mapsto \al_6 \mapsto \al_2 \mapsto \al'_8 \mapsto \al_1,
           ~\al_3 \mapsto \al_7 \mapsto \al_4 \mapsto \al_8 \mapsto \al_3,\\
           & & \al_5 \mapsto (-e_1-e_2-e_3+e_4+e_5+e_6-e_7+e_8)/2\end{array}$$
in $W$ interchanges the two $A_4$-components of $E_8^{(1)}$, and $\varphi^2$
induces the graph automorphism of each of these $A_4$-component. Since $F$ acts
trivially on $E^{(1)}_8$, we now see that $\varphi$ interchanges the two
$A_4$-subgroups $D_i$ if $D$ is of type $2A_4$, and $\varphi$ interchanges the
two $\tw2 A_4$-subgroups $D_i$ if $D$ is of type $2(\tw2 A_4)$.

We have therefore completed the proof of Theorem~\ref{thm:bsas}.

\subsection{Further variations on Baer--Suzuki}  \label{sec:bsgen}

We can now prove:

\begin{thm}   \label{thm:bsgen}
 Let $p \ge 5$ be prime. Let $G$ be a finite group. Let $C, D$ be conjugacy
 classes of $G$ with $G=\langle C \rangle = \langle D \rangle$.
 If $c^id^j$ is a $p$-element for all $(c,d) \in C \times D$ and all integers
 $i, j$, then $G$ is a cyclic $p$-group.
\end{thm}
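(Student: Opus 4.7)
The plan is to induct on $|G|$. Setting $j=0$ (resp.\ $i=0$) in the hypothesis shows immediately that every element of $C$ (resp.\ $D$) is a $p$-element. Let $G$ be a minimal counterexample. For any nontrivial proper normal subgroup $N\lhd G$, the hypothesis descends to $G/N$: images of $p$-elements are $p$-elements, and $\langle\bar C\rangle=\langle\bar D\rangle=G/N$. By minimality, $G/N$ is then a cyclic $p$-group.

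If $G$ had two distinct minimal normal subgroups $N_1,N_2$, then $N_1\cap N_2=1$, so $G$ embeds in the product $G/N_1\times G/N_2$ of cyclic $p$-groups; $G$ would then be abelian, hence cyclic (since $C$ is a singleton in an abelian group), contradicting minimality. So $G$ has a unique minimal normal subgroup $N$, with $G/N$ a cyclic $p$-group. If $N$ were a $p$-group then $G$ would be a $p$-group, and the image of $C$ in $G/\Phi(G)$ being a singleton (a conjugacy class in an abelian quotient) would force $G$ cyclic by the Burnside basis theorem --- again a contradiction. Hence $N$ is either (a) an elementary abelian $q$-group for some prime $q\ne p$, or (b) a direct product $S_1\times\cdots\times S_t$ of copies of a non-abelian simple group $S$.

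In case (a), Schur--Zassenhaus writes $G=N\rtimes P$ with $P$ a cyclic $p$-group; factoring out the kernel $K=C_P(N)$ of the $P$-action, $P/K$ acts faithfully and irreducibly on $N$. Each nontrivial $p$-conjugacy class of $G$ has the form $Np'$ for some $p'\in P\setminus K$, and $\langle C\rangle=\langle D\rangle=G$ forces the corresponding representatives $c,d\in P$ to generate $P$; writing $d=c^\mu$ with $\gcd(\mu,|P|)=1$ and setting $(i,j)=(-\mu,1)$ gives $c^id^j=1$ in $P$. Viewing $N$ as a one-dimensional $\End_{\FF_qP}(N)=\FF_{q^d}$-module on which $\sigma_c$ acts as multiplication by some $\lambda\in\FF_{q^d}^\times$ with $\lambda\ne 1$, a direct calculation shows that for a suitable representative $c'\in C$ the product $(c')^i d^j$ is a nontrivial element of $N$ (since the relevant $\FF_{q^d}$-linear combination of $1,\lambda^{-1},\lambda^{-2},\ldots$ is invertible), hence not a $p$-element, contradicting the hypothesis.

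In case (b), raise $c,d$ to suitable $p$-power powers to reduce to order $p$; the hypothesis still yields that $cd^g$ is a $p$-element for every $g\in G$. When $t=1$, $G$ is almost simple with socle $S$, and Theorem~\ref{thm:bsas}(2) directly produces $g\in G$ with $cd^g$ not a $p$-element, a contradiction. The hard step is $t>1$: here $G$ permutes $\{S_1,\ldots,S_t\}$ transitively as a cyclic $p$-group of order $t=p^r$, and neither $c$ nor $d$ lies in the kernel $K_0$ of this permutation action (else $\langle C\rangle$ or $\langle D\rangle$ would be proper). The main obstacle is reducing to an almost simple situation: my plan is to take sufficiently large $p$-power powers of $c$ and of $d$ landing in $K_0$, project them to the almost simple quotient $H=K_0/C_{K_0}(S_1)$ (with socle $S_1$ and $H/S_1$ a cyclic $p$-group), use $C_G(N)=1$ together with a suitable re-indexing of the $S_i$ and replacement of $d$ by a conjugate to ensure these projections are nontrivial $p$-elements, and then apply Theorem~\ref{thm:bsas}(2) inside $H$ to order-$p$ descendants. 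The resulting non-$p$-element of $H$ lifts to a non-$p$-element of the form $c^a(d^h)^b$ in $G$, contradicting the hypothesis.
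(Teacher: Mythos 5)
Your overall architecture matches the paper's: take a minimal counterexample, show there is a unique minimal normal subgroup $N$ with $G/N$ a cyclic $p$-group, and split into the cases $N$ a $p$-group (trivial), $N$ elementary abelian $q$-group with $q\ne p$, and $N$ a product of copies of a non-abelian simple group. Cases one and two, and the subcase $t=1$ of case three, are essentially right.

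\smallskip
\emph{Comparison on the abelian case.} Your $\FF_{q^d}$-module computation is correct but considerably heavier than needed. Since $C_G(N)=N$ (as $N$ is the unique minimal normal subgroup and $G/N$ is a $p$-group), you automatically have $K=C_P(N)=P\cap N=1$, so the ``factor out $K$'' step is vacuous. Also, the claim that \emph{every} nontrivial $p$-class is a full coset $Np'$ is not true in general --- it holds only for elements generating $P$, because $d^N=dN$ uses $C_N(d)=1$. That is all you use, so the proof still works, but the general assertion is false. The paper's shortcut is simpler: fix $d\in P$ with $d=c^i$; since $C_N(d)=1$, the map $x\mapsto[d,x]$ is bijective on $N$, so one can choose $x$ with $[d,x]=y\ne1$, and then $c^{-i}d^x=y\in N$ is a nontrivial $q$-element, done.

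\smallskip
\emph{The genuine gap: case (b) with $t>1$.} Your plan is to raise $c$ and $d$ to $p$-power powers that land in the kernel $K_0$ of the action on $\{S_1,\dots,S_t\}$, then project into the almost simple quotient $H=K_0/C_{K_0}(S_1)$ and invoke Theorem~\ref{thm:bsas}. This fails because nothing forces those powers to be nontrivial. Concretely: $c$ has $p$-power order $p^a$, and the order of its image in $\fS_t$ is $t=p^r$ (it generates the cyclic transitive image). If $a=r$, i.e.\ $c$ has order exactly $t$, then the only power of $c$ lying in $K_0$ is the identity --- for instance $c=(1,\dots,1)\sigma$ with $\sigma$ a $t$-cycle realises this --- so there is no nontrivial element to project, and the reduction to $H$ never gets off the ground. (The secondary worry you flag, making the two projections nontrivial \emph{in the same} $H$, can indeed be handled by conjugating $d$ since $G$ is transitive on the $S_i$, but that does not rescue the case just described.)

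The paper avoids this entirely by \emph{not} pushing $c,d$ into $K_0$. After replacing $c,d$ by powers so that $|G/N|=p$ (hence $t\in\{1,p\}$), and further powering $d$ by an integer prime to $p$ so that $cd\in N$, one writes $c=(x_1,\dots,x_p)\rho$ and $d=\rho^{-1}(y_1,\dots,y_p)$ in $\Aut(L)\wr\fS_p$. Conjugating $d$ by $h=(z,1,\dots,1)\in N$ and computing, the product $cd^h$ lies in $N$ and its first coordinate is $x_1y_1z$; since $cd\in N$ forces $x_1y_1\in L$, this coordinate sweeps all of $L$ as $z$ does. As $L$ is simple non-abelian it has non-$p$-elements, so $cd^h$ cannot always be a $p$-element --- contradicting the hypothesis with $i=j=1$ and a suitable representative of $D$. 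This exploitation of the cyclic permutation structure \emph{before} powering away from the $t$-cycle is the crucial idea your argument is missing.
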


\begin{proof}
First note that a $p$-group generated by a single conjugacy class is
cyclic (pass to the Frattini quotient to see that the Frattini quotient of $G$
and so $G$ are cyclic). Consider a minimal counterexample $(G, C, D)$.

We claim that $G$ has a unique minimal normal subgroup $N$. If $N_1$ and $N_2$
are distinct minimal normal subgroups, then by minimality, $G/N_i$ is a
$p$-group for each $i$, whence $G$ is a $p$-group, whence the claim.

By induction, $G/N$ is a cyclic $p$-group. If $N$ is a $p$-group, then so is
$G$ and the result follows. Assume that $N$ is an elementary abelian $r$-group
for some prime $r \neq p$, and let $P$ be a Sylow $p$-subgroup of $G$.
Choose $c, d \in P$, whence $d = c^i$ for some $i$.
Since $N$ is the unique minimal normal subgroup of $G$ and
$G/N = \langle dN \rangle$, $d$ acts irreducibly and nontrivially on $N$. It
follows that $d^N = dN$. In particular, we can find $x,y \in N$ such that
$d^x = dy \neq d$. Now $c^{-i}d^x = y$ is not a $p$-element, a contradiction.

So $N$ is a direct product of copies of a non-abelian simple group $L$.
Replacing $C$ and $D$ by $C^q$ and $D^q$,
we may assume that $|G/N| = p$. If $N$ is simple, then
$G$ is almost simple and Theorem~\ref{thm:bsas}(b) applies. So we may
assume that $N$ is a direct product $L_1 \times \cdots \times L_p$ and
that an element of $C$ or $D$ conjugates $L_i$ to $L_{i+1}$ for
$1 \le i < p$. Replacing the elements of $D$ by a power prime to $p$,
we may assume that $CD \subset N$. Choose $(c,d) \in C \times D$. Write
$c = (x_1, \ldots, x_p)\rho$ where $x_i \in \Aut(L_i)$ and $\rho \in \Aut(N)$
permuting the $L_i$ in a cycle. So $d=\rho^{-1}(y_1, \ldots, y_p)$ with
$y_i \in \Aut(L_i)$. Choosing $h = (z,1, \ldots ,1) \in N$ with $z$ running
over $L$, we see that $cd^h \in N$, whose first
coordinate is equal to $x_1y_1z$ and so it also runs over $L$.
In particular $cd^h$ need not be a $p$-element for all $h \in N$.
\end{proof}

We now want to weaken the hypothesis that $G = \langle C \rangle
=\langle D \rangle$ in Theorem~\ref{thm:bsgen}. To do so,
we have to weaken slightly the conclusion.

We first need the following result:

\begin{lem}   \label{lem:primetop}
 Let $p \ge3$ be prime. Let $G$ be a finite group with a Sylow $p$-subgroup
 $P$ and a normal $p$-complement $N$. Assume that
 $P = \langle C \rangle = \langle D \rangle$ for $C, D \subset P$, and that
 $\langle c^x, d \rangle$ is a $p$-group for all $x \in N$ and 
 $(c,d) \in C \times D$. Then $G = N \times P$.
\end{lem}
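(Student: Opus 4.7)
The plan is to induct on $|G|$ and reduce to the case that $N$ is a minimal normal subgroup of $G$. The first step is to reformulate the hypothesis: ``$\langle c^x,d\rangle$ is a $p$-group for every $x\in N$'' is equivalent to the factorization $N = C_N(c)\,C_N(d)$. Indeed, by Schur--Zassenhaus every Sylow $p$-subgroup of $G$ has the form $P^m$ for some $m\in N$, and for any $h\in P$ the intersection $P^m\cap hN$ is the singleton $\{h^m\}$. Thus $\langle c^x,d\rangle\subseteq P^m$ forces $d^m = d$ and $c^x = c^m$, i.e.\ $m\in C_N(d)$ and $xm^{-1}\in C_N(c)$, so $x\in C_N(c)\,C_N(d)$; conversely, writing $x = ab$ with $a\in C_N(c)$, $b\in C_N(d)$ gives $\langle c^x,d\rangle = \langle c,d\rangle^b\subseteq P^b$, a $p$-group.

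For the induction, suppose $N$ contains a proper nontrivial $G$-invariant subgroup $M$. The hypothesis (in either form) is inherited by $G/M$ (take images of $c,d,x$) and by the subgroup $M\rtimes P\le G$ (restrict $x$ to $M$); induction gives $[N,P]\subseteq M$ and $[M,P] = 1$, whence $[[N,P],P]\subseteq[M,P] = 1$. The standard coprime-action identity $[N,P] = [[N,P],P]$ (valid since $\gcd(|N|,|P|)=1$) then forces $[N,P] = 1$. We may therefore assume $N$ is minimal normal in $G$, hence characteristically simple: $N = T_1\times\cdots\times T_r$ with isomorphic simple factors.

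In the abelian case $T_i = \ZZ/q$ (with $q\ne p$), $N$ is an irreducible $\FF_qP$-module and the hypothesis reads $N^c + N^d = N$ for every $(c,d)\in C\times D$. Let $K$ be the kernel of the $P$-action on $N$; since $\langle C\rangle = \langle D\rangle = P\not\subseteq K$, one picks $c_0\in C\setminus K$, and then $N^{c_0}\ne 0$ (else $N^d = N$ for all $d\in D$, forcing $D\subseteq K$). When $P$ is abelian, every absolutely irreducible constituent of $N\otimes\overline{\FF}_q$ is one-dimensional, so $N^c\in\{0,N\}$ for every $c\in P$; the factorization then dichotomizes into ``$c$ acts trivially or $d$ acts trivially on $N$'', and $\langle C\rangle = \langle D\rangle = P$ forces $P\subseteq K$, a contradiction. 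The non-abelian case (where the $T_i$ are nonabelian simple) uses that $P$ permutes the simple factors and that the transitivity of $C_N(d)$ on each $N$-orbit $c^N$ is extremely restrictive in view of the size of conjugacy classes in nonabelian simple groups. The main obstacle I anticipate is the nonabelian-$P$ abelian-$N$ subcase, where one must combine the dimension bound $\dim N^c + \dim N^d\ge\dim N$ with $\bigcap_{c\in C}N^c = N^P = 0$ and Clifford theory over $Z(P)$ to reach the contradiction; the hypothesis $p\ge 3$ enters here to preclude $p=2$ exceptions like $\fS_3$.
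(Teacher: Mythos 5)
Your reformulation of the hypothesis as the factorization $N = C_N(c)\,C_N(d)$ is correct and matches the paper's first observation, and your coprime-action reduction to the case that $N$ is minimal normal in $G$ is a valid alternative to what the paper does. The paper's route is slightly different and, importantly, more economical: it uses a Frattini argument ($G = N_G(R)\,N$ for $R$ a Sylow $r$-subgroup of $N$, so $P$ can be taken to normalize $R$), replaces $N$ by $R$, and then passes to the Frattini quotient of $R$ and to an irreducible constituent. This lands directly in the case ``$N$ elementary abelian, $P$ acting irreducibly and nontrivially'' and never sees a nonabelian minimal normal subgroup. Your minimal-normal reduction, by contrast, leaves you with the extra subcase $N = T_1\times\cdots\times T_r$ with $T_i$ nonabelian simple, which you only wave at; that subcase simply does not arise in the paper's proof, so you have introduced work that the argument does not need.

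The more serious gap is in the abelian-$N$, nonabelian-$P$ subcase, which you explicitly flag as the ``main obstacle'' and do not resolve. You anticipate Clifford theory over $Z(P)$, but the actual mechanism is monomiality of irreducible representations of $p$-groups in coprime characteristic: after extending scalars to an algebraically closed field, the absolutely irreducible module $N$ is induced from a linear character of a subgroup, hence $N = \Ind_M^P(W)$ for some maximal subgroup $M < P$ and irreducible $M$-module $W$ (assuming $\dim N>1$; the one-dimensional case is immediate since then $P$ acts through a cyclic quotient, which is your abelian-$P$ argument). Then $N|_M = W_1\oplus\cdots\oplus W_p$ with the $W_i$ pairwise non-isomorphic, and any $c\in P\setminus M$ permutes the $W_i$ in a $p$-cycle, giving $\dim C_N(c)\le (\dim N)/p$. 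Since $\langle C\rangle = \langle D\rangle = P\not\subseteq M$, one can choose $c\in C\setminus M$ and $d\in D\setminus M$, whence $|C_N(c)C_N(d)|\le |N|^{2/p}<|N|$, contradicting the factorization; this is exactly where $p\ge 3$ enters, uniformly, whether or not $P$ is abelian. So the bound you expected ($\dim N^c + \dim N^d \ge \dim N$ versus $p\ge 3$) is the right shape, but the decisive input — that the induced structure forces $\dim C_N(c)\le(\dim N)/p$ for \emph{every} element of $C$ outside a fixed maximal subgroup — is missing from your sketch. Without it the argument does not close, even in the abelian-$N$ case.
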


\begin{proof}
Observe that the hypotheses imply that $N = C_N(c)C_N(d)$ for all 
$(c,d) \in C \times D$. Indeed, for all $x \in N$ we have that 
$\langle c^x, d \rangle$ is a $p$-group. Thus, there exists $y \in N$ with
$c^{xy}, d^y  \in P$. Since $N \cap P = 1$, it follows that $y \in C_N(d)$ and 
$xy \in C_N(c)$, whence $x \in C_N(c)C_N(d)$.

By way of contradiction, assume that $[P,N] \ne 1$. Note that if $R$ is 
a Sylow $r$-subgroup of $N$,
then $G= N_G(R)N$, whence $N_G(R)$ contains a Sylow $p$-subgroup of $G$.
Thus, $P$ normalizes a Sylow $r$-subgroup $R$ of $G$ for each prime
divisor $r$ of $|N|$.  So for some $r$, $P$ does not centralize $R$.
Thus, without loss we may assume that $N$ is an $r$-group for some prime
$r$.  By passing to a quotient,  we may first assume that
$N$ is elementary abelian and then that $P$ acts
irreducibly and nontrivially on $N$.

Now view $N$ as an absolutely irreducible $\FF P$-module where
$\FF :=\End_P(N)$. We can extend scalars and work over an algebraically
closed field. So $N = \Ind_M^P(W)$ for some irreducible $M$-module $W$
with $M$ a maximal subgroup 
of $P$. Since $P/M$ is cyclic and $N$ is irreducible over $P$, we have that 
$N= W_1 \oplus \ldots \oplus W_p$ where the $W_i$ are pairwise non-isomorphic 
irreducible $M$-modules. Choosing $c \in C \setminus{M}$, we see that 
$c$ permutes the $W_i$ transitively, whence $\dim C_N(c) \le (\dim N)/p$.

Thus we have found $c \in C$ with 
$\dim_{\FF_r} C_N(c) \le (\dim_{\FF_r}N)/p$
and similarly for some $d \in D$. For this choice of $(c,d)$,
$|C_N(c)C_N(d)| \le |N|^{2/p} < |N|$, a contradiction.
\end{proof}

We can now prove another variation on Baer--Suzuki, which is 
Theorem \ref{thm:bsthm} in the introduction. Note that this includes
the usual Baer--Suzuki theorem (for $p \ge 5$) by taking $C=D$.

\begin{thm}   \label{thm:bsplus}
 Let $G$ be a finite group and $p \ge 5$ prime. Let $C$ and $D$ be
 normal subsets of $G$ such that $H:=\langle C \rangle = \langle D \rangle$.
 If $\langle c, d \rangle$ is a $p$-group for all $(c,d) \in C \times D$,
 then $H \le O_p(G)$.
\end{thm}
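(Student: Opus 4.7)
The plan is to argue by induction on $|G|$ via a minimal counterexample $(G, C, D)$. Since $C$ and $D$ are normal subsets of $G$, the subgroup $H$ is automatically normal in $G$, so the conclusion $H \le O_p(G)$ is equivalent to $H$ being a $p$-group. Every element of $C \cup D$ is a $p$-element (for $c \in C$ pick any $d \in D$, which exists in a nontrivial counterexample; then $c \in \langle c, d \rangle$, a $p$-group). The hypotheses clearly restrict to $H$ with the same $C$ and $D$, so I may assume $G = H$ and aim to force $G$ to be a $p$-group.

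Pick a minimal normal subgroup $N \lhd G$. The images $\bar C, \bar D$ in $G/N$ still satisfy the hypotheses and generate $G/N$, so by induction $G/N$ is a $p$-group; in particular $N$ itself is not a $p$-group (else $G$ would be). Thus $N$ is either elementary abelian of exponent some prime $r \ne p$, or a direct product $L_1 \times \cdots \times L_k$ of pairwise isomorphic non-abelian simple groups.

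In the $p'$-abelian case, $G = N \rtimes P$ for a Sylow $p$-subgroup $P$ by Schur--Zassenhaus. Setting $C_0 := C \cap P$ and $D_0 := D \cap P$, a Frattini-type argument using $G = N \cdot N_G(P)$ combined with Sylow fusion of $p$-elements shows $P = \langle C_0 \rangle = \langle D_0 \rangle$. For any $x \in N$ and $(c,d) \in C_0 \times D_0$, normality of $C$ gives $c^x \in C$, so the hypothesis yields that $\langle c^x, d \rangle$ is a $p$-group. Lemma~\ref{lem:primetop} then gives $G = N \times P$; but the $p$-elements of such a direct product lie entirely in $P$, so $C \cup D \subseteq P$ and $H \le P \ne G$, a contradiction. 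In the non-abelian case I first argue $C_G(N) = 1$: otherwise, induction applied to $G/C_G(N)$ forces this proper quotient to be a $p$-group, and since $N \cap C_G(N) = Z(N) = 1$ means $N$ embeds in $G/C_G(N)$, $N$ itself would be a $p$-group, contradicting non-abelianness. Hence $G \hookrightarrow \Aut(N) \le \Aut(L) \wr S_k$. When $k = 1$, $G$ is almost simple with socle $L$, and for any $(c, d) \in C \times D$ I apply Theorem~\ref{thm:bsas}(2) to the order-$p$ parts $c_0 := c^{p^{r-1}}$ and $d_0 := d^{p^{s-1}}$ to obtain $g \in G$ with $c_0 d_0^g$ not a $p$-element; then $\langle c, d^g \rangle \supseteq \langle c_0, d_0^g \rangle$ is not a $p$-group, contradicting the hypothesis applied to the pair $(c, d^g) \in C \times D$.

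The hard part will be the remaining subcase $k \ge 2$, where $G/N$ is a nontrivial transitive $p$-subgroup of $S_k$ (so $k = p^a$) and elements of $C, D$ may genuinely permute the factors $L_i$. Modeled on the final step of the proof of Theorem~\ref{thm:bsgen}, the plan is to exploit that $\langle c, d \rangle$ being a $p$-group makes every $c^i d^j$ a $p$-element: by suitable power replacements reduce to a pair $(c, d) \in C \times D$ with $cd \in N$, write $c = (x_1, \ldots, x_k)\rho$ and $d = \rho^{-1}(y_1, \ldots, y_k)$ for a cyclic coordinate shift $\rho$, and conjugate $d$ by $h = (z, 1, \ldots, 1) \in N$ so that the first coordinate of $cd^h$ ranges over $L$ as $z$ varies; since $L$ is non-abelian simple, some $z$ produces a non-$p$-element, giving the desired contradiction. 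The delicate step is carrying out these power and coset replacements while maintaining that $C$ and $D$ remain normal subsets of the ambient group with $\langle C \rangle = \langle D \rangle = G$, which is subtler than in Theorem~\ref{thm:bsgen} precisely because we do not start with single conjugacy classes.
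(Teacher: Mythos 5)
Your treatment of the elementary-abelian $p'$ case and of the socle-simple ($k=1$) case is sound and runs parallel to the paper's argument; the small deviations --- invoking Theorem~\ref{thm:bsas}(2) on the order-$p$ powers rather than part~(1), and deriving $C_G(N)=1$ by a direct inductive step instead of via uniqueness of the minimal normal subgroup --- are harmless. (One cosmetic remark: the paper's step~2 is phrased for $N$ any normal $p'$-group rather than just the elementary-abelian case, which handles uniformly the possibility that the socle is a $p'$-product of nonabelian simples; your case split by structure of $N$ would need the observation that $p\nmid |N|$ forces a normal $p$-complement and hence falls to Lemma~\ref{lem:primetop} anyway.)

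The genuine gap is exactly where you flag it: the socle $N = L_1 \times \cdots \times L_k$ with $k \geq 2$. You propose to port the powering-and-coset argument from the proof of Theorem~\ref{thm:bsgen}, but that argument leans heavily on $C$ and $D$ being single conjugacy classes, so that all elements of $C$ have the same order and a uniform image modulo $N$, and on $G/N$ being \emph{cyclic}, which supplies a canonical index-$p$ subgroup to target by the replacement $C\mapsto C^q$. With $C$ and $D$ only normal subsets, $G/N$ is merely some $p$-group, the orders of elements of $C$ vary, and there is no evident power $q$ that simultaneously steers all of $C$ and $D$ into a prescribed coset of a prescribed index-$p$ subgroup while preserving $\langle C^q\rangle = \langle D^q\rangle = G$. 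You acknowledge this is delicate, but the proposal leaves it unresolved, so the proof is not complete.

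The paper dispatches the $k\ge 2$ case by a completely different and self-contained device that you should compare against. Let $Q := P\cap N$ and $T := N_G(Q)$. Then $T = XP$ where $X = \prod_i N_{L_i}(Q_i)$, and by the Glauberman--Thompson normal $p$-complement theorem (here $p\ge 5$ is used, and $p\mid |L|$ because the $p'$ case is already excluded) one has $N_{L_i}(Q_i)\neq Q_i$, so $X/Q$ is a nontrivial $p'$-group. The first step of the paper's proof shows $P/Q\cong G/N$ is generated by the image of $C\cap P$ and also by the image of $D\cap P$; feeding this into Lemma~\ref{lem:primetop} applied to $T/Q$ forces $P/Q$ to \emph{centralize} $X/Q$. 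But $P$ permutes the nontrivial direct factors $X_i/Q_i$ according to its permutation action on the $L_i$, so centralizing $X/Q$ forces $P$ to normalize every $L_i$. Hence each $L_i\lhd G = NP$, and uniqueness of the minimal normal subgroup yields $N = L_1$ simple, reducing to the almost simple case you already handled. This sidesteps entirely the powering issues that block your sketch.
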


\begin{proof}
1) Let $G$ be a counterexample of minimal order. By minimality, $G=H$.
By the usual argument, we see that $G$ must have a unique minimal normal
subgroup $N$. Let $P$ be a Sylow $p$-subgroup of $G$.

By induction, $G/N$ is a $p$-group, whence $G = NP$ and $N$ is not a $p$-group.
For any $c \in C$, since $\langle c \rangle$ is a $p$-subgroup, 
we can find $x \in N$ such that $c' := c^x \in C \cap P$. It follows that
$Nc' = Ncx = N(cxc^{-1})c = Nc$, and so $c \in N \langle C \cap P \rangle$. 
Thus $G = N \langle C \cap P \rangle$, and similarly, 
$G = N \langle D \cap P \rangle$.   

\medskip
2) Suppose that $N$ is a $p'$-group. Then $N \cap P = 1$ and 
$NP = N \langle C \cap P \rangle$ by 1), whence 
$P = \langle C \cap P \rangle$
and similarly, $P = \langle D \cap P \rangle$.
Applying Lemma \ref{lem:primetop}, we see that $P \lhd G$, a contradiction.

Thus we may assume that $N=L_1 \times \cdots \times L_t$
where $L_i \cong L$, a non-abelian simple group (of order divisible by $p$).
Let $Q := P \cap N = Q_1 \times \cdots \times Q_t$ with
$Q_i \leq L_i$, and let $T:=N_G(Q)=XP$, where $X=X_1 \times \cdots \times X_t$
with $X_i:=N_{L_i}(Q_i)$.
By a result of Glauberman--Thompson \cite[Thm.~X.8.13]{HB} (see also
\cite{gmn}), it follows that $X_i \ne Q_i$.

Now consider $T/Q = (X/Q)(P/Q)$.  Then $P/Q \cong G/N$ is generated
by the images of $C \cap P$ and also by the images of $D \cap P$ by 1),
and $X/Q$ is a $p'$-group.  So by Lemma~\ref{lem:primetop} applied to
$T/Q$, $P/Q$ must
centralize $X/Q \cong (X_1/Q_1) \times  \cdots \times (X_t/Q_t)$.  
But $X_i \neq Q_i$ and $P$ permutes the $L_i$, hence  
$P$ must normalize each $L_i$. Since $N$ normalizes
each $L_i$, this implies that $L_i$ is normal in $G = NP$.
Recall that $N$ is the unique minimal normal subgroup of $H$. Thus, we have 
shown that $N=L_1$ is simple and so $G$ is almost simple.  Now we
have a contradiction by Theorem \ref{thm:bsas}(a).
\end{proof}

There is a version of the previous result for linear groups.

\begin{cor} \label{cor:linearbs}
Let $k$ be a field of characteristic $p$ with $p=0$ or
$p > 3$.    Let $G$ be a subgroup of $\GL_n(k)$.  If $C$ and $D$
are normal unipotent subsets of $G$ with $H:=\langle C \rangle =
\langle D \rangle$
such that $\langle c, d \rangle$ is unipotent for all $(c,d) \in C \times D$,
then $H$ is a normal unipotent subgroup of $G$.
\end{cor}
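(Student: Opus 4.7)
The plan is to combine Kolchin's theorem with a Zariski-closure reduction, then apply Theorem~\ref{thm:bsplus} to the component group and Theorem~\ref{main:alg} to a simple quotient of the identity component. First, $H\lhd G$ is automatic from $H=\langle C\rangle$ and $C\lhd G$, so the content of the corollary is that $H$ consists of unipotent matrices. By Kolchin's theorem this reduces to showing that every element of $H$ is unipotent, and after extending scalars we may assume $k$ is algebraically closed.

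Let $\bar H,\bar C,\bar D$ denote the Zariski closures of $H,C,D$ in $\GL_n(k)$. Since unipotence is a closed condition, $\bar C\cup\bar D$ consists of unipotent elements; and for each word $w(x,y)$ the morphism $(x,y)\mapsto w(x,y)$ from $\bar C\times\bar D$ to $\GL_n$ pulls the closed unipotent variety back to a closed subvariety of $\bar C\times\bar D$ containing the dense subset $C\times D$. Hence $\langle\gamma,\delta\rangle$ is unipotent for every $(\gamma,\delta)\in\bar C\times\bar D$, and it suffices to prove that the algebraic group $\bar H$ is unipotent.

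Set $F:=\bar H/\bar H^0$ with projection $\pi$. Density of $H$ in $\bar H$ gives $\bar H=H\cdot\bar H^0$, so $F=\langle\pi(C)\rangle=\langle\pi(D)\rangle$ with $\pi(C),\pi(D)$ normal subsets of $F$. For each $(c,d)\in C\times D$, the finite image $\pi(\langle c,d\rangle)$ is a quotient of the unipotent linear group $\langle c,d\rangle$; in characteristic $p>0$ every element of $\langle c,d\rangle$ has $p$-power order, so this image is a $p$-group, while in characteristic $0$ the Zariski closure $\overline{\langle c,d\rangle}$ is a connected unipotent algebraic group lying in $\bar H^0$, forcing $\pi(c)=\pi(d)=1$. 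Applying Theorem~\ref{thm:bsplus} to $F$ in positive characteristic now yields $F=\langle\pi(C)\rangle\le O_p(F)$, so $F$ is a $p$-group (and trivially so in characteristic $0$).

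The main obstacle is to show that $\bar H^0$ is unipotent. Suppose not; then $M:=\bar H^0/R_u(\bar H^0)$ is a nontrivial connected reductive algebraic group. In the algebraic quotient $\bar H/R_u(\bar H^0)$ the images of $C$ and $D$ are unipotent, and any product of them landing in the identity component $M$ lies in the subgroup of $M$ generated by unipotent elements, which in a reductive group is $[M,M]$; combined with density of the image of $H\cap\bar H^0$ in $M$, this forces $M=[M,M]$ to be semisimple. Passing to a minimal nontrivial $\bar H$-invariant closed connected normal subgroup of $M$, modding out its finite center, and projecting to a single simple factor produces a simple adjoint algebraic group $\bG$ together with closed normal subsets $A,B\subseteq\bG$ of unipotent elements satisfying the cross Baer--Suzuki condition; density of the image of $H$ ensures that $A$ and $B$ contain non-identity (and hence non-central) elements. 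Then for any $a\in A\setminus\{1\}$ and $b\in B\setminus\{1\}$, every element of $a^\bG b^\bG$ lies in the unipotent group $\langle a^g,b^h\rangle$, so $a^\bG b^\bG$ consists of unipotent elements and is a finite union of conjugacy classes. This contradicts Theorem~\ref{main:alg}, whose exceptional cases (1)--(4) all require $\Char k\in\{2,3\}$, which we have excluded. Hence $M=1$, so $\bar H^0$ is unipotent; combined with $F$ being a $p$-group this shows that every eigenvalue of every element of $\bar H$ equals $1$, and so $\bar H$ (and therefore $H$) is unipotent.
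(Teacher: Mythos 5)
Your high-level plan --- pass to the Zariski closure, use Theorem~\ref{thm:bsplus} to make the component group a $p$-group, reduce to a simple adjoint quotient, and invoke Theorem~\ref{main:alg} --- is the same as the paper's, but the step ``this forces $M=[M,M]$ to be semisimple'' has a genuine gap: density alone does not kill the central torus of $M$. The images of $C,D$ in $Q:=\bar H/R_u(\bar H^0)$ are unipotent elements of $Q$, not of $M=Q^0$, and a product of such elements that happens to land in $M$ need \emph{not} lie in $[M,M]$. For instance, in characteristic $p\ge5$ take $Q=T\rtimes\langle\sigma\rangle$ with $T$ a torus of rank $p-1$ and $\sigma$ of order $p$ acting on the character lattice through the companion matrix of $\Phi_p$. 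Then $1+\sigma+\cdots+\sigma^{p-1}=\Phi_p(\sigma)=0$, so every element of each nontrivial coset $T\sigma^i$ has order $p$ and hence is unipotent; since $(t_1\sigma)(t_2\sigma)^{-1}=t_1t_2^{-1}$, the unipotent elements of $Q$ generate all of $Q$ (in particular a dense subgroup), yet $M=T$ is a nontrivial torus with $[M,M]=1$. The Baer--Suzuki hypothesis fails for this $Q$ ($t_1t_2^{-1}$ is semisimple), so the corollary is not contradicted --- but it shows that the hypothesis must actually be invoked at this step, and your density argument does not invoke it.

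The paper closes this gap with Lemma~\ref{lem:primetop}: with $\bG^\circ$ reductive, choose a Borel $\bB\supseteq\bT$ with unipotent radical $\bU$ and a maximal unipotent subgroup $\bP\le N_\bG(\bB)$, apply the lemma to the finite groups $\bT[m](\bP/\bU)$ and take a limit over $m$ to get $[\bP,\bT]\le\bU$. This simultaneously eliminates the central torus and shows that $\bG$ normalizes each simple component of $\bG^\circ$. That second fact is also needed to justify your ``projecting to a single simple factor'' step, which as written glosses over the possibility that $\bar H$ permutes the simple factors of $M$ in an orbit of length $>1$; it is not explained how $C,D$ then give nontrivial normal subsets of one factor. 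Once each factor $L$ is known to be normalized, one passes to the image in $\Aut(L)$, uses that for $p\ge5$ the outer automorphism group of a simple algebraic group has no elements of order $p$ to conclude the image is simple, and Theorem~\ref{main:alg} finishes exactly as you describe.
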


\begin{proof}
There is no harm in assuming that $k$ is algebraically closed and that $G=H$.
Since the condition that $\langle c, d \rangle$ is unipotent is a closed
condition, it suffices to prove the result in the case where $G$, $C$ and
$D$ are replaced by their Zariski closures.  
So $G=\bG$ is an algebraic group.  We may furthermore assume that the
unipotent radical of $\bG$ is trivial.
In particular, the connected component $\bG^\circ$ of $\bG$ is reductive.
By the result for finite groups, $\bG/\bG^\circ$ is a $p$-group (in particular
if $p=0$, $\bG$ is connected). If $\bG^\circ$ is trivial, the result follows.
Let $\bB$ be a Borel subgroup of $\bG^\circ$ with unipotent radical $\bU$.
Then $N_\bG(\bB)$ covers $\bG/\bG^\circ$.
Let $\bP$ be a maximal (necessarily closed) unipotent subgroup of 
$N_\bG(\bB)$ (so $\bU \le \bP$), and let $\bT$ be a maximal torus of $\bB$.
Then $N_\bG(\bB)/\bU = \bT.(\bP/\bU)$.  Note
that $\bP/\bU$ is generated by $C\bU/\bU$ (as in our earlier arguments).
For any $m\ge1$ let $\bT[m]$ be the $m$-torsion subgroup of $\bT$. Note that 
$\bT[m]$ is a finite group. Applying Lemma~\ref{lem:primetop}, it follows that
$[\bP,\bT[m]] \le \bU$. Since $\bT$ is the closure of its torsion
subgroup, $[\bP,\bT] \le \bU$.  Thus, 
$\bG$ normalizes each simple component of $\bG^\circ$ and so we are reduced
to the almost simple case. However a simple algebraic group in
characteristic $p \neq 2,3$ has no outer automorphisms of order $p$ and
so $\bG$ is simple. Now the result follows  by
Theorem \ref{thm:algebraic}.
\end{proof}

\vskip 2pc

\end{document}